\theoremstyle{plain}
\newtheorem{thm}{Theorem}[section]
\newtheorem*{thm1.1}{Theorem 1.1}
\newtheorem*{thmM}{Main Theorem}
\newtheorem{lem}[thm]{Lemma}
\newtheorem{cor}[thm]{Corollary}
\newtheorem{pro}[thm]{Proposition}
\theoremstyle{definition}
\newtheorem{rem}[thm]{Remark}
\newtheorem{defi}[thm]{Definition}
\numberwithin{equation}{section}
\newcounter{elno}                % This to number lists
\newcommand{\la}{\lambda}
\newcommand{\Supp}{{\rm Supp}}
\newcommand{\Spec}{{\rm Spec \,}}
\newcommand{\Frac}{{\rm Frac \,}}
\newcommand{\ord}{{\rm ord}}
\renewcommand{\d}{\stackrel{\mbox{\scriptsize{$\bullet$}}}{}}
\newcommand{\boxtensor}{{\Box\kern-9.03pt\raise1.42pt\hbox{$\times$}}}
\newcommand{\propsubset}
{\mbox{$\textstyle{
\subseteq_{\kern-5pt\raise-1pt\hbox{\mbox{\tiny{$/$}}}}}$}}
\newcommand{\A}{{\mathbb A}}
\renewcommand{\P}{{\mathbb P}}
\newcommand{\Q}{{\mathbb Q}}
\newcommand{\R}{{\mathbb R}}
\newcommand{\V}{{\mathbb V}}
\DeclareMathOperator{\supp}{Supp}
\DeclareMathOperator{\SH}{SH}
\DeclareMathOperator{\SHP}{SH^+(V_\infty)}
\DeclareMathOperator{\Tan}{Tan}
\title[]{Intersection of valuation rings in $k[x,y]$}
\author{Xie Junyi}
\address{Centre de Math\'ematiques Laurent Schwartz \'Ecole Polytechnique, 91128, Palaiseau
Cedex, France}
\email{junyi.xie@math.polytechnique.fr}
\date{\today}
\thanks{The author is supported
by the ERC-starting grant project "Nonarcomp" no.307856.}
\begin{document}
\bibliographystyle{plain}
\begin{abstract}
We associate to any given finite set of valuations on the polynomial ring in two variables over an algebraically
closed field a numerical invariant whose positivity characterizes the case when  the intersection of their valuation rings has maximal
transcendence degree over the base fields.

As an application, we give a criterion for when an analytic branch at infinity in the affine plane
that is defined over a number field in a suitable sense is the branch of an algebraic curve.

%Let $R:=k[x,y]$ denote the ring of polynomials in two variables over an algebraically closed field $k$.
%Given any finite set of valuations $S$ on $R$ that are trivial on $k$, we define
%$R_S = \cap_{v \in S} \{ P \in R, \, v(P) \ge 0\}$ as the intersection of the
%valuation rings of the elements in $S$ with $R$ and $\delta(S)$ the transcendence degree of the fraction field of $R_S$ over $k$. In this paper, we prove that whether $\delta(S)=2$ is determined by a numerical invariant.
%
%Then we apply this result to the algebraization problem of analytic curves. Roughly speaking, what we prove is as follows : Let $s_1,\cdots, s_l$ be a finite set of formal curves in $\mathbb{P}^2_k$ centred at infinity defined over a number field $K$. Suppose $s_i$'s satisfies some convergence condition w.r.t all places on $K$. Further suppose that there are infinitely may $K$ points $p$ such that when $p$ is near the line at infinity w.r.t. a place $v$, then $p$ contains in a convergence $v$-analytic curve defined by some $s_i$. Then there exists an algebraic curves such that their branches at infinity are contained in $\{s_1,\cdots,s_l\}$.
\end{abstract}

\maketitle
\tableofcontents

\section{Introduction}

Let $R:=k[x,y]$ denote the ring of polynomials in two variables over an algebraically closed field $k$.
Given any finite set of valuations $S$ on $R$ that are trivial on $k$, we define
$R_S = \cap_{v \in S} \{ P \in R, \, v(P) \ge 0\}$ as the intersection of the
valuation rings of the elements in $S$ with $R$.
We obtain in this way a $k$-subalgebra of $R$, and it is a
natural question to ask for the transcendence degree of the fraction field of $R_S$ over $k$
which is an integer $\delta(S) \in \{0,1,2\}$.

Our main result is the construction of a symmetric matrix  $M(S)$
whose signature characterizes the case when
$\delta(S) = 2$. We should mention that when all valuations in $S$ are divisorial, this matrix $M(S)$ 
is the same as the matrix $\widetilde{\mathcal{M}}$ in \cite[Corollary 4.9]{Mondalb}.

As we shall see below, this construction is based
on the analysis developped by C. Favre and M. Jonsson \cite{Favre2011} on the tree of normalized
rank $1$ valuations
centered at infinity on $R$. In the case $S$ consists only of divisorial valuations, $M(S)$ can however be
defined using classical intersection theory on an appropriate projective compactification of
the affine plane, and we shall explain that one can recover in this way recent results
by Schroer \cite{Schroer2000} and Mondal \cite{Mondal}.

\bigskip

To get some insight into the problem, let us now describe a couple of examples.
We first observe that if $S_1,S_2$ are two finite sets of valuations satisfying $S_1\subseteq S_2$, then we have $R_{S_2}\subseteq R_{S_1}$. Also it is  only necessary to consider  valuations $v$ that are centered at infinity in the sense that  $R$ is not contained in the valuation ring of $v$.

We first recall the definition of a monomial valuation. Given $(s,t) \in \R^2 \setminus \{ (0,0)\}$, we denote by
$v_{s,t}:R\rightarrow \mathbb{R}$ the rank $1$ valuation defined by
\begin{equation}\label{eq:defmono}
v_{s,t} \left(\sum_{i,j\geq 0} a_{i,j}x^iy^j\right) :=  \min\left\{si+tj|\,\, a_{i,j}\neq 0\right\}~.
\end{equation}
The valuation $v_{s,t}$ is centered at infinity iff $\min \{ s, t \} <0$, and one immediately checks that
$R_{\{v_{s,t}\}}=k$ when $ \max \{ s, t \} < 0$ so that $\delta(\{v_{s,t}\}) = 0$ in this case.
This happens in particular when $(s,t) = (-1, -1)$ that is $\delta (\{- \deg\}) = 0$.

Fix a compactification $\mathbb{A}^2_k \subset \mathbb{P}^2_k$, and write $L_{\infty} = \mathbb{P}^2_k\setminus \mathbb{A}^2_k$ for the line at infinity.
Recall that a polynomial $P\in R$ is said to have one place at infinity, if the closure of $P =0$ intersects $L_{\infty}$
at a single point and the germ of curve it defines at that point is analytically irreducible. If $P$ has one place at infinity,   it follows
from a theorem of Moh~\cite{T.T.Mok1974} that all curves $\{ P = \lambda\}$ have  one place at infinity.
This pencil thus defines a rank $1$ (divisorial) valuation $v_{|P|}$ sending
$Q \in R$ to $v_{|P|} (Q) := \# \{ P^{-1} (\lambda) \cap Q^{-1}(0) \}$ for $\lambda$ generic.
One has in this case $R_{\{v_{|P|}\}}=k[P]$, hence $\delta (\{ v_{|P|} \}) = 1$.

To get examples of a finite family valuations such that $\delta = 2$, it is  necessary to
choose valuations that are far enough from $-\deg$. A first construction arises as follows.
Pick $s, t \in \R^2$ such that $s< 0 < t$ and let $m$ be any integer larger than $s$.
Since $ k [x y^m , y ] \subset R_{v_{s,t}}$ it follows that $\delta (\{v_{s,t}\}) =2$.

Next choose $\{s_i\}_{1 \le i \le m}$ any finite set of branches based at points lying on $L$ of algebraic curves defined in $\mathbb{A}^2$ by  equations $\{P_i=0\}$.  Let $v_i$ be the rank $2$ valuation on $R$ associated to the branch $s_i$.
Then one checks that $ (P_1 \cdots P_m). R \subset R_{\{v_1,\cdots, v_m\}}$ so that
$\delta( \{v_1,\cdots, v_m\}) =2$.

%
%:R\rightarrow \mathbb{R}\bigcup\{+\infty\}$ be the valuation defined by $v_{C_i}:P\mapsto (l_{\infty}\cdot C_i)^{-1}\ord_{\infty}(P|_{C_i}).$
%Observer that $\ord_{\infty}(P_i|_{C_i})=\infty$ so that for any polynomial $P$, we have $P_1\cdots P_mP\in
%Then we have $\degtr(\Frac (R_{\{v_{C_1},\cdots,v_{C_m}\}}))
%= 2$.

A first (simple) characterization of the case $\delta(S) =2$ is as follows.
\begin{thm}\label{thmrichexistpposi}
Let $S$ be any finite set of rank one valuations on $R = k[x,y]$ that are trivial on $k^*$.
Then the transcendence degree $\delta(S)$ of the fraction field of the intersection of $R$ with the valuation
rings of the valuations in $S$ is equal to $2$ iff there exists a polynomial
$P\in R$ satisfying $v(P)>0$ for all $v\in S$.
\end{thm}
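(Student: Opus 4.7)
The plan is to establish both implications directly, using only the rank-$1$ assumption (so each $v\in S$ takes values in $\R$) and the fact that $R=k[x,y]$ has transcendence degree $2$ over $k$.

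For the backward direction, suppose $P \in R$ is nonzero with $v(P) > 0$ for every $v \in S$. Since $v$ is trivial on $k^*$, $P$ must be non-constant, hence transcendental over $k$, so $k(P)$ has transcendence degree $1$. Because $k(x,y)$ has transcendence degree $2$, at least one of $x,y$ — call it $Q$ — is transcendental over $k(P)$, so that $P$ and $Q$ are algebraically independent. For each $v \in S$, the value $v(QP^n) = v(Q) + n\,v(P)$ becomes positive once $n > -v(Q)/v(P)$, and because $S$ is finite a single integer $n$ works uniformly. Then $P$ and $QP^n$ both lie in $R_S$, and from $Q = (QP^n)\cdot P^{-n}\in\Frac(R_S)$ we conclude that $\Frac(R_S)\supseteq k(P,Q)$, giving $\delta(S)=2$.

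For the forward direction, suppose $\delta(S)=2$. Then $\Frac(R_S)\subseteq\Frac(R)=k(x,y)$ is an inclusion between two fields of transcendence degree $2$ over $k$, so $k(x,y)$ is algebraic over $\Frac(R_S)$. I claim that for every $v\in S$ the restriction $v|_{R_S}$ is non-trivial. If not, $v|_{\Frac(R_S)}$ is trivial; but because $v$ has rank one, the standard minimal-polynomial-plus-ultrametric argument (if $\alpha$ satisfies $\alpha^n+a_{n-1}\alpha^{n-1}+\ldots+a_0=0$ over $\Frac(R_S)$ with $a_0\neq 0$ and $v(\alpha)\neq 0$, the nonzero terms $v(a_i\alpha^i)=i\,v(\alpha)$ are all distinct, contradicting $v(0)=\infty$ by strict ultrametric) forces $v$ to be trivial on the entire algebraic extension $k(x,y)/\Frac(R_S)$, contradicting the hypothesis that $v$ is a (non-trivial) rank-one valuation on $R$. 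Hence for each $v\in S$ there exists $P_v\in R_S$ with $v(P_v)>0$. Setting $P:=\prod_{v\in S}P_v\in R_S\subseteq R$, we get for every $v\in S$ that $v(P)=\sum_{v'\in S}v(P_{v'})\geq v(P_v)>0$, since every other factor $P_{v'}$ satisfies $v(P_{v'})\geq 0$ by lying in $R_S$.

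The only delicate point is the non-triviality of $v|_{R_S}$ in the forward direction, and this hinges essentially on the rank-$1$ hypothesis: without the embedding of the value group into $\R$, the strict ultrametric step that propagates triviality across algebraic extensions may fail. Everything else uses only that $R$ is two-dimensional and that $S$ is finite.
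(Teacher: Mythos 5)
Your proof is correct, and the forward direction takes a genuinely different route from the paper's. In the proof of (i)$\Rightarrow$(ii) inside Theorem~\ref{thmsuvsectionprorefshr}, the paper picks a generic linear form $Q$ with $v(Q)<0$ for all $v\in S$, writes an algebraic relation $\sum_{i=0}^m a_iQ^i = 0$ with coefficients $a_i\in R_S$, and shows by a valuation computation that the leading coefficient $a_m$ satisfies $v(a_m)>0$ for every $v$, producing a single polynomial directly. You instead invoke the standard fact that a valuation trivial on a subfield remains trivial on any algebraic extension, conclude that $v|_{\Frac(R_S)}$ is non-trivial for each $v$, extract a $P_v\in R_S$ with $v(P_v)>0$ for each $v$, and take the product $\prod_v P_v$. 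Both are sound; yours is somewhat more conceptual, works uniformly even for valuations not centered at infinity (so it does not need the normalization to $V_\infty$ or the ``generic linear form'' step), at the cost of producing a larger-degree witness. The backward directions are essentially the same argument of multiplying by high powers of $P$ to clear negative valuations.

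One small inaccuracy in your closing remark: the minimal-polynomial-plus-ultrametric step that propagates triviality across algebraic extensions does \emph{not} depend on rank one --- it is valid for a valuation with any totally ordered value group, since $i\,v(\alpha)$ for $i=0,\dots,n$ are pairwise distinct whenever $v(\alpha)\neq 0$ (the value group being torsion-free). What the rank-one hypothesis actually buys you is (a) that $\mathfrak{P}_v=\{P:v(P)=+\infty\}$ is zero, so each $v$ extends to a genuine valuation of $k(x,y)$ --- this is what lets you run the algebraic-extension argument at all --- and (b) the archimedean property of $\R$, which is needed in the backward direction so that $n\,v(P)$ eventually dominates $-v(Q)$. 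The latter is where rank one is truly indispensable, as it fails already for lexicographically ordered $\Z^2$.
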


%Observe that in both examples, one can find a polynomial
%In fact Theorem \ref{thS^{\min}uvsectionprorefshr} implies that $\degtr(\Frac (R_{S}))
%= 2$ if and only if there exists a polynomial $P$ satisfying $v(P)>0$ for all $v\in S$.

\bigskip

We now describe  more precisely  our main result.
Since the construction of our matrix $M(S)$ relies on the fine tree structure of
the space of normalized
rank $1$ valuations centered at infinity (see Section \ref{sectionvatree}), we first explain our main theorem in the
simplified (yet important)
situation when all valuations are divisorial.

\medskip

Now pick any proper modification $\pi : X\to \mathbb{P}^2$ that is an isomorphism above the affine plane with $X$ a smooth projective surface.
Let $\{E_0, E_1,\cdots,E_m\}$ be the set of all irreducible components of $X\setminus \mathbb{A}^2_k$ with $E_0$ the strict transform of $L_{\infty}$, and $S$ be a subset of $\{\ord_{E_0}, \ord_{E_1},\cdots,\ord_{E_m}\}$.

Since the intersection form on the divisors $E_i$'s is non-degenerate, for each $i$, there exists a unique divisor $\check{E}_i$
supported at infinity such that $(\check{E}_i\cdot E_j)=\delta_{i,j}$ for all $i,j$. Observe that
$(\check{E}_0\cdot\check{E}_0) = +1 >0$.

%
%by $X\setminus \mathbb{A}^2_k$ i.e.
%The following argument shows that there exists $E\in \{E_1,\cdots,E_m\}$ satisfying $(\check{E}\cdot\check{E})>0.$
%There exists a birational morphism $\pi:X\rightarrow Y$ where $Y$ is a minimal rational surface with an embedding $\mathbb{A}^2_k\hookrightarrow Y$ such that $\pi$ is an isomorphism over $\mathbb{A}^2_k.$ There exists an irreducible component $L$ of $Y\setminus \mathbb{A}^2_k$ such that $(\check{L}\cdot \check{L})>0.$ Let $E$ be the strict transform of $L$, we have $\check{E}=\pi^*\check{L}$ and then we have $(\check{E}\cdot\check{E})>0.$
%
%

Finally we define $M(S)$ to be the symmetric matrix whose entries are given by $[(\check{E}_i\cdot \check{E_j})]_{1 \le i,j \le m}$.

%
%
% either $i=j$ or $E_i$ and $E$ are  contained in the same connected component of $X\setminus (\mathbb{A}^2_k\bigcup E_j).$
%given
%
%with respect to $E$ defined by $\ord_{E_i}\leq_E \ord_{E_j}$ if either $i=j$ or $E_i$ and $E$ are  contained in the same connected component of $X\setminus (\mathbb{A}^2_k\bigcup E_j).$ We have $R_{\{\ord_{E_i}\}}\subseteq R_{\{\ord_{E_j}\}}$ if $\ord_{E_i}\leq_E \ord_{E_j}$. Let $S^{\min}_E$ be the subset of $S$ consisting of all valuations minimal in $S$ with respect to $\leq_E$. Then we have $R_S=R_{S^{\min}_E}.$ Set $S^{\min}_E:=\{\ord_{E_1'}\cdots,\ord_{E_l'}\}.$ We define $\chi_E(S):=\chi_E(S^{\min}_E):=(-1)^l\det\left((\check{E}_i'\cdot \check{E}_j')\right)_{1\leq i,j\leq l}.$
Our main theorem in the case of divisorial valuations reads as follows.
\begin{thm}\label{thmmainthmchifordiv}
Given any finite set of divisorial valuations $S$ on $R$ that are centered at infinity,
we have $\delta(S) =2$  if and only if the matrix $M(S)$ is negative definite.
\end{thm}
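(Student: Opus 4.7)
The plan is to combine Theorem~\ref{thmrichexistpposi} with Goodman's affineness criterion, reducing the theorem to a Hodge-index calculation on the intersection form at infinity.

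By Theorem~\ref{thmrichexistpposi}, $\delta(S)=2$ is equivalent to the existence of a nonzero $P\in R$ with $\ord_{E_i}(P)>0$ for every $\ord_{E_i}\in S$. Set $I:=\{i:\ord_{E_i}\in S\}$. The case $0\in I$ is immediate on both sides: $\ord_{E_0}(P)=-\deg P\le 0$ on any polynomial rules out the required $P$, while $M(S)$ contains the positive diagonal entry $\check{E}_0^2=1$ and is not negative definite. Assuming henceforth $0\notin I$, I would identify $R_S$ with $H^0(U,\sO_X)$ where $U:=X\setminus\bigcup_{j\notin I}E_j$: a polynomial has no pole on any $E_i$ with $i\in I$ exactly when it extends regularly to $U$. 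Therefore $\delta(S)=2$ if and only if $U$ is quasi-affine, which by Goodman's theorem amounts to the existence of an effective ample $\Q$-divisor supported on $\bigcup_{j\notin I}E_j$.

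The main step is the linear-algebra equivalence between this ampleness condition and the negative-definiteness of $M(S)$. By Nakai--Moishezon, ampleness of some $D=\sum_{j\notin I}a_jE_j$ with $a_j>0$ reduces (after handling curve-positivity using $E_0\subset\supp(D)$ and the blow-up structure of $X$) to the existence of positive $(a_j)_{j\notin I}$ with $a^T(A|_{I^c\times I^c})a>0$, i.e., to $A|_{I^c\times I^c}$ admitting a positive eigendirection meeting the positive orthant. A Perron--Frobenius argument applied to the nonnegative-off-diagonal matrix $A|_{I^c\times I^c}$ (which uses $E_i\cdot E_j\ge 0$ for $i\ne j$), together with the connectedness of the boundary configuration, reduces the latter to $A|_{I^c\times I^c}$ having Hodge signature $(1,|I^c|-1)$. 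Finally, Haynsworth's inertia-additivity applied to the Schur-complement identity $(A^{-1})|_{I\times I}=(A/A|_{I^c\times I^c})^{-1}$, combined with the known signature $(1,m)$ of $A$, translates this signature condition into the negative-definiteness of $M(S)=(A^{-1})|_{I\times I}$.

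The principal obstacle is the chain of equivalences in the last step, especially the Perron--Frobenius argument extracting a strictly positive eigenvector from the abstract signature condition: this requires a careful case analysis when the restricted boundary graph $\bigcup_{j\notin I}E_j$ is disconnected, and an auxiliary verification of Nakai--Moishezon curve-positivity for all (not only at-infinity) irreducible curves, using that $E_0\subset\supp(D)$ and exploiting the blow-up presentation of $X$ over $\P^2$. Degenerate instances of $A|_{I^c\times I^c}$, where the Schur complement is not literally defined, have to be handled as limiting cases (or by passing to a suitable further blow-up) so that the inertia-additivity applies throughout.
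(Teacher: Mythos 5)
Your linear-algebra observation is correct and actually quite clean: writing $A$ for the intersection matrix $[(E_i\cdot E_j)]_{0\le i,j\le m}$, one checks $\check E_i=\sum_k(A^{-1})_{ik}E_k$, hence $M(S)=(A^{-1})|_{I\times I}=(A/A|_{I^c\times I^c})^{-1}$, and Haynsworth inertia additivity together with $\mathrm{In}(A)=(1,m,0)$ gives at once that $M(S)$ is negative definite iff $A|_{I^c\times I^c}$ has signature $(1,|I^c|-1,0)$, iff there is a divisor $F$ supported on $\bigcup_{j\in I^c}E_j$ with $F^2>0$. This is the same linear algebra that the paper encodes in the Dirichlet pairing on the tree (Lemma~\ref{lemdirichletontree}, Lemma~\ref{lemtolinear}), seen through the $\check E_i$ basis.

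The geometric half of your argument has a genuine gap, and the gap is conceptual rather than a missing technicality. The correct geometric condition matching the linear algebra is \emph{bigness} of the boundary ($F^2>0$), not ampleness, and the equivalence you need is not ``$\delta(S)=2\iff U$ quasi-affine'' but ``$\delta(S)=2\iff$ there is $F$ supported on $\bigcup_{j\in I^c}E_j$ with $F^2>0$''. One implication of this last equivalence is exactly Lemma~\ref{lemofschroer} of the paper (Schr\"oer's lemma, proved by Zariski decomposition plus the fact that $nF-D$ is big for $n\gg0$); the other implication is the substantive content for which the paper brings in potential theory (Proposition~\ref{proapoxishplus} and the energy computation). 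Your route via quasi-affineness and Goodman's criterion does not substitute for this. First, the claim ``$\delta(S)=2\iff U$ quasi-affine'' is simply false as stated: take any $E$ with $\alpha(v_E)<0$ and blow up a free point on $E$ to get $E'$, so $v_E<v_{E'}$, and set $S=\{\ord_E,\ord_{E'}\}$. Then $R_S=R_{\{\ord_E\}}$ so $\delta(S)=2$, and one computes $\det M(S)=\alpha(v_E)\bigl(\alpha(v_{E'})-\alpha(v_E)\bigr)>0$ with negative trace, so $M(S)$ is negative definite, consistent with the theorem; but $E'$ is a complete curve contained in $U=X\setminus\bigcup_{j\in I^c}E_j$ (it meets only $E\in I$), so $U$ is not quasi-affine. (The theorem places no $S=S^{\min}$ hypothesis, and your proposal never reduces to it.) Second, even for $S=S^{\min}$, Goodman's criterion is a statement about \emph{affineness} and it requires allowing a further modification of $X$ centered at infinity; it does not say that $\bigcup_{j\in I^c}E_j$ supports an ample divisor \emph{on the given $X$}. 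Third, ``ample supported on $I^c$'' is strictly stronger than ``some $F$ supported on $I^c$ with $F^2>0$'', so your Perron--Frobenius/Nakai--Moishezon step would have to prove an implication ($F^2>0$ forces an ample representative in the orthant, after blowup) that is precisely the nontrivial part — and which is neither true on the fixed $X$ nor needed, since Schr\"oer's lemma already handles $F^2>0$ directly. In short: replace the quasi-affine/Goodman/ample chain by Schr\"oer's bigness lemma for one direction, and you still need an argument (the paper's is the Dirichlet-energy computation on the valuation tree, culminating in Lemma~\ref{lemtolinear}) for the converse ``$\delta(S)=2\Rightarrow\exists F$ at infinity with $F^2>0$''.
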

%Observer that if for all $\ord_{E_i}\in S$ we have $(\check{E_i}\cdot \check{E_i})<0$ then the set $S^{\min}_E$ does not depend on the choice of $E$. If there exists $\ord_{E_i}\in S$ such that $(\check{E_i}\cdot \check{E_i})\geq 0$, then the set $S^{\min}_E$ depends on the choice of $E$ but for every choice of $E$, we have $\chi_E(S)<0.$
By Hodge index  theorem, the matrix $M(S)$ is negative definite if and only if $\chi(S):=(-1)^m\det M(S)>0$.

When $S$ is reduced to a singleton, Theorem \ref{thmmainthmchifordiv}  is due to P. Mondal, see \cite[Theorem 1.4]{Mondal}.

%\begin{thm}[\cite{Mondal}]
%Let $E$ be an irreducible component of $X\setminus \mathbb{A}^2_k$ and let $\check{E}$ be the unique divisor of $X$ supported on $X_{\infty}$ satisfying $(\check{E}\cdot E)=1$ and $(\check{E}\cdot F)=0$ for all irreducible components $F$ different from $E$.
%Then there exists a polynomial $P\in k[x,y]$ such that $\ord_E(P)>0$ if and only if $(\check{E}\cdot \check{E})<0.$
%\end{thm}

\bigskip

To treat the case of not necessarily divisorial valuations we need to briefly recall some facts on
the valuation tree as defined by C. Favre and M. Jonsson (see Section~\ref{sectionvatree} for details).

We denote by $V_{\infty}$
the set of functions
$$v:k[x,y]\rightarrow \mathbb{R}\cup \{+ \infty\}$$
that satisfy the axiom of valuations $v(PQ) = v(P) + v(Q)$, and $v(P+Q) \ge \min\{ v(P) , v(Q)\}$ and normalized by
$\min\{v(x),v(y)\}=-1$. However, we allow $v$ to take the value $+\infty$ on a non-constant polynomial.
The set $V_\infty$ is a compact topological space when equipped with the topology of the pointwise convergence.
It can be also endowed with a natural partial order relation given by $v \le v'$ if and only if $v(P) \le v'(P)$ for all $P \in R$.
The unique minimal point for that order relation is $-\deg$, and $V_\infty$ carries a tree structure in the sense that
for any $v'$ the set $\{v\in V_{\infty}|\,  -\deg \le v \le v'\}$ is isomorphic as a poset to a segment in $\R$ with its standard order relation.
In particular, one may define the minimum $ v \wedge v'$ of  any two valuations $v, v'\in V_\infty$.

\smallskip

There is a canonical way to associate an element $\bar{v}\in V_\infty$
to a given valuation $v$ on $R$ that is trivial on $k$.
When $v$ has rank $1$, we may assume it takes its values in $\R$, and
$\bar{v}$ is the unique valuation that is proportional to $v$ and normalized by $\min\{ \bar{v} (x), \bar{v}(y) \}= -1$.
For instance when $E$ is an irreducible component of
$\pi^{-1}(L)$ for some proper modification $\pi  : X \to \mathbb{P}^2$ as above,
then we define $b_E:=\min\{\ord_E(x),\ord_E(y)\}$, and we have $v_E = \frac1{b_E} \ord_E \in V_\infty$.
When $v$ has rank $2$ and is associated to a branch $s$ at infinity of  an irreducible curve
at infinity
$C$ in $\mathbb{A}^2$, then $\bar{v}(P)$ is  the local intersection number  of $s$ with the divisor
of $P$ with the convention that $\bar{v}(P) = +\infty$ when $P$ vanishes on $C$.
Finally when $v$ has rank $2$ and its valuation ring contains the
valuation ring of a divisorial valuation centered at infinity, we set $\bar{v}$  to be this divisorial valuation.
%When $v$ is rank $2$ and associated to an irreducible component $E$ of $X\setminus \mathbb{A}^2_k$ for some compactification $X$ and a point $p\in E$ i.e. a valuation equivalents to $v_{E,p}:R\rightarrow \mathbb{R}\times \mathbb{N}$ with the lexicographical order on $\mathbb{R}\times \mathbb{N}$,
%defined by $P\mapsto \left(v_E(P), \ord_p\left((\psi^{-v_E(P)}P)|_E\right)\right)$ where the divisor $E$ is defined by $\psi=0$ in some local coordinate at $p$, then $\bar{v}$ is defined to be $v_E$.

\smallskip

%
%
%space of normalized valuations centered at infinity  i.e. the set of valuations $v:k[x,y]\rightarrow \mathbb{R}\bigcup \{\infty\}$ normalized by  The set $V_{\infty}$ admits a nature tree structure rooted by $-\deg$ and this rooted tree structure gives us a partial ordering on $V_{\infty}$.

The skewness function $\alpha:V_{\infty}\rightarrow [-\infty,1]$ is the unique upper semicontinuous function on $V_{\infty}$
that is decreasing along any segment starting from $-\deg$, and that
satisfies $\alpha(v_E)=b_E^{-2}(\check{E}\cdot \check{E})$ for any divisorial valuation (in the notation introduced above).
On the other hand, $\alpha (v) = - \infty$ when $v$ is associated to a branch at infinity of an algebraic curve in $\mathbb{A}^2$.

%
%where $E$ is an irreducible component of $X\setminus \mathbb{A}^2_k$ for some compactification $X$ of $\mathbb{A}^2_k$ and $b_E:=\min\{\ord_E(x),\ord_E(y)\}.$

\smallskip

Now given any finite subset $S=\{v_1,\cdots,v_m\}$ of valuations centered at infinity and trivial on $k$,
we let $\bar{S}= \{ \bar{v}, \, v \in S\}\subset V_\infty$
%, and
%denote by $S^{\min}= \{ v_1, \cdots, v_l\} \subset V_\infty$ the subset of $\bar{S}$ consisting of valuations that are minimal
%for the order relation $\le$ restricted to $S$.
and define
\begin{equation}\label{eq:defchi}
M(\bar{S}):=[\alpha(\bar{v_i}\wedge \bar{v_j})]_{1\leq i,j\leq m}.
\end{equation}
This is a symmetric matrix with entries in $\mathbb{R}\cup \{-\infty\}.$

As above, we then have
\begin{thmM}
Given any finite set of valuations $S$ on $R$ that are trivial on $k$ and centered at infinity,
we have $\delta(S) =2$  if and only if $M(\bar{S})$ is negative definite.
\end{thmM}
When one entry of the matrix $\alpha(\bar{v_i}\wedge \bar{v_j})$ is equal to $-\infty$, we say that $M(\bar{S})$ is negative definite if and only if the matrix
$[(\max\{\alpha(\bar{v_i}\wedge \bar{v_j}),-t\}]_{1\leq i,j\leq m}$ is negative definite for $t$ large enough.

Observe that one can use  Hodge index theorem to characterize the case when
$M(\bar{S})$ is negative definite by a {\em numerical invariant} $\chi(\bar{S}):=(-1)^l \det M(\bar{S})$. Here $l$ denotes the cardinality of
$\bar{S}$ and
%By Hodge index  theorem, the matrix $M(S)$ is negative definite if and only if $(-1)^l\det M(S)>0$,
%where
$\det(M(\bar{S})) := \lim_{t\rightarrow-\infty}\det(\max\{\alpha(\bar{v_i}\wedge \bar{v_j}),t\})_{1\leq i,j\leq m}$
when one entry of the matrix $\alpha(\bar{v_i}\wedge \bar{v_j})$ is equal to $-\infty$.
Observe that the limit exists because the quantity $\det(\max\{\alpha(\bar{v_i}\wedge \bar{v_j}),t\})_{1\leq i,j\leq m}$ is a polynomial for $t$ large enough.

Indeed
our Main Theorem can be phrased by saying that $\delta(S) =2$
if and only if $\chi(\bar{S})>0$.

%Here for a matrix $(a_{i,j})_{1\leq i,j\leq l}$ satisfying $a_{i,i}\in \mathbb{R}\bigcup \{-\infty\}$ and $a_{i,j}\in \mathbb{R}$ if $i\neq j$, we define $\det(a_{i,j})_{1\leq i,j\leq l}:=\lim_{t\rightarrow-\infty}\det(\max\{a_{i,j},t\})_{1\leq i,j\leq l}.$
%Observe that when $\alpha(v_i)=-\infty$ for some $i=1,\cdots, l$, $\chi(S) =(-\infty)\chi(S\setminus \{v_i\}).$

When  $S$ contains only one point $v$, we get $M(S)=\alpha(v)$ and Theorem \ref{thmrichexistpposi} together with our Main Theorem imply the following result of P. Mondal.
\begin{thm}[\cite{Mondal}]
For a  valuation $v\in V_{\infty}$,
the existence of  a non constant polynomial $P\in k[x,y]$ such that $v(P)>0$ is equivalent to
$\alpha(v)<0$.
\end{thm}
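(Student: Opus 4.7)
The plan is to deduce the statement as a direct corollary of Theorem \ref{thmrichexistpposi} together with the Main Theorem, applied to the singleton $S = \{v\}$.

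First, I would observe that since $v \in V_{\infty}$ the normalization condition $\min\{v(x),v(y)\} = -1$ forces either $v(x) < 0$ or $v(y) < 0$, which means that $v$ is centered at infinity. By the definition of $V_{\infty}$, the valuation $v$ is trivial on $k$, so in particular any polynomial $P$ with $v(P) > 0$ is automatically nonconstant. Consequently both Theorem \ref{thmrichexistpposi} and the Main Theorem are applicable to $S = \{v\}$, and the qualifier ``nonconstant'' in the statement is automatic.

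Second, I would specialize the Main Theorem. Since $v \in V_{\infty}$ already satisfies the normalization condition, the canonical element $\bar{v}$ associated to $v$ coincides with $v$, and by definition $\bar{v} \wedge \bar{v} = v$. Therefore the matrix $M(\bar{S})$ defined in \eqref{eq:defchi} reduces to the $1 \times 1$ matrix $[\alpha(v)]$. When $\alpha(v) > -\infty$, this matrix is negative definite if and only if $\alpha(v) < 0$. When $\alpha(v) = -\infty$, the convention stipulated after the Main Theorem makes $M(\bar{S})$ negative definite as well, so in both cases the condition amounts to $\alpha(v) \in [-\infty, 0)$, i.e. $\alpha(v) < 0$.

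Third, I would chain the two equivalences. By the Main Theorem, $\delta(\{v\}) = 2$ if and only if $\alpha(v) < 0$. By Theorem \ref{thmrichexistpposi}, $\delta(\{v\}) = 2$ if and only if there exists a polynomial $P \in R$ with $v(P) > 0$. Combining yields the desired equivalence
\[
\exists\, P \in k[x,y] \text{ nonconstant with } v(P) > 0 \ \iff\  \alpha(v) < 0.
\]
There is no serious obstacle here: the result is essentially a formal corollary of the two main theorems once one observes that in the singleton case the matrix $M(\bar{S})$ collapses to a scalar. The only mild subtlety is the handling of $\alpha(v) = -\infty$, but this is covered by the convention built into the Main Theorem and is also transparent directly, since a valuation $v$ with $\alpha(v) = -\infty$ is associated to a branch at infinity of an algebraic curve $\{P=0\}$, for which $v(P) = +\infty > 0$.
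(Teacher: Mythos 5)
Your proposal is correct and follows exactly the paper's own route: the paper states this theorem precisely as a corollary of Theorem~\ref{thmrichexistpposi} and the Main Theorem, noting that for $S=\{v\}$ the matrix $M(\bar S)$ collapses to the scalar $\alpha(v)$. The only microscopic imprecision is the remark that ``any polynomial $P$ with $v(P)>0$ is automatically nonconstant'' --- strictly speaking $P=0$ satisfies $v(0)=+\infty>0$ and is constant, so one should say any \emph{nonzero} $P$ with $v(P)>0$ is nonconstant; this is the implicit reading of Theorem~\ref{thmrichexistpposi} and does not affect the argument.
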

%In the case $S=\{v_1,v_2\}$, then either $v_1$ and $v_2$ are comparable and $S^{\min}$ is a singleton so that the preceding result applies.
%Or $v_1$ and $v_2$ are not comparable, and then $\chi(S)=\alpha(v_1)\alpha(v_2)-\alpha(v_1\wedge v_2)^2.$

\smallskip

Our Main Theorem also implies the following
\begin{cor}
Let $s_1,\cdots,s_m$ be a finite set of formal branches of curves centered at infinity.
Then there exists a polynomial $P\in k[x,y]$ such that $\ord_{\infty}(P|_{s_i})>0$ for all $i=1,\cdots,m$.
\end{cor}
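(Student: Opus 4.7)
My plan is to apply the Main Theorem, reducing the existence of the desired polynomial $P$ to negative definiteness of the matrix $M(\bar{S})$, and then to verify this negative definiteness using the fact that each $\bar{v_i}$ is an algebraic curve valuation.

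To each formal branch $s_i$ of an algebraic curve at infinity one canonically associates a rank $2$ valuation $v_i$ on $R$ whose normalization $\bar{v_i}\in V_\infty$ is the curve valuation described in the preliminary material on the valuation tree. Under this identification the condition $\ord_\infty(P|_{s_i})>0$ reads simply $v_i(P)>0$. Setting $S=\{v_1,\dots,v_m\}$, Theorem~\ref{thmrichexistpposi} says that the polynomial we seek exists if and only if $\delta(S)=2$, which by the Main Theorem is equivalent to $M(\bar{S})$ being negative definite in the sense of the truncation convention.

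The key observation is that each $\bar{v_i}$ is a curve valuation of an algebraic branch at infinity, hence an end of the Favre--Jonsson tree at which $\alpha=-\infty$. Thus every diagonal entry of $M(\bar{S})$ equals $-\infty$. After discarding duplicate branches we may assume the $\bar{v_i}$'s are pairwise distinct; for $i\ne j$ the meet $\bar{v_i}\wedge\bar{v_j}$ then lies strictly below both of the distinct ends $\bar{v_i},\bar{v_j}$, hence is not itself an algebraic curve valuation, and $\alpha(\bar{v_i}\wedge\bar{v_j})$ is a finite real number.

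It remains to invoke the truncation criterion. For $t$ larger than $(m-1)\max_{i\ne j}|\alpha(\bar{v_i}\wedge\bar{v_j})|$, the matrix $[\max\{\alpha(\bar{v_i}\wedge\bar{v_j}),-t\}]_{1\le i,j\le m}$ has diagonal entries equal to $-t$ and off-diagonal entries uniformly bounded by a fixed constant; it is therefore strictly diagonally dominant with strictly negative diagonal, and hence negative definite. This establishes negative definiteness of $M(\bar{S})$ in the sense required, and the corollary follows. The only delicate step in this plan is the finiteness of $\alpha(\bar{v_i}\wedge\bar{v_j})$ for $i\ne j$, which rests on the characterization of $\{\alpha=-\infty\}\subset V_\infty$ as the set of ends of the valuation tree coming from algebraic curves.
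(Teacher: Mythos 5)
Your overall strategy is sound and reaches the same conclusion the paper does: reduce the existence of $P$ to $\delta(\bar S)=2$, reduce that via the Main Theorem to negative definiteness of $M(\bar S)$, and verify the matrix condition using $\alpha(\bar v_i)=-\infty$. The paper would instead reach $\delta(\bar S)=2$ in one step from Theorem~\ref{thmstofs} (or Lemma~\ref{lemredtosminplus}), since all entries of $\bar S$ have infinite skewness and hence $\bar S^{\min}_{+}=\emptyset$, which is trivially rich (convention $\chi(\emptyset)=1$); this is also what the paper does when it reuses this fact in Section~\ref{sectionapptoalg}, via Theorem~\ref{thmgeqm}/Corollary~\ref{corgeqm}. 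Your diagonal-dominance computation is a perfectly valid unpacking of the same fact, and is a reasonable way to make the truncation convention explicit.

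Two small points should be tightened. First, you cite Theorem~\ref{thmrichexistpposi} to pass from $\delta(S)=2$ to the existence of $P$, but that theorem is stated only for \emph{rank one} valuations, whereas your $v_i$ may be rank two (when $s_i$ is a branch of an algebraic curve) and the normalized objects $\bar v_i$ live in $V_\infty$ and may take the value $+\infty$. The correct reference is Theorem~\ref{thmsuvsectionprorefshr}, specifically the equivalence (i)$\Leftrightarrow$(ii), which applies to arbitrary finite subsets of $V_\infty$ and delivers exactly the polynomial you want. Second, your justification that $\alpha(\bar v_i\wedge\bar v_j)$ is finite for $i\neq j$ leans on ``the characterization of $\{\alpha=-\infty\}$ as the set of ends coming from algebraic curves,'' which is neither stated in the paper nor needed. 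The cleaner argument is purely combinatorial: $\bar v_i$ and $\bar v_j$ are distinct maximal elements of the tree, so $\bar v_i\wedge\bar v_j$ either equals $-\deg$ (where $\alpha=1$) or has at least three tangent directions (towards $-\deg$, $\bar v_i$, $\bar v_j$), hence is a branched point, hence divisorial, hence has rational and in particular finite skewness. With those two corrections the proof is complete.
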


\medskip

In a sequel to this paper \cite{Xiec}, we shall use these results to get a proof of the dynamical Mordell-Lang conjecture
for polynomial endomorphisms on $\mathbb{A}^2_{\overline{\mathbb{Q}}}$.

\medskip

We conclude this introduction by giving  a criterion of arithmetic nature for an analytic branch at infinity to be algebraic.

The setting is as follows.
Let $K$ be a number field. For any finite set $S$ of places of $K$ containing all archimedean places, denote by $O_{K,S}$ the ring of $S$-integers in $K$. For any place $v$ on $K$, denote by $K_v$ the completion of $K$ w.r.t. $v$.
We cover the line at infinity $L_{\infty}$ of the compactification of $\mathbb{A}^2_K = \Spec K[x,y]$ by $\mathbb{P}^2_K$ by charts
$U_q=\Spec K[x_q,y_q]$ centered at $q\in L_{\infty}(K)$ so that $q  = \{ (x_q,y_q) = (0,0)\}$,  $L_{\infty} \cap U_i = \{ x_i =0\}$,
and $x_q = 1/x$, $y_q = y/x + c$ for some $c\in K$ (or $x_q = 1/y$, $y_q = x/y$).

We shall say that $s$ is an \emph{adelic} branch defined over $K$ at infinity if it is given by the following data.
%
% formal curves in $\mathbb{P}^2_{\overline{\mathbb{Q}}}$ satisfying the following conditions:
\begin{points}
\item
$s$ is a formal branch based at a point $q\in L_{\infty}(K)$  given in coordinates $x_q, y_q$ as above by a formal Puiseux series
$y_q=\sum_{j\ge 1}a_{j}x_q^{j/m}\in O_{K,S}[[x_q^{1/m}]]$  for some positive integer $m$ and some finite set $S$ of places of $K$ containing all archimedean places.
\item
for each place $v\in S$, the radius of convergence of the Puiseux series determining $s$ is positive, i.e.
$\limsup_{j\rightarrow\infty}|a_j|_v^{-m/j}>0$.
\end{points}
Observe that for any other place $v\notin S$, then the radius of convergence is a least $1$. In the sequel, we set
$r_{C,v}$ to be the minimum between $1$ and the radius of convergence over $K_v$ of this Puiseux series.

Any adelic branch $s$ at infinity thus defines an analytic curve $$C^v(s):=\{(x_i,y_i)\in U_i(K_v)|\,\,y_i=\sum_{j=1}^{\infty}a_{ij}x^{\frac{j}{m_i}}, |x_i|_v< \min\{r_{C_i,v},1\}\}.$$
%Any adelic branch $s$ at infinity thus defines a closed analytic curve $C^v(s)$ in a neighborhood of
%infinity in $\mathbb{A}^2(K_v)$ that is a set of the form $\{ (x,y)|\, \max \{ |x|, |y| \} > \text{cst}\}$.

%For each $i=1,\cdots,l$, there exists an affine open set $U_i\simeq \mathbb{A}^2_{K}=\Spec K[x_i,y_i]$ whose original point is the center $q_i$ of $C_i$ such that $C_i$ are defined by $y_i=\sum_{j=1}a_{ij}x^{\frac{j}{m_i}}\in O_S[[x^{\frac{1}{m_i}}]]$ where $m_i$ is a positive integer;
%\item for every $v\in S$, for all $i=1,\cdots,l$.
%\end{points}
%Set $r_{C_i,v}=1$ for all $i=1,\cdots,l$ and $v\not\in S$.
%For every place $v\in M_K$, let $C^v_i$ be $v$-analytic curve defined by $\{(x_i,y_i)\in U_i(K_v)|\,\,y_i=\sum_{j=1}^{\infty}a_{ij}x^{\frac{j}{m_i}}, |x_i|_v< \min\{r_{C_i,v},1\}\}$.

\begin{thm}\label{thmanalytictoalgemanyrational}
Suppose $s_1,\cdots, s_l$, $l\geq 1$ is a finite set of adelic branches at infinity.
Let $\{B_v\}_{v\in M_K}$ be a set of positive real numbers such that
$B_v=1$ for all but finitely many places.

Finally let $p_n = (x^{(n)}, y^{(n)})$, $n\geq 0$ be an infinite collection of $K$-points in $\mathbb{A}^2(K)$ such that for each place $v \in M_K$ then either
$\max\{|x^{(n)}|_v,|y^{(n)}|_v\}\leq B_v$ or $p_n \in \cup_{i=1}^lC^v(s_i)$.

Then there exists  an algebraic curve $C$ in $\mathbb{A}^2_K$
such that  any branch of $C$ at infinity is contained in the set $\{s_1,\cdots, s_l\}$ and $p_n$ belongs to $C(K)$ for all $n$ large enough.

In particular, by the theorem of Faltings \cite{Faltings1994}, the geometric genus of $C$ is at most one.
\end{thm}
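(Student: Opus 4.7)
I would construct a polynomial $P_{0}\in K[x,y]$ vanishing to positive order along each adelic branch $s_{i}$ via the Corollary, then use the adelic hypothesis together with a Puiseux expansion and the product formula to force $P_{0}(p_{n})=0$ for all $n$ large, and finally take the Zariski closure of the tail of $\{p_{n}\}$ as $C$, checking that its branches at infinity are among $\{s_{1},\ldots,s_{l}\}$ by an analytic continuation argument paired with a Siegel-type finiteness.

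\textbf{The vanishing polynomial and adelic estimates.} Applying the Corollary to the finite family $\{s_{1},\ldots,s_{l}\}$ produces a nonzero $P_{0}\in\bar K[x,y]$ with $\mathrm{ord}_{\infty}(P_{0}|_{s_{i}})>0$ for every $i$; since $\{s_{i}\}$ is $\Gal(\bar K/K)$-stable (each $s_{i}$ is defined over $K$), replacing $P_{0}$ by its product with its Galois conjugates descends $P_{0}$ to $K[x,y]$. For each place $v$ and each $i$, substituting the Puiseux parametrization $y_{q_{i}}=\sum_{j}a_{i,j}\,x_{q_{i}}^{j/m_{i}}$ of $s_{i}$ into $P_{0}$ yields, for $p\in C^{v}(s_{i})$ with $|p|_{v}$ sufficiently large,
\[
|P_{0}(p)|_{v}\;\leq\;C_{i,v}\,|p|_{v}^{-\beta_{i}},\qquad \beta_{i}>0,
\]
with $\beta_{i}$ determined by $v_{i}(P_{0})$ and $m_{i}$. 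On the other hand, for $\max\{|x|_{v},|y|_{v}\}\leq B_{v}$ the trivial polynomial bound gives $|P_{0}(p)|_{v}\leq D_{v}$, with $D_{v}=1$ for all but finitely many places.

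\textbf{Product formula and Northcott.} Suppose $P_{0}(p_{n})\neq 0$. The product formula gives $\sum_{v}\log|P_{0}(p_{n})|_{v}=0$, and the adelic dichotomy in the hypothesis allows one to apply the $B_{v}$-bound or the decay estimate at every place, with all nontrivial contributions concentrated on the finite set $S$. Thus
\[
0\;\leq\;-\beta\, h(p_{n})+O(1),\qquad \beta:=\min_{i}\beta_{i}>0,
\]
the $O(1)$ being uniform in $n$. By Northcott, $h(p_{n})\to\infty$ among distinct $K$-points, so $P_{0}(p_{n})=0$ for every $n$ sufficiently large.

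\textbf{Extraction and branch identification.} Let $C$ be the Zariski closure in $\mathbb{A}^{2}_{K}$ of $\{p_{n}:n\geq N\}$ for $N$ large enough; then $C\subseteq\{P_{0}=0\}$ has dimension at most one, and after discarding isolated points $C$ is a finite union of irreducible affine $K$-curves, each containing infinitely many $p_{n}$, with $p_{n}\in C(K)$ for $n$ large. It remains to check that every branch $s'$ of $C$ at infinity lies in $\{s_{1},\ldots,s_{l}\}$. Suppose $s'$ is a branch of an irreducible component $C'$ at $q'\in L_{\infty}$ with $s'\notin\{s_{i}\}$. Any $p\in C'(K)$ in the $v$-adic analytic germ of $s'$ at $q'$, having $|p|_{v}$ large, must by hypothesis also lie in some $C^{v}(s_{i})$; but $s'\neq s_{i}$ forces the two distinct one-dimensional analytic germs at $q'$ to meet in only discretely many points, so only finitely many $p_{n}$ can lie near $s'$ at any one place. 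Summed over the finite $S$, and combined with a Siegel-type finiteness for the points of $C'$ bounded at every place and avoiding $s'$, the infinitude of $\{p_{n}\}\cap C'$ is contradicted.

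\textbf{Main obstacle.} The hardest step is the Siegel-type finiteness: controlling $K$-rational points on an irreducible curve $C'$ whose approach to one prescribed branch at infinity is forbidden at \emph{every} place. Siegel's theorem handles this when the affine complement $\bar C'\setminus(\text{allowed branches})$ has negative Euler characteristic, but the genus-zero edge cases with only one or two forbidden branches require a finer adelic analysis (via the $S$-unit theorem, or a Roth-type refinement) to extract the required finiteness. Tracking the ramification data carefully so that the decay exponent $\beta_{i}$ in Step~2 is uniform in $v$ is the other point demanding attention.
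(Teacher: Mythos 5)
Your Steps~1--4 are a workable alternative to the paper's argument, but they prove a strictly stronger intermediate claim than necessary and at higher cost. You try to force $P_{0}(p_{n})=0$ for $n$ large via the product formula plus a decay-rate estimate $|P_{0}(p)|_{v}\lesssim|p|_{v}^{-\beta_{i}}$ on each branch. The paper avoids this entirely: from the adelic integrality of the Puiseux coefficients it only extracts the crude sup-bound $|P(p)|_{v}\leq M_{v}$ on $C^{v}(s_{i})$ with $M_{v}=1$ for all but finitely many $v$, and hence $h(P(p_{n}))=O(1)$; Northcott then gives that $\{P(p_{n})\}$ is a \emph{finite} set $T\subset\mathbb{A}^{1}(K)$, and one takes $C$ inside the level set $\prod_{t\in T}(P-t)=0$. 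That sidesteps the uniformity-in-$v$ issues of $\beta$ and the constants $C_{i,v}$ that you flag as "demanding attention"; you don't need vanishing, only that $p_{n}$ lands on \emph{some} algebraic curve.

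The genuine gap is in Step~5, and you have identified it yourself. You frame branch identification as a Siegel-type statement (finiteness of points on $C'$ that avoid the forbidden branch $s'$ at every place), note that Siegel covers the negative-Euler-characteristic case, and leave the genus-zero edge cases open. In fact no Siegel-type input is needed, and the correct logic runs the other way. Choose a compactification in which the center $z$ of the strict transform of the bad branch $Z_{1}$ is distinct from the centers of the $s_{i}$, and pick small balls $D_{v}$ around $z$ disjoint from every $C^{v}(s_{i})$ and from the box $\{\max(|x|_{v},|y|_{v})\leq B_{v}\}$, with $D_{v}$ the unit ball for all but finitely many $v$. Proposition~\ref{procurveopensetcoverkpoints} --- a pure Northcott argument: after normalizing $Z$ and projecting to $\mathbb{P}^{1}$ with $z\mapsto\infty$, avoiding all the $D_{v}$ forces bounded naive height --- shows that all but finitely many $K$-points of $Z$ \emph{do} fall into some $D_{v}$. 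So one takes such a $p_{n}$ and the corresponding $v$: at that place $p_{n}$ is neither in the box nor on any allowed $C^{v}(s_{i})$, contradicting the hypothesis directly. Your "distinct germs meet discretely" observation establishes that $D_{v}$ can be chosen disjoint from the $C^{v}(s_{i})$ even when $q_{i}=q'$, which is a correct ingredient, but you then reach for the $S$-unit/Roth machinery instead of the much simpler height bound, and you leave the argument incomplete.
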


\bigskip

The article is organized in five sections. Section \ref{sectionvatree} contains background
informations on the valuation tree $V_\infty$.
Section \ref{sectionpotentialtheory} is entirely
devoted to the description of a potential theory  in $V_\infty$. Especially important for us are the notion of subharmonic functions and
the definition of a Dirichlet energy.
\label{sectionrzspaceandv}
%Section \ref{sectionvatree} contains background informations on the Riemann-Zariski space at infinity and
%its relationship with the potential theory on $V_{\infty}$.
The proof of our main theorem can
be found in Section \ref{sectionpolytakingnova}.
Section \ref{sectionrechisez} contains various remarks in the case
$\delta = 0$ or $1$.
 Finally Section \ref{sectionapptoalg} contains the proof of Theorem \ref{thmanalytictoalgemanyrational}.
\newpage

\section{The valuation tree}\label{sectionvatree}
Let $k$ be any algebraically closed field. In this section, we recall some basic facts on
the space of normalized valuations centered at infinity in the affine plane and its tree structure following \cite{Favre2004,Favre2007,Favre2011,Jonsson}.

\subsection{Definition}
The set  $V_{\infty}$ is defined as the set of functions
$v:k[x,y]\rightarrow (-\infty ,+\infty]$ satisfying:
\begin{points}
\item $v(P_1P_2)=v(P_1)+v(P_2)$ for all $P_1,P_2\in k[x,y]$;
\item $v(P_1+P_2)\geq \min\{v(P_1),v(P_2)\}$;
\item $v(0)=+\infty$, $v|_{k^*}=0$ and $\min\{v(x),v(y)\}=-1$.
\end{points}
We endow  $V_{\infty}$ with the topology of the pointwise convergence, for which it is  a compact space.

Given $v\in V_\infty$, the set $ \mathfrak{P}_v:= \{ P, v(P) = +\infty\}$ is a prime ideal.
When it is reduced to $(0)$ then $v$ is a rank $1$ valuation on $k[x,y]$.
Otherwise it is generated by an irreducible polynomial $Q$, and for any $P\in k[x,y]$
the quantity $v (P)$ is the order of vanishing of $P|_Q$ at a branch of the curve $Q^{-1}(0)$ at infinity with the convention $v(P) = +\infty$ when $P\in
\mathfrak{P}_v$.

Let $s$ be a formal branch of curve centered at infinity. We may associate to $s$ a valuation $v_s\in V_{\infty}$ defined by $P\mapsto -\min\{\ord_{\infty}(x|_s),\ord_{\infty}(y|_s)\}^{-1}\ord_{\infty}(P|_s)$. Such a valuation is called a curve valuation.

\smallskip

Suppose $X$ is a smooth projective compactification of $\A^2_k$.
The center of $v\in V_\infty$
in $X$ is the unique scheme-theoretic point on $X$ such that its associated valuation is
strictly positive on the maximal ideal of its local ring.
A divisorial valuation is an element $v\in V_\infty$ whose center has codimension $1$
for at least one compactification $X$ as above.

More precisely, let  $E$ be an irreducible divisor of $X\setminus \A^2_k$.
Then  the order of vanishing $\ord_E$ along $E$  determines a divisorial valuation on $k[x,y]$, and $v_E:=(b_E)^{-1}\ord_E \in V_\infty$
where $b_E:=-\min\{\ord_E(x),\ord_E(y)\}$.

\smallskip

\noindent {\bf Warning}.
In the sequel, we shall refer to elements in $V_\infty$ as \emph{valuations}
even when the prime ideal  $ \mathfrak{P}_v$ is non trivial.

%
%
%Thus the center is either an irreducible divisor
%of $X$ or a point on $X_{\infty}$. We denote by $V_{\infty}$ the subset of $v\in \widehat{V}_{\infty}$ consisting the $normalized$ valuations centered at infinity i.e.
%valuations that are normalized by
%$\min\{v(x),v(y)\}=-1.$
%

\subsection{The canonical ordering and the tree structure}

The space $V_{\infty}$ of normalized valuations is
equipped with a partial ordering defined by $v\leq w$ if and only if $v(P)\leq w(P)$ for all $P\in k[x,y]$ for which  $-\deg$ is the unique minimal element.

All curve valuations are maximal and and no divisorial valuation is maximal.

It is a theorem that given any valuation $v\in V_\infty$ the set
$ \{ w \in \V_\infty, \, - \deg \le w \le v \}$ is isomorphic as a poset to the real segment
$[0,1]$ endowed with the standard ordering. In other words, $(V_\infty, \le)$ is a rooted tree in the sense of \cite{Favre2004,Jonsson}.

It follows that given any two valuations $v_1,v_2\in V_{\infty}$,
there is a unique valuation in $V_{\infty}$ which is maximal in the set $\{v\in V_{\infty}|\,\,v\leq v_1 \text{ and } v\leq v_2\}.$ We denote it by  $v_1\wedge v_2$.

The segment $[v_1, v_2]$ is by definition the union of $\{w , \, v_1\wedge v_2 \le w \le v_1\}$
and $\{w , \, v_1\wedge v_2 \le w \le v_2\}$.

\smallskip

Pick any valuation  $v\in V_\infty$. We say that two points $v_1, v_2$
lie in the same direction at $v$ if the segment $[v_1, v_2]$ does not contain $v$.
A direction (or a tangent vector) at $v$ is an equivalence class for this relation.
We write $\Tan_v$ for the set of directions at $v$.

When $\Tan_v$ is a singleton, then $v$ is called an endpoint. In $V_\infty$, the set of endpoints is exactly the set of all maximal valuations. This set is dense in $V_{\infty}.$

When $\Tan_v$ contains exactly two directions, then $v$ is said to be regular.
In $V_\infty$, regular points are given by monomial rank $1$ valuations as in \eqref{eq:defmono}  for which the weights are rationally independent, see \cite{Favre2004,Jonsson} for details.

When $\Tan_v$ has more than three directions, then $v$ is a branched point. In $V_\infty$, branched points are exactly the divisorial valuations. Given any smooth projective compactification $X$ in which $v$ has codimension $1$ center $E$, one proves that the
map sending an element $V_\infty$ to its center in $X$ induces a  map $\Tan_v \to E$ that is a bijection.

\smallskip

Pick any $v\in V_\infty$.
For any tangent vector $\vec{v}\in \Tan_v$, we denote by $U(\vec{v})$ the subset of those elements in $V_\infty$ that determine $\vec{v}$. This is an open set whose boundary is reduced to the singleton $\{v\}$. The complement of $\{w\in V_\infty, \, w \ge v\}$ is equal to $U(\vec{v}_0)$ where $\vec{v}_0$ is the tangent vector determined by $-\deg$.

It is a fact that finite intersections of open sets of the form $U(\vec{v})$ form a basis for the topology of $V_\infty$.

\smallskip

Finally recall that the \emph{convex hull} of  any subset $S\subset V_\infty$ is defined the set of valuations $v\in V_\infty$ such that
there exists a pair $v_1 , v_2 \in S$ with $v\in [v_1, v_2]$.

A \emph{finite subtree} of $V_\infty$ is by definition  the convex hull of a finite collection of points in $V_\infty$. A point in a finite subtree  $T\subseteq V_{\infty}$ is said to be an end point if it is maximal in $T.$

%
%
%. Denote by $[t_1,t_2]$ (resp., $(t_1,t_2]$, and
%$(t_1,t_2)$) the closed (resp., semiopen and open) segment between $t_1$ and $t_2$.
%Given a point
%$t\in T$, define two points $t_1,t_2\in T\setminus \{t\}$ to be equivalent if $[t_1,t)\bigcap [t_2,t)\neq \emptyset.$
%An equivalence class is called a $tangent$ $direction$ $at$ $t$
%and we denote by $Tt$ the $tangent$ $space$ of $T$ over a point $t$
%consisting of all tangent directions at $t$.
%
%
%We denote by $\overrightarrow{v}=[t']\in Tt$ a tangent direction over $t$ represented by $t'$.
%Finally, let $t\in T$ be a point in the tree, and $\overrightarrow{v} \in Tt$ a tangent direction
%over $t$; we denote by
%$$U(\overrightarrow{v}):=\{t'\in T|\,\,\overrightarrow{v}=[t']\}.$$

%The ends of $V_{\infty}$, i.e. the maximal elements in the partial ordering, are the curve and
%infinitely singular valuations. The branch points of $V_{\infty}$, i.e. the points whose tangent spaces contain at least three elements, are divisorial divisors.
%
%- ordering : this is a tree
%
%- dendrology : branched points notion of direction

\subsection{The valuation space as the universal dual graph}
One can understand the tree structure of $V_\infty$ from the geometry of compactifications of $\A^2_k$ as follows.

\smallskip

Pick any  smooth projective compactification $X$ of $\mathbb{A}^2_k$.
The divisor at infinity $X\setminus \mathbb{A}^2_k$ has simple normal crossings, and we denote by  $\Gamma_X$ its dual graph: vertices are in bijection with irreducible components of the divisor at infinity, and vertices are joined by an edge when their corresponding component intersect at a point.

The choice of coordinates $x,y$ on $\mathbb{A}^2_k$ determines a privileged compactification
$\mathbb{P}^2_k$ for which the divisor at infinity is a rational curve $L_\infty$ and
$\ord_{L_{\infty}} = -\deg$. In this case, the dual graph is reduced to a singleton.

\smallskip

For a general compactification $X$, we may look at the convex hull (in $V_\infty$) of the finite set of valuations $v_E$ where $E$ ranges over all irreducible components of
$X\setminus \mathbb{A}^2_k$. It is a fact that the finite subtree that we obtain in this way is a geometric realization of the dual graph $\Gamma_X$. To simplify notation, we shall identify
$\Gamma_X$ with its realization in $V_\infty$. Observe that the dual graph $\Gamma_X$ inherits a partial order relation from its inclusion in $V_\infty$.

\smallskip

There is also a canonical retraction map $r_X: V_\infty \to \Gamma_X$
sending a valuation $v\in V_\infty$ to the unique $r_X(v)\in \Gamma_X$ such that $[r_X(v), v] \cap \Gamma_X = \{ r_X(v)\}$.

Say that a compactification $X'$ dominates another one $X$ when the canonical birational map
$X' \dashrightarrow X$ induced by the identity map on $\A^2_k$ is regular.
The category $\mathcal{C}$ of all smooth projective compactifications of $\A^2_k$ is an inductive set for this domination relation, and one can form the projective limit
$\Gamma_\mathcal{C}:= \varprojlim_{X \in \mathcal{C}} \Gamma_X$ using the retraction maps.
In other words, a point in $\Gamma_\mathcal{C}$ is a collection of points $v_X \in \Gamma_X$ such that
$r_X(v_{X'}) = v_X$ as soon as $X'$ dominates $X$.

It is a theorem that $\Gamma_\mathcal{C}$ endowed with the product topology is homeomorphic to $V_\infty$.

\smallskip

\noindent
{\bf Warning}.
In the sequel, we shall mostly consider smooth projective compactifications that \emph{dominates}
$\mathbb{P}^2_k$, and refer to them as \emph{admissible} compactifications of the affine plane.

\smallskip

Observe that $\Gamma_X$ contains $-\deg$ when $X$ is an admissible compactification.

\subsection{Parameterization}

The \emph{skewness} function
$\alpha:V_{\infty}\rightarrow [-\infty,1]$ is the function on $V_{\infty}$
that is strictly decreasing (for the order relation of $V_\infty$) satisfying $\alpha(-\deg)=1$ and
$$|\alpha(v_E)-\alpha(v_{E'})|=\frac1{b_Eb_{E'}}~.$$
whenever $E$ and $E'$ are two irreducible components of $X\setminus \A^2_k$ that intersect at
a point in some admissible compactification $X$ of the affine plane.

Since divisorial valuations are dense in any segment $[-\deg, v]$ it follows that $\alpha$ is uniquely determined by the conditions above. One knows that
$\alpha (v) \in \Q$ for any divisorial valuation, that $\alpha (v) \in \R\setminus \Q$
for any valuation that is a regular point of $V_\infty$, and that $\alpha (v) = -\infty$ for any curve valuation. However there are endpoints of $V_\infty$ with finite skewness.

There is a geometric interpretation of the skewness of a divisorial valuation as follows.
Let $X$ be an admissible compactification of $\mathbb{A}^2_k$, and $E$ be an irreducible component of $X\setminus \A^2_k$. Let $\check{E}$ be the unique divisor supported on the divisor at infinity such that  $(\check{E}\cdot E)=1$ and $(\check{E}\cdot F)=0$ for all components $F$ lying at infinity. Then we have $$\alpha(v_E)=\frac1{b_E^2}\,(\check{E}\cdot \check{E})~.$$

 Since the skewness function is strictly decreasing, it induces a metric $d_{V_\infty}$ on $V_\infty$ by setting
$$d_{V_\infty}(v_1,v_2):=2\alpha(v_1\wedge v_2)-\alpha(v_1)-\alpha(v_2)$$
for all $v_1, v_2\in V_\infty.$
In particular, any segment in $V_\infty$ carries a canonical metric for which it becomes isometric to a real segment.

\newpage

\section{Potential theory on $V_{\infty}$}\label{sectionpotentialtheory}
As in the previous section $k$ is any algebraically closed field.
We recall the basic principles of a potential theory on $V_\infty$ including the definition of subharmonic functions, and their associated Laplacian. We then construct a Dirichlet pairing on
subharmonic functions and study its main properties.

We refer to \cite{Jonsson} for details.

\subsection{Subharmonic functions on $V_\infty$}
To any $v\in V_\infty$ we attach its Green function
$$
g_v(w) := \alpha(v \wedge w)~.
$$
This is a decreasing continuous function taking values in $[-\infty, 1]$, satisfying
$g_v(-\deg) =1$. Moreover pick any $v'\in V_\infty$
and define the function $g(t) :  [\alpha(v'), 1] \to [-\infty, 1]$ by sending $t$
to $g(v_t)$ where $v_t$ is the unique valuation in $[-\deg , v']$ with skewness $t$.
Then $g$ is a piecewise affine increasing and convex function with slope in $\{0,1\}$.

\smallskip

Denote by  $M^+(V_\infty)$  the set of positive Radon measures on $V_\infty$
that is the set of positive linear functionals on the space of continuous functions on $V_\infty$.
We endow $M^+(V_\infty)$ with the weak topology.

\begin{lem}\label{lem714favvalu}For any positive Radon measures $\rho$ on $V_{\infty}$, there exists a sequence of compactification $X_n\in \mathcal{C}$, $n\geq 0$ such that $X_{n+1}$ dominates $X_{n}$ for all $n\geq 0$, and $\rho$ is supported on the closure of $\cup_{n\geq 0}\Gamma_{X_n}.$
\end{lem}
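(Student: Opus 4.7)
The plan is to build $X_n$ inductively, starting from $X_0=\P^2_k$, so that each successive $\Gamma_{X_n}$ probes more of the mass of $\rho$. The complement $V_\infty\setminus\Gamma_{X_n}$ decomposes as a disjoint union of open cones $U(\vec v)$ attached to vertices of $\Gamma_{X_n}$, and finiteness of $\rho$ ensures that only finitely many such cones satisfy $\rho(U)\geq 1/n$; these are the $n$-heavy cones. Likewise, only finitely many atoms $w$ of $\rho$ satisfy $\rho(\{w\})\geq 1/n$.

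At step $n$, for each such atom $w$ I would choose a divisorial valuation $w'$ in a suitably small basic open neighborhood of $w$ (this is possible since divisorial valuations are dense in $V_\infty$, and a countable neighborhood basis of $w$ is available along the segment $[-\deg,w]$, which is parametrized by skewness as an interval of $\R$). For each $n$-heavy cone $U$ not containing a $\rho$-atom of mass greater than $\rho(U)/2$, apply the standard tree-median construction to the finite measure $\rho|_U$: there exists $w^*\in U$ such that every connected component of $U\setminus\{w^*\}$ carries $\rho$-mass at most $\rho(U)/2$. Approximate $w^*$ by a divisorial valuation $w_U$ on the segment through $w^*$; the resulting sub-cones of $V_\infty\setminus(\Gamma_{X_n}\cup\{w_U\})$ inside $U$ still have mass $\leq\rho(U)/2$, since a small motion along the segment transfers only a vanishing amount of mass in the absence of large atoms. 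Let $X_{n+1}$ be an admissible compactification dominating $X_n$ whose dual graph contains all divisorial valuations produced at this step.

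To conclude that $\supp(\rho)\subseteq\overline{\bigcup_n\Gamma_{X_n}}$, argue by contradiction. Suppose $v\in\supp(\rho)$ lies outside this closure, and pick a basic open set $W\ni v$ with $W\cap\Gamma_{X_n}=\emptyset$ for every $n$; then $\rho(W)>0$. Because the $\Gamma_{X_n}$ are nested, $W$ is contained in a nested sequence of cones $U^{(n)}$ of $V_\infty\setminus\Gamma_{X_n}$ with $\rho(U^{(n)})\geq\rho(W)>0$, so $U^{(n)}$ is $n$-heavy for $n$ large enough. If no $\rho$-atom lies in $W$, the halving step forces $\rho(U^{(n+1)})\leq\rho(U^{(n)})/2$ from some index on, yielding $\rho(U^{(n)})\to 0$ --- a contradiction. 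If instead $W$ contains an atom $w^*$ of $\rho$, the divisorial approximations of $w^*$ produced at successive steps converge to $w^*$ in $V_\infty$ and therefore eventually lie in $W$, contradicting $W\cap\Gamma_{X_n}=\emptyset$.

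The main obstacle is the median step: producing a divisorial valuation $w_U$ whose addition to $\Gamma_{X_n}$ effectively halves the masses of the sub-cones of $U$. This relies on the tree structure of $V_\infty$ (so that medians of finite tree-measures exist), the density of divisorial valuations along each segment, and the decomposition of $\rho|_U$ into atomic and non-atomic parts --- only a single atom of mass exceeding $\rho(U)/2$ can obstruct halving, and such atoms are caught instead by the atom-approximation step.
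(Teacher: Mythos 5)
Your approach is genuinely different from the paper's, and it has a gap in the step you yourself flag as the obstacle. The paper's proof is short: it invokes \cite[Lemma 7.14]{Favre2004} as a black box to obtain an increasing exhaustion of the support of $\rho$ by finite subtrees $T_n$, then approximates each $T_n$ from inside by a finite subtree with divisorial vertices, and finally realizes the resulting trees $Y_n$ as subgraphs of dual graphs $\Gamma_{X_n}$. You instead attempt a direct construction by iterated median-splitting of heavy cones, which amounts to reproving the substance of that cited lemma from scratch.

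The problem is in the median-approximation claim: you assert that after replacing the median $w^*$ of $\rho|_U$ by a nearby divisorial $w_U$ on the segment through $w^*$, the components of $U\setminus\{w_U\}$ still have mass $\leq\rho(U)/2$, because ``a small motion along the segment transfers only a vanishing amount of mass.'' This is not correct. If $w_U<w^*$, the component of $U\setminus\{w_U\}$ in the direction of $w^*$ is not a small perturbation of any single component of $U\setminus\{w^*\}$: it contains all of $\{w:w\geq w^*\}$, so its mass tends to $\rho(U)-\rho(P(w^*))$, where $P(w^*)$ is the predecessor component. The median property only gives $\rho(P(w^*))\leq\rho(U)/2$, so this component has mass $\geq\rho(U)/2$, with equality only in the degenerate case $\rho(P(w^*))=\rho(U)/2$. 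In particular, if $w^*$ carries an atom of mass $a\in(0,\rho(U)/2]$ --- which your filter does not exclude, since it only removes atoms of mass strictly greater than $\rho(U)/2$ --- this component has mass at least $a+M_+$, where $M_+$ is the mass of the direction at $w^*$ away from the root, and this easily exceeds $\rho(U)/2$. Consequently, the halving $\rho(U^{(n+1)})\leq\rho(U^{(n)})/2$ that drives your final contradiction in the no-atom-in-$W$ case is not guaranteed: an atom in $U^{(n)}\setminus W$ can obstruct halving indefinitely unless the atom-approximation points are deliberately placed \emph{above} the atom so that the atom itself enters the convex hull $\Gamma_{X_{n+1}}$, which your construction does not specify. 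A secondary point: the existence of a tree median for a Radon measure on a compact tree with uncountable branching (as $V_\infty$ has, since $\Tan_v$ is in bijection with $\mathbb{P}^1(k)$ at divisorial $v$) deserves its own argument. The intuition is right, but as written the proposal does not reconstitute the content of \cite[Lemma 7.14]{Favre2004}; the paper avoids all of this by citing it.
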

\proof Observe that $V_{\infty}$ is complete rooted nonmetric tree and weakly compact (See \cite[Section 3.2]{Favre2004}), thus \cite[Lemma 7.14]{Favre2004} apples. By \cite[Lemma 7.14]{Favre2004}, there exists a sequence of finite subtree $T_n$ $n\geq 0$ satisfying $T_{n}\subseteq T_{n+1}$ for $n\geq 0$ such that $\rho$ is supported on the closure $T$ of $\cup_{n\geq 0}T_n.$
Since $T_n$ is a finite tree and the divisorial valuations are dense in $T_n$, there exists a sequence of subtrees $T_n^m$ such that
\begin{points}
\item[$\d$] all vertices in $T_n^m$ are divisorial;
\item[$\d$] $T_{n}^m\subseteq T_{n}^{m+1}$ for $m\geq 0$;
\item[$\d$] $T_n$ is the closure of  $\cup_{m\geq 0}T_n^m.$
\end{points}
Set $Y_n:=\cup_{1\leq i,j\leq n} T_i^j$, then we have
\begin{points}
\item[$\d$] $Y_n$ is a finite tree;
\item[$\d$] all vertices in $Y_n$ are divisorial;
\item[$\d$] $Y_n \subseteq Y_{n+1}$ for $n\geq 0$;
\item[$\d$] $T$ is the closure of  $\cup_{n\geq 0}Y_n.$
\end{points}
To conclude, we pick by induction a sequence of increasing compactification $X_n\in\mathcal{C}$ such that $Y_n\subseteq \Gamma_{X_n.}$
\endproof

\begin{lem}\label{lemapromesbysupfini}Let $\rho$ be any positive Radon measures on $V_{\infty}$ and $T_n$ be a sequence of finite subtree of $V_{\infty}$ such that $T_n\subseteq T_{n+1}$ for $n\geq 0$ and $\rho$ is supported on the closure of $\cup_{n\geq 0}T_n.$ Then we have $r_{T_n*}\rho\to\rho$ weakly.
\end{lem}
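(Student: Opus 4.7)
The plan is to reduce weak convergence of $r_{T_n*}\rho$ to $\rho$ to the convergence of integrals $\int (f\circ r_{T_n})\, d\rho \to \int f\, d\rho$ for every $f \in C(V_\infty)$, which is equivalent to weak convergence by the Riesz representation theorem and the change-of-variables formula for pushforwards. Set $T := \overline{\cup_{n\geq 0} T_n}$, so that $\supp \rho \subseteq T$. The family $\{f\circ r_{T_n}\}_{n\geq 0}$ is uniformly bounded by $\|f\|_\infty$ (compactness of $V_\infty$) and $\rho$ is finite, so Lebesgue's dominated convergence theorem reduces the problem to establishing the pointwise convergence $r_{T_n}(v) \to v$ in the weak topology for every $v \in T$.

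For this pointwise statement, the case $v \in \cup_{n\geq 0} T_n$ is immediate: if $v \in T_N$ then by monotonicity $v \in T_n$ and $r_{T_n}(v) = v$ for all $n \geq N$. Otherwise, I would approximate $v$ by a sequence $w_k \in T_{n_k}$ with $n_k$ increasing and $w_k \to v$ weakly. Once $n \geq n_k$ we have $w_k \in T_n$; since $r_{T_n}(v)$ is characterized by $[r_{T_n}(v), v] \cap T_n = \{r_{T_n}(v)\}$, the retraction must lie on the segment $[v, w_k]$. The segments $[v, w_k]$ contract to $\{v\}$ as $k \to \infty$, and parameterizing them by skewness together with the tree structure of $V_\infty$ yields the required weak convergence $r_{T_n}(v) \to v$.

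The main obstacle is precisely this last step in the pointwise argument: the weak topology on $V_\infty$ is strictly coarser than the skewness metric $d_{V_\infty}$, and indeed $d_{V_\infty}(v, r_{T_n}(v))$ can be infinite (e.g.\ when $v$ is a curve valuation, so that $\alpha(v) = -\infty$), so one cannot naively invoke metric convergence. A cleaner backup strategy would be to apply Stone-Weierstrass: the subalgebra $\mathcal{A} \subseteq C(V_\infty)$ generated by functions of the form $g\circ r_{T'}$ with $T'$ a finite subtree contained in some $T_N$ and $g\in C(T')$ separates points of $T$ (using density of $\cup_n T_n$ in $T$) and is therefore dense in $C(T)$. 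For any such $f = g\circ r_{T'}$, the exact identity $r_{T'}\circ r_{T_n} = r_{T'}$ (which holds whenever $T' \subseteq T_n$ by the tree-theoretic description of the unique path from a point to a subtree) gives $f\circ r_{T_n} = f$ for $n \geq N$ and hence equality of integrals; a three-$\varepsilon$ argument then delivers the weak convergence without any delicate pointwise analysis of retractions.
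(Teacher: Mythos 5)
Your Stone--Weierstrass backup strategy is correct and is a genuinely different route from the paper's proof. The paper argues directly: it covers the compact set $T=\overline{\cup_n T_n}$ by finitely many basic open sets $U_{v_1},\dots,U_{v_m}$ on which $f$ oscillates by at most $\varepsilon$, picks a point $w_i\in U_{v_i}\cap(\cup_n T_n)$ for each $i$, takes $N$ with $\{w_1,\dots,w_m\}\subseteq T_N$, and then uses the geometric fact that $r_{T_n}(v)\in[v,w_i]$ to deduce $r_{T_n}(v)\in U_{v_i}$ for $n\geq N$, yielding the uniform estimate $\|f-f\circ r_{T_n}\|_{\infty,T}\leq\varepsilon$. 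Your approach packages the same underlying idea (compactness plus the retraction identity) into Stone--Weierstrass: the functions $g\circ r_{T'}$ with $T'\subseteq T_N$ already form an algebra, they are eventually fixed by precomposition with $r_{T_n}$ via the exact identity $r_{T'}\circ r_{T_n}=r_{T'}$, and density in $C(T)$ reduces the lemma to a three-$\varepsilon$ estimate. What the paper's hands-on argument buys is concreteness and avoiding the small care needed in your separation-of-points step: for $v\neq w$ in $T$ you need to exhibit a finite $T_N$ with $r_{T_N}(v)\neq r_{T_N}(w)$, and this requires pushing the density of $\cup_n T_n$ through the tangent-vector neighborhoods at an interior point of $[v,w]$ (it is not immediate from density alone, since a point of $\cup_n T_n$ close to $v$ in the weak topology need not lie on $[v,w]$). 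What your approach buys is a cleaner separation between the soft topology (Stone--Weierstrass) and the one genuinely tree-theoretic input ($r_{T'}\circ r_{T_n}=r_{T'}$). As a side remark, the obstacle you flag in your first (dominated-convergence) route is not actually fatal: the pointwise convergence $r_{T_n}(v)\to v$ does hold in the weak topology of $V_\infty$ (not the skewness metric), because the sequence $p_n:=r_{T_n}(v)$ is monotone along $[p_0,v]$, and if its limit $p_\infty$ were $\neq v$ then density of $\cup_n T_n$ in $T$ applied to the open set $U(\vec{v})$ at $p_\infty$ toward $v$ would produce a point of some $T_N$ strictly beyond $p_\infty$ on $[p_\infty,v]$, forcing $p_N$ past $p_\infty$, a contradiction; combined with dominated convergence this also yields the lemma.
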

\proof Let $T$ be the closure of $\cup_{n\geq 0}T_n$ and $f$ be any continuous function on $V_{\infty}$. For any $\varepsilon>0$ and any point $v\in T$, there exists a neighborhood $U_v$ of $v$ such that
$\sup_{U_v} | f - f(v)| \le \varepsilon/2$.
We may moreover choose it such that either $U_v = \{ w, w > w_1\}$ or  $U_v = \{ w,  w_1 < w\wedge w_2 < w_2\}$.
Since $T$ is compact,
it is covered by finitely many
such open sets $U_{v_1}, \ldots, U_{v_m}$. Since $\cup_{n\geq 1} T_n$ is dense in $T$, for any $i=1,\cdots,m$, there exists $w_i\in U_{v_i}\cap (\cup_{n\geq 1} T_n).$ There exists $N\geq 0,$ such that $T_N$ contains $\{w_1,\cdots,w_m\}$. For any $n\geq N$, if $v$ is a point in $U_{v_i}$, we have $r_{T_n}v\in U_{v_i}$. It follows that for all points $v\in T$, we have $|f(v)-f(r_{T_n})(v)|\leq \varepsilon$ and
$$\Big|\int_{V_{\infty}}f(v)d\rho(v)-\int_{V_{\infty}}f(v)dr_{T_n*}\rho(v)\Big|=\Big|\int_{T}f(v)-f(r_{T_n}(v))d\rho(v)\Big|\leq \varepsilon\rho(V_{\infty})$$ which concludes the proof.
\endproof

\smallskip

Given any positive Radon measure $\rho$ on $V_\infty$ we define
$$
g_\rho (w) := \int_{V_\infty} g_v(w) \, d\rho(v)~.
$$
Observe that $g_v(w)$ is always well-defined in $[-\infty, 1]$ since
$g_v \le 1$ for all $v$.
Since the Green function $g_v$ is decreasing for all $v\in V_{\infty}$, we get
\begin{pro}\label{proshdecreasing}For any any positive Radon measure $\rho$ on $V_\infty$, $g_{\rho}$ is decreasing.
\end{pro}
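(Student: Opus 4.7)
The plan is to reduce the statement to the pointwise monotonicity of each Green function $g_v$ and then integrate. The only real ingredients needed are the tree structure on $V_\infty$, the strict monotonicity of the skewness function $\alpha$, and the positivity of $\rho$.

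First I would establish the following elementary monotonicity of the meet operation: if $w_1\leq w_2$ in $V_\infty$, then for every $v\in V_\infty$ one has $v\wedge w_1\leq v\wedge w_2$. This is immediate from the tree structure recalled in Section \ref{sectionvatree}: by definition $v\wedge w_1\leq w_1\leq w_2$ and $v\wedge w_1\leq v$, so $v\wedge w_1$ is a common lower bound of $v$ and $w_2$, hence by maximality of the meet it lies below $v\wedge w_2$.

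Next, since $\alpha:V_\infty\to[-\infty,1]$ is (strictly) decreasing for the partial order, the previous inequality gives $\alpha(v\wedge w_1)\geq \alpha(v\wedge w_2)$, that is $g_v(w_1)\geq g_v(w_2)$ for every $v\in V_\infty$. This is precisely the statement that each individual Green function $g_v$ is decreasing, as already observed in the text before the proposition.

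Finally I would integrate this pointwise inequality against $\rho$. Here the only minor technical point to mention is that the integral defining $g_\rho(w)$ makes sense in $[-\infty,1]$: indeed $g_v(w)\leq 1$ uniformly in $v$, and $\rho$ has finite total mass since $V_\infty$ is compact, so the positive part of the integrand is $\rho$-integrable and the integral is well-defined (possibly taking the value $-\infty$). The measurability of $v\mapsto \alpha(v\wedge w)$ follows from upper semicontinuity of $\alpha$ and continuity of $v\mapsto v\wedge w$ along segments. Integrating the inequality $g_v(w_1)\geq g_v(w_2)$ against the positive measure $\rho$ then yields $g_\rho(w_1)\geq g_\rho(w_2)$, which is the desired conclusion. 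There is no real obstacle in this proof; the substance is entirely contained in the definition of $g_v$ and the tree-theoretic monotonicity of $\wedge$.
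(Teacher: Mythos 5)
Your proof is correct and follows essentially the same route as the paper: the paper simply observes that each Green function $g_v$ is decreasing and concludes by integrating against the positive measure $\rho$. You have filled in the elementary tree-theoretic reason why $g_v$ is decreasing (monotonicity of $\wedge$ plus monotonicity of $\alpha$), which the paper takes as already established earlier in the section.
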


The next result is
\begin{thm}\label{thmrhotogrhoinj}
The map $\rho \mapsto g_\rho$ is injective.
\end{thm}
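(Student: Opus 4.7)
The plan is to reconstruct $\rho$ from $g_\rho$ via its pushforwards to finite subtrees and then invoke the approximation result of Lemma \ref{lemapromesbysupfini}. Suppose $\rho_1,\rho_2\in M^+(V_\infty)$ satisfy $g_{\rho_1}=g_{\rho_2}$. Applying Lemma \ref{lem714favvalu} to each $\rho_i$ and passing to a common refinement, I obtain an increasing sequence of finite subtrees $T_n\subseteq V_\infty$ (with divisorial vertices) such that both $\rho_1$ and $\rho_2$ are supported on the closure of $\bigcup_n T_n$. By Lemma \ref{lemapromesbysupfini} it then suffices to prove $r_{T_n*}\rho_1=r_{T_n*}\rho_2$ for every $n$, since $\rho_i=\lim_n r_{T_n*}\rho_i$ weakly.

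Next, for a finite subtree $T$ and any $v\in V_\infty$, $w\in T$, I will verify the identity $v\wedge w=r_T(v)\wedge w$. Indeed $v\wedge w\in[-\deg,w]\subseteq T$ and $v\wedge w\leq v$ force $v\wedge w\leq r_T(v)$, which combined with $v\wedge w\leq w$ yields $v\wedge w\leq r_T(v)\wedge w$; the reverse inequality is automatic. Integrating $g_v(w)=g_{r_T(v)}(w)$ against $\rho$ gives $g_\rho(w)=g_{r_{T*}\rho}(w)$ for every $w\in T$. The theorem thus reduces to the following finite-tree statement: \emph{if $\nu_1,\nu_2$ are positive Radon measures both supported on the same finite subtree $T$ and $g_{\nu_1}\equiv g_{\nu_2}$ on $T$, then $\nu_1=\nu_2$.}

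The key computation is on segments. For any endpoint $w_0$ of $T$, I parametrize $[-\deg,w_0]$ by skewness, writing $v_t$ for the point of skewness $t\in[\alpha(w_0),1]$. A case analysis (according to whether $v_t\leq v\wedge w_0$ or $v_t\geq v\wedge w_0$) shows that $v\wedge v_t=v_t$ in the first case and $v\wedge v_t=v\wedge w_0$ in the second, so
\[
g_\nu(v_t)=\int_{V_\infty}\max\{t,\,\alpha(v\wedge w_0)\}\,d\nu(v).
\]
Differentiating in $t$, the distributional derivative equals
\[
\frac{d}{dt}g_\nu(v_t)=\nu\bigl(\{v:\alpha(v\wedge w_0)<t\}\bigr),
\]
which is precisely the distribution function of the pushforward measure $\sigma_{w_0}:=(\alpha\circ({\cdot}\wedge w_0))_{*}\nu$ on $[\alpha(w_0),1]$. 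Hence $g_\nu|_{[-\deg,w_0]}$ determines $\sigma_{w_0}$.

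To finish, let $w_1,\dots,w_m$ be the endpoints of $T$. For $\nu$ supported on $T$, the measure $\sigma_{w_i}$ coincides with $\nu$ on the interior of every edge of $T$ lying on the segment $[-\deg,w_i]$, and places at each branching point $b\in[-\deg,w_i]\cap T$ an atom equal to $\nu(\{b\})$ plus the total $\nu$-mass on the branches of $T$ at $b$ going away from $w_i$. A finite downward induction on branching points (from the endpoints toward $-\deg$) then recovers $\nu$: each edge-mass is read off from an appropriate $\sigma_{w_i}$, and at each branching point $b$ the remaining unknown $\nu(\{b\})$ is obtained from a small linear system comparing the atoms $\sigma_{w_i}(\{b\})$ with the already-reconstructed branch masses. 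The main technical obstacle lies in the segment formula and the justification of its distributional derivative; the subsequent reassembly is essentially bookkeeping on a finite tree.
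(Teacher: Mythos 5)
Your argument is correct, but it follows a genuinely different route from the paper's. The paper runs a density/duality argument: using Lemma \ref{lemaproximationfbyfx}, any continuous $f$ on $V_\infty$ is uniformly approximated by $f\circ r_X$, which in turn is approximated by a piecewise linear function on the finite graph $\Gamma_X$, and such a function is a linear combination $\sum r_i g_{v_i}$ of Green functions; the hypothesis $g_{\rho_1}=g_{\rho_2}$ then immediately gives $\int f\,d\rho_1=\int f\,d\rho_2$ in the limit. You instead perform an explicit reconstruction: after reducing via Lemmas \ref{lem714favvalu} and \ref{lemapromesbysupfini} and the (correctly verified) identity $v\wedge w=r_T(v)\wedge w$ to measures supported on a common finite tree $T$, you show that $g_\nu$ restricted to $[-\deg,w_0]$ has distributional derivative the distribution function of the pushforward $\sigma_{w_0}=(\alpha\circ(\cdot\wedge w_0))_*\nu$, and recover $\nu$ from the family $\{\sigma_{w_i}\}$ over all endpoints by downward induction on branch points. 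Both arguments work; the paper's is shorter and meshes with its later use of Green-function bases, but it silently invokes the fact that restrictions of Green functions span the piecewise linear functions on $\Gamma_X$, whereas your reconstruction avoids that fact at the cost of the final bookkeeping, which you leave terse: to make it airtight you should state explicitly that $\sigma_{w_i}(\{\alpha(b)\})=\nu(\{b\})+\sum\nu(T_j)$ where $T_j$ runs over the components of $T\setminus[-\deg,w_i]$ attached at $b$, and that those $\nu(T_j)$ are already determined at that stage of the induction (each $T_j$ contains an endpoint $w_j$ and $\sigma_{w_j}$ restricted strictly above $\alpha(b)$ carries exactly the $\nu$-mass of the subtree through $b$ towards $w_j$).
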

To prove this theorem, we first need the following

\begin{lem}\label{lemaproximationfbyfx}For any continuous function $f:V_{\infty}\rightarrow \mathbb{R}$ and any $\varepsilon>0$, there exists $X\in \mathcal{C}$ such that $|f-f\circ r_X|\leq \varepsilon$.
\end{lem}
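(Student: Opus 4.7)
The plan is to use the identification $V_\infty = \varprojlim_{X \in \mathcal{C}} \Gamma_X$ recalled in Section~\ref{sectionvatree}. Since the topology on an inverse limit of compact Hausdorff spaces over a directed index set is generated by its cylinder sets, the family $\{r_X^{-1}(W) : X \in \mathcal{C},\ W \subseteq \Gamma_X \text{ open}\}$ forms a basis of the topology on $V_\infty$. This is the key ingredient and is essentially the only non-trivial input.

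First I would fix $\varepsilon > 0$ and, using the continuity of $f$, choose for each $v \in V_\infty$ an open neighborhood $\Omega_v \ni v$ on which $|f - f(v)| < \varepsilon/2$. After shrinking, I may assume $\Omega_v = r_{X_v}^{-1}(W_v)$ for some $X_v \in \mathcal{C}$ and some open $W_v \subseteq \Gamma_{X_v}$, invoking the basis above. By compactness of $V_\infty$, I then extract a finite subcover $\Omega_{v_1}, \ldots, \Omega_{v_m}$.

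Next, using that domination makes $\mathcal{C}$ into an inductive set, I pick a single $X \in \mathcal{C}$ dominating every $X_{v_i}$. The retractions compose compatibly as $r_{X_{v_i}} = r_{X, X_{v_i}} \circ r_X$, where $r_{X, X_{v_i}} : \Gamma_X \to \Gamma_{X_{v_i}}$ is the retraction between the corresponding dual graphs; consequently each $\Omega_{v_i}$ can be rewritten as $r_X^{-1}(\widetilde W_i)$ for the open subset $\widetilde W_i := r_{X, X_{v_i}}^{-1}(W_i) \subseteq \Gamma_X$.

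The crucial observation is now that both $v$ and $r_X(v)$ lie in the same element of the cover: if $v \in \Omega_{v_i}$, then $r_X(v) \in \widetilde W_i \subseteq \Gamma_X$, and since $r_X$ is the identity on $\Gamma_X$ we also get $r_X(r_X(v)) = r_X(v) \in \widetilde W_i$, so that $r_X(v) \in \Omega_{v_i}$. The triangle inequality then gives
$$|f(v) - f(r_X(v))| \leq |f(v) - f(v_i)| + |f(v_i) - f(r_X(v))| < \varepsilon,$$
uniformly in $v \in V_\infty$, which is the desired conclusion. The main obstacle — realizing the entire finite cover through preimages under a single retraction $r_X$ — is precisely what is handled by combining the inverse limit description of $V_\infty$ with the inductive nature of $\mathcal{C}$.
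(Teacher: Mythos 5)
Your proof is correct and follows essentially the same three-step template as the paper's: cover $V_\infty$ by small neighborhoods adapted to the structure at infinity, use compactness to extract a finite subcover, choose one $X$ ``large enough'' for all of them, and then note that $r_X$ stabilizes each member of the cover so that the triangle inequality gives the uniform bound. The only difference is packaging: the paper works with explicit tree-theoretic neighborhoods $\{w>w_1\}$ or $\{w_1 < w\wedge w_2 < w_2\}$ with divisorial boundary points, and takes $X$ so that those boundary points lie in $\Gamma_X$; you work with cylinder sets $r_{X_v}^{-1}(W)$, using that $V_\infty \cong \varprojlim_{X\in\mathcal{C}}\Gamma_X$ (recorded in Section~\ref{sectionvatree}), that the index set is directed so cylinder sets form a basis, and that $r_X\circ r_X = r_X$ makes the stability of $\Omega_{v_i}$ under $r_X$ automatic. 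The cylinder-set formulation has the minor virtue of making the key invariance $v\in\Omega_{v_i}\Leftrightarrow r_X(v)\in\Omega_{v_i}$ a tautology, whereas with the paper's sets $\{w>w_1\}$ one must quietly rule out (or handle by continuity of $f$) the boundary case $r_X(v)=w_1$. Both buy the same thing, and your proof is sound.
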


\proof[Proof of Lemma \ref{lemaproximationfbyfx}]

For any $v$ we may find a neighborhood $U_v$ such that
$$\sup_{U_v} | f - f(v)| \le \varepsilon/2.$$ We may moreover choose it
such that $U_v = \{ w, w > w_1\}$ or  $U_v = \{ w,  w_1 < w
\wedge w_2 < w_2\}$
where $w_1, w_2$ are divisorial. Since $V_\infty$ is compact it is
covered by finitely many
such open sets $U_{v_1}, \ldots, U_{v_m}$. Choose $X$ to be an
admissible compactification
such that  the boundary valuations of $U_{v_i}$ has all codimension $1$ center in
$X$. For any $v\in V_\infty$
pick an index $i$ such that $v \in U_{v_i}$. Then we have
$|f(v)-f\circ r_X(v)|\leq |f(v)-f(v_i)|+|f(r_X(v))-f(v_i)|<\varepsilon.$
This concludes the proof.
%For any $v\in V_{\infty}$ we may find a neighborhood $U_v$ of $v$ taking form $\{w,w>v_1\}$ or $\{w,v_1<w\wedge v_2<v_2\}$
% such that for any $w\in U_v$, we have $|f(w)-f(v)|<\varepsilon/2$.  Since $V_{\infty}$ is compact, there exist are valuations
%  $v_1,\cdots,v_m\in V_{\infty}$ such that $V_{\infty}=\bigcup_{i=1}^mU_{v_i}$. For any $i=1,\cdots,m$, pick a divisorial valuation
%  $w_i\in U_{v_i}$. There exists an admissible compactification $X$ of $\mathbb{A}^2_k$ such that $w_i\in \Gamma_X$ for all $i=1,\cdots,m.$
%   For any $v\in V_{\infty}$, there exists $i\in \{1,\cdots,m\}$ such that $v\in U_{v_i}$. It follows that $r_X(v)\in U_{v_i}$.
%   Then we have $|f(v)-f\circ r_X(v)|\leq |f(v)-f(v_i)|+|f(r_X(v))-f(v_i)|<\varepsilon.$
\endproof

\begin{proof}[Proof of Theorem \ref{thmrhotogrhoinj}]By contradiction, suppose that $\rho_1\neq \rho_2$ in  $M^+(V_\infty)$ but $g_{\rho_1}=g_{\rho_2}$. There exists a continuous function $f:V_{\infty}\rightarrow\mathbb{R}$ satisfying $$\int_{V_{\infty}}f(v)d\rho_1(v)\neq \int_{V_{\infty}}f(v)d\rho_2(v).$$ Set $M:=\max\{\rho_1(V_{\infty}),\rho_2(V_{\infty})\}$.

By Lemma \ref{lemaproximationfbyfx}, for any $\varepsilon>0$, there exists $X\in \mathcal{C}$ such that $|f\circ r_X-f|\leq \varepsilon/2$.
 There exists a piecewise linear function $h$ on $\Gamma_X$ such that $|f\circ r_X-h\circ r_X|\leq \varepsilon/2$.
 Since $\Gamma_X$ is a finite graph, there exists $v_1,\cdots,v_m\in \Gamma_X$ such that $h\circ r_X=\sum_{i=1}^mr_ig_{v_i}$ where $r_1,\cdots,r_m\in \mathbb{R}$.

Since $g_{\rho_1}(v_i)=g_{\rho_2}(v_i)$ for $i=1,\cdots,m$, we have
$$\int_{V_{\infty}}h\circ r_X(v)d\rho_1(v)=\int_{V_{\infty}}\sum_{i=1}^mr_ig_{v_i}(v)d\rho_1(v)=\sum_{i=1}^mr_i\int_{V_{\infty}}g_{v}(v_i)d\rho_1(v)$$
$$=\sum_{i=1}^mr_ig_{\rho_1}(v_i)=\sum_{i=1}^mr_ig_{\rho_2}(v_i)=\int_{V_{\infty}}h\circ r_X(v)d\rho_2(v).$$
It follows that $$|\int_{V_{\infty}}f(v)d\rho_1(v)-\int_{V_{\infty}}f(v)d\rho_2(v)|\leq 2\varepsilon M.$$
We obtain a contradiction by letting $\epsilon \to 0$.
\end{proof}
One can thus make the following definition.
\begin{defi}
A function $\phi : V_\infty \to \R \cup \{-\infty\}$ is said to be \emph{subharmonic} if there exists a positive Radon measure $\rho$ such that $\phi = g_\rho$. In this case, we write
$\rho = \Delta \phi$ and call it the \emph{Laplacian} of $\phi$.
\end{defi}
Denote by $\SH$ (resp. $\SHP$) the space of subharmonic functions on $V_\infty$ (resp. of non-negative subharmonic functions on $V_\infty$).
%We endow $\SH$ (resp. $\SHP$) the compact-open topology. Since the map $\Delta:\SH(V_{\infty})\rightarrow M^+(V_{\infty})$ is an isomorphism of topology space,
%then $\SH(V_{\infty})$ is compact.

\smallskip

%See \cite[Proposition]{Baker}, we have $$M^+(V_{\infty})\simeq \varprojlim_{X \in \mathcal{C}} M^+(\Gamma_X)$$ by $\rho\mapsto \{r_{X*}\rho\}_{X\in \mathcal{C}}.$
%Concretely, if we have a system $\{\rho_X\}_{X\in \mathcal{C}}$, $\rho_X\in M^+(\Gamma_X)$ such that $r_{X*}\rho_{X'}=\rho_X$ when $X'$ dominates $X,$
%then there exists a unique measure $\rho\in M^+(V_{\infty})$ satisfying $r_{X*}\rho=\rho_X$ and
%further we have $\rho=\varprojlim_{X \in \mathcal{C}}\rho_X$ i.e. for any continuous fuction $f$ on $V_{\infty}$ and any $\varepsilon>0$,
%there exists $X_{\varepsilon}\in \mathcal{C}$ such that for all $X\rightarrow X_{\varepsilon}$, $|\int_{V_{\infty}}f(v)d\rho_{X}(v)-\int_{V_{\infty}}f(v)d\rho(v)|<\varepsilon$.

\begin{pro}\label{prosubhapbyrestogx}For any subharmonic function $\phi$ on $V_\infty$, there exists a sequence of compactification $X_n\in \mathcal{C}$, $n\geq 0$ such that $X_{n+1}$ dominates $X_{n}$ for all $n\geq 0$, and $\phi=\lim_{n\to\infty}\phi\circ r_{X_n}$ pointwise.
\end{pro}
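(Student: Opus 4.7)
The plan is to apply the structure theorem for Radon measures (Lemma \ref{lem714favvalu}) to $\rho := \Delta\phi$ and then analyze the integral representation of $\phi \circ r_{X_n}$ using a tree-theoretic change-of-variable identity. First I would write $\phi = g_\rho$ and apply Lemma \ref{lem714favvalu} to obtain an increasing sequence $X_n \in \mathcal{C}$ such that $\rho$ is supported on $T := \overline{\bigcup_{n\geq 0} \Gamma_{X_n}}$.

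The key observation is the identity
\[
v \wedge r_X(w) \;=\; r_X(v \wedge w) \qquad \text{for all } v, w \in V_\infty \text{ and all } X \in \mathcal{C},
\]
which follows because $r_X(w) \in \Gamma_X$ forces $v \wedge r_X(w) \in [-\deg,v] \cap \Gamma_X = [-\deg,r_X(v)]$, so $v \wedge r_X(w) \le r_X(v)$ and hence $v \wedge r_X(w) = r_X(v) \wedge r_X(w) = r_X(v \wedge w)$ (the last equality being the standard compatibility of meets with retractions to a subtree containing the root). Inserting this into the definition of $g_\rho$ yields
\[
\phi \circ r_{X_n}(w) \;=\; \int_{V_\infty} \alpha\bigl(r_{X_n}(v \wedge w)\bigr)\,d\rho(v).
\]

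I would then argue that $T$ is stable under taking segments to $-\deg$, so that for $\rho$-a.e.\ $v$ and every $w \in V_\infty$ the point $v \wedge w$ lies in $T$; and that $r_{X_n}(p) \to p$ for every $p \in T$, since $r_{X_n}(p)$ is by construction the maximum of $[-\deg,p] \cap \Gamma_{X_n}$ and these sets exhaust a dense subset of $[-\deg,p]$. Combined with the upper semicontinuity and the strict monotonicity of $\alpha$ along segments starting from $-\deg$, this yields the pointwise decreasing convergence $\alpha(r_{X_n}(v \wedge w)) \searrow \alpha(v \wedge w)$ for $\rho$-a.e.\ $v$. Since $\alpha \le 1$ and $\rho(V_\infty) < \infty$, monotone convergence applied to the non-negative decreasing sequence $1 - \alpha(r_{X_n}(v \wedge w))$ delivers $\phi(r_{X_n}(w)) \to \phi(w)$ for every $w$.

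The main obstacle is verifying that the closure of a nested union of finite subtrees remains a (closed) subtree of $V_\infty$, i.e.\ that $[-\deg,v] \subset T$ for every $v \in T$. This is a purely tree-theoretic statement about the weak topology of $V_\infty$ and should follow from the Favre--Jonsson framework~\cite{Favre2004}, together with the fact that each $\Gamma_{X_n}$ already contains $-\deg$. Granting it, all remaining steps reduce to a monotone convergence argument; in particular the possible value $-\infty$ of $\alpha$ on curve valuations causes no trouble because one controls the integrand from above by $1$ rather than from below.
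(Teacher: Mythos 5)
Your proof is correct, and it takes a genuinely more direct route than the paper's. The paper approximates $w$ from below by points $w_m\in[-\deg,w)$, invokes the weak convergence $r_{X_n*}\rho\to\rho$ from Lemma~\ref{lemapromesbysupfini} for each fixed $w_m$, and then interchanges a double limit in $(m,n)$ using monotonicity in both indices. You instead observe the commutation identity $v\wedge r_{X_n}(w)=r_{X_n}(v\wedge w)$ (valid because $r_T(v)\wedge r_T(w)=r_T(v\wedge w)$ and, for $v\wedge r_T(w)$, the segment $[-\deg,r_T(w)]$ lies in $T$), which collapses $\phi\circ r_{X_n}(w)$ to $\int\alpha(r_{X_n}(v\wedge w))\,d\rho(v)$ in a single step, and then a single application of monotone convergence finishes. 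This bypasses Lemma~\ref{lemapromesbysupfini} and the limit interchange entirely, which I find cleaner. The one point you flag as needing verification --- that $T:=\overline{\bigcup_n\Gamma_{X_n}}$ is stable under taking segments to $-\deg$ --- does hold and is short: if $p\in T$ is a limit of $p_k\in\Gamma_{X_{n_k}}$ and $q<p$, then for $k$ large one has $p\wedge p_k>q$ (else $p$ would stay outside the open set $U(\vec{q})$ of directions toward $p$, contradicting $p_k\to p$), and then $q=q\wedge p_k\in[-\deg,p_k]\subseteq\Gamma_{X_{n_k}}$, so $q\in\bigcup_n\Gamma_{X_n}$. Likewise your claim $r_{X_n}(p)\to p$ for $p\in T$ follows from the same observation that $p\wedge p_k\in[-\deg,p]\cap\Gamma_{X_{n_k}}$ eventually exceeds any $q<p$. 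With these filled in, the proof is complete.
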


\proof
Write $\rho$ for $\Delta\phi.$ \ref{proshdecreasing}
Pick $X_n$ as in Lemma \ref{lem714favvalu}. By Lemma \ref{lemapromesbysupfini}, $r_{X*}\rho\to\rho$ weakly.
For any $w\in V_{\infty}$, pick a sequence $w_n\in [-\deg,w]$ satisfying $w_n\to w$ when $n\to \infty.$
$$g_{\rho}(w)= \int_{V_\infty} g_v(w) \, d\rho(v)=\lim_{m\to\infty} \int_{V_\infty} g_v(w_m) \, d\rho(v)$$$$=\lim_{m\to\infty} \lim_{n\to\infty} \int_{V_\infty} g_v(w_m) \, dr_{X_n*}\rho(v).   \eqno (1)$$
Observe that $\int_{V_\infty} g_v(w_m) \, dr_{X_n*}\rho(v)=\int_{V_\infty} g_v(r_{X_n*}(w_m)) \, d\rho(v)$ which is decreasing in $n$ and $m$. We have
$$g_{\rho}(w)=\lim_{n\to\infty} \lim_{m\to\infty} \int_{V_\infty} g_v(w_m) \, dr_{X_n*}\rho(v)=\lim_{n\to\infty}\int_{V_\infty} g_v(w) \, dr_{X_n*}\rho(v)$$
$$=\lim_{n\to\infty}\int_{V_\infty} g_v(r_{X_n}w) \, d\rho(v)
=\lim_{n\to\infty}g_{\rho}\circ r_X(w).$$
\endproof

\subsection{Reduction to finite trees}
Let $T$ be any finite subtree of $V_\infty$ containing $-\deg$. Denote by $r_T: V_\infty \to T$ the canonical retraction defined by sending $v$ to the unique valuation $r_T(v)\in T$ such that
$[r_T(v), v] \cap T = \{ r_T(v)\}$.

For any function $\phi$, set $R_T \phi := \phi \circ r_T$. Observe that
$R_T\phi |_T = \phi |_T$ and that $R_T\phi$ is locally constant outside $T$.

Moreover we have the following
\begin{pro}
Pick any subharmonic function $\phi$
Then for any finite subtree $T$, $R_T\phi$ is subharmonic, $R_T\phi \ge \phi$ and $\Delta (R_T\phi) = (r_T)_* \Delta \phi$.
\end{pro}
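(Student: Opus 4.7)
The plan is to set $\rho := \Delta\phi$ so that $\phi = g_\rho$, and to reduce the entire statement to the tree-theoretic identity
\[ v \wedge r_T(w) \;=\; r_T(v) \wedge w \qquad \text{for all } v,w \in V_\infty. \qquad (\star) \]
Granting $(\star)$ and recalling that $g_v(w) = \alpha(v\wedge w)$, one obtains $g_v(r_T(w)) = g_{r_T(v)}(w)$, and Fubini together with the change-of-variables formula for pushforward measures yields
\[ R_T\phi(w) = \int_{V_\infty} g_v(r_T(w))\,d\rho(v) = \int_{V_\infty} g_{r_T(v)}(w)\,d\rho(v) = \int_{V_\infty} g_{v'}(w)\,d(r_T)_*\rho(v') = g_{(r_T)_*\rho}(w). \]
Since $(r_T)_*\rho$ is a positive Radon measure on $V_\infty$, this simultaneously shows $R_T\phi \in \SH$ and identifies $\Delta(R_T\phi) = (r_T)_*\rho$.

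To establish $(\star)$ I would argue by cases on the position of $u := v \wedge w$ relative to $T$. If $u \in T$, then $u \in T \cap [-\deg,v]$ forces $r_T(v) \geq u$, so $r_T(v)$ lies on the segment $[u,v]$; symmetrically $r_T(w) \in [u,w]$. By the very definition of the meet, the tangent directions at $u$ pointing to $v$ and to $w$ are distinct, so both $r_T(v)\wedge w$ and $v\wedge r_T(w)$ collapse to $u$. If instead $u \notin T$, then the convexity of $T$ together with the hypothesis $-\deg\in T$ prevents $T$ from meeting the half-open segments $(u,v]$ or $(u,w]$ — such a meeting would drag $u$ into $T$ by connectivity. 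Hence both $r_T(v)$ and $r_T(w)$ sit on the common chain $[-\deg,u]$ and are in fact equal to the single point $z := \max(T \cap [-\deg,u])$; both sides of $(\star)$ then reduce to $z$.

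The remaining inequality $R_T\phi \geq \phi$ is an immediate consequence of $r_T(w)\leq w$, which follows from the same convexity argument ($[-\deg,w]\cap T$ is an initial sub-segment of $[-\deg,w]$, whose maximum is $r_T(w)$). The monotonicity of subharmonic functions along chains from the root, established in Proposition \ref{proshdecreasing}, then gives $R_T\phi(w) = \phi(r_T(w)) \geq \phi(w)$. The main obstacle in this argument is the combinatorial identity $(\star)$, whose validity rests crucially on $-\deg\in T$; once $(\star)$ is in place, every other step is a formal consequence of the integral representation $\phi = g_\rho$ supplied by Theorem \ref{thmrhotogrhoinj} and the definition of subharmonicity.
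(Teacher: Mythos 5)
Your proof is correct and follows essentially the same route as the paper's: both rest on the integral representation $\phi = g_\rho$ and the chain of equalities $R_T\phi(w) = \int g_v(r_T(w))\,d\rho = \int g_{r_T(v)}(w)\,d\rho = g_{(r_T)_*\rho}(w)$. The paper silently invokes the tree identity $v\wedge r_T(w) = r_T(v)\wedge w$ in the middle step and does not explicitly address the inequality $R_T\phi \ge \phi$; you supply a careful two-case proof of the former (correctly exploiting convexity of $T$ and $-\deg\in T$) and deduce the latter from $r_T(w)\le w$ together with monotonicity. So your write-up is a more complete version of the same argument rather than a different one.
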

\proof
Set $\Delta\phi=\rho$. Then we have $$R_T\phi(w)=\int_{V_{\infty}}g_v(r_T(w))d\rho$$$$=\int_{V_{\infty}}g_{r_T(v)}(w)d\rho=\int_{V_{\infty}}g_{v}(w)dr_{T*}\rho=g_{r_{T*}\rho}$$ which concludes our proposition.
\endproof

Let $T$ be a finite tree containing $\{-\deg\}$ such that for all points $v\in T$, we have $\alpha(v)>-\infty.$
Let $\phi=g_{\rho}$ be a subharmonic function satisfying $\supp \,\rho\subseteq T.$ Set $t(v):=-\alpha(v).$
Let $E$ be the set of all edges of $T$. For each edge $I=[w_1,w_2]\in E$, this function $t(v)$ parameterizes $I$. Denote by $\frac{d^2\phi|_I}{dt^2}dt$
the usual real Laplacian of $\phi|_I$ on the segment $I$ i.e.
the unique measure on $I$ such that
\begin{points}
\item For any segment $(v_1,v_2)\subseteq I$, we have $\int_{[v_1,v_2]}\frac{d^2\phi|_I}{dt^2}dt=D_{\vec{v_1}}\phi(v_1)+D_{\vec{v_2}}\phi(v_2)$
where $\vec{v}_i$ is direction at $v_i$ in $(v_1,v_2)$ for $i=1,2.$
\item $\frac{d^2\phi|_I}{dt^2}dt\{w_i\}=-D_{\vec{w}_i}\phi$ where $\vec{w}_i$ is direction at $w_i$ in $I$ for $i=1,2.$
\end{points}

%
%Denote by $\Delta^0$ the usual Laplacian on $T$ w.r.t. the metric $d_{V_{\infty}}|_T$, then we have $\rho=\phi(-\deg)\delta_{-\deg}(\phi)+\Delta^0\phi$. Then we have the following

\begin{pro}\label{prosupplapfinittreesub}
We have
\begin{points}
\item
$$\Delta\phi=\phi(-\deg)\delta_{-\deg}+\sum_{I\in E}\frac{d^2\phi|_I}{dt^2}dt;$$
\item
the mass of $\Delta\phi$ at a point $v\in T$ is given
by $\phi(-\deg)\delta_{-\deg}\{x\}+\sum D_{\overrightarrow{v}}\phi$
the sum is over all tangent directions $\overrightarrow{v}$ in $T$ at $v$;
\item
for any segment $I$ contained in $T$,
$\phi|_I$ is convex  and for any point $v\in T$, we have $$\phi(-\deg)\delta_{-\deg}\{v\}+\sum D_{\overrightarrow{v}}\phi\geq 0$$ where $\delta_{-\deg}$ is the dirac measure at $-\deg$ and
the sum is over all tangent directions $\overrightarrow{v}$ in $T$ at $v$.
\end{points}
\end{pro}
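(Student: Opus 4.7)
The plan is to establish (i) by reducing to the case of a single Dirac source and performing an edge-by-edge computation, then to deduce (ii) and (iii) as corollaries.

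Both sides of the identity in (i) depend linearly on the measure $\rho=\Delta\phi$: the left side is $\rho$ itself, and the right side is manifestly affine in $\rho$ through the integral representation $\phi|_I(w)=\int_T \alpha(v\wedge w)\,d\rho(v)$ together with the identity $\phi(-\deg)=\rho(T)$ (using $g_v(-\deg)=\alpha(-\deg)=1$). By Theorem \ref{thmrhotogrhoinj} and a Fubini argument it therefore suffices to verify (i) when $\rho=\delta_{v}$ for a single point $v\in T$, so that $\phi=g_v$; the general case then follows by integrating both sides against $\rho$.

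For a fixed $v\in T$ and an edge $I=[w_1,w_2]$ of $T$ (with $w_1<w_2$ in the tree order), I would partition into three essentially disjoint cases according to the position of $v$: if $v\le w_1$ or if $v$ branches off $I$ at $w_1$ into another direction, then $v\wedge w$ is constant on $I$ and $g_v|_I$ is constant; if $v$ lies above $I$ on the branch through $w_2$, then $g_v(w)=\alpha(w)=-t(w)$ on all of $I$; and if $v\in I^{\circ}$, then $g_v$ equals $-t(w)$ on $[w_1,v]$ and equals $\alpha(v)$ on $[v,w_2]$. In each case $g_v|_I$ is piecewise linear with slopes in $\{-1,0\}$, so its distributional second derivative on $I$ together with the prescribed boundary point masses can be written down explicitly. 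A direct check shows that the contribution to $\frac{d^2g_v|_I}{dt^2}\,dt$ vanishes unless $I$ lies on the unique path $P_v=[-\deg,v]\cap T$, in which case it consists of equal and opposite point masses at the two endpoints $w_1,w_2$ of $I$ (replacing $w_2$ by $v$ if $v\in I^{\circ}$). Telescoping along the consecutive edges of $P_v$ yields $\sum_{I\in E}\frac{d^2g_v|_I}{dt^2}\,dt = \delta_v-\delta_{-\deg}$, and adding $g_v(-\deg)\delta_{-\deg}=\delta_{-\deg}$ recovers $\delta_v=\Delta g_v$, establishing (i).

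Statement (ii) is then obtained by evaluating (i) at a single point $v\in T$: when $v$ lies in the interior of an edge $I$, the point mass at $v$ coming from $\frac{d^2\phi|_I}{dt^2}$ equals the slope jump of $\phi$ at $v$, i.e.\ $\sum_{\vec{v}\in \Tan_v T}D_{\vec{v}}\phi$; when $v$ is a vertex, each adjacent edge contributes a boundary term which, under the stated convention, sums to the same expression; the extra term $\phi(-\deg)\delta_{-\deg}\{v\}$ is non-trivial only at $v=-\deg$ and yields the formula. For (iii), convexity of $\phi|_I$ is immediate from the integral representation: for each fixed $v$, the function $w\mapsto\alpha(v\wedge w)$ is piecewise linear and convex in $t$ on $I$ (by the same case analysis), and a positive combination of convex functions is convex; the non-negativity of the mass at any $v\in T$ then reflects the positivity of $\rho=\Delta\phi$ via (ii). The main obstacle will be the case analysis and telescoping in the third paragraph: one must verify carefully that edges branching off $P_v$ at interior vertices contribute the zero measure, and that the boundary conventions for $\frac{d^2\phi|_I}{dt^2}\,dt$ produce exactly the cancellations needed so that the only surviving masses sit at the endpoints $-\deg$ and $v$ of $P_v$.
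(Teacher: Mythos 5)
Your argument is correct and is essentially the paper's own proof, which is given only as a one-sentence sketch ("first check for $\phi=g_v$, then extend by linearity to $g_\rho$"): you reduce to the Green function $g_v$ by linearity and Fubini, then carry out the edge-by-edge computation and the telescoping along the path $[-\deg,v]\cap T$. The only quibble is that the invocation of Theorem~\ref{thmrhotogrhoinj} is unnecessary (the reduction to Diracs already follows from linearity of both sides and Fubini), but this does not affect the validity of the proof.
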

\proof[Sketch of the proof]
First check that our proposition holds when $\phi=g_v$ for any $v\in T$. Since all the conclusions in our proposition are linear, they hold for $g_{\rho}(w)=\int_{V_{\infty}}g_v(w)d\rho=\int_{T}g_v(w)d\rho$ also.
\endproof

\begin{thm}\label{thmsubharsequence}
Let $X_n\in \mathcal{C}$, $n\geq 0$ be a sequence of compactifications such that $X_{n+1}$ dominates $X_{n}$ for all $n\geq 0$ and let $T$ be the closure of $\cup_{n\geq 0}\Gamma_{X_n}.$
Suppose that we are given a sequence $\phi_n$ of subharmonic functions satisfying $\Supp \Delta \phi_n\subseteq \Gamma_{X_n}$ and $R_{\Gamma_{X_n}}\phi_{m}=\phi_{n}$ when $m\geq n$.

Then there exists a unique subharmonic function $\phi\in \SH(V_{\infty})$  satisfying $\Supp\Delta\phi\subseteq T$, $R_{\Gamma_{X_n}}\phi=\phi_{n}$ and $\phi=\lim_{n\to\infty}\phi_{n}$.
\end{thm}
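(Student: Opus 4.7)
The plan is to construct $\phi$ as the pointwise limit of the $\phi_n$ and then to identify its Laplacian with the weak limit of the measures $\rho_n := \Delta \phi_n$. First, the hypothesis $\phi_n = R_{\Gamma_{X_n}}\phi_m$ for $m \geq n$, combined with the inequality $R_T \psi \geq \psi$ from the preceding proposition, gives $\phi_n \geq \phi_m$ for $m \geq n$, so $(\phi_n)$ is pointwise non-increasing. Since $-\deg \in \Gamma_{X_n}$ for all $n$, the values $\phi_n(-\deg) = \rho_n(V_\infty) =: C$ are all equal to a single non-negative real number. Hence the pointwise limit $\phi(w) := \lim_n \phi_n(w)$ exists in $\R \cup \{-\infty\}$, with $\phi(-\deg) = C$, and $\phi$ is uniquely determined by the pointwise-limit requirement.

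Applying $\Delta R_T\psi = (r_T)_* \Delta \psi$ to the compatibility condition yields $(r_{\Gamma_{X_n}})_* \rho_m = \rho_n$ for $m \geq n$, so the $\rho_n$ form a compatible system of positive measures of constant mass $C$ supported on $T := \overline{\bigcup_n \Gamma_{X_n}}$. By weak compactness on the compact space $V_\infty$, some subsequence $\rho_{n_k}$ converges weakly to a Radon measure $\rho$ supported on $T$. The continuity of each retraction $r_{\Gamma_{X_n}}$ lets me pass to the limit in $(r_{\Gamma_{X_n}})_* \rho_{n_k} = \rho_n$ (for $n_k \geq n$) and obtain $(r_{\Gamma_{X_n}})_* \rho = \rho_n$. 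Then Lemma \ref{lemapromesbysupfini}, applied to $\rho$, gives $\rho_n = (r_{\Gamma_{X_n}})_*\rho \to \rho$ weakly, so in fact the entire sequence converges and $\rho$ is unique.

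The main technical step is to identify $\phi$ with $g_\rho$. The direct approach of passing to the weak limit in $\phi_n(w) = \int g_v(w)\,d\rho_n(v)$ is obstructed by the fact that $v \mapsto g_v(w)$ is only upper semicontinuous. To sidestep this, I would first establish the retraction identity $g_{r_T(v)}(w) = g_v(r_T(w))$ for any finite subtree $T$ containing $-\deg$; this follows from $u \wedge v = u \wedge r_T(v)$ whenever $u \in T$ (the meet lies in $T$ because $[-\deg, u] \subseteq T$). Combining with $(r_{\Gamma_{X_n}})_* \rho = \rho_n$ yields
\[
\phi_n(w) = \int g_{r_{\Gamma_{X_n}}(v)}(w)\,d\rho(v) = \int g_v(r_{\Gamma_{X_n}}(w))\,d\rho(v) = g_\rho(r_{\Gamma_{X_n}}(w)).
\]
A short tree argument shows that $r_{\Gamma_{X_n}}(w)$ is monotone along $[-\deg, w]$ and converges to $r_T(w)$ (any strict gap would contradict the density of $\bigcup_n \Gamma_{X_n}$ in $T$ together with continuity of the meet $v \mapsto v \wedge w$). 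The explicit formula $g_v(w_t) = \max(t, \alpha(v \wedge w))$ along $[-\deg, w]$ makes $g_\rho$ monotone and continuous in $t$, so $\phi(w) = \lim_n g_\rho(r_{\Gamma_{X_n}}(w)) = g_\rho(r_T(w))$. Finally $g_\rho(r_T(w)) = g_\rho(w)$ because $\rho$ is supported on $T$: for $v \in T$, $v \wedge w = v \wedge r_T(w)$. Thus $\phi = g_\rho$ is subharmonic with $\Delta \phi = \rho$, and the remaining assertions $\Supp\Delta\phi \subseteq T$ and $R_{\Gamma_{X_n}}\phi = \phi_n$ follow immediately from the Laplacian--retraction formula. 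The hard part is the identification $\phi = g_\rho$, where the failure of $g_v(w)$ to be continuous in $v$ forces us to use the retraction identity to convert weak convergence of measures into continuity of $g_\rho$ along a single segment.
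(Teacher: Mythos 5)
Your proof is correct, and it takes a genuinely different route from the paper's in both main steps. For constructing the limit measure $\rho$, the paper argues directly that the sequence $\int f\,d\rho_n$ is Cauchy (via $\int f\,d\rho_n=\int (f\circ r_{X_n})\,d\rho_m$ and an approximation claim based on Lemma \ref{lemaproximationfbyfx}), whereas you invoke weak compactness of $M^+(V_\infty)$ to extract a subsequential limit and then pin it down by the compatibility relations $(r_{\Gamma_{X_n}})_*\rho=\rho_n$ together with Lemma \ref{lemapromesbysupfini}. Your route is cleaner: the paper's statement that Lemma \ref{lemaproximationfbyfx} yields a \emph{uniform} bound $|f\circ r_{X_n}-f\circ r_{X_m}|\le\varepsilon$ for large $n,m$ over an arbitrary prescribed tower is not literally what that lemma provides, and the compactness-and-uniqueness argument sidesteps this entirely. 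For the identification $\phi=g_\rho$ and the convergence $\phi_n\to\phi$, the paper simply cites Proposition \ref{prosubhapbyrestogx} (whose proof does apply here because $\rho$ is supported on $T$), while you redo that argument from scratch by means of the retraction identity $g_{r_T(v)}(w)=g_v(r_T(w))$, monotone convergence of $r_{\Gamma_{X_n}}(w)$ up the segment $[-\deg,w]$, and the explicit piecewise form of $g_\rho$ along a segment. The cost of your version is length; the benefit is that it makes fully explicit why the upper semicontinuity of $v\mapsto g_v(w)$ is not an obstruction, which the paper leaves implicit in its appeal to the earlier proposition. One small remark: the paper only formally defines the retraction $r_T$ for a \emph{finite} subtree $T$ containing $-\deg$, whereas you apply $r_T$ to the closed (possibly infinite) tree $T=\overline{\cup_n\Gamma_{X_n}}$; this is harmless since $r_T(v):=\max\bigl(T\cap[-\deg,v]\bigr)$ is still well-defined by compactness of the segment and closedness of $T$, but it is worth flagging the extension.
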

\proof Set $\rho_n:=\Delta\phi_{n}$. For any $m\geq n$, we have $r_{X_n}\rho_m=\rho_{n}$. It follows that $\rho_n(V_{\infty})$ is independent on $n$ and we may suppose that $\rho_n(V_{\infty})=1$ for all $n\geq 0.$
Given a continuous function $f$ on $V_{\infty}$ and a real number $\varepsilon>0$, by Lemma \ref{lemaproximationfbyfx}, there exists $N\geq 0$ such that $|f\circ r_{X_n}-f\circ r_{X_m}|\leq \varepsilon$ for all $n,m\geq N.$ It follows that $|\int_{V_{\infty}}fd\rho_n-\int_{V_{\infty}}fd\rho_m|\leq \varepsilon$ for all $n,m\geq N.$
It follows that $\lim_{n\to \infty}\int_{V_{\infty}}fd\rho_n$ exists.

The functional $f\mapsto \lim_{n\to \infty}\int_{V_{\infty}}fd\rho_n$ is continuous, linear and positive, and thus defines a positive Radon measure $\rho.$ Observe that $r_{\Gamma_{X_n}}\rho=\rho_n$ for all $n\geq 0$ and $\rho_n\to\rho$ when $n\to\infty$.
Set $\phi:=g_{\rho}$. We have $R_{\Gamma_{X_n}}\phi=\phi_n$. By Proposition \ref{prosubhapbyrestogx}, we get $\phi=\lim_{n\to\infty}\phi_n$.
\endproof

%Theorem \ref{thmsubharsequence} shows that the map $(R_{\Gamma_X})_{X\in \mathcal{C}}:\SH(V_{\infty})\rightarrow\varprojlim_{X \in \mathcal{C}}\SH(\Gamma_X)$
%is an isomorphism where $\SH(\Gamma_X)$ is the set of subharmonic functions $\phi_X$ such that $\supp \Delta \phi_X\subseteq \Gamma_X$.

\subsection{Main properties of subharmonic functions}
The next result collects some properties of subharmonic functions.

\begin{thm}
Pick any subharmonic function $\phi$ on $V_\infty$. Then
\begin{points}
\item $\phi$ is decreasing and $\phi(-\deg)=\Delta\phi(V_{\infty})>0$ if $\phi\neq 0$;
\item $\phi$ is upper semicontinuous;
\item  for any valuation $v\in V_\infty$ the function
$t \mapsto \phi(v_t)$ is convex, where $v_t$ is the unique valuation in $[-\deg , v]$ of skewness $t$.
\end{points}
\end{thm}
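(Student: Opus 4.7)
The plan is to prove all three assertions by exploiting the integral representation $\phi=g_\rho=\int_{V_\infty}g_v\,d\rho(v)$, where $\rho=\Delta\phi$ is a non-negative Radon measure, together with the explicit form $g_v(w)=\alpha(v\wedge w)$ of the elementary Green functions. For (i), since each $g_v$ is a decreasing function on $V_\infty$ by Proposition \ref{proshdecreasing}, $\phi$ is decreasing as a non-negative integral combination of decreasing functions. At the root $v\wedge(-\deg)=-\deg$ for every $v$, so $g_v(-\deg)=\alpha(-\deg)=1$ and $\phi(-\deg)=\rho(V_\infty)=\Delta\phi(V_\infty)$. If $\phi\neq 0$, the injectivity of $\rho\mapsto g_\rho$ (Theorem \ref{thmrhotogrhoinj}) forces $\rho\neq 0$, hence $\rho(V_\infty)>0$ because $\rho$ is non-negative.

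For (ii), I would first show that each Green function $g_v(w)=\alpha(v\wedge w)$ is upper semicontinuous in $w$. This follows since the retraction $w\mapsto v\wedge w$ onto the segment $[-\deg,v]$ is continuous in the pointwise-convergence topology of $V_\infty$, and $\alpha$ is upper semicontinuous on $V_\infty$ by its very definition. Combined with the uniform bound $g_v\leq 1$, the reverse Fatou lemma then yields
\[
\limsup_{w\to w_0}\phi(w)\ =\ \limsup_{w\to w_0}\int_{V_\infty} g_v(w)\,d\rho(v)\ \leq\ \int_{V_\infty}\limsup_{w\to w_0}g_v(w)\,d\rho(v)\ \leq\ \int_{V_\infty}g_v(w_0)\,d\rho(v)\ =\ \phi(w_0),
\]
which is the desired upper semicontinuity of $\phi$.

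For (iii), I fix $v\in V_\infty$ and parametrize the segment $[-\deg,v]$ by skewness $t\in[\alpha(v),1]$. The key explicit computation, valid for any $v'\in V_\infty$, is
\[
g_{v'}(v_t)\ =\ \alpha(v'\wedge v_t)\ =\ \max\bigl\{t,\,\alpha(v\wedge v')\bigr\}.
\]
Indeed, if $t\geq\alpha(v\wedge v')$ then $v_t$ lies on the common segment $[-\deg,v\wedge v']\subseteq[-\deg,v']$, so $v'\wedge v_t=v_t$ of skewness $t$; otherwise $v_t$ lies strictly past the branch point $v\wedge v'$ on the way to $v$, so $v'\wedge v_t=v\wedge v'$ of skewness $\alpha(v\wedge v')$. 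Since $t\mapsto\max\{t,c\}$ is convex on $\mathbb{R}$ for any $c\in[-\infty,1]$, integrating this pointwise convexity inequality against the non-negative measure $d\rho(v')$ yields convexity of $t\mapsto\phi(v_t)$.

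The main obstacle lies in (ii): I must invoke continuity of the retraction $w\mapsto v\wedge w$ and upper semicontinuity of $\alpha$ on $V_\infty$; both are standard facts from the Favre--Jonsson description of the valuation tree. A minor subtlety in (iii) is that $\alpha(v\wedge v')$ may equal $-\infty$ when $v\wedge v'$ is a curve valuation, but then $\max\{t,-\infty\}=t$ is still convex, so the argument is unaffected. Similarly in (ii), $\phi$ may take the value $-\infty$ at some points, which is compatible with upper semicontinuity and with the reverse Fatou estimate since the integrable majorant $g_v\leq 1$ suffices.
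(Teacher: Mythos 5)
Part (i) matches the paper's argument exactly, and part (iii) is a correct and pleasantly explicit alternative: the paper deduces convexity along segments from Proposition \ref{prosupplapfinittreesub} (the structure of $\Delta\phi$ on finite subtrees), whereas you derive it directly from the closed-form identity $g_{v'}(v_t)=\max\{t,\alpha(v\wedge v')\}$ and the positivity of $\rho$, which is a shorter and more self-contained route.

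The gap is in (ii). Your argument hinges on the reverse Fatou lemma applied to $\limsup_{w\to w_0}\int g_v(w)\,d\rho(v)$, but Fatou's lemma (and hence reverse Fatou) is a statement about \emph{sequences}, not nets. The space $V_\infty$ is not first-countable — indeed not metrizable — whenever $k$ is uncountable: at a divisorial valuation $v_E$ the tangent directions are in bijection with $E(k)\cong\mathbb{P}^1(k)$, giving an uncountable family of pairwise disjoint nonempty open sets $U(\vec{v})$, which is impossible in a second-countable (equivalently, compact metrizable) space. Consequently upper semicontinuity at a point cannot be tested on sequences, and the reverse Fatou estimate does not carry over to the net $\{w\to w_0\}$; there are standard counterexamples to Fatou for nets even when the integrands are uniformly bounded. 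The paper circumvents this entirely: by Proposition \ref{prosubhapbyrestogx} one can choose compactifications $X_n$ such that $\phi\circ r_{X_n}$ decreases pointwise to $\phi$, and each $\phi\circ r_{X_n}$ is continuous (Proposition \ref{prosupplapfinittreesub}); a decreasing limit, i.e.\ an infimum, of continuous functions is upper semicontinuous by a purely topological argument ($\{\phi<c\}=\bigcup_n\{\phi\circ r_{X_n}<c\}$ is open), with no appeal to convergence theorems for integrals. You should replace the Fatou step in (ii) by this approximation argument, or otherwise justify why the particular nets occurring here are tame enough for a Fatou-type inequality.
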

\begin{proof}
The first statement follows from Proposition \ref{proshdecreasing} and the equality
%If $w_1\geq w_2$ in $V_{\infty}$, then $g_{v}(w_1)\leq g_{v}(w_2)$ on $V_{\infty}$. It follows that $$g_{\rho}(w_1)=\int_{V_{\infty}}g_{v}(w_1)d\rho(v)\leq \int_{V_{\infty}}g_{v}(w_2)d\rho(v)=g_{\rho}(w_2).$$
$$\phi(-\deg)=\int_{V_{\infty}}g_{v}(-\deg)d\rho(v)=\rho(V_{\infty}).$$

The second statement is a consequence of  Proposition \ref{prosubhapbyrestogx} and Proposition\ref{prosupplapfinittreesub} that impels that $\phi\circ r_X$ is continuous on $V_{\infty}$ for any $X\in \mathcal{C}$. The last statement follows from
Proposition \ref{prosupplapfinittreesub}.
\end{proof}

Now pick any direction $\vec{v}$ at a valuation $v\in V_\infty$.
One may define the directional derivative $D_{\vec{v}} \phi$ of any subharmonic function as follows. If $\alpha(v)\neq -\infty$, pick any map $t \in [0, \epsilon) \mapsto v_t$ such that
$v_0 = v$, $|\alpha (v_t) - \alpha (v_0)| = t$ and $v_t$ determines $\vec{v}$ for all $t>0$.
By property (iii) above, the function $t \mapsto\phi(v_t)$ is convex and continuous at $0$,
so that its right derivative is well-defined. We set
$$
D_{\vec{v}} \phi := \left. \frac{d}{dt}\right|_{t=0} \phi(v_t)~.
$$
This definition does not depend on the choice of map $t\mapsto v_t$. If $\alpha(v)=-\infty$, then $v$ is an endpoint in $V_{\infty}$ and there exists a unique direction $\vec{v}$ at $v$. For any $w<v$, denote by $\vec{w}$ the direction at $w$ determined by $v$. Then we define
$$
D_{\vec{v}} \phi := -\lim_{w\rightarrow v}D_{\vec{w}} \phi
$$
which exists since $\phi|_{[-\deg,v]}$ is convex.

Given any direction $\vec{v}$ at a valuation in $V_\infty$, recall that $U(\vec{v})$ is the open set of valuations determining $\vec{v}$.
\begin{thm}\label{thmdisclap}
Pick any subharmonic function $\phi$ on $V_\infty$. Then
 one has
\begin{equation*}
\Delta \phi (U(\vec{v})) = -D_{\overrightarrow{v}}\phi
\end{equation*}
for any direction $\vec{v}$ that is not determined by $-\deg$.
In particular, one has
\begin{eqnarray*}
\Delta \phi \{ -\deg\} &=& \sum_{\overrightarrow{v}\in \Tan_{-\deg}}D_{\overrightarrow{v}}\phi + \phi(-\deg)~; \text{ and } \\
\Delta \phi \{ v\} &=& \sum_{\overrightarrow{v}\in \Tan_v}D_{\overrightarrow{v}}\phi
\end{eqnarray*}
if $v \neq -\deg$.
\end{thm}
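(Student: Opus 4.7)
The plan is to reduce Theorem~\ref{thmdisclap} to the finite-tree formulas of Proposition~\ref{prosupplapfinittreesub} via approximation by retractions to admissible compactifications. By Proposition~\ref{prosubhapbyrestogx}, pick an increasing sequence of admissible compactifications $X_n$ so that $\phi_n := \phi\circ r_{X_n}\to \phi$ pointwise; each $\phi_n$ is subharmonic with Laplacian $\rho_n := (r_{X_n})_*\rho$ supported on $\Gamma_{X_n}$, so Proposition~\ref{prosupplapfinittreesub} applies to $\phi_n$. In the divisorial case one arranges that $v\in \Gamma_{X_n}$ for all large $n$.

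For the open-set formula $\Delta\phi(U(\vec{v})) = -D_{\vec{v}}\phi$ with $v$ divisorial, arrange moreover that $\vec{v}$ is realized by an edge $[v,v_n']\subset \Gamma_{X_n}$. The preimage under $r_{X_n}$ of the half-open segment $(v,v_n']$ is a subset of $U(\vec{v})$, and Proposition~\ref{prosupplapfinittreesub}(i) computes its $\rho_n$-mass as $-D_{\vec{v}}\phi_n$. Since the edge provides a valid parametrization of the direction $\vec{v}$ in $V_\infty$, the derivative along the edge coincides with $D_{\vec{v}}\phi$ in $V_\infty$. As $n$ grows, these preimages exhaust $U(\vec{v})$ up to a $\rho$-null set, giving $\rho(U(\vec{v})) = -D_{\vec{v}}\phi$. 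The mass formula at $v\in \Gamma_{X_n}$ follows similarly from Proposition~\ref{prosupplapfinittreesub}(ii): we have $\rho_n\{v\} = \rho(r_{X_n}^{-1}\{v\})\searrow \rho\{v\}$ since the preimages shrink to $\{v\}$, and on the right side each $D_{\vec{u}}\phi_n$ equals $D_{\vec{u}}\phi$ for directions $\vec{u}$ visible in $\Gamma_{X_n}$, the sum converging to $\sum_{\vec{v}\in \Tan_v}D_{\vec{v}}\phi$ in the limit.

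The non-divisorial case (especially endpoints $v$ with $\alpha(v) = -\infty$, i.e., curve valuations) is handled by further approximation: for an endpoint $v$ one uses the endpoint-limit definition $D_{\vec{v}}\phi := -\lim_{w\to v} D_{\vec{w}}\phi$ along a decreasing sequence of divisorial $w_n<v$, applying the divisorial case at each $w_n$ and invoking convexity along $[-\deg,v]$ to control the limit. The main technical obstacle is twofold: (a) identifying the preimages $r_{X_n}^{-1}(\cdot)$ with appropriate subsets of $V_\infty$ so as to link the finite-tree formulas to $\rho(U(\vec{v}))$; and (b) verifying that tangent directions at $v$ not eventually realized in any $\Gamma_{X_n}$ contribute zero to both sides of the identities, which follows from the density of divisorial valuations in each direction and the observation that any direction $\vec{u}$ with $\rho(U(\vec{u})) > 0$ must contain a divisorial point realizable in some $\Gamma_{X_m}$.
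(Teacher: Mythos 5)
Your proposal takes the same basic strategy as the paper (reduce to the finite-tree formulas of Proposition~\ref{prosupplapfinittreesub} by retracting $\phi$ onto a finite subtree), but you take a more convoluted route and there is a genuine gap in the key step. The paper's argument is much leaner: pick a single $w\in U(\vec{v})$, set $I=[-\deg,w]$, and observe two exact identities, namely $r_I^{-1}\bigl((v,w]\bigr)=U(\vec{v})$ (so $\Delta(R_I\phi)(U(\vec{v}))=\Delta\phi(U(\vec{v}))$) and $D_{\vec{v}}(R_I\phi)=D_{\vec{v}}\phi$ (since $R_I\phi=\phi$ on $I$). The finite-tree computation on the segment $I$ then gives $\Delta\phi(U(\vec{v}))=-D_{\vec{v}}\phi$ directly, with no sequence and no passage to a limit.

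You instead run a sequence $\phi_n=R_{\Gamma_{X_n}}\phi$ and claim that the preimages $r_{X_n}^{-1}\bigl((v,v_n']\bigr)$ ``exhaust $U(\vec{v})$ up to a $\rho$-null set.'' This is where the argument is not sound as written. One has $r_{X_n}^{-1}\bigl((v,v_n']\bigr)=U(\vec{v})$ precisely when $(v,v_n']=\Gamma_{X_n}\cap U(\vec{v})$, i.e.\ when $v_n'$ is an endpoint of $\Gamma_{X_n}$ in the direction $\vec{v}$. If $\Gamma_{X_n}$ extends past $v_n'$ in that direction, then $r_{X_n}^{-1}\bigl((v,v_n']\bigr)$ is a proper subset of $U(\vec{v})$, and it misses whatever $\rho$-mass $r_{X_n}$ pushes strictly beyond $v_n'$; moreover the formula $\rho_n\bigl((v,v_n']\bigr)=-D_{\vec{v}}\phi_n$ acquires extra terms at $v_n'$ if $v_n'$ is a branch point of $\Gamma_{X_n}$. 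You should therefore either (a) explicitly arrange that $v_n'$ is an endpoint of $\Gamma_{X_n}$ (equivalently, retract to $[-\deg,v_n']$ rather than to all of $\Gamma_{X_n}$, which is exactly the paper's choice $I=[-\deg,w]$), after which the preimage identity is exact and the whole limiting apparatus evaporates; or (b) compute the mass of the full subtree $\Gamma_{X_n}\cap U(\vec{v})$ by a telescoping sum over its interior vertices and edges. As it stands, neither is done, and the assertion about null sets is unjustified. Finally, the two point-mass formulas and the endpoint case are only gestured at; the paper derives them from the open-set identity with a short limit along $w_n\uparrow v$ (and, for $v=-\deg$, the normalization $\Delta\phi(V_\infty)=\phi(-\deg)$), and some version of that argument needs to be written out.
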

\begin{proof}Since $\vec{v}$ is not determined by $-\deg$, $v$ is not an endpoint of $V_{\infty}.$ Pick $w\in U(\vec{v})$, we have $w>v.$ Set $I:=[-\deg,w]$.
We have $\Delta R_I\phi (U(\vec{v}))=\int_{V_{\infty}}\frac{d^2\phi}{dt^2}dt=\int_{V_{\infty}}d\frac{d\phi}{dt} = -D_{\overrightarrow{v}}R_I\phi$. Since $R_I\phi|_I=\phi|_I$ and $\Delta R_I=r_{I*}\Delta \phi$, we have $\Delta R_I\phi (U(\vec{v}))=\Delta\phi (U(\vec{v}))$ and $D_{\overrightarrow{v}}R_I\phi=D_{\overrightarrow{v}}\phi$. It follows that $\Delta \phi (U(\vec{v})) = -D_{\overrightarrow{v}}\phi$.

If $v=-\deg$, then we have $$\phi(-\deg)=\Delta \phi(V_{\infty})= \Delta \phi\{ -\deg\}+\sum_{\overrightarrow{v}\in \Tan_{-\deg}}\Delta\phi(U(\vec{v}))$$$$=\Delta \phi\{ -\deg\}-\sum_{\overrightarrow{v}\in \Tan_{-\deg}}D_{\overrightarrow{v}}\phi.$$ It follows that $$\Delta \phi \{ -\deg\} = \sum_{\overrightarrow{v}\in \Tan_{-\deg}}D_{\overrightarrow{v}}\phi + \phi(-\deg).$$

If $v\neq -\deg$, let $w_n$ be a sequence of valuations in $[-\deg,v)$. Denote by $\vec{w_n}$ the direction at $w_n$ determined by $v$ and $\vec{v_0}$ the direction at $v$ determined by $-\deg$. Observe that $$-\lim_{n\rightarrow\infty}D_{\vec{w_n}}\phi=\lim_{n\rightarrow\infty}\Delta\phi(U(\vec{w_n}))=\Delta \phi \{ v\}+\sum_{\overrightarrow{v}\in \Tan_v\setminus \{\vec{v_0}\}}\Delta\phi(U(\vec{w_n})).$$ It follows that $D_{\vec{v_0}}\phi=\Delta \phi \{ v\}- \sum_{\overrightarrow{v}\in \Tan_v}D_{\overrightarrow{v}}\phi$ and then $$\Delta \phi \{ v\} = \sum_{\overrightarrow{v}\in \Tan_v\setminus \{\vec{v_0}\}}D_{\overrightarrow{v}}\phi.$$
\end{proof}
\begin{thm}\label{thmcheckafunctionsubhar}
Suppose $\phi: V_\infty \to [-\infty, +\infty)$ is a function such that
\begin{points}
\item  for any valuation $v\in V_\infty$ the function
$[\alpha(v), 1] \ni t \mapsto \phi(v_t)$ is continuous and convex,
where $v_t$ is the unique valuation in $[-\deg , v]$ of skewness $t$;
\item the inequalities
\begin{equation}\label{eqcharsh}
\sum_{\vec{v}\in \Tan_{-\deg}}D_{\vec{v}}\phi + \phi(-\deg) \geq 0~; \text{ and }
\sum_{\vec{v}\in \Tan_{v}}D_{\vec{v}}\phi \geq 0
\end{equation}
 are  satisfied for all valuations $v\neq -\deg$.
\end{points}
Then $\phi$ is subharmonic.
\end{thm}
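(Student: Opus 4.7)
\emph{Plan.} The strategy is to approximate $\phi$ by subharmonic functions whose Laplacians are supported on the dual graph of admissible compactifications, then pass to the limit via Theorem~\ref{thmsubharsequence}. The heart of the argument is a preliminary lemma: under hypotheses (i) and (ii), one has $D_{\vec v}\phi\le 0$ for every $v\in V_\infty$ and every direction $\vec v\in\Tan_v$ distinct from the direction $\vec v_0$ toward $-\deg$. Indeed, starting from $v$ one may extend along $\vec v$ through the tree to an endpoint $w^*\in V_\infty$ (which exists by compactness of $V_\infty$). At $w^*$ the only direction is toward $-\deg$, so hypothesis (ii) reads $D_{\vec v_0^{w^*}}\phi\ge 0$, which by the definition of the directional derivative at an endpoint translates to $\lim_{w\to w^*}D_{\vec w}\phi\le 0$, where $\vec w$ is the outward direction at a point $w\in[v,w^*)$ along the geodesic. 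Convexity of $\phi$ along the segment $[-\deg,w^*]$ from (i) makes $w\mapsto D_{\vec w}\phi$ non-decreasing, yielding $D_{\vec v}\phi\le\lim_{w\to w^*}D_{\vec w}\phi\le 0$. The case $\alpha(w^*)=-\infty$ (curve valuation) and the case $\alpha(w^*)$ finite are handled uniformly, using the respective definitions of the directional derivative at endpoints.

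Given the lemma, for each admissible compactification $X$ I define the candidate measure
\[
\mu_X := \phi(-\deg)\,\delta_{-\deg}+\sum_{I\in E_{\Gamma_X}}\frac{d^2\phi|_I}{dt^2}\,dt
\]
on $\Gamma_X$, using the second-derivative measure of each edge defined before Proposition~\ref{prosupplapfinittreesub}. Positivity of $\mu_X$ follows from (i) (continuous part on edge interiors is non-negative by convexity) and the preliminary lemma: the atom at a vertex $v\in\Gamma_X$ equals $\phi(-\deg)\delta_{-\deg}\{v\}+\sum_{\vec v\in\Tan_v^{\Gamma_X}}D_{\vec v}\phi$, which dominates the non-negative quantity in (ii) since the omitted outward directions $\vec v\in\Tan_v\setminus\Tan_v^{\Gamma_X}$ contribute $D_{\vec v}\phi\le 0$ by the lemma.

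Set $\phi_X := g_{\mu_X}$; this is subharmonic, $R_{\Gamma_X}$-invariant, and has Laplacian $\mu_X$. I claim $\phi_X|_{\Gamma_X}=\phi|_{\Gamma_X}$, whence $\phi_X=R_{\Gamma_X}\phi$. By Proposition~\ref{prosupplapfinittreesub}, $\phi_X$ gives rise to the same formal Laplacian $\mu_X$ on $\Gamma_X$ as the one used to define $\mu_X$ from $\phi$. Moreover $\phi_X(-\deg)=\mu_X(V_\infty)=\phi(-\deg)$ after a telescoping verification of the total mass using the defining formulas. The difference $h:=\phi|_{\Gamma_X}-\phi_X|_{\Gamma_X}$ is thus affine on each edge, vanishes at $-\deg$, and satisfies $\sum_{\vec v\in\Tan_v^{\Gamma_X}}D_{\vec v}h=0$ at each vertex. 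An induction on the number of leaves of $\Gamma_X$ (at each leaf, the single-direction constraint forces the slope of $h$ on the unique adjacent edge to vanish, after which the leaf can be pruned) yields $h\equiv 0$.

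To conclude, pick an increasing sequence $(X_n)_{n\ge 0}$ of admissible compactifications with $\bigcup_n\Gamma_{X_n}$ dense in $V_\infty$. The functions $\phi_{X_n}$ satisfy $\Supp\Delta\phi_{X_n}\subseteq\Gamma_{X_n}$, and for $m\ge n$ the equality $R_{\Gamma_{X_n}}\phi_{X_m}=\phi_{X_n}$ holds because both sides are $R_{\Gamma_{X_n}}$-invariant and agree with $\phi$ on $\Gamma_{X_n}$. Theorem~\ref{thmsubharsequence} produces a subharmonic $\psi$ on $V_\infty$ with $\psi=\lim_n\phi_{X_n}$ pointwise. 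Since $r_{\Gamma_{X_n}}(v)$ converges to $v$ along the segment $[-\deg,v]$ for each $v$ by density, and $\phi$ is continuous along this segment by (i), we get $\phi_{X_n}(v)=\phi(r_{\Gamma_{X_n}}(v))\to\phi(v)$; hence $\psi=\phi$ is subharmonic. The main difficulty is the preliminary lemma, which turns the pointwise condition (ii)---a sum over potentially infinitely many tangent directions at each point---into sign control of individual outward derivatives, thereby enabling the positivity argument for the candidate measures on dual graphs.
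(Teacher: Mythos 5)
Your overall strategy mirrors the paper's: establish monotonicity from (i)-(ii), build a candidate positive measure on each dual graph $\Gamma_X$, verify that the induced potential equals $R_{\Gamma_X}\phi$, and pass to the limit via Theorem~\ref{thmsubharsequence}. Your preliminary lemma is essentially the paper's first step (that $\phi$ is decreasing, i.e.\ outward derivatives are $\leq 0$), and your positivity verification for $\mu_X$ and the induction argument showing $\phi_X|_{\Gamma_X}=\phi|_{\Gamma_X}$ are sound; the latter is actually a welcome expansion of a point the paper passes over with an ``it follows that.''

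However, there is a genuine gap at the final step. You ``pick an increasing sequence $(X_n)$ of admissible compactifications with $\bigcup_n\Gamma_{X_n}$ dense in $V_\infty$,'' and then deduce $r_{\Gamma_{X_n}}(v)\to v$ for every $v$. Such a sequence does not exist in general: at any divisorial valuation $w$, the tangent set $\Tan_w$ is in bijection with $\mathbb{P}^1(k)$, hence uncountable whenever $k$ is uncountable (e.g.\ $k=\mathbb{C}$), and the open sets $U(\vec u)$, $\vec u\in\Tan_w$, are pairwise disjoint. Any dense subset of $V_\infty$ must meet uncountably many of these, so $V_\infty$ is not separable, while $\bigcup_n\Gamma_{X_n}$ is a countable union of finite trees. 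Consequently $r_{\Gamma_{X_n}}(v)\to v$ cannot be guaranteed, and your conclusion $\psi=\phi$ breaks down.

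What is missing is precisely the combinatorial finiteness result that the paper supplies: for each $n$, the set $T_n:=\{v\neq -\deg\ :\ D_{\vec v}\phi\geq 1/n\}$ (with $\vec v$ the direction toward $-\deg$) is a \emph{finite} subtree with at most $n\,\phi(-\deg)$ endpoints. This uses condition (ii) repeatedly via a counting argument: for pairwise incomparable $v_1,\dots,v_m\in T_n$ one telescopes the inequalities $\sum_{v_i>w}D_{\vec{v_i}}\phi\leq D_{\vec w}\phi$ down the tree to bound $m\cdot(1/n)\leq\sum D_{\vec{v_i}}\phi\leq\phi(-\deg)$. Once each $T_n$ is known to be a finite tree, one chooses the $X_n$ so that $\bigcup_n T_n$ lies in the closure of $\bigcup_n\Gamma_{X_n}$; then for each $v$ the segment $(\lim r_{\Gamma_{X_n}}(v),\,v]$ is disjoint from every $T_n$, forcing $D_{\vec w}\phi=0$ there (using both that $D_{\vec w}\phi\geq 0$, since $\phi$ is decreasing, and that $D_{\vec w}\phi<1/n$ for all $n$), hence $\phi$ is constant on that tail and $\phi_{X_n}(v)\to\phi(v)$. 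Your proof, which never introduces the $T_n$'s, cannot reach this conclusion in the general (uncountable base field) setting. If you replace the density assumption by this finiteness analysis and the resulting choice of $X_n$, the rest of your argument goes through.
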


\begin{proof}
Let $v_1,v_2\in V_{\infty}$ be two valuations satisfying $v_1<v_2.$ There exists an end point $w\in V_{\infty}$ satisfying $v_1,v_2\in [-\deg,w]$. Denote by $\vec{w}$ the unique direction in $\Tan_w$. By (ii), we have $D_{\vec{w}}\phi\geq 0.$ Since $\phi$ is convex on $[-\deg,w]$, it is decreasing on $[-\deg,w]$. It follows that $\phi(v_1)\geq \phi(v_2)$ and then $\phi$ is decreasing.

For any $v\in V_{\infty}\setminus\{-\deg\}$, denote by $\vec{v}$ the direction at $v$ determined by $-\deg.$ For any $n\geq 1$, set $T_n:=\{v\in V_{\infty}\setminus \{-\deg\}|\,\,D_{\vec{v}}\phi\geq 1/n \}.$ Since the map $v\mapsto D_{\vec{v}}\phi$ is non negative and decreasing, it follows that $T_n$ is a tree.

We claim that $T_n$ is a finite tree.
If $T_n=\{-\deg\}$, there is nothing to prove.

For convenience, we define  $D_{\overrightarrow{-\deg}}\phi:=-\sum_{\vec{v}\in \Tan_{-\deg}}D_{\vec{v}}\phi=\phi(-\deg).$
Let $w$ be a valuation in $T_n$ and $v_1,\cdots, v_m$ be valuations in $T_n$ satisfying $v_i\wedge v_j=w$ for all $i\neq j.$ Denote by $\vec{w_i}$ the direction at $w$ determined by $v_i.$
Then we have $$\sum_{i=1}^mD_{\vec{v_i}}\phi\leq \sum_{i=1}^m-D_{\vec{w}_i}\phi\leq D_{\vec{w}}\phi.$$

Pick $m$ valuations $v_1,\cdots,v_m\in T_n$ such that any two valuations $v_i$, $v_j$ $i\neq j$ are not comparable. Let $S$ be the set of maximal elements in the set $\{v_i\wedge v_j|\,\, 1\leq i<j\leq m\}$ and write $S=\{w_1,\cdots,w_l\}$. Observe that $l\leq m-1$ if $m\geq 2.$ Let $S_w$ be the set of $v_i$ satisfying $v_i>w$. Then we have $\sum_{v\in S_w}D_{\vec{v}}\phi\leq D_{\vec{w}}\phi$ and $\{v_1,\cdots,v_m\}=\coprod_{w\in S}S_w.$ It follows that $\sum_{i=1}^mD_{\vec{v_i}}\phi\leq\sum_{w\in S}D_{\vec{w}}\phi.$ By induction, we have
$$\sum_{i=1}^mD_{\vec{v_i}}\phi\leq D_{\vec{\wedge_{i=1}^mv_i}}\phi\leq D_{\overrightarrow{-\deg}}\phi=\phi(-\deg) .$$ Since $D_{\vec{v_i}}\phi\geq 1/n$, we conclude that $m\leq n\phi(-\deg).$ This fact implies that $T_n$ is a finite tree with at most $n\phi(-\deg)$ end points.

As in the proof of Lemma \ref{lem714favvalu}, we an now show that there exists a sequence of admissible compactification $X_n\in \mathcal{C}$, $n\geq 0$ such that $X_{n+1}$ dominates $X_{n}$ for all $n\geq 0$ and $\cup_{n\geq 0}T_n$ is contained in the closure of $\cup_{n\geq 0}\Gamma_{X_n}.$ Set $\phi_n:=R_{\Gamma_{X_n}}\phi.$

Let $v$ be a point in $V_{\infty}$. Set $I:=[-\deg,v]$ and $I_n:=I\cap \Gamma_{X_n}=[-\deg,v_n]$. Observe that $v_n$ is increasing and define $v':=\lim_{n\to\infty}v_n$. Observe that for all $(v',v]\subseteq V_{\infty}\setminus (\cup_{n\geq 1}T_n)$,  and then $D_{\vec{w}}=0$ for all $w\in (v',v]$.
It follows that $$\phi(v)=\phi(w)=\lim_{n\to\infty}\phi(v_n)=\lim_{n\to\infty}\phi_n(v).$$

Denote by $\rho_n:= \phi_n(-\deg)\delta_{-\deg}\{x\}+\sum\frac{d^2\phi|_I}{dt^2}dt$ where the sum is over all edges of $\Gamma_{X_n}.$
It is a Radon measure supported on $\Gamma_{X_n}$.
%\begin{points}
%\item[$\d$] The mass of $\rho_n$ at a point $v\in \Gamma_{X_n}$ is given
%by $\phi_n(-\deg)\delta_{-\deg}\{x\}+\sum_{\vec{v}\in \Tan_v} D_{\overrightarrow{v}}\phi_n$
%\item[$\d$] The restriction of $\rho_n$ to any open segment $I\subseteq T$ containing no
%branch point is equal to the usual real Laplacian of $\phi|I$ i.e. $\frac{d^2\phi}{dt^2}dt$.
%\end{points}
It follows that $\phi_n=g_{\rho_n}$ which is subharmonic and $\phi_n=R_{\Gamma_{X_n}}\phi_m$ for any $m\geq n$.  Then we conclude by applying Theorem \ref{thmsubharsequence}.
\end{proof}

\smallskip

 The next result collects the main properties of the space of subharmonic functions.
\begin{thm}\label{thmconstrushf}
The sets $\SH(V_\infty)$ and $\SHP$  are convex cones that are stable by $\max$. In other words, given any $c>0$, and any $\phi, \phi' \in \SH(V_\infty)$ (resp. in $\SHP$), then
$c\phi, \phi + \phi'$ and $\max \{ \phi, \phi'\}$ all belong to $\SH(V_\infty)$ (resp. to $\SHP$).
\end{thm}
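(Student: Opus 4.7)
The plan is to split the argument into two parts. The first part, stability of $\SH(V_\infty)$ (and $\SHP$) under positive scalar multiples and sums, is immediate from the definition: if $\phi = g_\rho$ and $\phi' = g_{\rho'}$, then the linearity of the integral defining $g_\rho$ yields $c\phi + \phi' = g_{c\rho + \rho'}$, so $c\phi + \phi'$ is subharmonic with Laplacian $c\rho + \rho'$. Non-negativity is preserved by both operations, so $\SHP$ is a convex cone as well.

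The substantive part is closure under $\max$. Setting $\psi := \max\{\phi, \phi'\}$, I would verify the two conditions of Theorem \ref{thmcheckafunctionsubhar}. Condition (i) --- continuity and convexity of $t \mapsto \psi(v_t)$ on each segment $[-\deg, v]$ parametrized by skewness --- is automatic, since the max of two continuous convex functions of one real variable is continuous and convex. For condition (ii), the inequality on sums of directional derivatives at each point $w \in V_\infty$, I would do a case analysis. If $\phi(w) \neq \phi'(w)$, say $\phi(w) > \phi'(w)$, I would combine upper semicontinuity of $\phi'$ with continuity of $\phi,\phi'$ along segments issuing from $w$ (which follows from their convexity on such segments) to deduce that $\phi > \phi'$ on a small initial portion of every direction $\vec{v} \in \Tan_w$. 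Consequently $\psi$ agrees locally with $\phi$, so $D_{\vec{v}}\psi = D_{\vec{v}}\phi$ and $\psi(w) = \phi(w)$, and the required inequality for $\psi$ reduces to the one for $\phi$ supplied by Theorem \ref{thmdisclap}.

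The remaining case $\phi(w) = \phi'(w)$ is where the main difficulty lies. Here I would compute the right derivative directly from the identity
\[
\max\{\phi(v_t), \phi'(v_t)\} - \psi(w) = \max\{\phi(v_t) - \phi(w),\; \phi'(v_t) - \phi'(w)\},
\]
dividing by $t > 0$ and letting $t \to 0^+$; by continuity of the max operation, this gives $D_{\vec{v}}\psi = \max\{D_{\vec{v}}\phi,\; D_{\vec{v}}\phi'\}$ for every $\vec{v} \in \Tan_w$. Summing and using the trivial bound $\max\{a,b\} \geq a$,
\[
\sum_{\vec{v} \in \Tan_w} D_{\vec{v}}\psi \;\geq\; \sum_{\vec{v} \in \Tan_w} D_{\vec{v}}\phi,
\]
which, together with $\psi(-\deg) = \phi(-\deg)$ when $w = -\deg$, reduces the desired inequality to the subharmonicity of $\phi$. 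The hard part is precisely this identity for the directional derivative of a max at a coincidence point; it relies on the fact that $\phi$ and $\phi'$ are continuous along each segment emanating from $w$ (granted by their convexity on segments), not on a global continuity which is not available. Non-negativity of $\psi$ when $\phi, \phi' \in \SHP$ is obvious, so $\SHP$ is also stable under $\max$.
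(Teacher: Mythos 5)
Your proof is correct and follows essentially the same strategy as the paper: verify conditions (i) and (ii) of Theorem \ref{thmcheckafunctionsubhar} for $\max\{\phi,\phi'\}$ via a case split on whether $\phi(w) = \phi'(w)$, with $D_{\vec{v}}\max\{\phi,\phi'\} = \max\{D_{\vec{v}}\phi, D_{\vec{v}}\phi'\}$ at coincidence points and reduction to one of $\phi, \phi'$ otherwise. Your treatment of the scalar-multiple and sum cases via the linearity $c\phi + \phi' = g_{c\rho + \rho'}$ is a touch more direct than the paper's appeal to Theorem \ref{thmcheckafunctionsubhar}, and your remark that the limit identity for the directional derivative of the max needs only continuity of $\phi, \phi'$ along segments (not global continuity, which fails for general subharmonic functions) supplies a justification the paper leaves implicit.
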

\begin{proof}By Theorem \ref{thmcheckafunctionsubhar}, it is easy to check that $c\phi$ and  $\phi + \phi'$  all belong to $\SH(V_\infty)$ (resp. to $\SHP$) when $c>0$, and $\phi, \phi' \in \SH(V_\infty)$ (resp. in $\SHP$).

We only have to check that $\max \{ \phi, \phi'\}$ belongs to $\SH(V_{\infty})$ when $\phi, \phi' \in \SH(V_\infty)$. It is easy to see that the condition (i) in Theorem \ref{thmcheckafunctionsubhar} holds. For any point $v\in V_{\infty}$ and any direction $\vec{v}$ at $v$, if $\phi(v)> \phi'(v)$ (resp. $\phi(v)< \phi'(v)$), then $D_{\vec{v}}\max \{ \phi, \phi'\}=D_{\vec{v}}\phi$ (resp. $D_{\vec{v}}\max \{ \phi, \phi'\}=D_{\vec{v}}\phi'$). It follows that the condition (ii) in Theorem \ref{thmcheckafunctionsubhar} holds when $\phi(v)\neq \phi'(v)$. Otherwise, if $\phi(v)= \phi'(v)$, we have $D_{\vec{v}}\max \{ \phi, \phi'\}=\max\{D_{\vec{v}}\phi,D_{\vec{v}}\phi'\}$ and then the condition (ii) in Theorem \ref{thmcheckafunctionsubhar} holds. Now we conclude by applying Theorem \ref{thmcheckafunctionsubhar}.
\end{proof}

\subsection{Examples of subharmonic functions}
For any nonconstant polynomial $Q\in k[x,y]$, we define the function
$$\log |Q| (v) :=  -v(Q)~,$$
which takes values in $[-\infty,\infty)$. %See \cite[Exercise 9.11]{Jonsson} for the following
\begin{pro}
The function $\log|Q|$ is subharmonic, and
$$
\Delta (\log |Q|) = \sum_i m_i \delta_{v_{s_i}}
$$
where $s_i$ are the branches of the curve $\{Q=0\}$ at infinity,
and $m_i$ is the intersection number of $s_i$ with the line at infinity in $\P^2_k$.
\end{pro}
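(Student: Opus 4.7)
The approach is to prove the explicit formula
\begin{equation*}
-v(Q)=\sum_i m_i\,\alpha(v\wedge v_{s_i})=\sum_i m_i\,g_{v_{s_i}}(v)\qquad\text{for all }v\in V_\infty,\qquad(\ast)
\end{equation*}
which is precisely the statement $\log|Q|=g_\rho$ for $\rho:=\sum_i m_i\delta_{v_{s_i}}$, so that the proposition follows by the injectivity in Theorem \ref{thmrhotogrhoinj}. Note that $(\ast)$ holds trivially at $v=-\deg$ (both sides equal $\deg(Q)=\sum_i m_i$ by Bezout's theorem in $\P^2_k$) and at $v=v_{s_j}$ (both sides are $-\infty$: on the left because $v_{s_j}(Q)=+\infty$, on the right because $\alpha(v_{s_j})=-\infty$).

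The main task is to verify $(\ast)$ at every divisorial $v_E$. For this I would pick a smooth projective compactification $\pi:X\to\P^2_k$ dominating $\P^2_k$ that contains $E$ as a boundary component and simultaneously resolves every $s_i$, so that the strict transform $\widetilde{s_i}$ is smooth and meets a single boundary component $F_i$ transversally at exactly one point with local intersection multiplicity $\ell_i$. Writing $\widetilde{C}=\sum_j n_j\widetilde{C}_j$ for the decomposition of the strict transform of $\overline{\{Q=0\}}$ according to the factorization $Q=\prod_j Q_j^{n_j}$, I view $Q$ as a rational function on $X$; since $(Q)$ is principal, $(Q)\cdot\check{E}=0$, and decomposing $(Q)=\widetilde{C}+\sum_F\ord_F(Q)\,F$ over the boundary components $F$ (using $\check{E}\cdot F=\delta_{E,F}$) yields
\begin{equation*}
\ord_E(Q)=-(\widetilde{C}\cdot\check{E})=-\sum_i n_i\,\ell_i\,(\check{E}\cdot\check{F_i}).
\end{equation*}
Comparing with $(\ast)$ then reduces to two identities: the numerical identity $m_i=n_i\,\ell_i\,b_{F_i}$ (with $n_i$ denoting the multiplicity in $Q$ of the factor whose branch is $s_i$), obtained by pulling back a local equation of $L_\infty$ at the center of $s_i$ in $\P^2_k$ and observing that its coefficient along $F_i$ equals $b_{F_i}=-\min\{\ord_{F_i}(x),\ord_{F_i}(y)\}$; and the skewness identity $\alpha(v_E\wedge v_{s_i})=b_E^{-1}b_{F_i}^{-1}(\check{E}\cdot\check{F_i})$, which follows from $v_{F_i}\leq v_{s_i}$ (so that $v_E\wedge v_{s_i}=v_E\wedge v_{F_i}$) together with the standard formula $\alpha(v_E\wedge v_{E'})=b_E^{-1}b_{E'}^{-1}(\check{E}\cdot\check{E'})$ for two divisorial valuations, itself a direct consequence of $\alpha(v_E)=b_E^{-2}(\check{E}\cdot\check{E})$ and the inductive defining property $|\alpha(v_E)-\alpha(v_{E'})|=(b_E b_{E'})^{-1}$.

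To extend $(\ast)$ from divisorial valuations to arbitrary $v\in V_\infty$, I would invoke Proposition \ref{prosubhapbyrestogx} applied to the subharmonic function $g_\rho$: choosing an increasing sequence of admissible compactifications $X_n$ whose dual graphs $\Gamma_{X_n}$ exhaust a subtree containing $[-\deg,v]$ and each $[-\deg,v_{s_i}]$, one has $g_\rho\circ r_{X_n}\to g_\rho$ pointwise; the function $v\mapsto -v(Q)$ obeys the same recursion because it is piecewise affine and continuous along segments from $-\deg$, and already coincides with $g_\rho$ on each $\Gamma_{X_n}$ by the divisorial case above. The principal obstacle is the intersection-theoretic computation at divisorial valuations, and within it the numerical identification $m_i=n_i\,\ell_i\,b_{F_i}$: this amounts to matching the pullback coefficient of a local equation of $L_\infty$ along $F_i$ with the arithmetic invariant $b_{F_i}$, a matching that is the geometric heart of the whole computation and the point where the structure of the resolution of each branch $s_i$ really enters.
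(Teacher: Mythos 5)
Your proof is essentially correct, but it follows a genuinely different route from the paper's. The paper's sketch fixes an admissible compactification $X$ and proves the equality $\log|Q|(v_E)=g(v_E)$ for all components $E$ of $X_\infty$ by \emph{induction on the number of boundary components}, the base case being $X=\P^2_k$ itself. You instead carry out a \emph{direct intersection-theoretic computation} on a single well-chosen compactification: a log resolution of every branch $s_i$, on which you exploit the principal-divisor identity $(Q)\cdot\check{E}=0$, the decomposition $\mathrm{div}(Q)=\widetilde{C}+\sum_F\ord_F(Q)F$, and the dual-basis formula $\alpha(v_E\wedge v_{F_i})=b_E^{-1}b_{F_i}^{-1}(\check{E}\cdot\check{F_i})$ (which follows from \cite[Lemma~A.2]{Favre2011} as quoted later in the proof of the Main Theorem). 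The two approaches buy different things: the paper's induction is more elementary and avoids fixing a global resolution at the outset, while your argument is more explicit, reveals where the invariant $b_{F_i}$ enters (as the pullback coefficient of $L_\infty$ along $F_i$), and parallels the computation already appearing in the paper's proof of the Main Theorem. Both proofs then need, and somewhat implicitly invoke, the passage from divisorial valuations to all of $V_\infty$; you are more careful about this than the paper's sketch, though your phrase ``already coincides with $g_\rho$ on each $\Gamma_{X_n}$'' quietly extends the divisorial agreement from vertices to whole edges (justifiable since both functions are affine on each edge of $\Gamma_{X_n}$ once $X_n$ dominates your log resolution, but worth saying out loud). One wording slip: after resolving the branch, ``meets $F_i$ transversally'' forces the local intersection multiplicity to be $\ell_i=1$, so carrying $\ell_i$ as a free parameter in $m_i=n_i\ell_i b_{F_i}$ is redundant; the clean identity is $m_i=n_ib_{F_i}$ (with $n_i$ the multiplicity of the corresponding factor of $Q$ and $b_{F_i}$ computed as the coefficient of $F_i$ in $\pi^*L_\infty$, which the projection formula identifies with the local intersection of the reduced branch with $L_\infty$).
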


\begin{proof}[Sketch of proof]
Let $g = \sum_i m_i g_{v_{s_i}}$. One has to prove that $\log|Q| = g$. To that end, we pick any
admissible compactification $X$ of $\A^2_k$ and prove that $\log|Q|(v_E) = g(v_E)$ for any irreducible component of $X_\infty := X\setminus \A^2_k$. The proof then goes by induction on the number of irreducible component of $X_\infty$ and observing that this number is $1$ only if $X = \P^2_k$.
\end{proof}

\begin{pro}\label{proshpos}
The function $\log^+|Q| := \max \{ 0, \log|Q|\}$ belongs to $\SHP$.

Denote by  $s_1,\cdots,s_l$ the branches of $\{Q=0\}$ at infinity and by $T$ the convex hull of  $\{ - \deg, v_{s_1},\cdots, v_{s_l}\}$.
Then the support of $\Delta (\log^+|Q|)$ is the set of points $v\in T$ satisfying $v(Q)=0$ and $w(Q)<0$ for all $w\in (v,-\deg]$.
\end{pro}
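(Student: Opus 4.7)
The plan is as follows.

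For (1), the zero function is subharmonic (it equals $g_\rho$ for $\rho=0$) and non-negative, and the previous proposition gives $\log|Q|\in \SH(V_\infty)$. Applying Theorem~\ref{thmconstrushf} yields $\log^+|Q|=\max\{0,\log|Q|\}\in\SH(V_\infty)$, and as it is manifestly $\ge 0$ it lies in $\SHP$.

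For (2), I would first confine the support of $\Delta(\log^+|Q|)$ to $T$. From the previous proposition $\log|Q|(w)=\sum_i m_i\,\alpha(v_{s_i}\wedge w)$. A short tree argument (using that $[-\deg,v_{s_i}]\subseteq T$ and that the path $[-\deg,w]$ meets $T$ exactly in $[-\deg,r_T(w)]$) shows $v_{s_i}\wedge w = v_{s_i}\wedge r_T(w)$ for every $i$ and every $w\in V_\infty$. Hence $\log|Q|=\log|Q|\circ r_T$, and consequently $\log^+|Q|=R_T(\log^+|Q|)$. The earlier proposition computing $\Delta(R_T\phi)=(r_T)_*\Delta\phi$ then forces $\Delta(\log^+|Q|)$ to be supported on $T$.

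The second step is a case analysis on points $p\in T$ according to the sign of $\log|Q|(p)=-p(Q)$. Upper semicontinuity of $\log|Q|$ makes $\{\log|Q|>0\}$ and $\{\log|Q|<0\}$ open. If $\log|Q|(p)>0$, then $\log^+|Q|=\log|Q|$ in a neighborhood of $p$, and $\Delta\log|Q|$ only has mass at the $v_{s_i}$ (where $\log|Q|=-\infty$), so $p$ carries no mass. If $\log|Q|(p)<0$, then $\log^+|Q|\equiv 0$ in a neighborhood of $p$, again no mass. Thus $\supp\Delta(\log^+|Q|)\subseteq Z:=\{p\in T:\ p(Q)=0\}$. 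Moreover, no $v_{s_i}$ can lie in $Z$ (as $\log|Q|(v_{s_i})=-\infty$), so any $p\in Z$ lies strictly below some $v_{s_i}$ in $T$; on the segment $[-\deg,v_{s_i}]$ the function $\log|Q|$ is piecewise affine in the skewness parameter with strictly negative slopes $-\sum_{v_{s_j}\text{ above}} m_j$, hence strictly decreasing from $\deg Q$ to $-\infty$. This automatically gives $w(Q)<0$ for every $w\in[-\deg,p)$, matching the statement.

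It remains to check that any $p\in Z$ actually belongs to the support. Applying Theorem~\ref{thmdisclap}: in the direction $\vec{v}_0$ toward $-\deg$, $\log|Q|$ is locally positive and linear in $\alpha$, so $\log^+|Q|=\log|Q|$ there and $D_{\vec{v}_0}(\log^+|Q|)=\sum_{v_{s_i}:\, p\le v_{s_i}} m_i>0$; in every other tangent direction at $p$, $\log^+|Q|$ vanishes locally — either because the direction lies in $T$ away from $-\deg$, where $\log|Q|$ strictly decreases from $0$ so that $\log^+|Q|\equiv 0$, or because the direction lies outside $T$, along which $\log^+|Q|$ is constant by the $r_T$-invariance from step~1. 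Summing gives $\Delta(\log^+|Q|)\{p\}>0$, so $p\in\supp\Delta(\log^+|Q|)$. The main obstacle is the bookkeeping of signs in the directional derivatives at $p$, and in particular the verification that contributions from directions outside $T$ vanish, which relies crucially on the retraction-invariance established in step~1.
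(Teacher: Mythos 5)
Your proof is correct and, in broad strokes, follows the paper's strategy: membership in $\SHP$ via Theorem~\ref{thmconstrushf}, confinement of $\supp\Delta(\log^+|Q|)$ to $T$, and directional-derivative computations via Theorem~\ref{thmdisclap}. The two proofs diverge in the closing step. The paper computes $\Delta\log^+|Q|\{v_i\}=\sum_{s_j\in S_i}m_j$ at each candidate point, sums these masses, and matches the total $\deg Q=\Delta(\log^+|Q|)(V_\infty)$, thereby concluding by mass conservation that no other mass can exist. You instead argue locally: you show the support lies in $Z=\{p\in T : p(Q)=0\}$ by a direct case analysis on the sign of $\log|Q|$, and then that every $p\in Z$ carries strictly positive mass. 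Your approach has the nice side-effect of observing explicitly that for $p\in T$ the condition $w(Q)<0$ for all $w\in(p,-\deg]$ is automatic once $p(Q)=0$, because $\log|Q|$ is strictly decreasing in the skewness parameter along each $[-\deg,v_{s_i}]$; the paper takes that condition as part of the definition of its set $\{v_1,\dots,v_m\}$. Either closing argument is valid; the paper's is slightly slicker, yours is slightly more self-contained.

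One small imprecision to fix: you invoke \emph{upper semicontinuity} of $\log|Q|$ to conclude that both $\{\log|Q|>0\}$ and $\{\log|Q|<0\}$ are open. Upper semicontinuity alone only gives openness of the sublevel set $\{\log|Q|<0\}$. What you actually have is stronger: $\log|Q|(v)=-v(Q)$ is \emph{continuous} on $V_\infty$ with values in $[-\infty,\infty)$, since $V_\infty$ carries the topology of pointwise convergence, so $v\mapsto v(Q)$ is continuous by definition. Continuity gives openness of both sets at once, which is the fact you use.
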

In particular,  $\Supp \Delta (\log^+|Q|)$ is finite.
\begin{proof} By Theorem \ref{thmconstrushf} we have $\log^+|Q| \in \SH(V_{\infty})$.
%On the other hand, we have $\log^+|Q| = \max \{ 0, \log|Q|\}\geq 0$ and then $\log^+|Q| \in \SH^+(V_{\infty}).$
Observe that $\log^+|Q|$ is locally constat on $V_{\infty}\setminus T$ so that the support of $\Delta \log^+|Q|$ is included in $T$.
Let $\{v_1,\cdots,v_m\}$ be the set of points $v\in T$ satisfying $v(Q)=0$ and $w(Q)<0$ for all $w\in (v,-\deg]$.
For any $v\in V_{\infty}$, we have $\log|Q|\geq \deg(Q)\alpha(v)$. It follows that $\alpha(v_i)\leq 0$ and then $v_i\neq -\deg$.
Denote by $m_i's$ the intersection number of $s_i$ with the line at infinity in $\mathbb{P}^2_k$.
For any $i=1,\cdots,m$, denote by $S_i$ the set of branches of the curve $s_j$ satisfying $v_{s_j}>v_i$.
Observe that $S_i\neq \emptyset$ and $\{s_1,\cdots,s_l\}=\coprod_{i=1}^mS_i$.
By Theorem \ref{thmdisclap}, we have $\Delta\log^+|Q|\{v_i\}=\sum_{s_j\in S_i}m_j>0$.
Then we have $\sum_{i=1}^m\Delta\log^+|Q|\{v_i\}=\sum_{j=1}m_j=\deg(Q)=\log^+|Q|(-\deg)=\Delta(\log^+|Q|)(V_{\infty})$.
It follows that $$\Delta(\log^+|Q|)=\sum_{i=1}^m(\sum_{s_j\in S_i}m_j)\delta_{v_i}.$$
It follows that $\supp \Delta (\log^+|Q|)=\{v_1,\cdots,v_m\}$ and moreover we have $m\leq \deg(Q)$.
\end{proof}
\subsection{The Dirichlet pairing}
 Let $\phi, \psi$ be any two subharmonic functions on $V_\infty$.
Since $\phi$ is bounded from above one can define the Dirichlet pairing
$$\langle\phi,\psi\rangle:=\int_{V_{\infty}^2}\alpha(v\wedge w)\Delta\phi(v)\Delta\psi(w)\in [-\infty,+\infty).$$
Observe that $\langle\phi,\psi\rangle=\langle\psi,\phi\rangle$.
\begin{pro}The Dirichlet pairing induces a symmetric bilinear form on $\SH(V_{\infty})$ that satisfies
$$\langle \phi, \psi \rangle= \int_{V_\infty} \phi \,\Delta \psi~\eqno (*).$$
\end{pro}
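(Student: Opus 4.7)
The plan is to establish the integral representation $\langle\phi,\psi\rangle = \int_{V_\infty}\phi\,\Delta\psi$ first; once this is in hand, both symmetry and bilinearity will follow as formal consequences. The starting point is the defining identity $\phi(w) = g_{\Delta\phi}(w) = \int_{V_\infty}\alpha(v\wedge w)\,d\Delta\phi(v)$ coming from the subharmonicity of $\phi$. Substituting this into $\int\phi\,d\Delta\psi$ produces a double integral that formally coincides with $\langle\phi,\psi\rangle$ after interchanging the order of integration. The entire content of the proof is therefore the justification of Fubini in the presence of the possible $-\infty$ values of $\alpha$ on curve valuations.

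To justify the interchange, I would exploit two structural facts: the measures $\Delta\phi$ and $\Delta\psi$ are finite positive Radon measures with total masses $\phi(-\deg)$ and $\psi(-\deg)$ respectively, and $\alpha(v\wedge w)\le 1$ everywhere. I then apply Tonelli's theorem to the \emph{non-negative} function $h(v,w):=1-\alpha(v\wedge w)\ge 0$, whose iterated integrals in either order are well-defined elements of $[0,+\infty]$ and automatically agree. Subtracting both iterated integrals from the finite quantity $\phi(-\deg)\psi(-\deg)$ then yields the swap for $\alpha$ itself, giving
\begin{equation*}
\int_{V_\infty}\phi\,d\Delta\psi \;=\; \iint_{V_\infty\times V_\infty}\alpha(v\wedge w)\,d\Delta\phi(v)\,d\Delta\psi(w) \;=\; \langle\phi,\psi\rangle
\end{equation*}
as identities in $[-\infty,+\infty)$.

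Once $(*)$ is established, symmetry is immediate from $\alpha(v\wedge w)=\alpha(w\wedge v)$ together with the Tonelli-based swap just performed. For bilinearity, I would use Theorem \ref{thmrhotogrhoinj}: the injectivity of $\rho\mapsto g_\rho$ together with the linearity of $\rho\mapsto g_\rho$ shows that $\Delta(c_1\phi_1+c_2\phi_2)=c_1\Delta\phi_1+c_2\Delta\phi_2$ for $c_1,c_2\ge 0$ and $\phi_1,\phi_2\in \SH(V_\infty)$. The representation $\langle\phi,\psi\rangle=\int\phi\,d\Delta\psi$ is then manifestly $\R_{\ge 0}$-linear in $\psi$, and, by symmetry, in $\phi$. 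This $\R_{\ge 0}$-bilinearity extends by the usual subtraction trick to a symmetric $\R$-bilinear form on the real vector space generated by $\SH(V_\infty)$.

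The main obstacle is exactly the Fubini step, since $\alpha$ is allowed to equal $-\infty$ on a set of positive $(\Delta\phi\otimes\Delta\psi)$-measure when $\phi$ and $\psi$ both carry Laplacian mass on curve valuations determining the same branch. The non-negative reformulation $h=1-\alpha$ bypasses this issue cleanly: whenever the iterated integral of $h$ is infinite, both sides of $(*)$ take the value $-\infty$, which is consistent with the stated range $\langle\phi,\psi\rangle\in[-\infty,+\infty)$; whenever it is finite, the standard version of Fubini applies directly. In either case the identity holds.
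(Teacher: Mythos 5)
Your proposal is correct and follows essentially the same route as the paper, which simply states that $(*)$ follows from Fubini's theorem and that symmetry and linearity are immediate from the definition. You have filled in the details: the Tonelli reduction to the nonnegative function $1-\alpha(v\wedge w)$ to handle the possible $-\infty$ values, and the use of the injectivity of $\rho\mapsto g_\rho$ (together with the linearity of that map) to make the linearity of $\Delta$, and hence of the pairing, precise on the cone $\SH(V_\infty)$.
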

\proof The linearity and the symmetry are obvious from the definition. Equation (*) follows from  Fubini's Theorem.
\endproof

We shall prove
\begin{thm}[Hodge inequality]\label{thmpropofdirpair}
For any two subharmonic functions  $\phi,\psi$, we have
$$(\phi(-\deg)\psi(-\deg)-\langle\phi,\psi\rangle)^2\leq (\phi(-\deg)^2-\langle\phi,\phi\rangle)(\psi(-\deg)^2-\langle\psi,\psi\rangle).$$
\end{thm}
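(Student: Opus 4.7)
The plan is to recognize the stated inequality as the Cauchy--Schwarz inequality for the symmetric bilinear form $B(\phi,\psi):=\phi(-\deg)\psi(-\deg)-\langle\phi,\psi\rangle$ on $\SH(V_\infty)$. Since $\phi(-\deg)=\Delta\phi(V_\infty)$, a brief use of Fubini yields
$$B(\phi,\psi)=\iint\bigl(1-\alpha(v\wedge w)\bigr)\,d\Delta\phi(v)\,d\Delta\psi(w)\in[0,+\infty],$$
so the kernel $K(v,w):=1-\alpha(v\wedge w)$ is non-negative and symmetric. The strategy is to exhibit $K$ as an $L^2$ inner product of path-indicators in the tree $V_\infty$, from which Cauchy--Schwarz will give $B(\phi,\psi)^2\le B(\phi,\phi)B(\psi,\psi)$ directly.

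For the Hilbert-space representation, I first assume that every valuation carrying positive mass has finite skewness. For each $v\in V_\infty$ I set $P_v:=[-\deg,v]$, a segment of length $1-\alpha(v)$ in the canonical tree metric. In a rooted tree any two such paths intersect in the initial segment $P_v\cap P_w=[-\deg,v\wedge w]$, so on any finite subtree $T$ containing $v,w$ (equipped with its arc-length measure $d\ell$), the indicator functions satisfy
$$\langle\mathbf 1_{P_v},\mathbf 1_{P_w}\rangle_{L^2(T,d\ell)}=\mathrm{length}\bigl([-\deg,v\wedge w]\bigr)=1-\alpha(v\wedge w)=K(v,w).$$
Setting $f_\mu:=\int\mathbf 1_{P_v}\,d\mu(v)\in L^2(T,d\ell)$ as a Bochner integral, Fubini for Bochner integrals gives $\iint K\,d\mu\,d\nu=\langle f_\mu,f_\nu\rangle_{L^2}$, $\iint K\,d\mu\,d\mu=\|f_\mu\|^2_{L^2}$, and the inequality reduces to the standard Cauchy--Schwarz in $L^2$.

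To treat the possible appearance of curve valuations (where $\alpha=-\infty$) and to handle general positive Radon measures $\mu=\Delta\phi,\nu=\Delta\psi$, I would use the truncation $\alpha_t:=\max\{\alpha,-t\}$ already introduced in the paper. Stopping each segment at the level $\alpha=-t$ produces a tree of finite total length to which the previous step applies verbatim with $K_t(v,w):=1-\alpha_t(v\wedge w)$, yielding $B_t(\phi,\psi)^2\le B_t(\phi,\phi)B_t(\psi,\psi)$. Since $K_t\uparrow K$ as $t\to\infty$ and the measures are positive and finite, monotone convergence passes the inequality from $K_t$ to $K$. The main obstacle will be making the Bochner-integration step rigorous for a general measure, not just for finite sums of Dirac masses: one must verify strong measurability of $v\mapsto\mathbf 1_{P_v^{(t)}}$ into $L^2$ of the truncated tree, together with $\int\|\mathbf 1_{P_v^{(t)}}\|\,d\mu<\infty$, which follows from the boundedness $\|\mathbf 1_{P_v^{(t)}}\|^2\le 1+t$ and finiteness of $\mu$. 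A safe alternative would be to first approximate $\mu,\nu$ weakly by finitely supported measures on divisorial valuations via Lemmas \ref{lem714favvalu} and \ref{lemapromesbysupfini}, apply the resulting finite-dimensional Cauchy--Schwarz (which is transparent), and then pass to the limit using continuity of $K_t$ on each finite subtree $\Gamma_X$.
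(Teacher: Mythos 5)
Your proposal is correct and follows essentially the same route as the paper: the paper first reduces to the case where $\Delta\phi,\Delta\psi$ are supported on a finite subtree $T$ (Proposition \ref{prohodgeindexchangorder}), then integrates by parts to obtain $\phi(-\deg)\psi(-\deg)-\langle\phi,\psi\rangle=\int_T\frac{d\phi}{dt}\frac{d\psi}{dt}\,dt$, and applies Cauchy--Schwarz in $L^2(T,dt)$. Your kernel decomposition $1-\alpha(v\wedge w)=\langle\mathbf 1_{[-\deg,v]},\mathbf 1_{[-\deg,w]}\rangle_{L^2(T,d\ell)}$ produces the identical $L^2$ representation (note $f_\mu(w)=\Delta\phi\{v\ge w\}=-\frac{d\phi}{dt}(w)$), and your ``safe alternative'' of approximating by finite subtrees is precisely the paper's reduction, so the two arguments coincide up to bookkeeping.
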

\begin{proof}[Proof of the Theorem~\ref{thmpropofdirpair}]
%(i) By Fubini's Theorem, we have  $$\int_{V_{\infty}^2}\alpha(v\wedge w)\Delta\phi(v)\Delta\psi(w)=\int_{V_{\infty}}(\int_{V_{\infty}}\alpha(v\wedge w)\Delta\phi(v))\Delta\psi(w)= \int_{V_\infty} \phi(v) \,\Delta \psi(v).$$
We first need the following
\begin{pro}\label{prohodgeindexchangorder} Let $\phi,\psi$ be two subharmonic functions in $\SH(V_{\infty})$.
Then there exists a sequence of compactifications $X_n\in \mathcal{C}$, $n\geq 0$ such that $X_{n+1}$ dominates $X_{n}$ for $n\geq 0$ and
$\langle\phi,\psi\rangle=\lim_{n\rightarrow\infty}\langle R_{\Gamma_{X_n}}\phi,R_{\Gamma_{X_n}}\psi\rangle.$
\end{pro}
W only have to prove our theorem in the case
 $\Delta\phi$ and $\Delta\psi$ are supported on a finite subtree $T$ of $V_{\infty}$. Set $t(v):=-\alpha(v)$ for $v\in T.$
Denote by $E$ the set of all edges of $T$, $v_1^I,v_2^I$ the two endpoints of $I$ and $\vec{v}_{1}^I, \vec{v}_{2}^I$ the two direction at $v_1^I$ and $v_2^I$. Denote by $\{v_1,\cdots,v_l\}$ the set of all endpoints and branch points in $T$ and $T_v$ the set of direction at $v$ in $T.$

By integration by parts, we have
$$\int_{I}\phi\frac{d^2\psi}{dt^2}=-\int_{I}\frac{d\phi}{dt}\frac{d\psi}{dt}dt$$ for all $I\in E.$
Then we have
$$\langle\phi,\psi\rangle=\int_{\V_{\infty}}\phi(v)\psi(-\deg)\delta_{-\deg}(v)+\sum_{I\in E}\int_{I}\phi\frac{d^2\psi}{dt^2}
=\phi(-\deg)\psi(-\deg)-\int_{T}\frac{d\phi}{dt}\frac{d\psi}{dt}dt.$$
It follows that $\langle\phi,\psi\rangle=\langle\psi,\phi\rangle$,
and by Cauchy inequality, we get $$(\phi(-\deg)\psi(-\deg)-\langle\phi,\psi\rangle)^2\leq (\phi(-\deg)^2-\langle\phi,\phi\rangle)(\psi(-\deg)^2-\langle\psi,\psi\rangle).$$
\end{proof}

%To prove Proposition \ref{prohodgeindexchangorder}, we first recall the following
%\begin{lem}\label{lemchangeorderlim}Let $\{a_{X_1,\cdots,X_n}\}_{X_1,\cdots,X_n\in\mathcal{C}}$ be a net of real numbers satisfying $$a_{X_1,\cdots,X_i'\cdots,X_n}\leq a_{X_1,\cdots,X_i\cdots,X_n}$$ if $X_i'\rightarrow X_i$, $i=1,\cdots,n$.  Then we have
%$$\lim\limits_{\overleftarrow{X_1\in \mathcal{C}}}\cdots\lim\limits_{\overleftarrow{X_n\in \mathcal{C}}}a_{X_1,\cdots,X_n}=\lim\limits_{\overleftarrow{X\in \mathcal{C}}}a_{X,\cdots,X}=\inf\{a_{X_1,\cdots,X_n}\}_{X_1,\cdots,X_n\in\mathcal{C}}.$$
%\end{lem}
%Next we need an analogue of the Monotone convergence lemma.
%
%\begin{lem}\label{lemmonotone}Let $\mu$ be a positive Radom measure on $V_{\infty}$ and $\{f_X\}_{X\in \mathcal{C}}$ be a net of positive continuous functions on $V_{\infty}$ satisfying $f_X\geq f_{X'}$ if $X\rightarrow X'$. Then there exists a measurable  function $f$ satisfying $f=\lim\limits_{\overleftarrow{X\in \mathcal{C}}}f_X$ pointwisely and we have $$\int_{V_{\infty}}fd\mu=\lim\limits_{\overleftarrow{X\in \mathcal{C}}}\int_{V_{\infty}}f_Xd\mu.$$
%\end{lem}

\proof[Proof of Proposition \ref{prohodgeindexchangorder}]By Proposition \ref{prosubhapbyrestogx}, there exists a sequence of compactifications $X_n\in \mathcal{C}$ $n\geq 0$ such that $X_{n+1}$ dominates $X_{n}$ for $n\geq 0$ and $R_{\Gamma_{X_n}}\phi$ (resp. $R_{\Gamma_{X_n}}\psi$) decreases pointwise to $\phi$ (resp. $\psi$).
%By Lemma \ref{lem714favvalu} and Lemma \ref{lemapromesbysupfini}, there exists a sequence of compactification $Z_n\in \mathcal{C}$ $n\geq 0$ such that $Z_{n+1}$ dominates $Z_{n}$ for $n\geq 0$ and $r_{Z_n*}\phi\rightarrow \phi$. Pick a sequence of compactification $X_n\in \mathcal{C}$ $n\geq 0$ such that $X_{n+1}$ dominates $X_{n}$
%and $\Gamma_{X_{n}}\cup\Gamma_{Y_{n+1}}\cup\Gamma_{Z_{n+1}}\subseteq\Gamma_{X_{n+1}}$ for all $n\geq 0.$

We have
$$|\langle\phi,\psi\rangle-\langle R_{\Gamma_{X_n}}\phi,R_{\Gamma_{X_n}}\psi\rangle|\leq \Big|\int_{V_{\infty}}R_{\Gamma_{X_n}}(\phi) \Delta R_{\Gamma_{X_n}}(\psi)-\int_{V_{\infty}}\phi \Delta R_{\Gamma_{X_n}}\psi\Big|$$$$+\Big|\int_{V_{\infty}}\phi \Delta R_{\Gamma_{X_n}}\psi-\int_{V_{\infty}}\phi\Delta \psi\Big|.$$
Observe that $$\Big|\int_{V_{\infty}}R_{\Gamma_{X_n}}(\phi) \Delta R_{\Gamma_{X_n}}(\psi)-\int_{V_{\infty}}\phi \Delta R_{\Gamma_{X_n}}\psi\Big|=0$$ and $$\Big|\int_{V_{\infty}}\phi \Delta R_{\Gamma_{X_n}}\psi-\int_{V_{\infty}}\phi\Delta \psi\Big|\to 0$$  by monotone convergence.
It follows that $$|\langle\phi,\psi\rangle-\langle R_{\Gamma_{X_n}}\phi,R_{\Gamma_{X_n}}\psi\rangle|\to\infty$$ as $n\to\infty$.

%$$\langle\phi,\psi\rangle=\int_{V_{\infty}}\phi\Delta\psi=\lim_{n\to\infty}\int_{V_{\infty}}R_{\Gamma_{X_n}}(\phi) \Delta\psi $$$$=\lim_{n\to\infty}\lim_{m\to\infty}\int_{V_{\infty}}R_{\Gamma_{X_n}}(\phi) \Delta R_{\Gamma_{X_m}}(\psi)
%=\lim_{n\to\infty}\lim_{m\to\infty}\langle R_{\Gamma_{X_n}}(\phi),R_{\Gamma_{X_m}}\psi\rangle
%$$$$=\lim_{n\to\infty}\langle R_{\Gamma_{X_n}}\phi,R_{\Gamma_{X_n}}\psi\rangle.$$
\endproof

Finally, we collect two useful results.
\begin{pro}\label{prortgephipsi}
Pick any two subharmonic functions $\phi, \psi \in \SH(V_\infty)$. For any finite subtree $T\subset V_\infty$ one has
$$
\langle R_T\phi, R_T\psi \rangle \ge \langle \phi, \psi \rangle~
.$$
%Moreover when $\langle \phi, \psi \rangle$ is finite, for any $\epsilon >0$
%there exists a finite subtree $T\subset V_\infty$ whose endpoints are divisorial valuations such that
%$$
%|\langle R_T\phi, R_T\psi \rangle - \langle \phi, \psi \rangle| \le \epsilon~.
%$$
\end{pro}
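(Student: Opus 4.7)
The plan is to reduce the inequality to the pointwise comparison $R_T\phi \ge \phi$ established in the previous proposition, together with the change-of-variables identity $\Delta(R_T\psi) = (r_T)_*\Delta\psi$ and the integral representation $\langle \phi,\psi\rangle = \int_{V_\infty} \phi\,\Delta\psi$.

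First I would rewrite both sides using the identity $(*)$ from the preceding proposition. On the right we simply have
\[
\langle \phi,\psi\rangle = \int_{V_\infty} \phi(v)\,d\Delta\psi(v).
\]
On the left, applying $(*)$ to the pair $(R_T\phi, R_T\psi)$ and using $\Delta(R_T\psi) = (r_T)_*\Delta\psi$ together with the obvious identity $R_T\phi \circ r_T = R_T\phi$, I get
\[
\langle R_T\phi, R_T\psi\rangle
= \int_{V_\infty} R_T\phi(w)\,d(r_T)_*\Delta\psi(w)
= \int_{V_\infty} R_T\phi(r_T(v))\,d\Delta\psi(v)
= \int_{V_\infty} R_T\phi(v)\,d\Delta\psi(v).
\]

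Next I invoke the preceding proposition, which asserts $R_T\phi \ge \phi$ pointwise on $V_\infty$. Since $\Delta\psi$ is a positive Radon measure, integrating this pointwise inequality yields
\[
\int_{V_\infty} R_T\phi(v)\,d\Delta\psi(v) \ge \int_{V_\infty} \phi(v)\,d\Delta\psi(v),
\]
which is exactly $\langle R_T\phi, R_T\psi\rangle \ge \langle \phi,\psi\rangle$.

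The only minor subtlety is that $\phi$ may take the value $-\infty$, so the integrals could be $-\infty$; but the pairing is a priori valued in $[-\infty,+\infty)$, and integrating a pointwise inequality between extended-real-valued measurable functions against a positive measure is valid in this range, so the argument goes through without modification. There is no real obstacle here once the two inputs $R_T\phi\ge\phi$ and $\Delta(R_T\phi)=(r_T)_*\Delta\phi$ from the preceding proposition are in hand.
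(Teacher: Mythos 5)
Your proof is correct. The paper's own proof is shorter and symmetric: from $R_T\phi\ge\phi$ it deduces $\langle R_T\phi,\psi'\rangle \ge \langle\phi,\psi'\rangle$ for every $\psi'$, then applies this twice (once with $\psi'=R_T\psi$ and once with the roles of $\phi$ and $\psi$ swapped) to get $\langle R_T\phi, R_T\psi\rangle \ge \langle\phi, R_T\psi\rangle \ge \langle\phi,\psi\rangle$. You instead use the push-forward formula $\Delta(R_T\psi)=(r_T)_*\Delta\psi$ and the idempotence $r_T\circ r_T = r_T$ to establish the clean intermediate \emph{equality} $\langle R_T\phi, R_T\psi\rangle = \langle R_T\phi,\psi\rangle$, after which a single application of $R_T\phi\ge\phi$ finishes. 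Both arguments rest on exactly the same inputs from the preceding proposition (monotonicity of $R_T$ and the Laplacian push-forward, via the integral representation $(*)$); yours trades one inequality for an explicit change of variables, which slightly sharpens the bookkeeping at the cost of one extra computation. Either is fine.
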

\begin{proof}
%0. the inequality follows from $R_T\phi \ge \phi$ and the symmetry of the Dirichlet pairing.
%
%1. any Radon measure is supported on the closure of a countable union of finite subtrees
%Choose $T_n$ increasing union of finite trees containing $-\deg$ and the support of $\Delta \phi$ and $\Delta \psi$.
%
%
%2. prove using monotone convergence (and replacing $\phi$ by $\phi - \phi(-\deg) \le0$)
%that $ \int R_{T_n} \phi \,\Delta\psi = \langle R_{T_n}\phi,\psi \rangle \to \langle \phi,\psi \rangle$ as $n\to\infty$.
%
%
%3. prove that
%$\lim_m \langle R_{T_n}\phi,  R_{T_m}\psi \rangle = \langle R_{T_m}\phi,\psi \rangle$.
Since $R_T\phi\geq \phi$, for any $\psi\in \SH(V_{\infty})$ we have $\langle R_T\phi, \psi \rangle=\int_{V_{\infty}}R_T\phi\Delta\psi \ge\int_{V_{\infty}}\phi\Delta\psi=\langle \phi, \psi \rangle$.  It follows that $$
\langle R_T\phi, R_T\psi \rangle \ge \langle \phi, R_T\psi \rangle\geq \langle \phi, \psi \rangle.
$$
%For any $\varepsilon>0$, by Proposition \ref{prohodgeindexchangorder}, there exists a compactification $X\in \mathcal{C}$ satisfying $$\varepsilon>\langle R_{\Gamma_X}\phi, R_{\Gamma_X}\psi \rangle- \langle \phi, \psi \rangle>0.$$ Set $T$
\end{proof}
\begin{pro}\label{prophiphi}Pick any subharmonic function $\phi \in \SH(V_\infty)$. For any finite subtree $T\subset V_\infty$ one has
$$
\langle R_T\phi, R_T\phi \rangle \ge \langle \phi, \phi \rangle~
$$ and the equality holds if and only if $\Delta\phi$ is supported on $T$.
\end{pro}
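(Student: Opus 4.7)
The inequality $\langle R_T\phi, R_T\phi\rangle \geq \langle \phi,\phi\rangle$ is immediate from Proposition~\ref{prortgephipsi} applied with $\psi=\phi$, so the genuine content is the characterization of equality. My plan is to first derive the sharper identity
$$
\langle R_T\phi, R_T\phi\rangle - \langle \phi,\phi\rangle \;=\; \int_{V_\infty} (R_T\phi - \phi)\, d\Delta\phi.
$$
This will follow by rewriting $\langle R_T\phi, R_T\phi\rangle = \int R_T\phi\, d\Delta R_T\phi = \int (R_T\phi\circ r_T)\, d\Delta\phi$ using $\Delta R_T\phi = r_{T*}\Delta\phi$ and the retraction identity $r_T\circ r_T=r_T$ (which gives $R_T\phi\circ r_T = R_T\phi$). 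Since $R_T\phi\geq \phi$ pointwise and $\Delta\phi$ is a positive Radon measure, the integrand is non-negative, and the integral vanishes exactly when $R_T\phi=\phi$ on $\supp\Delta\phi$.

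The proposition is thus reduced to the pointwise implication: if $v\in\supp\Delta\phi$ satisfies $R_T\phi(v)=\phi(v)$, then $v\in T$. The converse is trivial since $r_T$ fixes $T$. For the forward direction, assume $-\deg\in T$ (as in the definition of $r_T$) and suppose for contradiction that $v\in\supp\Delta\phi$ lies outside $T$. Then $r_T(v)$ is strictly below $v$ on $[-\deg,v]$, and the equality $\phi(r_T(v))=\phi(v)$ together with the monotonicity of $\phi$ along that segment forces $\phi$ to be constant on $I:=[r_T(v),v]$. The goal is then to show that $\Delta\phi$ vanishes on an open neighborhood of $v$, which will contradict $v\in\supp\Delta\phi$.

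The key input for this step is the interplay between Theorem~\ref{thmdisclap} and the subharmonic inequalities of Theorem~\ref{thmcheckafunctionsubhar}. At each $w\in(r_T(v),v]$, the flatness of $\phi$ on $I$ gives $D_{\vec{w}_0}\phi=0$ for the direction $\vec{w}_0$ along $I$ toward $-\deg$, and similarly $D_{\vec{w}_1}\phi=0$ for the direction along $I$ toward $v$ when $w<v$; the fact that $\phi$ is decreasing forces $D_{\vec{w}}\phi\leq 0$ for every remaining direction $\vec{w}\in \Tan_w$; and the non-negativity of $\Delta\phi\{w\}=\sum_{\vec{w}\in\Tan_w}D_{\vec{w}}\phi$ then forces every such $D_{\vec{w}}\phi$ to vanish. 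By Theorem~\ref{thmdisclap} this yields $\Delta\phi\{w\}=0$ and $\Delta\phi(U(\vec{w}))=0$ for every direction $\vec{w}\neq\vec{w}_0$ at $w$. Collecting these vanishings over $w\in(r_T(v),v]$, $\Delta\phi$ has no mass on the set $r_I^{-1}((r_T(v),v])$, which is open in $V_\infty$ (as the preimage under the continuous retraction $r_I$ of an open subset of $I$) and contains $v$.

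The main technical obstacle is this last step: translating pointwise vanishing of directional derivatives all along the segment $(r_T(v),v]$ into a genuine measure-theoretic vanishing of $\Delta\phi$ on an open neighborhood of $v$. Once the neighborhood is correctly identified as $r_I^{-1}((r_T(v),v])$, the remainder of the argument is routine push-forward book-keeping combined with the identity $r_T\circ r_T=r_T$.
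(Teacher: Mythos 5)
Your proof is correct, and it takes a genuinely different and cleaner route than the paper. The paper proves the strict inequality when $\supp\Delta\phi\not\subseteq T$ by reducing to the case where $\Delta\phi$ lives on a finite tree $Y\supset T$ (via $R_Y$), locating a connected component $U$ of $Y\setminus T$ carrying positive Laplacian mass, and then running the chain of inequalities $\int_Y\phi\,\Delta\phi < \int_Y R_T\phi\,\Delta\phi \le \int_Y R_T\phi\,\Delta R_T\phi$. Your approach instead establishes the identity
$$
\langle R_T\phi, R_T\phi\rangle - \langle\phi,\phi\rangle = \int_{V_\infty}(R_T\phi-\phi)\,\Delta\phi,
$$
using $\Delta R_T\phi=(r_T)_*\Delta\phi$ together with $r_T\circ r_T=r_T$, and then reduces everything to a local statement at a single point of the support. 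This buys two things: the inequality and the equality case are handled simultaneously, and the argument stays entirely in $V_\infty$ with no auxiliary choice of $Y$. (One should note that $R_T\phi-\phi\ge 0$ is lower semicontinuous, so the vanishing of the integral does yield $\supp\Delta\phi\subseteq\{R_T\phi=\phi\}$, as you use implicitly.) Your local step is also right, but the final ``collecting'' over all $w\in(r_T(v),v]$ is more than you need and, taken literally, invokes an uncountable union of null sets, which does not a priori yield a null set. The cleaner statement, already contained in your own observations, is that the \emph{single} tangent direction $\vec{w}_1$ at $w_0:=r_T(v)$ pointing toward $v$ satisfies $D_{\vec{w}_1}\phi=0$ (by constancy of $\phi$ on $I$), so by Theorem~\ref{thmdisclap} the open set $U(\vec{w}_1)=r_I^{-1}\bigl((r_T(v),v]\bigr)$ has $\Delta\phi$-mass $-D_{\vec{w}_1}\phi=0$; since it contains $v$, this already contradicts $v\in\supp\Delta\phi$. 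With that tightening, the proof is complete and, in my view, more transparent than the paper's.
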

\proof By Proposition \ref{prortgephipsi}, we only have to show that $\langle R_T\phi, R_T\phi \rangle > \langle \phi, \phi \rangle$ when $\Delta\phi$ is not supported on $T$.

Suppose that $\Delta\phi$ is not supported on $T$. It follows that $\Delta\phi(V_{\infty}\setminus T)>0$. Pick $X\in \mathcal{C}$ such that $r_{X*}\Delta\phi(V_{\infty}\setminus T)>0$, and set $Y:=T\cup \Gamma_X$, so that $Y$ is a finite tree.

Since $\langle R_T(\phi), R_T(\phi) \rangle\geq\langle R_Y(\phi), R_Y(\phi) \rangle\geq\langle \phi, \phi \rangle$, by replacing $\phi$ by $R_Y\phi$, we may suppose that $\Delta\phi$ is supposed by $Y$. There exists a connected component $U$ of $Y\setminus T$ satisfying $\int_U\Delta \phi>0$. There exists a unique point $y_0\in \overline{U}\cap T$ where $\overline{U}$ is the closure of $U$ in $Y$.  It follows that $\phi(y)<\phi(y_0)=R_T\phi(y)$ for all $y\in U.$ Then we conclude that
$$\langle \phi, \phi \rangle=\int_{Y}\phi\Delta \phi=\int_{T\setminus U}\phi\Delta \phi+\int_{U}\phi\Delta \phi$$$$<\int_{T\setminus U}\phi\Delta \phi+\int_{U}R_T\phi\Delta \phi\leq \int_{T\setminus U}R_T\phi\Delta \phi+\int_{U}R_T\phi\Delta \phi$$$$=\int_{Y}R_T\phi\Delta \phi=\int_{Y}\phi\Delta R_T(\phi)$$$$=\int_{Y}R_T(\phi)\Delta R_T(\phi)=\langle R_T(\phi), R_T(\phi) \rangle.$$
\endproof

\subsection{Positive subharmonic functions}
We prove here a technical result that will play an important role in the next section.

%\begin{pro}\label{prothetaezopzt}Let $\phi$ be a function in $\SHP$ such that $\langle\phi,\phi\rangle=0$. Moreover we suppose that $\Supp\Delta\phi=\{v_1,\cdots,v_s\}$ where $s$ is a positive integer. Then there exists $r_i\in \mathbb{R}^+$ such that $\phi=\sum_{i=1}^sr_iZ_{v_i}.$ In particular we have $\alpha(v_i)>-\infty$, $\sum_{j=1}^{s}r_i\alpha(v_i\wedge v_j)=0$ and $\sum_{i=1}^sr_i=\phi(-\deg).$
%\end{pro}
%\proof[Proof of Proportion \ref{prothetaezopzt}]There exists $r_i\in \mathbb{R}^+$ such that $\Delta\phi=\sum_{i=1}^sr_i\delta_{v_i}.$ Then we have $\phi=\sum_{i=1}^sr_iZ_{v_i}.$  We see that $\phi(-\deg)=\sum_{i=1}^sr_iZ_{v_i}(-\deg)=\sum_{i=1}^sr_i.$
%
%
%Since $0=\langle\phi,\phi\rangle=\int_{V_{\infty}}\phi\Delta\phi=\sum_{i=1}^sr_i\phi(v_i).$
%Since $\phi\in SH(V_{\infty})^+$, we have $\phi(v_i)\geq 0$, for all $i=1,\cdots,s$. It follows that $\phi(v_i)=0$ for all $i=1,\cdots,s.$
%Then we have$\sum_{j=1}^{s}r_i\alpha(v_i\wedge v_j)=0$ and then $\alpha(v_i)>-\infty$.
%\endproof

For any set $S\subset V_\infty$ we define $B(S) := \cup_{v\in S} \{ w, \, w\ge v\}$.

\pro\label{proapoxishplus}Let $\phi$ be a function in $\SHP$ such that $\langle\phi,\phi\rangle=0$ and $\Supp\Delta\phi=\{v_1,\cdots,v_s\}$ where $s$ is a positive integer.

Then for any finite set $S\subseteq B(\{v_1,\cdots,v_s\})$ satisfying $\{v_1,\cdots,v_s\}\not\subseteq S$, there exists a function $\psi\in \SHP$ such that
\begin{itemize}
\item $\psi(v)=0$ for all $v\in B(S)$;
\item
 $\langle\psi,\psi\rangle>0.$
\end{itemize}
\endpro

\exe Let $Q\in k[x,y]$ be any nonconstant polynomial. Proposition~\ref{proshpos}
implies that
$\log^+|Q|\in \SHP$, $\langle \log^+|Q|,\log^+|Q|\rangle=0$ and $\#\Supp\Delta \log^+|Q|<\infty$
so that the preceding proposition applies to $\phi = \log^+|Q|$.

\proof
Write $\Delta \phi = \sum_{i=1}^s r_i \delta_{v_i}$ with $r_i>0$.
Since $\langle\phi,\phi\rangle=0$ we have $\phi(v_i) =0$ for all $i$.
Observe now that the restriction of $\phi$ to any segment $[-\deg , v_i]$ is
not  locally constant. It follows that the sets $B(\{v_i\})$ are  disjoint, or in other
words that $v_i \wedge v_j < v_i$ for any $i \neq j$.

\smallskip

Suppose first that there exists an index $i\in \{1,\cdots,s\}$ such that $S\cap B(\{v_i\})=\emptyset$, and denote by $T$ the convex hull of $\{-\deg, v_1,\cdots,v_s\}\setminus \{v_i\}$. Then  $\psi :=R_T\phi$  satisfies all the required conditions.

\smallskip

Otherwise we may suppose that $v_1 \notin S$ and pick $w_1 \in S$ satisfying $w_1 >v_1$.

%By Proposition \ref{prothetaezopzt}, $\phi$ takes form $\phi=\sum_{i=1}^sr_iZ_{v_i}$ where $r_i>0$ for all $i=1,\cdots,s.$
%Let $T$ be the tree generated by $\{v_1,\cdots,v_s\}\setminus \{v_i\}$ and $-\deg$.

Choose any $v_1'<v_1$ such that $(\supp \Delta \phi)\cap B(\{v_1'\})=\{v_1\}$, and $w^1\in (v_1',v_1)$, $w^2\in (v_1,w_1)$ such that $\alpha(w^1)-\alpha(v_1)=\alpha(v_1)-\alpha(w^2).$ The subharmonic function $\psi:=\sum_{i=2}^sr_ig_{v_i}+\frac{r_1}2(g_{w^1}+g_{w^2})$
satisfies all required conditions.
\endproof

\subsection{The class of $\mathbb{L}^2$ functions}
We define $\mathbb{L}^2(V_{\infty})$ to be the set of functions $\phi:\{v\in V_{\infty}|\,\,\alpha(v)>-\infty\}\to \mathbb{R}$ such that
$\phi=\phi_1-\phi_2$ on $\{v\in V_{\infty}|\,\,\alpha(v)>-\infty\}$ with
$\phi_i\in \SH (V_\infty)$ and $\langle\phi_i,\phi_i\rangle>-\infty$ for $i=1,2$.
Then $\mathbb{L}^2(V_{\infty})$ is a vector space.

For sake of convenience, we shall always extend $\phi$ to $V_{\infty}$ by setting $\phi(v)$ to be an arbitrary number in
$\phi(v)\in [\liminf_{w<v}\phi(w),\limsup_{w<v}\phi(w)]$ when $\alpha(v)=-\infty$.

Observe that by Proposition~\ref{prohodgeindexchangorder} (iii), we have $\langle\phi_1,\phi_2\rangle>-\infty $ so that the pairing $\langle \cdot,\cdot\rangle$
extends to $\mathbb{L}^2(V_{\infty})$  as a symmetric bilinear form and  Hodge inequality \ref{thmpropofdirpair} is still valid.

%
%XXX not clear why a subharmonic function belongs to $\mathbb{L}^2(V_{\infty})$ iff
% $\langle\phi,\phi\rangle>-\infty$ XXX
%
%\medskip

All bounded subharmonic functions are contained in $\mathbb{L}^2(V_{\infty})$.
In particular, $g _v\in \mathbb{L}^2(V_{\infty})$ if $\alpha(v)>-\infty$ and $\SHP\subseteq \mathbb{L}^2(V_{\infty})$.
\newpage

%We denote by $\mathbb{L}^2(\mathfrak{X})$ the space of $\mathbb{L}^2$-classes of the Riemann-Zariski defined in \cite[Appendix A]{Favre2011},
%see also \cite{boucksomfavrejonsson,Cantat2007,Manin1986}.
%In a sequel to this paper \cite{Xiec}, we show that $\mathbb{L}^2(V_{\infty})\simeq\mathbb{L}^2(\mathfrak{X})$.

\section{Proof of the Main Theorem}\label{sectionpolytakingnova}
\subsection{First reductions}
%
%Let $k$ be an algebraically closed field. Let $S$ be a finite subset of $V_{\infty}$.
%Set $R_S:=\{P\in k[x,y]|\,\,v(P)\geq 0 \text{ for all }v\in S \}.$
%It is easy to check that $R_S$ is a $k$-subalgebra of $k[x,y]$.
%
%The only possible transcendence degrees of $\Frac(R_S)$ over $k$ are $0,1$ and $2$. A natural question is when it can reach the largest possible value $2$.
%To answer of this question, first we fix some notations.
Let us recall the setting from the introduction. Let $R:=k[x,y]$ denote the ring of polynomials in two variables over an algebraically closed field $k$.
Let $S$ be a finite set of valuations on $R$ that are trivial on $k$. We define
$R_S = \cap_{v \in S} \{ P \in R, \, v(P) \ge 0\}$.
This is a $k$-subalgebra of $k[x,y]$ and we denote by
$\delta(S)\in \{0,1,2\}$ the transcendence degree of its field of fraction over $k$.

We first do the following reduction.
\begin{lem}\label{lemreducetosubsetofvin}Given any finite set of valuations $S$ on $R$ that are trivial on $k$ and centered at infinity,
we have $\delta(S) =2$  if and only if $\delta(\bar{S}) =2$.
\end{lem}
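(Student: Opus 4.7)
The plan is to prove the two directions separately. For the easy direction I would establish the general pointwise inclusion $R_v \subseteq R_{\bar v}$ for every valuation $v$ on $R$ centered at infinity and trivial on $k$; intersecting over $v \in S$ then yields $R_S \subseteq R_{\bar S}$ and hence $\delta(S) \leq \delta(\bar S)$, which in particular gives $\delta(S) = 2 \Longrightarrow \delta(\bar S) = 2$. Checking the inclusion amounts to unpacking the definition of $\bar v$ in the three cases from the introduction. When $v$ has rank one, $\bar v$ is a positive scalar multiple of $v$ and the two valuation rings coincide. When $v$ has rank two---in either the curve-branch or the divisorial-refinement case---its value group is a lex $\Z^2$, and one checks directly that $v(P) \geq 0$ forces $\bar v(P) \geq 0$; in the curve-branch case one uses that $\bar v = v_s$ takes the value $+\infty$ precisely when $\ord_C(P) > 0$, i.e.\ when the leading coordinate of $v(P)$ is positive.

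For the harder direction $\delta(\bar S) = 2 \Longrightarrow \delta(S) = 2$, I would apply Theorem~\ref{thmrichexistpposi} to the rank one family $\bar S$ to produce a polynomial $Q \in R$ with $\bar v(Q) > 0$ for every $v \in S$. A case-by-case inspection then shows that strict positivity of $\bar v(Q)$ always forces the leading component $v_1(Q)$ of $v(Q)$ in its value group to be strictly positive; in particular $v(Q)>0$ for every $v\in S$. I would then claim that for $N$ large enough, the three polynomials $Q^N$, $Q^N x$, and $Q^N y$ all lie in $R_S$. Indeed, for each $v \in S$ and each $P \in \{1, x, y\}$ the leading component of $v(Q^N P) = N v(Q) + v(P)$ equals $N v_1(Q) + v_1(P)$ with $v_1(Q) > 0$, hence becomes strictly positive as soon as $N > -v_1(P)/v_1(Q)$; a single $N$ large enough works uniformly because $S$ and $\{1,x,y\}$ are both finite. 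The inclusion $k(x,y) = k(Q^N, Q^N x, Q^N y) \subseteq \Frac R_S$ then forces $\delta(S) = 2$.

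The main obstacle is the careful case analysis relating $v$ and $\bar v$ in the two rank two situations: one has to make sense of the leading coordinate $v_1$ of $v$ and verify that positivity of $\bar v(Q)$ translates into strict positivity of $v_1(Q)$. In the curve-branch case this rests on the geometric observation that a polynomial $Q$ with $v_s(Q) > 0$ must vanish identically on the underlying curve $C$ (since $Q|_C$ has a pole at any point at infinity of $C$ whenever it is not zero), so that $\ord_C(Q)\geq 1$; this is what rules out the subtle situation where positivity of $\bar v$ would only reflect the subleading coordinate of $v$. Once this bridge between positivity of $\bar v$ and of the leading component of $v$ is in place, both implications reduce to routine manipulations with valuations on $k[x,y]$.
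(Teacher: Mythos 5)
Your overall architecture matches the paper's proof: the easy direction via $R_S\subseteq R_{\bar S}$, and the hard direction by first producing $Q$ with $\bar v(Q)>0$ for all $v\in S$ (via Theorem~\ref{thmsuvsectionprorefshr}(ii), which is the rigorous version applicable to $\bar S\subset V_\infty$, rather than Theorem~\ref{thmrichexistpposi} as stated in the introduction), and then multiplying by a large power of $Q$. The difference is in how you pass from $\bar v(Q)>0$ to $Q^N P\in R_S$, and this is where there is a genuine gap.

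You claim that $\bar v(Q)>0$ always forces the \emph{leading} component $v_1(Q)$ of the rank-two value $v(Q)$ to be strictly positive, and you support the curve-branch case with the assertion that ``$Q|_C$ has a pole at any point at infinity of $C$ whenever it is not zero,'' so that $v_s(Q)>0$ would force $Q$ to vanish on $C$. This is false. Take $C=\{xy=1\}$ and $Q=x$, and let $s$ be the branch of $C$ at $(0\!:\!1\!:\!0)\in L_\infty$: parameterizing the branch by $x=\tau$, $y=1/\tau$, one gets $\ord_s(x|_s)=1>0$, so $v_s(Q)=1>0$, yet $Q=x$ does not vanish on $C$ and $\ord_C(Q)=0$. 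Thus $v_1(Q)=0$, and the step ``$N>-v_1(P)/v_1(Q)$'' is vacuous (division by zero). Your conclusion $v(Q^NP)\ge 0$ does in fact still hold in this example, but not for the reason you give.

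The correct way to handle the curve-branch case, which is exactly what the paper does, is to observe that for such $v$ one has the equality of rings $R_{\{v\}}=R_{\{\bar v\}}$ (because $v(P)\geq 0$ iff either $\ord_C(P)>0$, which is exactly $\bar v(P)=+\infty$, or $\ord_C(P)=0$ and $\ord_s(P|_C)\geq 0$, which is exactly $\bar v(P)\geq 0$ finite). Then $\bar v(Q^NP)=N\bar v(Q)+\bar v(P)$, where $\bar v(Q)>0$ and $\bar v(P)>-\infty$ for nonzero $P$, so this is positive for $N$ large, and $Q^NP\in R_{\{\bar v\}}=R_{\{v\}}$ with no reference to leading components. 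The leading-component argument should be reserved only for the remaining rank-two case, where $\bar v$ is a divisorial coarsening of $v$ and $v_1$ is genuinely a positive multiple of $\bar v$. Once you separate the two rank-two subcases in this way, your proposal becomes correct and coincides with the paper's proof.
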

\proof
Since $R_S \subset R_{\bar{S}}$ it follows that $\delta(S)=2$ implies $\delta(\bar{S})=2$.

Conversely suppose that $\delta(\bar{S})=2$.
Let $v_1, \ldots , v_s$ be the rank $2$ valuations in $S$ whose associated valuations
$\bar{v}_1, \ldots , \bar{v}_s$
in $V_\infty$ are divisorial. Observe that when $v\in S \setminus \{ v_1, \ldots, v_s\}$ then
$R_{\{v\}}=R_{\{\bar{v}\}}$.

By Theorem \ref{thmsuvsectionprorefshr} (ii), there is a nonzero polynomial $P\in R$
such that $v(P) >0$ for all $v \in \bar{S}$. Pick any polynomial $Q$.
Then for $m$ large enough, we have $v(P^mQ) >0$ for all $v\in \bar{S}$. In particular, we get
$\bar{v}_i(P^mQ) >0$ which implies $v_i(P^mQ)>0$. We conclude that
$P^mQ$ also belongs to $R_S$ so that the fraction field of $R_S$ is equal to $k(x,y)$ and
$\delta(S)=2$.
\endproof

\bigskip

In the rest of this section, let $S\subset V_\infty$ be a finite set. It will be convenient to use
the following terminology.
\begin{defi}
A subset of valuations $S \subset V_\infty$ is said to be \emph{rich} when $\delta(S) = 2$.
\end{defi}
We shall also write:
\begin{itemize}
\item
$S^{\min}\subset S$ for the set of valuations that are minimal for the order relation restricted to $S$;
\item
$S_{+} \subset S$ for the subset of valuations in $S$ with finite skewness;
\item
$S^{\min}_{+} \subset S^{\min}$ for the subset of valuations in $S^{\min}$ with finite skewness;
\item
$B(S)$ for the set of all valuations $v\in V_\infty$ such that $v \ge w$ for some $w \in S$;
\item
$B(S)^{\circ}$ for the interior of $B(S)$;
\item
$M(S)$ for the symmetric matrix whose entries are given by $[\alpha(v_i \wedge v_j)]_{1 \le i,j \le l}$.
\end{itemize}
The set $B(S)$ is compact and has as many connected components as there are elements of $S^{\min}$.
In fact, the boundary of any connected component of $B(S)$ is a singleton, and this point lies in $S^{\min}$. Observe that $R_{S^{\min}} = R_S$.

\medskip

The next result follows directly from Hodge index theorem in the case of
divisorial valuations and
by a continuity argument in the general case.
\begin{lem}\label{lemhodgeindextheoremvalu}Let $S$ be a finite subset of $V_{\infty}$ such that $\alpha(v)>-\infty$ for all $v\in S$.
Then the  symmetric matrix $M(S)$ has at most one non-negative eigenvalue.
%The bilinear form associated to the symmetric matrix $M(S)$
%has one of the following signature:
%\begin{itemize}
%\item[(1)]
%$( + , -, - , ... , - );$
%\item[(2)]
%$(0, -, - , ... , -);$
%\item[(3)]
%$(-, -, .... , - ).$
%\end{itemize}
\end{lem}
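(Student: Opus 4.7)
The plan is to establish the divisorial case first via the classical Hodge index theorem on an admissible compactification of $\mathbb{A}^2_k$, and then extend to arbitrary valuations with finite skewness by a density and continuity argument.

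For the divisorial case, write $S = \{v_{E_1}, \ldots, v_{E_l}\}$ where the $E_i$ are irreducible components at infinity in a common admissible compactification $X$ (which exists by successive domination). Denote by $E_0, \ldots, E_m$ all components of $X_\infty = X\setminus \mathbb{A}^2_k$, with $E_0$ the strict transform of $L_\infty$. The key identity to establish is
$$\alpha(v_{E_i} \wedge v_{E_j}) = \frac{1}{b_{E_i} b_{E_j}}(\check{E}_i \cdot \check{E}_j),$$
which generalizes the diagonal formula $\alpha(v_E) = b_E^{-2}(\check{E} \cdot \check{E})$ recalled in Section \ref{sectionvatree}. This identity follows from the axiomatic characterization of $\alpha$ (strictly decreasing along segments with prescribed jumps $|\alpha(v_E) - \alpha(v_{E'})| = (b_E b_{E'})^{-1}$ between intersecting components) applied along the path in $\Gamma_X$ joining $v_{E_i}$ and $v_{E_j}$. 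Granted this, one has $M(S) = D\, G\, D$ with $D = \operatorname{diag}(b_{E_1}^{-1}, \ldots, b_{E_l}^{-1})$ and $G := [(\check{E}_i \cdot \check{E}_j)]_{1 \le i,j \le l}$, so $M(S)$ and $G$ have the same signature.

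Next, observe that $G$ is a principal submatrix of the full Gram matrix $[(\check{E}_i \cdot \check{E}_j)]_{0 \le i,j \le m}$, which is the inverse of the intersection matrix $[(E_i \cdot E_j)]_{0 \le i,j \le m}$ on $\mathrm{Div}(X_\infty) \otimes \mathbb{R}$. By the Hodge index theorem on the smooth projective rational surface $X$, this intersection matrix has signature $(1,m)$; hence so does its inverse. Cauchy's interlacing theorem then forces the $l\times l$ principal submatrix $G$ to have at most one non-negative eigenvalue, which proves the lemma in the divisorial case.

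For the general case, for each $v_i \in S$ I would choose a sequence of divisorial valuations $v_i^{(n)} \in [-\deg, v_i]$ satisfying $v_i^{(n)} \ge v_i \wedge v_j$ for all $j \neq i$ and $v_i^{(n)} \uparrow v_i$. Since $\alpha(v_i) > -\infty$, the function $\alpha$ is continuous on $[-\deg, v_i]$ and $\alpha(v_i^{(n)}) \downarrow \alpha(v_i)$. The tree structure forces $v_i^{(n)} \wedge v_j^{(n)} = v_i \wedge v_j$ \emph{exactly} for $i \neq j$ (not merely in the limit), so only the diagonal entries of $M(S^{(n)})$ vary with $n$, and $M(S^{(n)}) \to M(S)$ entrywise. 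Since the second largest eigenvalue $\lambda_2$ depends continuously on a symmetric matrix, the inequality $\lambda_2(M(S^{(n)})) \le 0$ passes to the limit and yields the conclusion.

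The main obstacle I anticipate is verifying the off-diagonal identity $\alpha(v_{E_i} \wedge v_{E_j}) = (b_{E_i} b_{E_j})^{-1}(\check{E}_i \cdot \check{E}_j)$ in full generality: the diagonal case and the case of adjacent components are essentially given in the excerpt, but for non-adjacent $E_i, E_j$ one must carefully track the sequence of components along the path from $v_{E_i}$ to $v_{E_j}$ in $\Gamma_X$ and sum the prescribed jumps of $\alpha$, then match this with the expansion of $\check{E}_i \cdot \check{E}_j$ via the local-global decomposition of the dual divisors. Alternatively, one could cite this identity directly from \cite{Favre2004, Jonsson}.
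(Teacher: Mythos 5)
Your proposal follows exactly the paper's intended route---the paper offers no written proof beyond the sentence ``follows directly from Hodge index theorem in the case of divisorial valuations and by a continuity argument in the general case''---and your fleshing-out of the divisorial step is correct. The off-diagonal identity $\alpha(v_{E_i}\wedge v_{E_j})=(b_{E_i}b_{E_j})^{-1}(\check E_i\cdot\check E_j)$ is indeed the key fact; the paper later derives $\check E_i=b_{E_i}\sum_j b_{E_j}\,g_{v_{E_i}}(v_{E_j})E_j$ from \cite[Lemma~A.2]{Favre2011} (in the proof of Proposition~\ref{prorefshroer}), and pairing this with $\check E_j$ gives your identity immediately, so the obstacle you flag is easily cleared. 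The Cauchy-interlacing argument is then fine.

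Two small points on the non-divisorial case. First, the requirement $v_i^{(n)}\ge v_i\wedge v_j$ cannot be met when some $v_i<v_j$ in $S$ (there $v_i\wedge v_j=v_i$ while $v_i^{(n)}<v_i$), so the off-diagonal entries of $M(S^{(n)})$ need not equal those of $M(S)$ exactly; this is harmless, because entrywise convergence still holds---once $v_i^{(n)}<v_j^{(n)}$ one has $v_i^{(n)}\wedge v_j^{(n)}=v_i^{(n)}\to v_i$---so your continuity argument goes through unchanged. Second, and more substantively, continuity of $\lambda_2$ only yields $\lambda_2(M(S))\le 0$, whereas ``at most one non-negative eigenvalue'' literally means $\lambda_2<0$, and this strict form is actually used later (e.g.\ to deduce invertibility of the augmented matrix in Lemma~\ref{lemdirichletontree}, and the rank-$(l-1)$ claim in Section~\ref{sectionrechisez}). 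This is a gap shared with the paper's one-line sketch, not peculiar to you, but it is worth noticing: the continuity argument rules out two \emph{positive} eigenvalues, not a positive eigenvalue together with a zero one. To close it one needs a small extra step beyond passing to the limit---for instance, observe that a nonzero vector $a$ in $\ker M(S)$ would give $\phi:=\sum a_i g_{v_i}$ with $\Delta\phi$ supported on $\{v_1,\dots,v_l\}$ and $\phi(v_j)=\langle\phi,g_{v_j}\rangle=0$ for all $j$; such a $\phi$ is piecewise affine on the convex hull of $\{-\deg,v_1,\dots,v_l\}$ and vanishes at the support of its Laplacian, which forces $\phi\equiv0$ by a convexity/maximum-principle argument on the finite tree (cf.\ Lemma~\ref{lemmaxprinftree}), hence $a=0$---combined with the Hodge index inequality this then pins down the signature exactly.

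Subject to that strictness caveat (which the paper itself elides), your proof is correct and is the same proof the paper has in mind.
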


\begin{defi}Let $S$ be a finite subset of $V_{\infty}$. The symmetric matrix $M(S)$ is said to be
negative definite if and only if the matrix $[(\max\{\alpha(v_i\wedge v_j),-t\}]_{1\leq i,j\leq m}$ is negative
define for $t$ large enough.
\end{defi}

%Then we recall our
%\begin{thmM}
%Given any finite set of valuations $S$ on $R$ that are trivial on $k$ and centered at infinity,
%we have $\delta(S) =2$  if and only if $M(\bar{S})$ is negative definite.
%\end{thmM}

Observe that
for $t$ large enough the function $t\mapsto \det(\max\{\alpha(v_i\wedge v_j),t\})_{1\leq i,j\leq l}$ is a polynomial, and that we defined
$$\chi(S) = \lim_{t\to -\infty} (-1)^{\# S}
\det(\max\{\alpha(v_i\wedge v_j),t\})_{1\leq i,j\leq l} \in \R \cup \{\pm \infty\}$$
with the
convention
$\chi(\emptyset):=1$.
When $S = S_{+}$ we simply have
$\chi(S):=(-1)^{\# S}\det((\alpha(v_i\wedge v_j))_{1\leq i,j\leq l})$.

%By lemma \ref{lemhodgeindextheoremvalu}, we have the following Lemma and it is left to the reader.
With this definition, lemma \ref{lemhodgeindextheoremvalu}
implies immediately
\begin{lem}\label{lemreducetodet}Let $S$ be a finite subset of $V_{\infty}$. The symmetric matrix $M(S)$ is negative definite if and only if $\chi(S)>0$.
\end{lem}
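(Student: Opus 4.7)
My plan is to reduce Lemma~\ref{lemreducetodet} to the Hodge-type bound of Lemma~\ref{lemhodgeindextheoremvalu}, which gives ``at most one non-negative eigenvalue'' for matrices of the form $M(S')$ when $S' \subset V_\infty$ consists of valuations of finite skewness. The key point is to realize the truncated matrix $M_t(S) := [\max\{\alpha(v_i \wedge v_j), t\}]_{1 \le i,j \le l}$ (where $l := \# S$) as $M(S(t))$ for some auxiliary finite set $S(t) \subset V_\infty$ whose elements all have finite skewness.

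Concretely, for each $v_i \in S$ with $\alpha(v_i) > -\infty$ I set $v_i(t) := v_i$, and for each $v_i$ with $\alpha(v_i) = -\infty$ (necessarily a curve valuation) I let $v_i(t)$ be the unique valuation on the segment $[-\deg, v_i]$ of skewness $t$. Since the elements of $S$ are distinct, $\alpha(v_i \wedge v_j)$ is finite for all $i \ne j$. Choosing $t$ strictly smaller than every finite $\alpha(v_i)$ and every $\alpha(v_i \wedge v_j)$, a short case analysis on the tree positions of $v_i(t), v_j(t)$ relative to $v_i \wedge v_j$ yields $\alpha(v_i(t) \wedge v_j(t)) = \max\{\alpha(v_i \wedge v_j), t\}$, so that $M(S(t)) = M_t(S)$.

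Since $S(t)$ consists of valuations of finite skewness, Lemma~\ref{lemhodgeindextheoremvalu} applies and shows that $M_t(S)$ has at most one non-negative eigenvalue. For a symmetric $l \times l$ matrix with this property, negative-definiteness is equivalent to the determinant having sign $(-1)^l$: indeed, if all eigenvalues are negative, $\det$ has sign $(-1)^l$, while if exactly one eigenvalue is non-negative and the other $l - 1$ are negative, $\det$ is either zero or has sign $(-1)^{l-1} = -(-1)^l$.

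Finally, as recalled in the excerpt, $\det M_t(S)$ is a polynomial in $t$ for $t$ sufficiently negative, and hence has eventually constant sign. By definition $M(S)$ is negative definite precisely when $M_t(S)$ is negative definite for $t$ sufficiently negative, and by the previous paragraph this is equivalent to $(-1)^l \det M_t(S) > 0$ for such $t$; taking the limit $t \to -\infty$ this is exactly the condition $\chi(S) > 0$. I expect the main obstacle to be the bookkeeping in verifying $\alpha(v_i(t) \wedge v_j(t)) = \max\{\alpha(v_i \wedge v_j), t\}$, which however reduces to the observation that on the segment $[-\deg, v_i]$ the skewness strictly decreases from $1$ to $\alpha(v_i)$, so $v_i(t)$ lies above $v_i \wedge v_j$ precisely when $t < \alpha(v_i \wedge v_j)$.
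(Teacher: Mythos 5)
Your proof is correct, and it is essentially the argument the paper intends: the paper only says the lemma "implies immediately" from Lemma~\ref{lemhodgeindextheoremvalu}, and what you have done is supply the missing bookkeeping. The key unstated step in the paper is precisely your construction of the auxiliary set $S(t)$, which realizes the truncated matrix $[\max\{\alpha(v_i\wedge v_j),t\}]$ as $M(S(t))$ for a set of valuations of \emph{finite} skewness, so that Lemma~\ref{lemhodgeindextheoremvalu} actually applies; the verification $\alpha(v_i(t)\wedge v_j(t))=\max\{\alpha(v_i\wedge v_j),t\}$ (using that a valuation with $\alpha=-\infty$ is maximal, hence $v_i\wedge v_j$ has finite skewness for $i\neq j$, together with strict monotonicity of $\alpha$ along segments from $-\deg$) is exactly the right check, and the linear-algebra step reducing negative definiteness to $(-1)^l\det>0$ given the ``at most one non-negative eigenvalue'' constraint is also correct, including the passage to the limit $t\to-\infty$ via the observation that $\det M_t(S)$ is a polynomial in $t$ for $t\ll 0$, hence has eventually constant sign and a limit in $\mathbb{R}\cup\{\pm\infty\}$ with the same sign.
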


Finally we make the following reduction
\begin{lem}\label{lemredtosminplus}Let $S$ be a finite subset of $V_{\infty}$. We have $\chi(S)>0$ if and only if $\chi(S^{\min}_{+})>0$.
\end{lem}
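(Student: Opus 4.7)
The plan is to factor the reduction into two stages: first pass from $S$ to its finite-skewness subset $S_+$, then from $S_+$ to its minimum $(S_+)^{\min}$, which coincides with $S^{\min}_+$ since a valuation with $\alpha = -\infty$ is a curve valuation (an endpoint of $V_\infty$) and therefore cannot lie strictly below any other element of $S$. I will use the characterization ``$\chi > 0 \Leftrightarrow M$ negative definite'' from Lemma~\ref{lemreducetodet} in each stage, so both tasks reduce to producing explicit congruences of symmetric matrices.

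For the first stage, the off-diagonal entries of $M(S)$ are all finite (the meet of two distinct elements of $S$ cannot be a curve valuation, since curve valuations are maximal in $V_\infty$); only the diagonal entries indexed by $S \setminus S_+$ carry $-\infty$. Writing the cutoff matrix in block form with $A := M(S_+)$ and $D_0$ the finite off-diagonal block within $S \setminus S_+$,
\[
M_t = \begin{pmatrix} A & C \\ C^T & D_0 - tI \end{pmatrix},
\]
a Schur-complement / completion-of-squares argument will show that $M_t$ is negative definite for $t$ sufficiently large iff $A$ is: the forward direction is automatic by restriction to $S_+$-coordinates, while for the converse the surviving $z$-quadratic form acquires a dominant term $-t|z|^2$. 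This will yield $\chi(S) > 0 \Leftrightarrow \chi(S_+) > 0$.

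For the second stage, for each $v_k \in (S_+)^{\min}$ set $A_k := \{v \in S_+ : v > v_k\}$. Since minimal elements are pairwise incomparable and the upper subtrees above incomparable points are disjoint, the sets $A_k$ partition $S_+ \setminus (S_+)^{\min}$. Performing simultaneously the row operations $R_i \mapsto R_i - R_k$ (for $i \in A_k$, over all $v_k$) and the mirror column operations will block-diagonalize $M(S_+)$: the key cancellation is that $v_i \wedge v_j = v_k \wedge v_j$ whenever $v_i > v_k$ and $v_j$ lies outside the subtree above $v_k$. This produces the congruence
\[
M(S_+) \;\sim\; M\bigl(S^{\min}_+\bigr) \;\oplus\; \bigoplus_{v_k \in S^{\min}_+} N_{A_k}(v_k),
\quad N_{A_k}(v_k) := \bigl[\alpha(v_i \wedge v_j) - \alpha(v_k)\bigr]_{i,j \in A_k},
\]
so that by additivity of signatures under direct sums, $\chi(S_+) > 0 \Leftrightarrow \chi(S^{\min}_+) > 0$ \emph{provided each $N_{A_k}(v_k)$ is negative definite}.

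The hardest step is this last claim, which I plan to establish by realizing $N_{A_k}(v_k)$ as a Dirichlet Gram matrix. Since $\alpha(v_k) > -\infty$ and every $v_i \in A_k \subseteq S_+$ has finite skewness, the functions $f_i := g_{v_i} - g_{v_k}$ lie in $\mathbb{L}^2(V_\infty)$; using $v_i \wedge v_k = v_k$ a direct computation gives $\langle f_i, f_j\rangle = \alpha(v_i \wedge v_j) - \alpha(v_k) = (N_{A_k}(v_k))_{ij}$, and clearly $f_i(-\deg) = 0$. The integration-by-parts identity $\langle f, f\rangle = f(-\deg)^2 - \int (f')^2\, dt$ established during the proof of Theorem~\ref{thmpropofdirpair} shows that the Dirichlet form is negative semi-definite on $\{f(-\deg) = 0\}$, so $N_{A_k}(v_k)$ is negative semi-definite. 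For strict negative-definiteness I will argue that if $f = \sum_i \lambda_i f_i$ satisfies $\langle f, f\rangle = 0$ then $f'$ vanishes almost everywhere on every segment from $-\deg$, so $f$ is constant on $V_\infty$ and then $f \equiv 0$ by $f(-\deg) = 0$; injectivity of $\rho \mapsto g_\rho$ (Theorem~\ref{thmrhotogrhoinj}) will then force $\Delta f = \sum_i \lambda_i \delta_{v_i} - \bigl(\sum_i \lambda_i\bigr)\delta_{v_k} = 0$, and comparing supports will yield $\lambda_i = 0$ for all $i$. Chaining the two stages then concludes the argument.
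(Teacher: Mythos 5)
Your proof is correct, but it follows a genuinely different route from the paper's. The paper handles the nontrivial implication $\chi(S^{\min}_+)>0\Rightarrow\chi(S_+)>0$ by \emph{enlarging} $S_+$ to the meet-closed set $C(S_+)$ and peeling off one maximal meet at a time by induction: each step is a single row/column operation producing the scalar factor $\alpha(w_1)-\alpha(w_0)>0$, and both directions of the equivalence are obtained by restricting to principal submatrices ($S^{\min}_+\subset S_+\subset C(S_+)$). You instead stay inside $S_+$ and perform the full block-diagonalization $M(S_+)\sim M(S^{\min}_+)\oplus\bigoplus_k N_{A_k}(v_k)$ in one shot, which then requires showing each block $N_{A_k}(v_k)=[\alpha(v_i\wedge v_j)-\alpha(v_k)]_{A_k}$ is negative definite. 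You do this via the Dirichlet Gram matrix of the functions $g_{v_i}-g_{v_k}$ vanishing at $-\deg$, using the integration-by-parts formula $\langle f,f\rangle=f(-\deg)^2-\int (f')^2\,dt$ and the injectivity of $\rho\mapsto g_\rho$ for strictness. This is more machinery than the paper uses for this lemma (which is purely linear algebra plus Lemma~\ref{lemhodgeindextheoremvalu}), but your congruence is also more informative, as it identifies the full inertia of $M(S_+)$ rather than just the sign of its determinant. One could also obtain negative definiteness of $N_{A_k}(v_k)$ without potential theory by a double Schur complement of $M(\{-\deg,v_k\}\cup A_k)$ against the $1\times1$ blocks $[\,1\,]$ and $[\,\alpha(v_k)-1\,]$ and then invoking Lemma~\ref{lemhodgeindextheoremvalu}, which would bring your argument closer to the paper's toolbox. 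Finally, a small wording slip: the paper does not assert that every valuation with $\alpha=-\infty$ is a curve valuation; what you actually need, and what holds, is that $\alpha(w)=-\infty$ forces $w$ to be maximal in $V_\infty$ (since $\alpha$ is strictly decreasing along segments). That maximality is what guarantees both that the off-diagonal entries of $M(S)$ are finite and that $(S_+)^{\min}=S^{\min}_+$, so your conclusions are unaffected. Your first stage (the Schur-complement analysis of $M_t$ as $t\to\infty$) also differs superficially from the paper's one-line observation about the leading coefficient of the cutoff determinant, but the two are equivalent.
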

\proof
Suppose that $S=\{v_1,\cdots,v_l\}$ and $S_+=\{v_1,\cdots,v_{l'}\}$ where $l'\leq l$.
When $t$ large enough the function $t\mapsto \det(\max\{\alpha(v_i\wedge v_j),t\})_{1\leq i,j\leq l}$ is a polynomial with leading term $\chi(S_{+})t^{l-l'}$. It follows that $\chi(S)>0$ if and only if $\chi(S_{+})>0$.
Now, we may suppose that $S=S_+.$

\smallskip

Since $S^{\min}$ is a subset of $S$, if $M(S)$ is negative definite then $M(S^{\min})$ is negative definite. By Lemma \ref{lemreducetodet}, we conclude the " only if" part.

\smallskip
To prove the " if" part, we suppose that $\chi(S^{\min})>0$.
For any $w\in S^{\min}$, set $S_w:=\{v\in S|\,\,v\geq w\}$. It follows that $S=\coprod_{w\in S^{\min}}S_w$. For any $w\in S^{\min}$, denote by $C(S_w)$ the set of valuations taking forms $\wedge_{v\in S_w'}v$ where $S_w'$ is a subset of $S_w$. Set $C(S):=\coprod_{w\in S^{\min}}C(S_w)$.
We complete the proof of our theorem by induction on the number $\#C(S)-\#S^{\min}$.

If $\#C(S)-\#S^{\min}=0$, then $S=C(S)=S^{\min}$. Our theorem trivially holds.

If $\#C(S)-\#S^{\min}\geq 1$, there exists $w\in S^{\min}$ satisfying $C(S_w)\geq 2.$ Let $w_0$ be a maximal element in $C(S_w)$ then $w_0>w.$ Let $w_1$ be the maximal element in $[w,w_0)\cap S_w$ and set $S_1:=C(S)\setminus \{w_0\}.$ For any valuation $v\in C(S)\setminus \{w_0\}$, we have $v\wedge w_0=v\wedge w_1$.
Then we have
\begin{multline*}
M(C(S))=\left(\begin{matrix}
\alpha(w_0) & \ldots & \alpha(w_0\wedge v) & \ldots & \alpha(w_0\wedge w_1)& \ldots\\
\ldots & \ldots & \ldots & \ldots & \ldots & \ldots  \\
\alpha(v\wedge w_0) & \ldots & \alpha(v) & \ldots & \alpha(v\wedge w_1)& \ldots\\
\ldots & \ldots & \ldots & \ldots & \ldots & \ldots  \\
 \alpha(w_1\wedge w_0) & \ldots & \alpha(w_1\wedge v) & \ldots & \alpha(w_1)& \ldots\\
\ldots & \ldots & \ldots & \ldots & \ldots & \ldots  \\
 \end{matrix} \right)\\
 =\left(\begin{matrix}
\alpha(w_0) & \ldots & \alpha(w_1\wedge v) & \ldots & \alpha(w_1)& \ldots\\
\ldots & \ldots & \ldots & \ldots & \ldots & \ldots  \\
\alpha(v\wedge w_1) & \ldots & \alpha(v) & \ldots & \alpha(v\wedge w_1)& \ldots\\
\ldots & \ldots & \ldots & \ldots & \ldots & \ldots  \\
 \alpha(w_1) & \ldots & \alpha(w_1\wedge v) & \ldots & \alpha(w_1)& \ldots\\
\ldots & \ldots & \ldots & \ldots & \ldots & \ldots  \\
 \end{matrix} \right).
\end{multline*}
It follows that
\begin{multline*}
M(C(S))=\left(\begin{matrix}
\alpha(w_0) & \ldots & \alpha(w_1\wedge v) & \ldots & \alpha(w_1)& \ldots\\
\ldots & \ldots & \ldots & \ldots & \ldots & \ldots  \\
\alpha(v\wedge w_1) & \ldots & \alpha(v) & \ldots & \alpha(v\wedge w_1)& \ldots\\
\ldots & \ldots & \ldots & \ldots & \ldots & \ldots  \\
 \alpha(w_1) & \ldots & \alpha(w_1\wedge v) & \ldots & \alpha(w_1)& \ldots\\
\ldots & \ldots & \ldots & \ldots & \ldots & \ldots  \\
 \end{matrix} \right)\\
 =\left(\begin{matrix}
1 & \ldots & 0 & \ldots & 0& \ldots\\
\ldots & \ldots & \ldots & \ldots & \ldots & \ldots  \\
0 & \ldots & 1 & \ldots & 0& \ldots\\
\ldots & \ldots & \ldots & \ldots & \ldots & \ldots  \\
1 & \ldots & 0 & \ldots & 1& \ldots\\
\ldots & \ldots & \ldots & \ldots & \ldots & \ldots  \\
 \end{matrix} \right)
 \left(\begin{matrix}
\alpha(w_0)-\alpha(w_1) & \ldots & 0 & \ldots & 0& \ldots\\
\ldots & \ldots & \ldots & \ldots & \ldots & \ldots  \\
0 & \ldots & \alpha(v) & \ldots & \alpha(v\wedge w_1)& \ldots\\
\ldots & \ldots & \ldots & \ldots & \ldots & \ldots  \\
 0 & \ldots & \alpha(w_1\wedge v) & \ldots & \alpha(w_1)& \ldots\\
\ldots & \ldots & \ldots & \ldots & \ldots & \ldots  \\
 \end{matrix} \right)\\
 \left(\begin{matrix}
1 & \ldots & 0 & \ldots & 1& \ldots\\
\ldots & \ldots & \ldots & \ldots & \ldots & \ldots  \\
0 & \ldots & 1 & \ldots & 0& \ldots\\
\ldots & \ldots & \ldots & \ldots & \ldots & \ldots  \\
 0 & \ldots & 0 & \ldots & 1& \ldots\\
\ldots & \ldots & \ldots & \ldots & \ldots & \ldots  \\
 \end{matrix} \right).
\end{multline*}
It follows that $\chi(C(S))=\left(\alpha(w_1)-\alpha(w_0)\right)\chi(S_1).$ Since $C(S_1)=S_1=C(S)\setminus \{w_0\}$ and $S_1^{\min}=S^{\min}$,
we have $\chi(S_1)>0$ by induction hypotheses.
Since $\alpha(w_1)-\alpha(w_0)>0$, we have $\chi(C(S))>0$ and $M(C(S))$ is negative definite. Since $M(S)$ is a principal submatrix of $M(C(S))$,
it is also negative definite. It follows that $\chi(S)>0.$
\endproof

%$$\chi(S):=+\infty^{(\#S^{\min}-\# S^{\min}_{+})}\times (-1)^{\# S^{\min}_{+}}\det((\alpha(v_i\wedge v_j))_{1\leq i,j\leq l})$$
%here we define $+\infty^s$ to be $\infty$ if $s\geq 1$ and $1$ if $s=0$.

%\medskip
%
%We shall work with normalized valuations in $V_\infty$ only until Section \ref{subsectionproofofthmthmtrandeterbycha}, where we give a proof of our Main Theorem
%as stated in the introduction.

%%%%%%%%%%%%%%%%%%%%%%%%%%%%%%%%%%%%%%%%%%%%%%%%%

\subsection{Characterization of rich sets using potential theory on $V_\infty$}

As an important intermediate step towards our Main Theorem we shall prove the following characterization of rich subsets of $V_\infty$ in terms of the existence of
adapted functions in $\mathbb{L}^2(V_{\infty})$.

\begin{thm}\label{prodimtwothengeqz}\label{thmsuvsectionprorefshr}\label{thmsuvsectionprorefshr}Let $S$ be a finite set of valuations in $V_{\infty}$. Then the following statements are equivalent.
%\points
\begin{itemize}
\item[(i)] The set $S$ is rich, i.e. $\delta(S) =2$.
\item[(ii)] There exists a nonzero polynomial $P\in R_{S}$ such that $v(P)>0$ for all $v\in S.$
\item[(iii)] There exists a valuation $v\in S$ and a nonzero polynomial $P\in R_{S}$ such that $v(P)>0.$
\item[(iv)] There exists a function $\phi\in \SHP$ such that $\phi(v)=0$ for all $v\in B(S)$ and $\langle\phi,\phi\rangle>0.$
\item[(v)] There exists a function $\phi\in \mathbb{L}^2(V_{\infty})$ such that $\phi(v)=0$ for all $v\in B(S)$ and $\langle\phi,\phi\rangle>0$.
\item[(vi)] There exists a finite set $S'\subseteq V_{\infty}$ such that $S\subseteq B(S')^{\circ}$ and $S'$ is rich.
\end{itemize}
%\endpoints
Moreover when these conditions are satisfied, then the fraction field of $R_S$ is equal to $k(x,y)$.
\end{thm}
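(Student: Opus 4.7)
The plan is to establish the equivalences by a cycle of implications, with the ``moreover'' statement absorbed into the proof of (ii)$\Rightarrow$(i). The trivial implications are (ii)$\Rightarrow$(iii), (iv)$\Rightarrow$(v) (as $\SHP\subseteq\mathbb{L}^2(V_\infty)$), and (vi)$\Rightarrow$(i): every $v\in S$ satisfies $v\geq w$ for some $w\in S'$, so $\mathcal{O}_w\subseteq\mathcal{O}_v$ gives $R_{S'}\subseteq R_S$, which preserves transcendence degree two.

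For (ii)$\Rightarrow$(i) together with the moreover statement, given $P\in R_S$ with $v(P)>0$ for every $v\in S$, for any $Q\in k[x,y]$ one may choose $n$ large enough that $nv(P)+v(Q)>0$ for all $v\in S$ (rank-two valuations at which $v(Q)=+\infty$ pose no obstruction). Hence $P^nQ\in R_S$ and $Q=(P^nQ)/P^n\in\Frac R_S$, so $\Frac R_S=k(x,y)$ and $\delta(S)=2$. Conversely, for (i)$\Rightarrow$(ii), fix $v\in S$; if no nonzero $P\in R_S$ had $v(P)>0$, the natural residue map $R_S\to\kappa(v)$ would be an injective $k$-algebra homomorphism, yet Abhyankar's inequality gives $\trd(\kappa(v)/k)\leq 2-\rank v\leq 1$ for any $v\in V_\infty$, contradicting $\trd(R_S/k)=2$. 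Taking $P:=\prod_{v\in S}P_v$ then yields (ii).

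For (iii)$\Rightarrow$(iv), consider $\phi:=\log^+|P|\in\SHP$ (Proposition \ref{proshpos}), whose Laplacian has finite support $\{v_1,\ldots,v_s\}$; since $P\in R_S$ we have $\phi=0$ on $B(S)$ and hence $\langle\phi,\phi\rangle=\sum_i r_i\phi(v_i)=0$. The hypothesis $v^*(P)>0$ for some $v^*\in S$ forces $v^*>v_i$ strictly for some $i$, so $S\subseteq B(\{v_i\})$. When $\{v_i\}\not\subseteq S$, Proposition \ref{proapoxishplus} produces the required $\psi\in\SHP$ with $\psi=0$ on $B(S)$ and $\langle\psi,\psi\rangle>0$. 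The degenerate case $\{v_i\}\subseteq S$ is handled by a perturbation argument: replacing $P$ by a suitable multiple $PQ$ with $Q\in R_S$ chosen so that the Laplacian support of $\log^+|PQ|$ meets $V_\infty\setminus S$ brings us back to the generic case.

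The main obstacle is closing the cycle via (v)$\Rightarrow$(vi), which amounts to extracting a rich finite configuration strictly below $S$ from purely potential-theoretic data. The plausible route is to reduce, using Propositions \ref{prohodgeindexchangorder} and \ref{prortgephipsi}, to the case where $\Delta\phi_1$ and $\Delta\phi_2$ are supported on a finite subtree $T=\Gamma_X$ of an admissible compactification containing the convex hull of $\{-\deg\}\cup S$, arranging that the retracted functions still vanish on $B(S)\cap T$ and retain positive self-pairing. In this divisorial setting the Hodge inequality (Theorem \ref{thmpropofdirpair}) combined with $\langle\phi,\phi\rangle>0$ and $\phi|_{B(S)\cap T}=0$ should locate, for each connected component of $B(S)$, a divisorial valuation $v'\in T$ strictly below that component such that the insertion of these $v'$'s yields a negative-definite local Gram matrix, in the sense of the construction preceding Theorem \ref{thmmainthmchifordiv}. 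Collecting one such $v'$ per component defines a finite $S'$ with $S\subseteq B(S')^\circ$; richness of $S'$ is verified by constructing from the restriction of $\phi$ to the complement of $B(S')$, together with Proposition \ref{prophiphi}, a function in $\SHP$ vanishing on $B(S')$ with positive energy, so that $S'$ satisfies (iv) and is therefore rich by the chain (iv)$\Rightarrow$\ldots$\Rightarrow$(i) already established.
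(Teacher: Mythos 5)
Your treatment of (ii)$\Rightarrow$(i), the moreover clause, and the trivial implications is fine, and your proof of (i)$\Rightarrow$(ii) via the residue map $R_S\hookrightarrow\kappa(v)$ and Abhyankar's inequality is a genuine alternative to the paper's more elementary manipulation with a generic linear form $Q$ and a minimal algebraic relation $\sum a_i Q^i=0$; both work, and yours is arguably more conceptual. Your (iii)$\Rightarrow$(iv) is essentially the paper's argument ($\log^+|P|$ plus Proposition~\ref{proapoxishplus}), modulo the fact that after reducing to $S=S^{\min}$ the ``degenerate case'' $\supp\Delta\log^+|P|\subseteq S$ cannot occur, so your perturbation step is unnecessary.

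The genuine gap is in (v)$\Rightarrow$(vi), and it is fatal as written. After reducing to $\Delta\phi_i$ supported on a finite tree, you propose to manufacture $S'$ from ``a negative-definite local Gram matrix'' (vague, and not what the paper does), and then you close by asserting that $S'$ satisfies (iv) and is ``therefore rich by the chain (iv)$\Rightarrow\cdots\Rightarrow$(i) already established.'' But the only path from (iv) to (i) in the cycle runs through (v)$\Rightarrow$(vi), which is precisely the implication you are in the middle of proving; the implications you have actually established at that point are (i)$\Leftrightarrow$(ii)$\Rightarrow$(iii)$\Rightarrow$(iv)$\Rightarrow$(v) and (vi)$\Rightarrow$(i), none of which lets you conclude (i) from (iv). The argument is therefore circular. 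What is missing is the paper's Proposition~\ref{prorefshroer}: one approximates $\phi$ by a piecewise linear $\phi'$ with $\Delta\phi'$ supported on finitely many divisorial valuations (Proposition~\ref{proaproxim}), realizes these on a single admissible compactification $X$, forms the $\R$-divisor $A'=\sum b_{E_i}\phi'(v_{E_i})E_i$ with $(A')^2=\langle\phi',\phi'\rangle>0$, and then invokes Lemma~\ref{lemofschroer} (Schroer) to conclude directly that $\Gamma(X\setminus C,\mathcal{O}_X)\subseteq R_{S'}$ has fraction field $k(x,y)$, i.e.\ $S'$ satisfies (i), not merely (iv). That algebro-geometric input --- converting a positive self-intersection of a boundary divisor into bigness and hence into transcendence degree two --- is the step that actually breaks the circle, and it does not appear in your sketch.
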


\proof
Observe first that when (ii) is satisfied, then for any polynomial $Q$ there exists an integer $n$ such that $Q P^n$ belongs to $R_S$. This implies that $k[x,y]$ is included in the fraction field of $R_S$ hence the latter is equal to $k(x,y)$.

We now prove the equivalence between the six statements.
The three implications (ii)$\Rightarrow$(iii), (iv)$\Rightarrow$(v) and (vi)$\Rightarrow$(i)
are immediate.

\smallskip

(i)$\Rightarrow$(ii).
Replacing $S$ by $S^{\min}$, we may suppose that $S=S^{\min}.$
By contradiction, we suppose that $v(P)=0$ for all $v\in S$ and all $P\in R_S\setminus \{0\}.$

For every $v\in S$, we have $\min\{v(x),v(y)\}=-1$.  Since $k$ is infinite, for a general linear polynomial $Q\in k[x,y]$, we have $v(Q)<0$ for all $v\in S$.
Since the transcendence degree of $\Frac(R_S)$ over $k$ is $2$, we have $$\sum_{i=0}^ma_iQ^i=0$$ where $m\geq 1$, $a_i\in R_S.$ We may suppose that $a_m\neq 0$. Let $v$ be a valuation in $S$.
It follows that $v(a_iQ^i)=iv(Q)+v(a_i)\geq iv(Q)>mv(Q)$ for $i=1,\cdots,m-1$. If $v(a_m)=0$ for some $v$, we have $v(\sum_{i=0}^ma_iQ^i)=mv(Q)<0$ which is a contradiction. It follows that $v(a_m)>0$ for all $v\in S.$

\smallskip

(iii)$\Rightarrow$(iv).
By assumption there exists a polynomial $P\in R_S$ and a valuation $v_0\in S$ for which $v_0(P)>0$. It follows that $\Supp ( \Delta \log^+|P| )\not\subseteq S$.
Since we have $S \subset  B (\Supp \,\Delta \log^+|P|)$, Proposition \ref{proapoxishplus} implies the existence of $\phi\in \SHP$ such that $\phi(v)=0$ for all $v\in B(S)$. And we get $\langle\phi,\phi\rangle>0$ as required.

\smallskip

The proof of the  implication (v)$\Rightarrow$(vi) is the core of our Theorem \ref{thmsuvsectionprorefshr}.
We state it as a separate Proposition \ref{prorefshroer} and prove it below.
\endproof

\begin{pro}\label{prorefshroer}Let $S$ be a finite subset of $V_{\infty}$. Suppose that there exists a function $\phi\in \mathbb{L}^2(V_{\infty})$ such that $\phi(v)=0$
for all $v\in B(S)$,
and
$\langle\phi,\phi\rangle>0$.

Then there exists a finite set $S'$ of divisorial valuations such that $S\subseteq B(S')^{\circ}$ and $\Frac(R_{S'})=k(x,y)$.
\end{pro}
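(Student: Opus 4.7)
My plan is to pass from the given $\mathbb{L}^2$-function $\phi$ to a non-negative subharmonic function on a finite subtree of divisorial valuations vanishing on $B(S')$ for a suitably chosen $S'$, and then to appeal to the already-proved implication (iv)$\Rightarrow$(i) of Theorem \ref{thmsuvsectionprorefshr} to conclude that $\Frac(R_{S'})=k(x,y)$. First I would write $\phi=\phi_1-\phi_2$ with $\phi_1,\phi_2\in\SH(V_{\infty})$ of finite Dirichlet energy, so that the vanishing condition $\phi=0$ on $B(S)$ becomes $\phi_1=\phi_2$ on $B(S)$.

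Next I discretize using admissible compactifications. By Proposition \ref{prosubhapbyrestogx} I can choose a sequence of admissible compactifications $X_n$ with $X_{n+1}$ dominating $X_n$ and $R_{\Gamma_{X_n}}\phi_i\to\phi_i$ pointwise, and by Proposition \ref{prohodgeindexchangorder} the self-pairings $\langle R_{\Gamma_{X_n}}\phi_1-R_{\Gamma_{X_n}}\phi_2,\;R_{\Gamma_{X_n}}\phi_1-R_{\Gamma_{X_n}}\phi_2\rangle$ converge to $\langle\phi,\phi\rangle>0$. Pick $n$ large enough that this pairing is still strictly positive and that $T:=\Gamma_{X_n}$ contains, for each $v_0\in S^{\min}$, a divisorial valuation $v_0'<v_0$ strictly below $v_0$ (possible by density of divisorial valuations in each segment $[-\deg,v_0]$, refining $X_n$ if necessary). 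Set $S':=\{v_0'\,:\,v_0\in S^{\min}\}$; then $S\subseteq B(S')^{\circ}$ and $S'$ consists of divisorial valuations.

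To conclude I need to produce $\psi\in\SHP$ vanishing on $B(S')$ with $\langle\psi,\psi\rangle>0$. Starting from the subharmonic functions $\phi_i^{(n)}:=R_{\Gamma_{X_n}}\phi_i$, whose Laplacians are point measures supported on the finite tree $T$ (by Proposition \ref{prosupplapfinittreesub}), I would construct $\psi$ by first turning the difference $\phi^{(n)}=\phi_1^{(n)}-\phi_2^{(n)}$ into an honest non-negative subharmonic function via a local rearrangement of the two Laplacians along each branch of $T$, and then, in the spirit of Proposition \ref{proapoxishplus}, pushing any residual point masses sitting in $B(S')$ upward along the segments $[v_0',v_0]$ by a symmetric splitting argument so that the resulting function vanishes on the entire set $B(S')$. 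Choosing each $v_0'$ close enough to $v_0$ in the skewness metric guarantees that the loss of Dirichlet self-pairing during these rearrangements is small, so $\langle\psi,\psi\rangle$ remains strictly positive.

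The main obstacle is this last step: the hypothesis only gives vanishing of $\phi$ on $B(S)$, whereas $\psi$ must vanish on the strictly larger set $B(S')$, and moreover $\psi$ must be a genuine non-negative subharmonic function rather than a difference of two of them. The delicate point is to carry out the redistribution of Laplacian masses without destroying positivity of the Dirichlet pairing; this is where the strict inequalities $v_0'<v_0$ are essential, as they provide room on the segments $[v_0',v_0]$ to perform the required rearrangements with a controlled, hence strictly positive, loss of energy.
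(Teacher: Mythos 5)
Your plan has two serious problems, one logical and one technical, and both are circumvented in the paper by a route you do not mention.

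The logical problem is circularity. You propose to conclude by producing $\psi\in\SHP$ vanishing on $B(S')$ with $\langle\psi,\psi\rangle>0$ and then ``appeal to the already-proved implication (iv)$\Rightarrow$(i) of Theorem~\ref{thmsuvsectionprorefshr}.'' But that implication is \emph{not} available at this point: the proof of Theorem~\ref{thmsuvsectionprorefshr} establishes the chain (i)$\Rightarrow$(ii)$\Rightarrow$(iii)$\Rightarrow$(iv)$\Rightarrow$(v)$\Rightarrow$(vi)$\Rightarrow$(i), and the step (v)$\Rightarrow$(vi) is exactly Proposition~\ref{prorefshroer}, the statement you are asked to prove. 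Passing from (iv) to (i) requires (v)$\Rightarrow$(vi), so invoking it here is circular. Some independent bridge from potential-theoretic data to the algebraic conclusion $\Frac(R_{S'})=k(x,y)$ is required, and this bridge is precisely Lemma~\ref{lemofschroer} (Schroer's lemma), which does not appear in your proposal at all.

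The technical problem is the ``local rearrangement'' step. You want to turn the difference $\phi^{(n)}=\phi_1^{(n)}-\phi_2^{(n)}$ into a genuine element of $\SHP$ by redistributing Laplacian masses along the branches of $T$. There is no such operation available: $\Delta(\phi_1^{(n)}-\phi_2^{(n)})$ is a signed measure, and a difference of subharmonic functions (even a non-negative one) is in general not subharmonic. You cannot simply shift negative mass upward or split it symmetrically without losing control of both the sign of the resulting Laplacian and the Dirichlet energy; you acknowledge this is ``the delicate point'' but never carry it out, and Proposition~\ref{proapoxishplus}, which you cite in spirit, has the hypotheses $\phi\in\SHP$ and $\langle\phi,\phi\rangle=0$, neither of which is satisfied here. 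The paper sidesteps this entirely: it never tries to make the approximating function subharmonic. It keeps $\phi'$ as a piecewise linear element of $\mathbb{L}^2(V_\infty)$ produced by Proposition~\ref{proaproxim} (with $T'$ the divisorial valuations in $T\setminus S$, so that the extremal points $S'$ of $\supp\Delta\phi'$ avoid $S$ and satisfy $\phi'|_{B(S')}=0$ automatically), then realizes $\phi'$ as an $\mathbb{R}$-divisor $A'=\sum_i b_{E_i}\phi'(v_{E_i})E_i$ on a suitable compactification, computes $(A')^2=\langle\phi',\phi'\rangle>0$ via the dual-basis identity for $\check E_i$, and concludes with Lemma~\ref{lemofschroer} --- which works for arbitrary signed $\mathbb{R}$-divisors $A$ with $A^2>0$, so no positivity or subharmonicity of $\phi'$ is required. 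To repair your proof you would need to replace both the circular appeal to (iv)$\Rightarrow$(i) and the unrealized rearrangement by exactly this intersection-theoretic argument.
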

The proof relies  on the following lemma that is a corollary of~\cite[Proposition 3.2]{Schroer2000}.
For the convenience of the reader, we give a simplified proof of it at the end of this section.

%In our case, we can give a simpler proof.
%
%subsection{A geometric lemma}\label{subsectionageometriclemma}In this section, we prove a lemma, which is a nondivisorial form of Proposition \ref{prorefshroer}.
%This lemma is a corollary of  \cite[Proposition 3.2]{Schroer2000}. In our case, we can give a simpler proof.

\begin{lem}\label{lemofschroer}Let $X$ be any smooth projective compactification of $\mathbb{A}^2_k$. Let $C$ be a reduced curve contained in $X \setminus \mathbb{A}^2_k$, and set
$U:= X\setminus C$.

 If there exists a $\mathbb{R}$-divisor $A$ supported on $C$ such that $A^2>0,$ then the fraction field of the ring of regular functions on $U$ is equal to  $k(x,y)$.
\end{lem}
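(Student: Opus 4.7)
The plan is to reduce to a big integer divisor supported on $C$ via Hodge index and continuity, then use the fact that sections of a big divisor embedded as rational functions generate the full function field $k(x,y)$ and are regular on $U$.

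First, I would reduce from an $\mathbb{R}$-divisor to a $\mathbb{Z}$-divisor. Writing $C = \sum_i E_i$ and $A = \sum_i a_i E_i$, I approximate the $a_i$ by rational numbers to get a $\mathbb{Q}$-divisor $B$ supported on $C$ with $B^2 > 0$ (using continuity of the intersection form). Fix any ample divisor $H$ on $X$. By the Hodge index theorem, since $B^2 > 0$ and $H^2 > 0$, one has $(B\cdot H)^2 \geq B^2 \cdot H^2 > 0$, so $B \cdot H \neq 0$; replacing $B$ by $-B$ if necessary, I may assume $B \cdot H > 0$. Clearing denominators, I obtain an integer divisor $D$ supported on $C$ with $D^2 > 0$ and $D \cdot H > 0$.

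Next, I would show that $D$ is big via Riemann–Roch on surfaces:
\[
\chi(\mathcal{O}_X(mD)) = \chi(\mathcal{O}_X) + \tfrac{1}{2}\, mD\cdot(mD - K_X) = \tfrac{m^2}{2} D^2 + O(m).
\]
For $m$ large, $(K_X - mD) \cdot H = K_X \cdot H - m(D \cdot H) < 0$, so $K_X - mD$ cannot be effective, whence $h^0(K_X - mD) = 0$. Serre duality and Riemann–Roch then give $h^0(mD) \geq \chi(\mathcal{O}_X(mD)) \gtrsim \tfrac{D^2}{2} m^2$, so $D$ is big. This step is the main technical point: $D$ need not be effective, so I rely on ampleness of $H$ to kill the cohomology term $h^0(K_X - mD)$ rather than any direct effectivity argument.

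Finally, I would identify the sections as regular functions on $U$ and exploit bigness. Under the standard embedding $H^0(X, \mathcal{O}_X(mD)) \hookrightarrow k(X)$, a section $s$ is identified with a rational function satisfying $(s) + mD \geq 0$, hence with poles supported on $\supp(D) \subseteq C$. Such $s$ is regular on $U$, so $H^0(X, \mathcal{O}_X(mD)) \subseteq \Gamma(U, \mathcal{O}_U)$ for every $m \geq 0$. Since $D$ is big, the Iitaka dimension of $D$ equals $\dim X = 2$; equivalently, for $m$ sufficiently large the rational map $\phi_{|mD|}: X \dashrightarrow \mathbb{P}^{h^0(mD)-1}$ is birational onto its image. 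Writing $\phi_{|mD|} = [s_0 : \cdots : s_N]$ with $s_i \in H^0(X, \mathcal{O}_X(mD))$, the ratios $s_i/s_j$ lie in $\Frac(\Gamma(U, \mathcal{O}_U))$ and generate $k(X) = k(x,y)$ as a field over $k$. Since $\Frac(\Gamma(U, \mathcal{O}_U)) \subseteq k(X)$, we conclude $\Frac(\Gamma(U, \mathcal{O}_U)) = k(x,y)$, as required.
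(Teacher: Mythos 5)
Your proof is correct and takes a genuinely different route from the paper's. The paper first replaces $A$ by whichever of its positive or negative parts $A^{\pm}$ has positive self-intersection, perturbs to a $\mathbb{Q}$-divisor, applies Zariski decomposition to reduce to a nef effective integral divisor, introduces an auxiliary divisor $D$ supported on the components of $X\setminus\mathbb{A}^2_k$ not in $C$, finds a section of the big divisor $nA-D$, and shows for each $Q\in k[x,y]$ that $P^mQ$ is regular on $U$ for $m\gg 0$. You instead pass directly to a $\mathbb{Q}$-divisor by approximation, use the Hodge index inequality $(B\cdot H)^2\ge B^2H^2$ to orient $B$ so that $B\cdot H>0$, and then prove bigness of the resulting integral $D$ by hand via Riemann--Roch and Serre duality (killing $h^2$ because $(K_X-mD)\cdot H<0$), after which you invoke birationality of the Iitaka map $\phi_{|mD|}$ for $m\gg 0$. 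Both approaches are sound; yours avoids Zariski decomposition and the effective reduction $A^{\pm}$ entirely, at the cost of invoking the standard (but nontrivial) fact that a big divisor induces a birational rational map for large multiples. The paper's route is more elementary at that final step, producing explicit regular functions $P^mQ$ on $U$ whose ratios recover $k[x,y]$; your route is arguably shorter and more conceptual, using bigness as the organizing notion from the start rather than funneling through nefness.
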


\proof [Proof of Proposition \ref{prorefshroer}]
We may assume $S=S^{\min}.$
Let $T_S$ be the convex hull of $S\cup \{-\deg\}$.
This is a finite tree.
Write $\phi=\phi_1-\phi_2$ where both functions $\phi_i$ lie in $\SH (V_\infty)$ and satisfy $\langle\phi_i,\phi_i\rangle>-\infty$ for $i=1,2$. By Proposition \ref{prohodgeindexchangorder} and Proposition \ref{prortgephipsi}, there exists a finite tree $T$ containing $T_S$ such that $$\langle R_T(\phi_1),R_T(\phi_2)\rangle\leq \langle \phi_1,\phi_2\rangle+\frac12 \langle \phi, \phi \rangle.$$
Using Proposition~\ref{prohodgeindexchangorder} (i), we get
\begin{eqnarray*}
\langle R_T(\phi_1)-R_T(\phi_2),R_T(\phi_1)-R_T(\phi_2)\rangle
&\geq&
\langle \phi_1,\phi_1\rangle +
\langle \phi_2,\phi_2\rangle-2\langle R_T(\phi_2),R_T(\phi_1) \rangle
\\
&\geq&
\langle \phi_1,\phi_1\rangle +
\langle \phi_2,\phi_2\rangle-2\langle \phi_1,\phi_2\rangle-\frac12 \langle \phi, \phi \rangle
\\
&=& \frac12 \langle\phi,\phi\rangle >0~.
\end{eqnarray*}
Replacing $\phi$ by $R_T(\phi_1)-R_T(\phi_2)$, we may thus assume that
$\phi$ is the difference of two functions $\phi_1, \phi_2 \in \SH (V_\infty)$
such that $\Delta\phi_1$ and  $\Delta\phi_2$ are supported on a finite tree $T$
whose set of vertices is the union of $S$ and a finite set of divisorial valuations.
%Also since $\phi(v)=0$ for $v\in B(S)$, we may suppose that all points in $S$ are vertexes of $T$.

\smallskip

\begin{pro}\label{proaproxim}
Let $T$ be any finite subtree of $V_\infty$ containing $-\deg$, and $T'$ be any dense subset of $T$. Suppose
$\phi \in \mathbb{L}^2(V_\infty)$ is a function such that
$\Delta \phi$ is supported on $T$ and $\phi(v)\in \mathbb{R}$ for any end point $v$ of $T$.

Then for any $\epsilon>0$
there exists a piecewise linear function $\phi'$ such that
\begin{enumerate}
\item
the support of $\Delta \phi'$ is  a finite collection of  valuations
that belong to $T'$;
%\item
%$\Delta \phi' \{ v \}$ is a rational number for any $v$;
\item
$\phi =  \phi'$ at any endpoint of $T$;
\item
$| \langle \phi, \phi \rangle - \langle \phi', \phi' \rangle | \le \epsilon$.
\end{enumerate}
\end{pro}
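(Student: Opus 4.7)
The approach is to build $\phi'$ edge-by-edge on the finite tree $T$ by piecewise linear interpolation of $\phi$ in the skewness parameter $t = -\alpha(v)$, with breakpoints chosen from the dense subset $T'$, and then extend $\phi'$ to $V_\infty$ via the retraction $R_T$. Since $T$ has only finitely many edges, it suffices to work on one edge at a time.

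Fix an edge $I = [v_1, v_2]$ of $T$, parameterize it by $t = -\alpha(v)$, and write $\phi = \phi_1 - \phi_2$ with $\phi_i \in \SH(V_\infty)$ and $\Supp \Delta\phi_i \subseteq T$. Then each $\phi_i|_I$ is continuous and convex in $t$, so $\phi|_I$ is a difference of continuous convex functions and $d\phi/dt$ is a BV function. The integration-by-parts identity from the proof of Theorem \ref{thmpropofdirpair} applied to $\phi_i$ shows that $\langle \phi_i, \phi_i \rangle > -\infty$ is equivalent to $d\phi_i/dt \in L^2(T, dt)$; hence $d\phi/dt \in L^2(I, dt)$, even when $\alpha(v_2) = -\infty$. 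Choose an ordered partition $v_1 = w_0 < w_1 < \cdots < w_N = v_2$ with interior nodes $w_1, \ldots, w_{N-1} \in T' \cap I$, and let $\phi'|_I$ be the piecewise linear function in $t$ agreeing with $\phi$ at each $w_j$. Since $w_0 = v_1$ and $w_N = v_2$, this yields $\phi'(v_i) = \phi(v_i)$; gluing over all edges of $T$ and setting $\phi' := \phi' \circ r_T$ on $V_\infty$ gives (2). Using density of $T'$ to move any residual atoms of $\Delta\phi'$ located at interior vertices of $T$ (including $-\deg$) to nearby points of $T'$, at the cost of a small modification of $\phi'$, yields (1).

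To prove (3), I apply the same integration-by-parts identity to $\phi$ and $\phi'$:
\[
\langle \psi, \psi' \rangle \;=\; \psi(-\deg)\psi'(-\deg) \;-\; \int_T \frac{d\psi}{dt}\,\frac{d\psi'}{dt}\,dt,
\]
valid by bilinearity for any pair $\psi, \psi' \in \mathbb{L}^2(V_\infty)$ whose Laplacians are supported on $T$. Since $\phi$ and $\phi'$ coincide at $-\deg$,
\[
\langle \phi, \phi \rangle - \langle \phi', \phi' \rangle \;=\; \int_T \left(\tfrac{d\phi'}{dt}\right)^2 dt - \int_T \left(\tfrac{d\phi}{dt}\right)^2 dt.
\]
On each sub-segment $[w_{j-1}, w_j]$ the constant value of $d\phi'/dt$ is the average of $d\phi/dt$ over that sub-segment, so by the density of step functions in $L^2$, $d\phi'/dt \to d\phi/dt$ in $L^2(T, dt)$ as the mesh tends to $0$. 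A sufficiently fine partition then gives $|\langle \phi, \phi\rangle - \langle \phi', \phi'\rangle| \le \epsilon$.

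The principal obstacle is to handle edges of infinite $t$-length, which arise precisely when $T$ has a vertex with $\alpha = -\infty$ (e.g.\ a curve valuation in $S$). On such an edge $I$ the parameter interval $[t(v_1), +\infty)$ is unbounded, yet one must impose $\phi'(v_2) = \phi(v_2) \in \mathbb{R}$ and retain $L^2$-control on $d\phi'/dt$. This is resolved by first fixing a truncation level $T_0$ large enough that $\int_{T_0}^{+\infty}\bigl(d\phi/dt\bigr)^2 dt < \epsilon$ (finite by the $\mathbb{L}^2$ hypothesis), declaring $\phi'$ to be constant equal to $\phi(v_2)$ on $[T_0, +\infty)$ so that $d\phi'/dt = 0$ there, and then refining the partition on the compact interval $[t(v_1), T_0]$ using the standard density argument.
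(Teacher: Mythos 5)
Your approach is valid and genuinely different from the paper's. The paper proves the statement in two steps: Step~1 handles the case where all end points of $T$ lie in $T'$ by taking piecewise-linear interpolants $\phi_i^n$ of the two subharmonic pieces $\phi_i$ along refining meshes $T_n\subset T'$, then estimates $\langle\phi^n,\phi^n\rangle-\langle\phi,\phi\rangle$ via $\|\phi_i^n-\phi_i\|_\infty\cdot\Delta\phi_j(T)$; Step~2 is an induction on the number of end points of $T$ \emph{not} in $T'$ (necessarily with $\alpha=-\infty$), removing them one at a time by retracting onto $T\setminus(v_n,w]$ with a delicate case analysis ($\phi(v_n)=,>,<\phi(w)$) that balances the retractions $R\phi_1$ and $R\phi_2$ separately so that the value at the truncated end point still equals $\phi(w)$. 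You bypass this induction entirely by identifying the Dirichlet energy $\langle\phi,\phi\rangle=\phi(-\deg)^2-\int_T(d\phi/dt)^2\,dt$ and reducing everything to $L^2$-approximation of the derivative $d\phi/dt$ by a step function with jumps in $T'$; this is cleaner and avoids the case analysis.

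There is, however, a small gap in your handling of an infinite-length edge $I=[t(v_1),+\infty)$. You declare $\phi'$ constant equal to $\phi(v_2)$ on $[T_0,+\infty)$ and then "refine the partition on $[t(v_1),T_0]$" by interpolating $\phi$ at breakpoints. But if $\phi'$ interpolates $\phi$ at all breakpoints in $[t(v_1),T_0]$, then $\phi'(T_0)=\phi(T_0)\neq\phi(v_2)$ in general, so $\phi'$ is not continuous at $T_0$. To repair this you must instead set $\phi'(T_0):=\phi(v_2)$ on the last breakpoint, so that $d\phi'/dt$ on $[w_{N-1},T_0]$ equals the average of $d\phi/dt$ there \emph{plus} the constant shift $(\phi(v_2)-\phi(T_0))/(T_0-w_{N-1})$. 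This extra term contributes $(\phi(v_2)-\phi(T_0))^2/(T_0-w_{N-1})$ to the squared $L^2$-error; controlling it requires the additional observation that $\phi(T_0)\to\phi(v_2)$ as $T_0\to\infty$ (which holds since $\phi(v_2)$ is defined as this limit and is finite by hypothesis), so one should choose $T_0$ to make both $\int_{T_0}^\infty(d\phi/dt)^2\,dt$ and $|\phi(v_2)-\phi(T_0)|$ small. With that correction, and assuming (as in the paper's intended application) that $T'$ contains $-\deg$ and the branch points of $T$ so that perturbing the remaining atoms of $\Delta\phi'$ into $T'$ is straightforward, your argument goes through.
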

Applying this lemma to $ \epsilon = \frac12 \langle \phi, \phi \rangle$, and to the set $T'$ consisting of all divisorial valuations lying in $T\setminus S$, we obtain a piecewise linear function $\phi'$ such that $ \langle \phi', \phi' \rangle > 0$ and the properties (1) -- (3) above are satisfied.

Let $S'$ be the set of extremal points of the support of $\Delta \phi'$. Observe that thanks to our choice of $T'$ and the fact that $\phi|_S = 0$, we have $S \subset B(S')^\circ$ and
$\phi'|_S = 0$.

Now pick any smooth projective compactification $X$ of $\mathbb{A}^2_k$
such that any valuation in $\supp\Delta\phi'\cup S'$ has codimension $1$ center in $X$.
Denote by $E_1,\cdots, E_s$ the centers of valuations in $S'$, and by $E_{s+1},\cdots, E_l$ the other irreducible components of $X\setminus \mathbb{A}^2$. Introduce now the $\mathbb{R}$-divisor
$$
A':=\sum_{i=1}^l b_{E_i}\phi'(v_{E_i})E_i~.
$$

By \cite[Lemma A.2.]{Favre2011}, $$(\sum_{j=1}^lb_{E_j}g_{v_{E_i}}(v_{E_j})E_j\cdot E_k)=0$$ when $k\neq i$, and $$(\sum_{j=1}^lb_{E_j}g_{v_{E_i}}(v_{E_j})E_j\cdot E_k)=b_{E_i}^{-1}$$ when $k=i.$ It follows that $\check{E}_i=b_{E_i}\sum_{j=1}^lb_{E_j}g_{v_{E_i}}(v_{E_j})E_j$ for all $i=1,\cdots,l.$

Write $\phi'=\sum_{i=1}^lc_ig_{v_{E_i}}.$ Then we have
$$A'=\sum_{i=1}^l b_{E_i}\phi'(v_{E_i})E_i=\sum_{i=1}^l b_{E_i}\left(\sum_{j=1}^lc_jg_{v_{E_j}}(v_{E_i})\right)E_i$$
$$=\sum_{j=1}^lb_{E_j}^{-1}c_j\left(b_{E_j}\sum_{i=1}^l b_{E_i}g_{v_{E_j}}(v_{E_i})E_i\right)=\sum_{j=1}^lb_{E_j}^{-1}c_j\check{E}_j.$$
It follows that
 $$(A')^2=\left((\sum_{i=1}^l b_{E_i}\phi'(v_{E_i})E_i)\cdot (\sum_{i=1}^lb_{E_i}^{-1}c_i\check{E}_i)\right)$$
 $$=\sum_{i=1}^lc_i\phi'(E_i)=\langle \phi',\phi'\rangle>0.$$

Since $\phi'|_{S'}= 0$ and $S'$ is the set of extremal points of the support of $\Delta \phi'$ it follows that $\phi' (v_{E_i}) =0$ for any $v_{E_i} \in B(S')$.
In other words,  the support $C$ of $A'$ contains no component $C_i$ such that
$v_{C_i} \in B(S')$.
Now pick $P \in \Gamma(X\setminus C, O_X)$. Then $v_{E_j}(P)\geq 0$ for all $j=1,\cdots,s$
 hence $v(P)\geq 0$ for all $v\in B(S')$
and we conclude that
$$\Gamma(X\setminus C, O_X) \subset R_{S'}=\cap_j \{P\in k[x,y]|\,\,v_{E_j}(P)\geq 0\}~.$$
One completes the proof using Lemma~\ref{lemofschroer}.
%
%
%
%
%
%On the other hand no two
%
%
%
% $\phi' |_S =0$ on
%
% the support of $A'$ contains no curve $C_i$ for which $v_{C_i} \in B(S')$
%
%the union of the curves $C_i$'s such that
%
%
%
%
% and note that
%
%
%Let $C_1,\cdots, C_l$ be all irreducible component of  which do not associate to valuations in $S'.$
%Set $C:=\cup_{i=1}^l C_i.$ Since , then $C$ is connected.
%Set $$=\sum_{i=1}^l b_{C_i}\phi(v_{C_i})C_i.$$
%We have Since $\mathbb{Q}$ is dense in $\mathbb{R},$ there exist rational numbers $c_1,\cdots,c_l$ such that
% $(\sum_{i=1}^l c_iC_i)^2>0.$ There exists an integer $M\geq 1$ such that $Mc_i\in \mathbb{Z}$ for $i=1,\cdots,l$. Set $A:=M(\sum_{i=1}^l c_iC_i)$, then $A$ is a
% divisor supported on $C$ satisfying $A^2>0.$ By Lemma \ref{lemofschroer}, we have that $\Frac(\Gamma(X\setminus C, O_X))=k(x,y)$.
% Since , we conclude our Proposition.
 \endproof

\begin{proof}[Proof of Proposition~\ref{proaproxim}]Write $\phi=\phi_1-\phi_2$ where both functions $\phi_i$ lie in $\SH (V_\infty)$ and satisfy $\langle\phi_i,\phi_i\rangle>-\infty$ for $i=1,2$.

\noindent{\bf Step 1}. We first suppose that all end points of $T$ are contained in $T'.$

For any $n\geq 0,$ let $T_n$ be a subset of $T'$ such that
\begin{points}
\item[$\d$] all end points of $T$ are contained in $T_n;$
\item[$\d$] for any end point $w$ of $T$ and any point $v\in [-\deg,w]$, there exists a point $v'\in [-\deg,w]\cap T_n$ such that $|\alpha(v)-\alpha(v')|\leq 1/2^{n+1}.$
\end{points}
For $i=1,2$, let $\phi_i^n$ be the unique piecewise linear function on $T$ such that $\phi_i^n(v)=\phi_i(v)$ for all $v\in T_n$. We extend $\phi_i^n$ to a function on $V_{\infty}$ by $\phi^n_i(v):=\phi^n_i(r_T(v))$ for all $v\in V_{\infty}.$
We see that
\begin{points}
\item $\phi_i^n\in \SH(V_{\infty})$;
\item $\Delta\phi_i^n$ is supported on $T$;
\item $\int_T\Delta \phi_i^n=\int_T\Delta \phi_i$;
\item $0\leq \phi_i^n(v)-\phi_i(v)\leq \int_T\Delta \phi_i/2^{n}$ for all $v\in V_{\infty}$.
\end{points}
Set $\phi^n=\phi^n_1-\phi^n_2$. We have
$$ \langle \phi^n,\phi^n\rangle=\sum_{i=1,2;j=1,2}(-1)^{i+j}\int_T\phi_i^n\Delta \phi_j^n$$
$$=\sum_{i=1,2;j=1,2}(-1)^{i+j}(\int_T\phi_i\Delta \phi_j+\int_T(\phi_i^n-\phi_i)\Delta\phi^n_j+\int_T(\phi_j^n-\phi_j)\Delta\phi_i)$$
$$\geq  \langle \phi,\phi\rangle-2(\int_T(\phi_1^n-\phi_1)\Delta\phi^n_2+\int_T(\phi_2^n-\phi_2)\Delta\phi_1)$$
$$\geq \langle \phi,\phi\rangle-4\int_T\Delta \phi_1\int_T\Delta \phi_2/2^n.$$
Then we have $\langle \phi^n,\phi^n\rangle>0$ for $n$ large enough.
Set $\phi':=\phi^n$, then we conclude our Proposition.

\noindent{\bf Step 2}. We complete the proof by induction on the number $n_T$ of end points of $T$ not contained in $T'$.

 When $n_T=0,$ by Step 1, our Proposition holds.

 \medskip

 When $n_T\geq 1$, there exists an end point $w'$ of $T$ not contained in $T'$.
 There exists an increasing sequence $v_n\in [-\deg,w]$ tending to $w$ satisfying $\phi(v_n)\rightarrow\lim_{v<w,v\rightarrow w}\phi(v)=\phi(w)$.
 Since $w$ is an end point, we may suppose that $T_n:=T\setminus (v_n,w]$ is a finite tree.
 There exists a function $g\in \SHP$ such that $\supp \Delta g\subseteq [-\deg, w]$ and it is strict decreasing on $[-\deg, w]$.
 By replacing $\phi_i$ by $\phi_i+g$ for $i=1,2$, we may suppose that $\phi_i$'s are strict decreasing on $[-\deg, w]$.

 When $\phi(v_n)=\phi(w)$, set $\psi_n:=R_{T_n}\phi$.

 When $\phi(v_n)>\phi(w)$, the function $\phi_1(v)-\phi_2(v_n)$ is decreasing. Observe that $\phi_1(v_m)-\phi_2(v_n)=\phi(v_m)-\phi_2(v_n)+\phi_2(v_m)\rightarrow \phi(w)-\phi_2(v_n)+\phi_2(w)$ when $m\rightarrow \infty.$ Since $\phi_2$ is strict decreasing on $[-\deg, w]$, we have $\phi_2(v_n)>\phi_2(w)$ and then
 there exists $v'\in (v_n,w)$ such that $\phi_1(v')-\phi_2(v_n)=\phi(w)$, set $\psi_n:=R_{T\setminus(v',w]}\phi_1-R_{T_n}\phi_2.$

When $\phi(v_n)<\phi(w)$, by the previous argument for $-\phi$, there exists $v'\in (v_n,w)$ such that $\phi_1(v_n)-\phi_2(v')=\phi(w)$, set $\psi_n:=R_{T_n}\phi_1-R_{T\setminus(v',w]}\phi_2.$

%the function $\phi_1(v_n)-\phi_2(v)$ is increasing. Observe that
%$\phi_1(v_n)-\phi_2(v_m)=\phi(v_m)+\phi_1(v_n)-\phi_1(v_m)\rightarrow \phi(w)+\phi_1(v_n)-\phi_1(v_m)$ when $m\rightarrow \infty.$
%Since $\phi_1$ is strict decreasing on $[-\deg, w]$, we have $\phi_1(v_n)>\phi_1(w)$ and then
% there exists $v'\in (v_n,w)$ such that $\phi_1(v_n)-\phi_2(v')=\phi(w)$, set $\psi_n:=R_{T_n}\phi_1-R_{T\setminus(v',w]}\phi_2.$

%the argument in the previous paragraph except  implies that there exists $v'\in (v_n,w)$ such that  $\phi_1(v_n)-\phi_2(v')=\phi(w)$.
%Set $\psi_n:=R_{T_n}\phi_1-R_{T\setminus(v',w]}\phi_2.$

By Proposition \ref{prohodgeindexchangorder} and Proposition \ref{prortgephipsi}, there exists $n\geq 0$ such that $|\langle\psi_n,\psi_n\rangle- \langle \phi, \phi \rangle|\le\varepsilon/2$. Since $T'$ is dense in $T$, there exists $w'\in (v_n,w)\cap T'$ such that $\supp\Delta\psi_n\subseteq T\setminus (v_n,w].$
Apply the induction hypotheses to $\psi_n$, there exists a piecewise linear function $\phi'$ such that
\begin{enumerate}
\item[$\d$]
the support of $\Delta \phi'$ is  a finite collection of  valuations
that belong to $T'$;
%\item
%$\Delta \phi' \{ v \}$ is a rational number for any $v$;
\item[$\d$]
$\phi' = \psi_n=\phi$ at any endpoint of $T$;
\item[$\d$]
$| \langle \psi_n, \psi_n \rangle - \langle \phi', \phi' \rangle | \le \epsilon/2$.
\end{enumerate}
It follows that $| \langle \phi, \phi \rangle - \langle \phi', \phi' \rangle | \le \epsilon$ which concludes our Proposition.
\end{proof}

\
\proof[Proof of Lemma \ref{lemofschroer}]

Decompose $A = A^+-A^-$ into its positive and negative parts. Since $(A^+)^2+(A^-)^2-2A^+A^-=A^2>0$, and $A^+A^-\geq 0$, we have $(A^+)^2>0$ or $(A^-)^2>0$. Replacing $A$ by $A^+$ or $A^-$, we may thus suppose that $A$ is effective.

Pertubing slightly the coefficients of $A$, we can also impose that $A$ is a $\mathbb{Q}$-divisor.
Let $A=P+N$ be the Zariski decomposition of $A$, see~\cite[Theorem 2.3.19]{Lazarsfeld}.
Here $P$ is a nef and effective $\mathbb{Q}-$divisor,  $N$ is an effective $\mathbb{Q}-$divisors, and they satisfy
$P\cdot N = 0$ and $N^2 <0$. It follows that $ P^2 \ge P^2+N^2 = A^2$.
Replacing $A$ by a suitable multiple of $P$
we may thus assume that $A$ is an effective nef integral divisor with $A^2 >0$.
Now pick any effective integral divisor $D$ whose support is equal to the union of all components of $X\setminus \A^2_k$ that are not contained in $C$.
For $n$ large enough $nA-D$ is big, hence $H^0(nA-D,X) \neq 0$.
Since
\begin{equation*}
H^0(nA-D, X) = \{P\in k(x,y)|\,\,\text{div}(P)+nA\geq D\}~,
\end{equation*}
we may find $P\in k(x,y)$ such that $\text{div}(P)+nA\geq D$.
Since $A$ is supported on $X\setminus U$ and $D$ is effective, $P$ is a regular function on $U$.  Now pick any polynomial $Q \in k[x,y]$. For $m$ large enough,
$v_E(P^mQ)\ge 0$ for any component $E$ of the support of $D$, which implies
$P^mQ$ to be regular on $U$.
This shows that $Q$ is included in the fraction field of $\Gamma (U, O_X)$
hence the latter is equal to $k(x,y)$.
%
%
% Let $D$ be an effective divisor on $X$ satisfying $\Supp\, D=X_{\infty}$. Then $A-rD$ is big for $r\in \mathbb{Q}^+$ small enough. Then there exists $n\geq 0$ such that $\dim H^0_X(n(A-rD))\geq 1.$
%Since $n(A-rD)$ is supported on $X_{\infty}$ and $\ord_E(\text{div}(n(A-rD)))=-rn\ord_E (D)<0$ for all irreducible exceptional divisor $E$ ont containing in $C$, we have
%$$H^0_X(n(A-rD))\simeq\{P\in k(x,y)|\,\,\text{div}(P)+n(A-rD)\geq 0\}$$$$=\{P\in k[x,y]|\,\,\text{div}(P)+n(A-rD)\geq 0\}\subseteq \Gamma(U,O_X).$$
%Pick a polynomial $Q\in \{P\in k[x,y]|\,\,\text{div}(P)+n(A-rD)\geq 0\}\setminus \{0\}$. For every irreducible component $E$ of $U\setminus \mathbb{A}^2_k$, we have $\ord_E(P)\geq -\ord_E(\text{div}(n(A-rD)))>0$. Then for all polynomial $P\in k[x,y]$, there exists $m\geq 0$ such that $PQ^m\in \Gamma(U,O_X).$ It follows that $k[x,y]\subseteq \Gamma(U,O_X)\subseteq k(x,y).$ Then we conclude that $\Frac(\Gamma(U,O_X))=k(x,y)$.
\endproof

%Then we give a family of equivalence definitions of the rich property.
%
%\begin{thm}\label{prodimtwothengeqz}\label{thS^{\min}uvsectionprorefshr}\label{thuvsectionprorefshr}Let $S$ be a finite set of valuations in $V_{\infty}$. Then the following statements are equivalence.
%\points
%\item The subset $S$ is rich.
%\item There exists a nonzero polynomial $P\in R_{S}$ such that $v(P)>0$ for all $v\in S.$
%\item For every valuation $v\in S$, there exists a nonzero polynomial $P\in R_{S}$ such that $v(P)>0.$
%\item There exists a function $\phi\in \SHP$ such that $\phi(v)=0$ for all $v\in B(S)$ and $\langle\phi,\phi\rangle>0.$
%\item There exists a function $\phi\in \mathbb{L}^2(V_{\infty})$, satisfying $\phi(v)=0$ for all $v\in B(S)$ and $\langle\phi,\phi\rangle>0$.
%\item There exist a finite set $S'\subseteq V_{\infty}$ satisfying $S\subseteq Y_{S'}^{\circ}$ and $S'$ is rich.
%\endpoints
%\end{thm}

\subsection{Reduction to the case of finite skewness}
Recall that given a finite set $S \subset V_\infty$, we let $S^{\min}_{+}$ be the
subset of $S$ consisting of valuations that are minimal in $S$ and of finite skewness.

Our aim is to prove
\begin{thm}\label{thmstofs}Let $S$ be a finite subset of $V_{\infty}$. Then $S$ is rich if and only if $S^{\min}_{+}$ is rich.
\end{thm}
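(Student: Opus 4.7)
The plan is to invoke the polynomial characterization of richness from Theorem~\ref{thmsuvsectionprorefshr} — that $\delta(T)=2$ is equivalent to the existence of a nonzero polynomial $P\in R_T$ with $v(P)>0$ for all $v\in T$ — and to move such a polynomial between $S$ and $S^{\min}_{+}$. The forward implication is immediate: if $S$ is rich then the last sentence of Theorem~\ref{thmsuvsectionprorefshr} gives $\Frac(R_S)=k(x,y)$; the inclusion $S^{\min}_{+}\subseteq S$ yields $R_S\subseteq R_{S^{\min}_{+}}\subseteq k[x,y]$, so $\Frac(R_{S^{\min}_{+}})=k(x,y)$ and $\delta(S^{\min}_{+})=2$.

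For the converse, assume $S^{\min}_{+}$ is rich. The identity $R_{S^{\min}}=R_S$ (noted right after the definition of $B(S)$) lets me reduce to the case $S=S^{\min}$. The complement $S\setminus S_{+}$ consists of valuations with $\alpha=-\infty$, which by Section~\ref{sectionvatree} are curve valuations $v_{s_1},\ldots,v_{s_k}$ attached to formal branches $s_j$ at infinity of irreducible algebraic curves $\{Q_j=0\}\subset\A^2_k$, each satisfying $v_{s_j}(Q_j)=+\infty$. Theorem~\ref{thmsuvsectionprorefshr}~(ii) applied to $S^{\min}_{+}$ yields a nonzero $P\in R_{S^{\min}_{+}}$ with $v(P)>0$ for every $v\in S^{\min}_{+}$, and I propose the certificate $P':=P^N\prod_{j=1}^k Q_j$ for $S$, where the product is taken to be $1$ if $S=S^{\min}_{+}$ and $N$ is a large integer to be chosen.

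To verify that $P'$ certifies richness of $S$ via Theorem~\ref{thmsuvsectionprorefshr}~(ii)$\Rightarrow$(i), it suffices to check $v(P')>0$ for every $v\in S$. For $v\in S^{\min}_{+}$, finite skewness forces $\mathfrak P_v=(0)$, so each $v(Q_j)$ is a genuine real number; since $S^{\min}_{+}$ is finite with $v(P)>0$, a single sufficiently large $N$ makes $v(P')=Nv(P)+\sum_j v(Q_j)>0$ uniformly. For a curve valuation $v_{s_\ell}\in S\setminus S^{\min}_{+}$, the factor $Q_\ell$ contributes $v_{s_\ell}(Q_\ell)=+\infty$, and since all remaining factors take values in $\R\cup\{+\infty\}$ one obtains $v_{s_\ell}(P')=+\infty>0$. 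The only mildly delicate step is the identification of $S\setminus S_{+}$ with curve valuations of algebraic branches (and the resulting relation $v_{s_j}(Q_j)=+\infty$); this is more bookkeeping than a real obstacle, and the argument is essentially algebraic — the potential-theoretic machinery of Section~\ref{sectionpotentialtheory} has already been absorbed into the statement of Theorem~\ref{thmsuvsectionprorefshr}.
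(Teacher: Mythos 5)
Your forward direction is correct, and the reduction to $S=S^{\min}$ is fine. However, the converse has a genuine gap that cannot be repaired by the route you propose.

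You assert that every $v\in S\setminus S_+$ (i.e.\ $\alpha(v)=-\infty$) is a curve valuation $v_{s_j}$ attached to a formal branch of an \emph{algebraic} curve $\{Q_j=0\}\subset\mathbb{A}^2_k$, and you then exploit $v_{s_j}(Q_j)=+\infty$. This is false in two ways. First, the paper's curve valuations are attached to arbitrary \emph{formal} branches $s$ (a Puiseux series with bounded ramification), and $\alpha(v_s)=-\infty$ for every such $s$, algebraic or not. Section~\ref{sectionapptoalg} explicitly exhibits non-algebraic formal branches (``An adelic branch need not to be algebraic''). For a non-algebraic formal branch $s$, the prime ideal $\mathfrak{P}_{v_s}=\{P:v_s(P)=+\infty\}$ is $(0)$: no nonzero polynomial vanishes identically along $s$, because $s$ does not lie on any algebraic curve. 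Hence there is no $Q$ with $v_s(Q)=+\infty$, and the factor you want for $P'=P^N\prod_j Q_j$ does not exist. Second, $\{\alpha=-\infty\}$ also contains infinitely singular valuations that are not curve valuations at all; your identification silently excludes them. So the step ``by Section~\ref{sectionvatree} these are curve valuations of irreducible algebraic curves'' is not bookkeeping — it is the crux, and it is wrong.

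Note that merely weakening your requirement from $v_{s_j}(Q_j)=+\infty$ to $v_{s_j}(Q_j)>0$ does not save the argument: the existence of a polynomial $Q$ with $v(Q)>0$ for an arbitrary $v$ with $\alpha(v)=-\infty$ is precisely Mondal's theorem, itself a corollary of the Main Theorem whose proof you are in the middle of — so invoking it would be circular. The paper avoids this by running the argument entirely in potential theory: it starts from the function $\phi\in\SHP$ associated to $S^{\min}_+$ (via Theorem~\ref{thmsuvsectionprorefshr}, (iii)$\Rightarrow$(iv)) and then applies the quantitative perturbation result Theorem~\ref{thmgeqm}, which says one may adjoin any $l$ further valuations provided their skewness is below an explicit threshold $M_l$; since the added valuations have $\alpha=-\infty$, the hypothesis is automatic, yielding $\phi'\in\mathbb{L}^2(V_\infty)$ that certifies richness of $S$ through Theorem~\ref{thmsuvsectionprorefshr} again. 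This Dirichlet-energy perturbation is exactly the piece of Section~\ref{sectionpotentialtheory} that your proposal claims to have ``absorbed'' but in fact has not.
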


The proof relies on the following result of independent interest.
\begin{thm}\label{thmgeqm}Let $S$ be a finite set of valuations in $V_{\infty}$. Suppose that there exists a function $\phi\in \SH(V_{\infty})$ such that $\langle\phi,\phi\rangle>0$ and $\phi(v)=0$ for all $v\in B(S)$.

For any integer $l\ge0$,  there exists a real number $M_l\leq 1$ such that for any set $S'$ of valuations such that
\begin{points}
\item[(1)]$S'\setminus B(S)$ has at most $l$ elements and,
\item[(2)] $S'\setminus B(S) \subset \{v\in V_{\infty}|\,\, \alpha(v)\leq M_l\}$,
\end{points}
then there exists a function $\phi'\in \mathbb{L}^2(V_{\infty})$
satisfying $\phi'(v)=0$ for all $v\in B(S')$ and $\langle\phi',\phi'\rangle>0.$
\end{thm}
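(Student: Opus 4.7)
The plan is to construct $\phi'$ as a linear modification of $\phi$ by Green functions $g_{v_i}$, with coefficients chosen so that $\phi'$ vanishes on $B(S')$. The key observation is that $g_v(w) = \alpha(v\wedge w)$ is constant equal to $\alpha(v)$ on $B(\{v\})$ and has self-pairing $\langle g_v, g_v\rangle = \alpha(v)$, so scaling $g_v$ by a coefficient of order $1/|\alpha(v)|$ yields a function of arbitrarily small Dirichlet energy as $\alpha(v) \to -\infty$.

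Concretely, using Proposition~\ref{proaproxim} together with the equivalence (iv)$\Leftrightarrow$(v) of Theorem~\ref{thmsuvsectionprorefshr}, I would first reduce to the case where $\phi \in \SHP$ is piecewise linear with $\Delta\phi$ supported on a finite subtree $T_0 \supset \{-\deg\}\cup S^{\min}$, $\langle\phi,\phi\rangle \ge 2\epsilon > 0$, and $0 \le \phi \le C := \phi(-\deg)$. Replacing $\{v_1,\ldots,v_l\}$ by its minimal elements (which does not change $B(\{v_1,\ldots,v_l\})$), I may further assume the $v_i$ are pairwise incomparable and none lies in $B(S)$. Enlarging $T_0$ to contain the $v_i$ and retracting $\phi$ onto this enlarged tree via Proposition~\ref{prortgephipsi} (which only increases the energy), I then set
\[
\phi' := \phi - \sum_{w\in S^{\min}\cup\{v_1,\ldots,v_l\}} c_w\, g_w,
\]
where the coefficients $c_w$ are chosen to solve the linear system obtained by imposing $\phi'(w) = 0$ at each marked point $w \in S^{\min}\cup\{v_i\}$. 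Since $g_v$ is constant on each ball $B(\{w\})$ whenever $v$ is not strictly above $w$, these finitely many point-conditions propagate to the required vanishing on the whole of $B(S\cup\{v_i\}) = B(S')$. The matrix of this linear system is precisely $M(S^{\min}\cup\{v_1,\ldots,v_l\})$, and the right-hand side vector has entries in $[0,C]$.

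The final step is the energy estimate. By Lemma~\ref{lemhodgeindextheoremvalu} the matrix $M$ has at most one non-negative eigenvalue; combined with the very negative diagonal entries $\alpha(v_i) \le M_l$ of the $(v_i)$-block, a Schur-complement argument shows that for $|M_l|$ large enough $M$ is negative definite with $\|M^{-1}\|$ tending to $0$ as $M_l \to -\infty$. Consequently the correction $\psi := \sum c_w g_w$ satisfies $\langle\psi,\psi\rangle = O(1/|M_l|)$ and $|\langle \phi,\psi\rangle| = O(1/|M_l|)$, and therefore $\langle\phi',\phi'\rangle > \epsilon > 0$ provided $M_l$ is chosen sufficiently negative.

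The main obstacle I expect is to control $\|M^{-1}\|$ uniformly. If some pair $v_i, v_j$ has $\alpha(v_i\wedge v_j)$ close to $\alpha(v_i)$, the matrix can become nearly singular despite the very negative diagonal. I plan to handle this via an iterative clustering step: whenever $\alpha(v_i\wedge v_j)$ is comparable to $\alpha(v_i)$, I would replace the pair $\{v_i,v_j\}$ by the single valuation $v_i\wedge v_j$, whose ball contains both $B(\{v_i\})$ and $B(\{v_j\})$ and which still lies outside $B(S)$. After at most $l-1$ such reductions, the remaining valuations are well-separated in the required sense, and the matrix estimate applies. Curve valuations ($\alpha = -\infty$) will be handled through the extension convention for $\mathbb{L}^2$ functions, or by approximating them from below by divisorial valuations of very negative skewness.
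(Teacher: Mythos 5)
Your approach is genuinely different from the paper's, and the comparison is instructive. You solve for the coefficients of Green functions $g_w$, $w\in S^{\min}\cup\{v_1,\ldots,v_l\}$, by inverting the Gram matrix $M(S^{\min}\cup\{v_1,\ldots,v_l\})$; the paper instead builds the correction explicitly as $\phi' = \phi + \sum_i \Phi_i$ with $\Phi_i := x_i\bigl(g_{v_i} - g_{v_i^0}\bigr)$, where $v_i^0 \le v_i$ is the ancestor at skewness exactly $M_{l-1}$ and $x_i = \phi(v_i)/(M_{l-1}-\alpha(v_i))$. The difference $g_{v_i}-g_{v_i^0}$ is supported entirely in the direction of $v_i$, so (after the same clustering you describe) each $\Phi_i$ vanishes identically on $B(S)$ and on $B(\{v_j\})$ for $j\neq i$, and moreover $\langle\Phi_i,\Phi_j\rangle = 0$ for $i\neq j$. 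This decouples the system into $l$ scalar equations and makes the energy estimate a one-line sum: $\langle\phi',\phi'\rangle = \langle\phi,\phi\rangle - \sum x_i\phi(v_i) \ge r - l/|M_{l-1}-\alpha(v_i)| \ge r/2$. The paper's auxiliary points $v_i^0$ thus do the work your $S^{\min}$-Green functions are meant to do, but with no matrix to invert.

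Your version has a concrete gap that the paper's decoupling sidesteps. You need $M(S^{\min}\cup\{v_1,\ldots,v_l\})$ to be invertible and, more than that, you need $b^\top M^{-1}b$ to be small, where $b$ is supported only in the $\{v_i\}$-block. Your claim that ``$\|M^{-1}\|\to 0$'' is not correct: the $S^{\min}$-block $A = M(S^{\min})$ is independent of $M_l$, so $\|M^{-1}\|$ stays bounded below. What goes to zero is only the Schur complement inverse $S^{-1} = (D - B^\top A^{-1}B)^{-1}$ and the blocks of $M^{-1}$ involving it; your estimate should be phrased in those terms (and then, since $b$ has zeros in the $S^{\min}$ positions, $b^\top M^{-1}b = b_2^\top S^{-1}b_2 \to 0$, which is what you actually need). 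But this all presupposes $A$ is invertible, and that is not guaranteed by the hypotheses: the only general information (via Hodge index, Lemma~\ref{lemhodgeindextheoremvalu}, and the Hodge inequality applied with the given $\phi$) is that $M(S^{\min})$ has at most one non-negative eigenvalue and that $c^\top A c \le 0$ whenever $\sum_u c_u = 0$; it can perfectly well have a kernel. You would then need a separate argument (e.g.\ restricting to a suitable subspace, or projecting out the kernel) to get a valid and uniformly controlled solution of the linear system. The paper's construction never touches $A$ and so never meets this obstruction. Your clustering step, by contrast, is essentially identical to the paper's inductive device (replace $\{v_i,v_j\}$ by $v_i\wedge v_j$ whenever $\alpha(v_i\wedge v_j)\le M_{l-1}$ and appeal to the case $l-1$), and would work once made precise.
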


In the particular case where $S=\emptyset$,  the previous result says the following.
\begin{cor}\label{corgeqm}
For any positive integer $l>0$, there exists a real number $M_l\leq 1$ such that given any valuations $v_1,\cdots,v_l$ satisfying $\alpha(v_i)\leq M_l$, there exists a function $\phi\in \mathbb{L}^2(V_{\infty})$
satisfying $\phi'(v)=0$ for all $v\in B(\{v_1,\cdots,v_l\})$ and $\langle\phi',\phi'\rangle>0.$
\end{cor}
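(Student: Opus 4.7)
The plan is to deduce Corollary \ref{corgeqm} as the direct specialization of Theorem \ref{thmgeqm} to the case $S = \emptyset$. When $S$ is empty, the set $B(S) = \bigcup_{v \in \emptyset}\{w : w \geq v\}$ is empty as well, so the hypothesis of Theorem \ref{thmgeqm} that there exist $\phi \in \SH(V_\infty)$ with $\phi|_{B(S)} = 0$ and $\langle \phi, \phi \rangle > 0$ collapses to simply producing any subharmonic function on $V_\infty$ with positive Dirichlet self-pairing, the vanishing condition being vacuous.

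For this witness I would take $\phi := g_{-\deg}$, the Green function of $-\deg$. Since $-\deg$ is the minimum of $V_\infty$, we have $-\deg \wedge w = -\deg$ for every $w \in V_\infty$, whence $\phi(w) = \alpha(-\deg) = 1$; that is, $\phi$ is the constant function $1$ on $V_\infty$ (in particular bounded, hence lying in $\mathbb{L}^2(V_\infty)$). Its Laplacian is the Dirac mass $\Delta\phi = \delta_{-\deg}$, and consequently
$$\langle \phi, \phi \rangle \;=\; \int_{V_\infty} \phi \, d\Delta\phi \;=\; \phi(-\deg) \;=\; 1 \;>\; 0.$$

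Applying Theorem \ref{thmgeqm} with this starting data produces the desired constant $M_l \leq 1$: given any valuations $v_1, \ldots, v_l$ with $\alpha(v_i) \leq M_l$, the set $S' := \{v_1, \ldots, v_l\}$ satisfies $S' \setminus B(\emptyset) = S'$, which has at most $l$ elements and is contained in $\{v \in V_\infty : \alpha(v) \leq M_l\}$. The theorem accordingly furnishes $\phi' \in \mathbb{L}^2(V_\infty)$ with $\phi'|_{B(S')} = 0$ and $\langle \phi', \phi' \rangle > 0$, which is exactly the statement of Corollary \ref{corgeqm}. All the substantive work is absorbed into Theorem \ref{thmgeqm} itself, whose proof constructs $\phi'$ by perturbing the given $\phi$ to offset the obstruction created by the new valuations; there is no additional obstacle here beyond exhibiting a valid starting $\phi$ for the empty-$S$ case, and $g_{-\deg}$ does the job.
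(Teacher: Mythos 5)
Your proposal is correct and follows the same route the paper intends: the corollary is stated immediately after Theorem \ref{thmgeqm} as "the particular case where $S=\emptyset$," and your contribution is to make explicit that the hypothesis of the theorem is satisfied in this degenerate case by exhibiting $\phi = g_{-\deg} \equiv 1$ with $\Delta\phi = \delta_{-\deg}$ and $\langle\phi,\phi\rangle = \alpha(-\deg) = 1 > 0$, the vanishing condition on $B(\emptyset)=\emptyset$ being vacuous. This is exactly the argument the paper leaves implicit.
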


\proof[Proof of Theorem \ref{thmstofs}]
As before, we may suppose that $S=S^{\min}.$

Since $S^{\min}_{+}\subseteq S$, we only have to show the "if" part. Suppose that $S^{\min}_{+}$ is rich, and
set $l=\#(S\setminus S^{\min}_{+})$.
%Observe that for all points $v\in S\setminus S^{\min}_{+}$, we have $\alpha(v)=-\infty$.
Since $S^{\min}_{+}$ is rich, Theorem \ref{thmsuvsectionprorefshr} implies the existence of a function $\phi\in \SHP$ such that $\langle\phi,\phi\rangle>0$ and $\phi(v)=0$ for all $v\in B(S^{\min}_{+})$.
Since $S \setminus B(S^{\min}_{+}) \subset \{ \alpha = -\infty\}$
Theorem \ref{thmgeqm} then implies the existence of $\phi'\in \mathbb{L}^2(V_{\infty})$
satisfying $\phi'(v)=0$ for all $v\in B(S)$ and $\langle\phi',\phi'\rangle>0.$

We conclude that $S$ is rich by applying Theorem \ref{thmsuvsectionprorefshr} once again.
\endproof

\proof[Proof of Theorem~\ref{thmgeqm}]
We first make a couple of reductions. Let $T_S$ be the convex hull of $S$.
Replacing $\phi$ by $R_{T_{S}}(\phi)$, we may suppose that $\Delta\phi$ is supported on $T_S.$ We can also scale $\phi$ so that $\phi(-\deg)=1$ which implies
$0\leq \phi(v)\leq 1$ for all $v\in V_{\infty}$
since $\phi(v)=0$ for all $v\in B(S)$.

Further, we may apply Theorem~\ref{thmsuvsectionprorefshr} (vi) and suppose
$M_0 := \inf_S \alpha > -\infty$.

To simplify notation, set $r:=\langle\phi,\phi\rangle>0.$

\medskip

We prove the theorem  by induction on $l$. In the case $l=0$, there is nothing to prove.  Suppose that the result  holds for $(l-1)\geq 0$ with $M_{l-1}\leq M_0$, and set $M_{l} :=M_{l-1}-2l/r.$

\smallskip

Suppose $S'$ is a set of valuations satisfying the conditions (1) and (2) of the theorem.
When  $\# (S' \setminus B(S) ) \leq l-1$, we are done since
$M_l<M_{l-1}$. So we have $\# (S' \setminus B(S) ) = l$, and we write
$S' \setminus B(S)=\{v_1,\cdots,v_l\}$.  If there exist
a pair of valuations $v_i,v_j$ such that $\alpha(v_i\wedge v_j)\leq M_{l-1}$, then we may conclude by replacing
$S'$ by $(S'\setminus\{v_i,v_j\})\cup \{v_i\wedge v_j\}$ and using the induction hypothesis.

Whence $\alpha(v_i\wedge v_j)>M_{l-1}$ when $i\neq j$.
For each $i$, let $v_i^0$ be the unique valuation in $V_{\infty}$ such that $v_i^0\leq v_i$ and $\alpha(v_i^0)=M_{l-1}$, so that  $v_i^0\neq v_j^0$ when $i \neq j$.
Define $$\Phi_i=  x_i (g_{v_i} - g_{v_i^0}) \in \mathbb{L}^2(V_{\infty}) \text{ with }
x_i:=\phi(v_i)/(M_{l-1}-\alpha(v_i))~.$$
Observe that $\Phi_i (-\deg) = 0$, $\Delta \Phi_i=  x_i (\delta_{v_i} - \delta_{v_i^0})$;
 $1\ge |\Phi_i| \ge 0$, and that
$\Phi_i (v) =-\phi(v_i)$ %(resp. $\Phi_i (v) =1$)
when $v\ge v_i$.
%(resp. $\le0$).
It follows that
$\langle \Phi_i, \Phi_i \rangle = -x_i\phi(v_i)$ and $\langle \Phi_i, \Phi_j \rangle =0$ when $i\neq j$.

Set $$\phi':=\phi+\sum_{i=1}^l\Phi_i~.$$
Then $\phi' \in \mathbb{L}^2(V_{\infty})$, and it is not difficult to check that $\phi'(v)=0$ for all $v\in B(S')$.
Finally we have
\begin{eqnarray*}
\langle\phi',\phi'\rangle
&=&
\langle\phi',\phi\rangle + \sum_{i=1}^l \langle\phi',\Phi_i\rangle = \langle\phi',\phi\rangle-\sum_{i=1}^lx_i\phi'(v_i^0)
\\
&=&
\langle\phi,\phi\rangle-\sum_{i=1}^lx_i\phi(v_i)
\geq
r-\sum_{i=1}^l\phi(v_i)^2/(M_{l-1}-\alpha(v_i))
\\
&\geq&
r-\sum_{i=1}^l1/(M_{l-1}-\alpha(v_i))\geq r/2>0~,
\end{eqnarray*}
which concludes the proof.
\endproof

\subsection{Proof of the Main Theorem}\label{subsectionproofofthmthmtrandeterbycha}% \ref{thmtrandeterbycha}}

~ \smallskip

By Lemma \ref{lemreducetosubsetofvin}, Lemma \ref{lemredtosminplus} and Theorem \ref{thmstofs}
we may suppose that $S=S^{\min}_+$.
%is a finite subset of $V_{\infty}$ consisting of valuations with \emph{finite} skewness. As above, we may assume that $S = S^{\min}$.

Denote by $T$  the convex hull of  $S\cup \{-\deg\}$. To simplify notation, set $S=\{v_1,\cdots,v_l\}$ and $v_0:=-\deg.$ Since $\alpha(v_0\wedge v_0)=1>0$,
by Lemma \ref{lemhodgeindextheoremvalu}, we have the following
%The following lemma  is a discrete analogue of the Dirichlet problem.
\begin{lem}\label{lemdirichletontree} The matrix
$[\alpha(v_i\wedge v_j)]_{0\le i,j \le l}$ is invertible, and its  determinant has the same sign as $(-1)^l$.
\end{lem}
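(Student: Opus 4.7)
The plan is to derive this immediately from the Hodge-type statement in Lemma~\ref{lemhodgeindextheoremvalu} together with the positivity of the diagonal entry at the root.

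First, I will apply Lemma~\ref{lemhodgeindextheoremvalu} to the $(l+1)$-element set $\{v_0, v_1, \ldots, v_l\}$. All of these valuations have finite skewness: one has $\alpha(v_0) = \alpha(-\deg) = 1$, and the valuations $v_1, \ldots, v_l$ lie in $S^{\min}_+$ by the reductions carried out in the previous subsection. The lemma then tells us that the symmetric $(l+1)\times(l+1)$ matrix
\[
M := [\alpha(v_i \wedge v_j)]_{0 \leq i, j \leq l}
\]
has at most one non-negative eigenvalue.

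Next, I will produce at least one strictly positive eigenvalue. This uses only the fact that the diagonal entry at the root satisfies $M_{00} = \alpha(v_0 \wedge v_0) = \alpha(-\deg) = 1 > 0$. Evaluating the quadratic form on the standard basis vector $e_0$ gives $e_0^{\mathrm T} M e_0 = 1 > 0$, so by the Courant--Fischer min-max principle the largest eigenvalue of $M$ satisfies $\lambda_{\max}(M) \geq 1 > 0$.

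Combining the two observations, $M$ has exactly one eigenvalue that is $\geq 0$, this eigenvalue is strictly positive, and the remaining $l$ eigenvalues are therefore strictly negative. In particular $0$ is not an eigenvalue, so $M$ is invertible, and $\det M$, being the product of its eigenvalues, has the sign of $(+)(-)^l = (-1)^l$. I do not anticipate any real obstacle here: the substantive content is packaged inside Lemma~\ref{lemhodgeindextheoremvalu}, and the present statement is essentially its corollary once one notes that the root contributes the unique positive direction.
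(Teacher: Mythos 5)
Your proof is correct and takes exactly the same approach as the paper, which presents this lemma without a written-out proof but introduces it by "Since $\alpha(v_0\wedge v_0)=1>0$, by Lemma~\ref{lemhodgeindextheoremvalu}, we have the following." You have correctly identified the two ingredients (the Hodge-index bound of at most one non-negative eigenvalue, and the fact that the Rayleigh quotient at $e_0$ equals $\alpha(-\deg)=1>0$) and assembled them into the sign and invertibility conclusion.
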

We may thus find real numbers $a_0, \ldots , a_l$ such that
$$
\left(\begin{matrix}
 1 & 1 & \ldots & 1\\
 1 & \alpha(v_1) & \ldots & \alpha(v_1\wedge v_l)\\
\ldots & \ldots & \ldots & \ldots  \\
  1 & \alpha(v_1\wedge v_l) & \ldots & \alpha(v_l)\\
 \end{matrix} \right)
\left[
\begin{matrix}
a_0 \\ a_1 \\ \vdots \\ a_l \end{matrix}
\right]
=
\left[
\begin{matrix}
1 \\ 0 \\ \vdots \\ 0 \end{matrix}
\right]
~\eqno (*).$$
%
%By lemma \ref{lemdirichletontree}, we have that the matrix $A_S=(\alpha(v_i\wedge v_j)_{0\leq i,j\leq l})$ is invertible. There exists $\bar{a}=(a_0,\cdots,a_l)\in \mathbb{R}^{l+1}$ satisfying $\bar{a}^T=A_S^{-1}(1,0,\cdots,0)^{T}$.
%
%Consider the function $\phi := \sum_0^l a_i g_{v_i}$.
%
%Let $\overrightarrow{v_i}$ be the unique tangent direction at $v_i$ in $T$ for $i=0,\cdots,l$.
%The minimal principal for harmonic functions implies that $\phi_{\bar{a}}(v)>0$ for all $v\in T\setminus (S\bigcup\{v_0\})$.
%We have $a_i=D_{\overrightarrow{v}}\phi_{\bar{a}}>0$ for $i=1,\cdots,l.$
\begin{lem}\label{lemtolinear}The subset $S$ is rich if and only if $a_0$ is positive.
\end{lem}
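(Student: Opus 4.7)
The plan is to construct from the solution $(a_0, \ldots, a_l)$ of $(*)$ an explicit test function in $\mathbb{L}^2(V_\infty)$ whose self-energy equals exactly $a_0$, and then read off both directions from Theorem~\ref{thmsuvsectionprorefshr}(v) together with the Hodge inequality. Concretely, I will set
\[
\phi := a_0\,g_{-\deg} + \sum_{i=1}^l a_i\, g_{v_i},
\]
using the convention $g_{v_0} = g_{-\deg} \equiv 1$. Since $S = S^{\min}_+$, every $v_i$ has finite skewness, each $g_{v_i}$ is a bounded subharmonic function, and so $\phi$ lies in the vector space $\mathbb{L}^2(V_\infty)$ with Laplacian $\Delta\phi = \sum_{i=0}^l a_i\,\delta_{v_i}$.

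The system $(*)$ says precisely that $\phi(-\deg)=1$ and $\phi(v_j)=0$ for $j=1,\ldots,l$. I would then promote the latter to $\phi \equiv 0$ on all of $B(S)$ by a short tree argument: for any $w \ge v_i$, the minimality of $v_i$ in $S$ forces $v_j \wedge w = v_j \wedge v_i$ for every $j \neq i$ (otherwise $v_j \wedge w > v_i$ would yield $v_i \le v_j$, contradicting $S = S^{\min}$), so $g_{v_j}(w) = g_{v_j}(v_i)$ for every $j$ and hence $\phi(w) = \phi(v_i) = 0$. Fubini then gives
\[
\langle \phi, \phi \rangle \;=\; \int_{V_\infty} \phi\, d\Delta\phi \;=\; \sum_{i=0}^l a_i\, \phi(v_i) \;=\; a_0.
\]

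The forward direction is now immediate: if $a_0 > 0$, the function $\phi$ vanishes on $B(S)$, lies in $\mathbb{L}^2(V_\infty)$, and has positive self-energy, so Theorem~\ref{thmsuvsectionprorefshr}(v) yields that $S$ is rich. For the converse, supposing $S$ rich, I would take any $\psi \in \mathbb{L}^2(V_\infty)$ provided by the same theorem, so $\psi|_{B(S)} = 0$ and $\langle \psi, \psi\rangle > 0$. Another application of Fubini together with $\psi(v_j)=0$ for $j \ge 1$ yields $\langle \phi, \psi\rangle = a_0\,\psi(-\deg)$. Plugging this into the Hodge inequality (Theorem~\ref{thmpropofdirpair}, which remains valid on $\mathbb{L}^2(V_\infty)$) simplifies to
\[
\psi(-\deg)^2 (1-a_0)^2 \;\le\; (1-a_0)\bigl(\psi(-\deg)^2 - \langle \psi,\psi\rangle\bigr).
\]
If $a_0 \ge 1$ there is nothing to prove; otherwise $1 - a_0 > 0$, and dividing gives $\langle \psi, \psi \rangle \le a_0\,\psi(-\deg)^2$, so positivity of $\langle \psi, \psi\rangle$ forces $a_0 > 0$.

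The only step I expect to require real care is the tree computation extending $\phi|_{S} = 0$ to $\phi \equiv 0$ on the whole region $B(S)$; this is the one place where the standing reduction $S = S^{\min}$ is used essentially. Everything else reduces to linear algebra (reading off $\langle\phi,\phi\rangle$ and $\langle\phi,\psi\rangle$ from $\Delta\phi$) and a single appeal to the already-established Hodge inequality for the Dirichlet pairing on $\mathbb{L}^2(V_\infty)$.
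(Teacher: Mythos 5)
Your proof is correct, and the forward direction coincides with the paper's argument (same test function $\phi^* = \sum_{i=0}^l a_i g_{v_i}$, same appeal to Theorem~\ref{thmsuvsectionprorefshr}). The converse is where you take a genuinely different route. The paper takes a witness $\psi\in\SHP$ from Theorem~\ref{thmsuvsectionprorefshr}(iv), retracts it to the finite tree $T$ spanned by $S\cup\{-\deg\}$, normalizes $\psi(-\deg)=1$, and uses the maximum principle on trees (Lemma~\ref{lemmaxprinftree}) to get $\psi\le\phi^*$ on $T$, concluding via the chain $a_0 = \int\phi^*\Delta\phi^*\ge\int\psi\Delta\phi^* = \int\phi^*\Delta\psi\ge\int\psi\Delta\psi>0$. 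You instead take the witness $\psi\in\mathbb{L}^2(V_\infty)$ from statement (v), compute $\langle\phi^*,\psi\rangle = a_0\psi(-\deg)$ by the Fubini formula (valid because all $v_i$ have finite skewness, so $\psi(v_i)$ is well-defined), and feed this into the Hodge inequality on $\mathbb{L}^2(V_\infty)$. This bypasses both the retraction step and Lemma~\ref{lemmaxprinftree}, and is arguably cleaner; note also that $\phi^*(-\deg)^2 - \langle\phi^*,\phi^*\rangle\ge 0$ holds automatically on $\mathbb{L}^2$, so $a_0\le 1$ for free, your ``$a_0\ge 1$'' escape clause only covers $a_0 = 1$, and in the remaining case dividing by $1-a_0>0$ gives $0<\langle\psi,\psi\rangle\le a_0\,\psi(-\deg)^2$, forcing $a_0>0$ and incidentally $\psi(-\deg)\ne 0$. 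One small remark: your tree argument showing $\phi^*|_{B(S)}=0$ makes explicit a step the paper states without proof; it is correct, though the clause ``otherwise $v_j\wedge w > v_i$'' is terse. The actual reason is that $v_j\wedge w$ and $v_i$ both lie on $[-\deg,w]$ and are therefore comparable, so $v_j\wedge w > v_j\wedge v_i$ forces $v_i < v_j\wedge w\le v_j$, contradicting $S=S^{\min}$.
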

Now observe that
\begin{multline*}
\left(\begin{matrix}
 1 & 1 & \ldots & 1\\
 1 & \alpha(v_1) & \ldots & \alpha(v_1\wedge v_l)\\
\ldots & \ldots & \ldots & \ldots  \\
  1 & \alpha(v_1\wedge v_l) & \ldots & \alpha(v_l)\\
 \end{matrix} \right)
\left(\begin{matrix}
 a_0 & 0 & \ldots & 0\\
 a_1 & 1 & \ldots & 0\\
\ldots & \ldots & \ldots & \ldots  \\
  a_l & 0& \ldots & 1\\
 \end{matrix} \right)
=\\
\left(\begin{matrix}
 1 & 1 & \ldots & 1\\
 0 & \alpha(v_1) & \ldots & \alpha(v_1\wedge v_l)\\
\ldots & \ldots & \ldots & \ldots  \\
  0 & \alpha(v_1\wedge v_l) & \ldots & \alpha(v_l)\\
 \end{matrix} \right)
~, \end{multline*}
hence $a_0 >0$ iff $\chi(S) := (-1)^l\det (\alpha(v_i \wedge v_j)_{1 \le i,j \le l} )>0$ as required.

\proof[Proof of Lemma \ref{lemtolinear}]Set $\phi^* := \sum_0^l a_i g_{v_i}\in \mathbb{L}^2(V_{\infty})$. By (*), we have $\phi^*(-\deg)=1$, $\phi^*(v)=0$ for all $v\in B(S)$ and $\langle\phi^*,\phi^*\rangle=a_0.$

Suppose first that $a_0 >0$ $\langle\phi^*,\phi^*\rangle=a_0>0.$ It follows from Theorem~\ref{prodimtwothengeqz} that  $S$ is rich.

\smallskip

Conversely if $S$ is rich, then again by Theorem~\ref{prodimtwothengeqz}  there exists $\phi\in \SHP$ such that $\phi(v)=0$ for all $v\in B(S)$ and $\langle\phi,\phi\rangle>0.$ By replacing $\phi$ by $R_T(\phi)$, we may suppose that $\Delta\phi$ is supported on $T,$ and by scaling,  that $\phi(-\deg)=1.$

Observe that on each connected component of $T\setminus (S\cup \{-\deg\})$, we have $\Delta (\phi-\phi^*)=\Delta (\phi-\phi^*)=\Delta\phi \ge 0$.
The following lemma is basically the maximum principle for subharmonic functions on finite trees.
\begin{lem}\label{lemmaxprinftree}Let $T$ be a finite subtree in $V_{\infty}$ and $S$ be the set of end points of $T$. Suppose that all points in $S$ are with finite skewness. Let $\phi$ subharmonic function on $T\setminus S$
i.e. $\Delta \phi$ is a positive measure on $T\setminus S$. Then if there exists a point $w\in T\setminus S$ satisfying $\phi(w)=\sup\{\phi(v)|\,\,v\in T\setminus S\}$ then $\phi$ is constant in the connected component containing $w$.
\end{lem}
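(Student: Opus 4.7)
\proof[Proof proposal for Lemma \ref{lemmaxprinftree}]
The plan is to run the classical maximum principle for convex functions on the tree $T$, exploiting the two key consequences of $\phi$ being subharmonic on $T\setminus S$ furnished by Proposition \ref{prosupplapfinittreesub}: $(a)$ along any edge $I$ of $T$, parameterized by $t=-\alpha$, the restriction $\phi|_I$ is convex; and $(b)$ at every point $v\in T\setminus S$ the sum $\sum_{\vec v\in\Tan_v\cap T}D_{\vec v}\phi$ is non-negative (being the mass of $\Delta\phi$ at $v$, which lies in $T\setminus S$). The finite-skewness assumption on $S$ guarantees that each edge of $T$ has finite length in the skewness parameter, so these statements apply up to (but not including) the points of $S$.

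Write $M:=\sup\{\phi(v):v\in T\setminus S\}$, let $C$ be the connected component of $T\setminus S$ containing $w$, and set $W:=\{v\in C:\phi(v)=M\}$. By hypothesis $w\in W$, so $W\neq\emptyset$. The set $W$ is closed in $C$ because $\phi$ is continuous on each edge by $(a)$, hence continuous on $C$. The heart of the argument is to prove that $W$ is also open in $C$: connectedness of $C$ will then force $W=C$, which is the conclusion.

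To show openness, pick $v\in W$ and split into two cases. If $v$ lies in the interior of an edge $I$ of $T$, then $\phi|_I$ is convex in the parameter $t=-\alpha$ and attains its maximum $M$ at an interior point of $I$; a convex function with an interior maximum must be constant on the segment joining that point to either endpoint where the value equals $M$, and since $M$ is a global supremum on $C$, we conclude $\phi\equiv M$ in an $I$-neighborhood of $v$. If instead $v$ is a branch point of $T$, then for every tangent direction $\vec v\in\Tan_v\cap T$ we have $D_{\vec v}\phi\le 0$, because otherwise $\phi$ would exceed $M$ along the edge determined by $\vec v$, contradicting the definition of $M$. Combined with the subharmonicity inequality $\sum_{\vec v}D_{\vec v}\phi\ge 0$ from $(b)$, this forces $D_{\vec v}\phi=0$ for every $\vec v$. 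Then convexity of $\phi$ along the edge determined by $\vec v$ makes the slope non-decreasing starting from $0$, so $\phi$ is non-decreasing along that edge; but $\phi\le M=\phi(v)$ on $C$, hence $\phi\equiv M$ in a neighborhood of $v$ along each edge through $v$.

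In both cases $\phi\equiv M$ on an open neighborhood of $v$ in $C$, so $W$ is open. Hence $W=C$, proving the lemma. The main delicate point, which I would treat carefully, is the branch-point case: one needs the sign of $D_{\vec v}\phi$ at the branch point to be controlled in every tangent direction simultaneously, and it is here that the hypothesis $v\in T\setminus S$ (so that $(b)$ applies) and the convexity along each outgoing edge interact to force local constancy.
\endproof
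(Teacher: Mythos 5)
Your proof is correct and follows essentially the same route as the paper's: a case split between interior edge points (where convexity of $\phi$ with an interior maximum forces local constancy) and branch points (where the non-negativity of $\sum_{\vec v} D_{\vec v}\phi$ combined with the fact that each directional derivative is $\le 0$ at a maximum forces all derivatives to vanish), followed by an open-and-closed connectedness argument. The only cosmetic difference is at branch points, where the paper first identifies the direction of maximal derivative and shows it is $\ge 0$ before concluding all derivatives vanish, while you observe directly that all directional derivatives are $\le 0$ and use the subharmonicity inequality to force them all to zero; the two orderings are logically equivalent.
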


Since
$\phi-\phi^*(v_i)=0$ for all $i=0,\cdots,l$,
Lemma~\ref{lemmaxprinftree} implies that $\phi-\phi ^*\le 0$ on $T$.
Then we conclude that $$a_0=\int \phi^*\Delta \phi^*\geq \int \phi\Delta \phi^*=\int \phi^*\Delta \phi\geq \int \phi\Delta \phi>0.$$
\endproof
\proof[Proof of Lemma \ref{lemmaxprinftree}]We suppose that there exists a point $w\in T\setminus S$ satisfying $\phi(w)=\sup\{\phi(v)|\,\,v\in T\setminus S\}$.

If $w$ is not a branch point, then there exists open segment $I$ in $T$ containing $w$ such that there are no branch points in $I$. Since $\Delta\phi|_I=\frac{d^2\phi}{dt^2}$, we get that $\phi|_I$ is convex. It follows that $\phi$ is constant on $I$.

If $w$ is a branch point, we have $0\leq\Delta\phi\{w\}=\sum_{\vec{w}}D_{\vec{w}}\phi$ where the sum is over all tangent directions $\overrightarrow{w}$ in $T$ at $w$. Then there exists a direction $\vec{v}$ satisfying $D_{\vec{v}}\phi=\max\{D_{\vec{w}}\phi\}$ where the $\max$ is over all tangent directions $\overrightarrow{w}$ in $T$ at $w$. Then we have $D_{\vec{v}}\phi\geq 0$.
There exists a segment $[w,v')$ determining $\vec{v}$ and containing no branch points except $w$. Since $\phi$ is convex on $[w,v')$ and $D_{\vec{v}}\phi\geq 0$, it follows that $\phi$ is constant on $[w,v')$ and then $D_{\vec{w}}\phi=0$ for all tangent directions $\overrightarrow{w}$ in $T$ at $w$. We conclude that there exists an open set $U$ in $T$ containing $w$ such that $\phi$ is constant on $U$.

So the set $\{w|\,\,\sup\{\phi(v)|\,\,v\in T\setminus S\}\}$ is both open and closed. It is thus a union of connected components of $Y\setminus S$ which concludes our lemma.
\endproof

\newpage
\section{Further remarks in the case $\chi(S) = 0$}\label{sectionrechisez}
In this section, we discuss the case when $\chi(S) =0$
for some finite subset $S$ of valuations in $V_{\infty}$, and explore its relations with
the condition $\delta(S)=1$.

As before, $k$ is any algebraically closed field. To simplify the discussion we shall always assume that $S = S^{\min}$, that is
no two different valuations in $S$ are comparable.
%As before, $k$ is any algebraically closed field. To simplify the discussion we shall always assume that $S = S^{\min}_+$, that is any valuation in $S$ has finite skewness, and
%no two different valuations in $S$ are comparable.

\subsection{Characterization of finite sets with $\chi(S)=0$}

\begin{thm}If any valuation in $S$ has finite skewness,
the following  conditions are equivalent:
\begin{enumerate}
\item $\chi(S) =0$;
%\item
%there exists $\phi \in \SHP$ such that $\phi|_S =0$ and $\langle \phi, \phi \rangle =0$.
\item there exists $\phi \in \SHP$ such that $\phi|_S =0$, the support of $\Delta \phi$ is equal to $S$, and $\langle \phi, \phi \rangle =0$.
\end{enumerate}
Moreover when either one of these conditions are satisfied, the function $\phi$ as in $(2)$ is unique up to a scalar factor.
If all valuations in $S$ are divisorial and we normalize $\phi$ such that $\phi(-\deg) = +1$
then the mass of $\Delta \phi$ at any point is a rational number.
\end{thm}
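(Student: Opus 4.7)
My plan is to reduce the equivalence to the linear algebra of the matrix $M(S) = [\alpha(v_i \wedge v_j)]_{1 \le i,j \le l}$ and its kernel. For $(2) \Rightarrow (1)$ I would argue directly: the conditions $\phi \in \SHP$ and $\supp \Delta \phi = S$ force $\Delta \phi = \sum_{i=1}^l c_i \delta_{v_i}$ with $c_i > 0$, hence $\phi = \sum_i c_i g_{v_i}$. The vanishing $\phi|_S = 0$ translates into $M(S) c = 0$ with $c \neq 0$, so $\det M(S) = 0$ and $\chi(S) = 0$; the identity $\langle \phi, \phi \rangle = c^{\top} M(S) c = 0$ is then automatic.

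For $(1) \Rightarrow (2)$, observe first that Lemma~\ref{lemhodgeindextheoremvalu} restricts $M(S)$ to have at most one non-negative eigenvalue, so $\det M(S) = 0$ forces $\dim \ker M(S) = 1$. If $c$ generates this kernel, the function $\phi := \sum c_i g_{v_i}$ tautologically satisfies $\phi|_{B(S)} = 0$, $\supp \Delta \phi \subseteq S$, and $\langle \phi, \phi \rangle = 0$. The main obstacle is to verify that $c$ may be chosen with all $c_i > 0$, so that $\phi$ genuinely lies in $\SHP$ and $\supp \Delta \phi = S$. My intended approach is a perturbation: for each non-maximal $v_i$ of finite skewness, replace it by $v_i^\varepsilon > v_i$ on the same segment with $\alpha(v_i^\varepsilon) = \alpha(v_i) - \varepsilon$; the pairwise meets are unchanged, so $M(S_\varepsilon) = M(S) - \varepsilon I$ and a shift of eigenvalues shows $\chi(S_\varepsilon) > 0$ for $\varepsilon > 0$ small. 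The Main Theorem then identifies $S_\varepsilon$ as rich, Theorem~\ref{thmsuvsectionprorefshr}(ii) yields a polynomial $P_\varepsilon$ with $v(P_\varepsilon) > 0$ on $S_\varepsilon$, and $\psi_\varepsilon := \log^+|P_\varepsilon| \in \SHP$ is supported on a finite tree by Proposition~\ref{proshpos}. After retracting $\psi_\varepsilon$ onto the convex hull of $S_\varepsilon \cup \{-\deg\}$ and normalising, extracting a subsequential limit as $\varepsilon \to 0$ should yield a non-zero $\phi \in \SHP$ satisfying the required conditions.

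The hard part will be establishing the strict positivity of every $c_i$ in the limit: in principle some coefficient could degenerate to $0$, and ruling this out relies on the tree structure of $V_\infty$ (the skewness monotonicity along segments is what prevents mixed-sign kernel vectors from arising). An alternative route is a direct application of the maximum principle on the finite tree $T$ spanned by $S \cup \{-\deg\}$: any $\phi \in \SHP$ piecewise linear on $T$ with $\phi|_S = 0$ and $\supp \Delta \phi \subseteq S$ must be non-negative on $T$, and the vanishing mass at $-\deg$ required for $\supp \Delta \phi \subseteq S$ is exactly $\det M(S) = 0$. Uniqueness up to a positive scalar then follows from $\dim \ker M(S) = 1$ together with the positivity of any two admissible kernel vectors. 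For rationality in the divisorial case: each meet $v_i \wedge v_j$ is itself divisorial on a sufficiently refined admissible compactification, so $\alpha(v_i \wedge v_j) \in \mathbb{Q}$; hence $M(S)$ has rational entries and admits a rational kernel generator. Normalising by $\phi(-\deg) = \sum c_i = 1$ keeps the masses $c_i = \Delta \phi\{v_i\}$ rational.
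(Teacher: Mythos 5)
Your proof of $(2)\Rightarrow(1)$, of uniqueness, and of the rationality claim all match the paper's argument and are correct: from $\phi = \sum c_i g_{v_i}$ with $c_i > 0$ one reads off $M(S)c = 0$, Hodge index forces $\mathrm{rank}\,M(S) = l-1$ when $\det M(S)=0$, and a rational system has a rational kernel.

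The genuine gap is in $(1)\Rightarrow(2)$, precisely at the point you flag yourself: given a kernel vector $c$ with $\phi := \sum c_i g_{v_i}$ vanishing on $B(S)$, you must show all $c_i > 0$, and neither route you sketch closes this. Route~1 (perturbing each $v_i$ upward to $v_i^\varepsilon$) already fails at the first step: finite skewness does \emph{not} imply $v_i$ is non-maximal in $V_\infty$ --- the paper explicitly notes there are endpoints of $V_\infty$ with finite skewness --- so the perturbation $v_i^\varepsilon > v_i$ need not exist. Even when it does, the subsequent limiting argument is not controlled: $\supp \Delta \log^+|P_\varepsilon|$ has no reason to be close to $S_\varepsilon$, let alone to converge to $S$, and a retraction onto the convex hull of $S\cup\{-\deg\}$ can destroy the subharmonicity you need at the endpoints. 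Route~2 is the right idea and is essentially what the paper does, but you only gesture at it: you phrase it as ``any $\phi \in \SHP$ piecewise linear on $T$\dots must be non-negative,'' which is tautological (membership in $\SHP$ is exactly what is to be proved), and your remark that ``the vanishing mass at $-\deg$ \dots is exactly $\det M(S)=0$'' is incorrect --- for $\phi = \sum c_i g_{v_i}$ one always has $\Delta\phi\{-\deg\} = \phi(-\deg) + \sum D_{\vec v}\phi = \sum c_i - \sum c_i = 0$, independently of any determinant condition.

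What is actually needed, and what the paper does, is the following. Solve the augmented $(l+1)\times(l+1)$ system (Lemma~\ref{lemdirichletontree}) to obtain $a_0,\dots,a_l$ with $\phi(-\deg) = \sum a_i = 1$, $\phi|_S = 0$; the determinant computation from the proof of the Main Theorem shows $a_0 = 0$ is equivalent to $\chi(S)=0$. Then one applies the maximum principle Lemma~\ref{lemmaxprinftree} to $-\phi$ on the finite tree $T$: $-\phi$ is harmonic on the interior $T\setminus S$ (its tree Laplacian there is the nonnegative measure $\delta_{-\deg}$), its boundary values are $-\phi|_S = 0$ and $-\phi(-\deg) = -1$, so a sup attained at an interior point would force constancy, which is incompatible with these boundary data; hence $\phi > 0$ on $T\setminus S$. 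Since $\phi$ is piecewise linear on $T$ and vanishes on $B(S)$, the slope toward each endpoint $v_i$ is strictly positive, i.e.\ $a_i = \Delta\phi\{v_i\} > 0$. This is the step your sketch omits.
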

\begin{rem}
When $S = S_+$, $\chi(S)=0$ if and only if
the matrix $M(S)$ has a one-dimensional kernel by Lemma \ref{lemhodgeindextheoremvalu}.
\end{rem}
\begin{defi}
When $\chi(S)=0$ and $S=S^{\min}_+$,  let $\phi_S$ be the unique function in $\SHP$ such that
$\phi_S(-\deg) = +1$, $\phi_S|_S =0$, the support of $\Delta \phi_S$ is equal to $S$, and $\langle \phi_S, \phi_S \rangle =0$ as above.
\end{defi}

\begin{proof}Denote by $T$  the convex hull of  $S\cup \{-\deg\}$. To simplify notation, set $S=\{v_1,\cdots,v_l\}$ and $v_0:=-\deg.$

$(1)\Rightarrow (2).$
By Lemma \ref{lemdirichletontree}, there exists $a_0,\cdots,a_n$ such that
We may thus find real numbers $a_0, \ldots , a_l$ such that
$$
\left(\begin{matrix}
 1 & 1 & \ldots & 1\\
 1 & \alpha(v_1) & \ldots & \alpha(v_1\wedge v_l)\\
\ldots & \ldots & \ldots & \ldots  \\
  1 & \alpha(v_1\wedge v_l) & \ldots & \alpha(v_l)\\
 \end{matrix} \right)
\left[
\begin{matrix}
a_0 \\ a_1 \\ \vdots \\ a_l \end{matrix}
\right]
=
\left[
\begin{matrix}
1 \\ 0 \\ \vdots \\ 0 \end{matrix}
\right]
~.$$
As in the proof of the Main theorem, the signature of $a_0$ is the same as $\chi(S)$. It follows that $a_0=0.$
Consider the function $\phi:=\sum_{i=1}^la_ig_{v_i}.$ Observe that $\phi(-\deg)=1$, $\phi|_S=0$ and $\supp\Delta\phi\subseteq S$.
Lemma~\ref{lemmaxprinftree} implies that $\phi>0$ on $T$. Since $\phi$ is piecewise linear on $T$ and $\phi=0$ on $B(S)$, $a_i=\Delta\phi(v_i)>0$ for $i=1,\cdots,l.$
It follows that $\phi\in \SHP$, $\supp\Delta\phi=S$, $\phi|_S=0$ and $\langle \phi, \phi \rangle =\sum_{i=1}^la_i\phi(v_i)=0$.

$(2)\Rightarrow (1).$ Write $\phi=\sum_{i=1}^la_ig_{v_i}$ where $a_i\in \mathbb{R}^+$, $i=1,\cdots,l.$ Since $\phi|_S=0$, we have
$$
\left(\begin{matrix}
\alpha(v_1) & \ldots & \alpha(v_1\wedge v_l)\\
\ldots & \ldots & \ldots  \\
\alpha(v_1\wedge v_l) & \ldots & \alpha(v_l)\\
 \end{matrix} \right)
\left[
\begin{matrix}
a_1 \\ \vdots \\ a_l \end{matrix}
\right]
=
\left[
\begin{matrix}
0 \\ \vdots \\ 0 \end{matrix}
\right]
~.$$
It follows that $\chi(S)=(-1)^l\det (\alpha(v_i \wedge v_j)_{1 \le i,j \le l} )=0.$

Further, Lemma \ref{lemhodgeindextheoremvalu} implies that the rank of the $l\times l$ matrix $[\alpha(v_i \wedge v_j)]_{1 \le i,j \le l}$ is $l-1$.
It follows that the function $\phi$ is unique up to a scalar factor. When all $v_i$, $i=1,\cdots,l$ are divisorial, then all $\alpha(v_i \wedge v_j)$,
$1 \le i,j \le l$ are rational. If we normalize $\phi$ such that $\phi(-\deg) = +1$
then the mass of $\Delta \phi$ at any point is a rational number.
\end{proof}

\subsection{The relation between $\chi(S)=0$ and $\delta(S) =1$}

Let us begin with the following simple consequence of the Main Theorem.
\begin{pro}\label{prodeseochisez}
If $\delta(S) =1$ then $\chi(S) =0$ and $v$ is divisorial for all $v\in S$.
\end{pro}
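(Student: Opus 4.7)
The Main Theorem gives $\chi(S)\le 0$ immediately, since $\delta(S)=1\neq 2$. The proposition thus splits into proving the reverse inequality $\chi(S) \ge 0$ and establishing the divisoriality of each $v\in S$.

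For $\chi(S)\ge 0$: pick a non-constant $P\in R_S$, provided by $\delta(S)\ge 1$. Clause~(iii) of Theorem~\ref{thmsuvsectionprorefshr} applied to $\delta(S)\ne 2$ rules out any $v\in S$ with $v(P)>0$, so $v(P)=0$ for all $v\in S$. Set $\phi:=\log^+|P|\in\SHP$. By Proposition~\ref{proshpos}, $\Delta\phi$ has finite support on which $\phi$ vanishes, so $\langle\phi,\phi\rangle=\int\phi\,d\Delta\phi=0$. Also $\phi(-\deg)=\deg P>0$ and $\phi|_S=0$. Assuming provisionally that $S=S_+$, I consider $\phi^*=\sum_{i=0}^l a_i g_{v_i}$ furnished by Lemma~\ref{lemdirichletontree} (with $v_0=-\deg$), satisfying $\phi^*(-\deg)=1$, $\phi^*|_S=0$, and $\langle\phi^*,\phi^*\rangle=a_0$ with the same sign as $\chi(S)$. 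Since $\Delta\phi^*$ is supported on $\{-\deg\}\cup S$ and $\phi$ vanishes on $S$, I obtain $\langle\phi,\phi^*\rangle=a_0\deg P$. Plugging into the Hodge inequality (Theorem~\ref{thmpropofdirpair}) and simplifying yields $a_0(1-a_0)\ge 0$; the forced non-negativity of $1-a_0$ (from the right-hand side of Hodge) then gives $0\le a_0\le 1$, hence $\chi(S)\ge 0$ and so $\chi(S)=0$.

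Two issues remain: ruling out curve valuations in $S$ (so that the above $\phi^*$ is actually defined) and upgrading \emph{finite skewness} to \emph{divisorial}. For the first, if $v_0=v_s\in S$ were the curve valuation attached to an algebraic branch of $\{Q=0\}$, then $v_0(Q)=+\infty$, and since $\{v_0\}$ is already rich and $\delta(S\setminus\{v_0\})\ge 1$, a polynomial of the form $R^m Q$ (with $R\in R_{S\setminus\{v_0\}}$ non-constant and $m$ large), after suitable correction, should yield an element of $R_S$ with strictly positive $v_0$-valuation, contradicting the conclusion $v(P)=0$ for all $v\in S$ and $P\in R_S$.

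For the divisoriality upgrade, I use the theorem preceding the proposition: once $\chi(S)=0$ and $S=S_+$, there is a unique (up to scaling) $\phi_S\in\SHP$ with $\supp\Delta\phi_S=S$, $\phi_S|_S=0$, and $\langle\phi_S,\phi_S\rangle=0$. Since $\Delta\phi_S$ is supported on $S$ and $\phi$ vanishes on $S$, we have $\langle\phi,\phi_S\rangle=0$. The Hodge inequality then becomes an equality; the corresponding equality case --- $\phi-c\phi_S$ lies in the kernel of the positive semi-definite form $B(\phi,\psi):=\phi(-\deg)\psi(-\deg)-\langle\phi,\psi\rangle$, which one easily checks to be the space of constants --- combined with $\phi|_S=\phi_S|_S=0$ forces $\phi=(\deg P)\phi_S$ outright. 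Hence $\supp\Delta\log^+|P|=S$, and the structural description of this support in Proposition~\ref{proshpos} (together with the fact that $\Delta\log^+|P|$ has integer masses attached to branches of $\{P=0\}$) identifies each $v\in S$ as a divisorial valuation. The principal obstacle is the first issue above (the exclusion of curve valuations from $S$); all other pieces are essentially formal consequences of the potential-theoretic tools developed in the paper.
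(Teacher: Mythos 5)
Your Hodge-inequality argument for $\chi(S)\ge 0$ (pairing $\phi=\log^+|P|$ against $\phi^*=\sum a_ig_{v_i}$ to get $a_0(1-a_0)\ge 0$, then using positivity of the right-hand factor) is a valid alternative to the paper's route, which instead takes $v_1'>v_1$, invokes Proposition~\ref{proapoxishplus} to show $\{v_1',v_2,\ldots,v_l\}$ is rich hence $\chi(\{v_1',v_2,\ldots,v_l\})>0$, and lets $v_1'\to v_1$ to get $\chi(S)\ge 0$. Likewise your use of the equality case of Cauchy--Schwarz to force $\log^+|P|=(\deg P)\,\phi_S$ and hence $\supp\Delta\log^+|P|=S$ is sound. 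But both of these steps require $\alpha(v)>-\infty$ for every $v\in S$, and your proposal does not supply this.

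That is the genuine gap, and it is wider than you indicate. Besides curve valuations, $V_\infty$ has infinitely singular endpoints of skewness $-\infty$; these have no attached polynomial $Q$, so the $R^mQ$ device cannot even be started for them, and even for genuine curve valuations your sketch ("after suitable correction") does not carry weight as stated. The paper avoids the issue entirely by establishing divisoriality \emph{first}: take $\phi=\log^+|Q|$ with $Q\in R_S$ nonconstant, so $\phi|_{B(S)}=0$ and $\langle\phi,\phi\rangle=0$; the support of $\Delta\phi$ is a finite set of \emph{divisorial} valuations, so if some $v_1\in S$ is not divisorial then $v_1\notin\supp\Delta\phi$, and (since $S=S^{\min}$) the point $w_1\in\supp\Delta\phi$ with $w_1<v_1$ is not in $S$; Proposition~\ref{proapoxishplus} then produces $\psi\in\SHP$ with $\psi|_{B(S)}=0$ and $\langle\psi,\psi\rangle>0$, so Theorem~\ref{thmsuvsectionprorefshr} makes $S$ rich, a contradiction. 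This single argument excludes every non-divisorial valuation at once (irrational quasimonomial, infinitely singular, and curve alike), after which $S=S_+$ holds for free and the $\chi(S)=0$ step --- by your Hodge computation or by the paper's perturbation --- is legitimate; you should reorder your proof along these lines. A smaller secondary gap: you assert without justification that $\supp\Delta\log^+|P|$ consists of divisorial valuations. This is true, because the zero-crossings of $v\mapsto v(P)$ along the tree spanned by $-\deg$ and the branches of $\{P=0\}$ occur at rational skewness ($v\mapsto v(P)$ being piecewise affine in $\alpha$ with integer slopes and divisorial break points) and are not endpoints, hence are divisorial; but Proposition~\ref{proshpos} does not state it, and both your upgrade step and the paper's divisoriality argument quietly depend on it, so it must be supplied.
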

\begin{rem}The converse of  Proposition \ref{prodeseochisez} is not true.
Let $L_{\infty}$ be the line at infinity of $\mathbb{P}^2_{\mathbb{C}}$. Let $O$ be a point in $L_{\infty}$ and $(u,v)$ be a local coordinate
at $O$ such that locally $L_{\infty}=\{u=0\}$ and $\{v=0\}$ is a line in $\mathbb{P}^2_{\mathbb{C}}$. Let $C$
be a branch of curve at $O$ defined by $(v-u^2)^5-u^3=0$. We blow up 14 times at the center of ( the strict transform of) $C$
and denote by $E$ the last exceptional curve. One can check that $\alpha(v_E)=0$.
By \cite[Example 1.3, Example 2.5]{Mondala}, we have $\delta(\{v_E\})=0.$
\end{rem}

\proof[Proof of Proposition \ref{prodeseochisez}]
Write $S=\{v_1,\cdots,v_l\}.$
Pick any non constant polynomial $Q\in R_S$, and define $\phi:=\log^+|Q| \in \SHP$.
Since $\delta(S) \neq 2$ it follows from Theorem~\ref{prodimtwothengeqz} (iv) that
$\langle\phi,\phi\rangle \le 0$ hence $\langle\phi,\phi\rangle = 0$, and
$\phi(v)=0$ for all $v\in S$.

Suppose  $v_1\in S$ is not divisorial, then there exists $w_1<v_1$ such that $\phi(w_1)=\phi(v_1)=0.$ By Proposition \ref{proapoxishplus} and Proposition \ref{prorefshroer}, we have $S$ is rich which contradicts to our assumption. It follows that $v$ is divisorial for all $v\in S.$

For every $v_1'>v_1$,
By Proposition \ref{proapoxishplus}, the set $S':=\{v_1',v_2,\cdots,v_l\}$ is rich. It follows that $\chi(S')>0$. Let $v_1'\rightarrow v_1$, we have $\chi(S)\geq 0.$ Since $S$ is not rich, we have $\chi(S)\leq 0$ and then $\chi(S)=0.$
\endproof

Our aim is to state a partial converse to the preceding result.
To do so we need to introduce an important invariant that is referred to as the \emph{thinness} of a valuation in~\cite{Favre2011}. Recall that this is unique function $A:V_{\infty}\rightarrow [-2,\infty]$ that is increasing and lower semicontinuous function on $V_{\infty}$ and such that
$$ A(v_E)=\frac1{b_E} \, \left(1 + \ord_E(dx\wedge dy)\right)$$
for any irreducible  component $E$  of $X\setminus \A^2_k$ in any admissible  compactification.

By the very definition we have $A(-\deg) = -2$ and the thinness of any divisorial valuation is a rational number whereas the thinness of any valuation associated to a branch of an algebraic curve is $+\infty$.

%In $V_{\infty}$, a divisorial valuation has finite and rational skewness and
%thinness; an irrational valuation has finite and irrational skewness and
%thinness; a curve valuation has infinite skewness and
%thinness.
%

We can now state the main result of the section.
\begin{pro}
Suppose $\chi(S) =0$, $v$ is divisorial for all $v\in S$ and $\int A\, \Delta \phi_S \le 0$. Then $\delta(S) =1$.
\end{pro}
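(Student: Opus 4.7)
The plan is to combine the Main Theorem with a Riemann--Roch argument on an admissible compactification. Since $\chi(S)=0$, the Main Theorem rules out $\delta(S)=2$, so it suffices to exhibit a single non-constant polynomial in $R_S$; this forces $\delta(S)\geq 1$ and hence $\delta(S)=1$.

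Write $\Delta\phi_S=\sum_{v_i\in S}c_i\,\delta_{v_i}$ with $c_i\in\mathbb{Q}_{>0}$, and after multiplying $\phi_S$ by a sufficiently large positive integer, assume the divisor below is integral. Pick an admissible compactification $X$ in which every $v_i\in S$ is the divisorial valuation $v_{E_i}$ of an irreducible component $E_i$ of $X_\infty:=X\setminus\A^2_k$, and define
$$A':=\sum_{E\subset X_\infty}b_E\,\phi_S(v_E)\,E.$$
Since $\phi_S\geq 0$ and $\phi_S|_S=0$, $A'$ is effective, supported on $X_\infty$, and has coefficient $0$ along each $E_i$ with $v_i\in S$. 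Expanding $\phi_S=\sum_{v_i\in S}c_i g_{v_i}$ and using the identity $\check{E}_i=b_{E_i}\sum_k b_{E_k}g_{v_{E_i}}(v_{E_k})E_k$ from \cite[Lemma A.2]{Favre2011} exactly as in the proof of Proposition~\ref{prorefshroer}, I can rewrite $A'=\sum_{v_i\in S}b_{E_i}^{-1}c_i\,\check{E}_i$. This representation immediately gives $A'\cdot E=0$ for every $E\subset X_\infty$ with $v_E\notin S$ and $A'\cdot E_i=b_{E_i}^{-1}c_i>0$ for $v_i\in S$; thus $A'$ is nef on $X$, and $(A')^2=\langle\phi_S,\phi_S\rangle=0$.

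To compute $A'\cdot K_X$, I use that $dx\wedge dy$ is regular and nowhere-vanishing on $\A^2_k$, so its divisor is a canonical divisor supported on $X_\infty$ with $\ord_E(dx\wedge dy)=b_EA(v_E)-1$. Therefore
$$A'\cdot K_X=\sum_{v_i\in S}(b_{E_i}A(v_i)-1)\,b_{E_i}^{-1}c_i=\int A\,d\Delta\phi_S-\sum_{v_i\in S}b_{E_i}^{-1}c_i<0,$$
the strict inequality following because the first term is $\leq 0$ by hypothesis while the second is strictly positive. Riemann--Roch then yields $\chi(X,\sO(nA'))=\chi(X,\sO_X)-\tfrac{n}{2}A'\cdot K_X\to+\infty$; any effective divisor $E'\sim K_X-nA'$ would satisfy $A'\cdot E'=A'\cdot K_X<0$, contradicting the nefness of $A'$, so by Serre duality $h^2(X,\sO(nA'))=0$ for all $n\geq 1$. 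Hence $h^0(X,\sO(nA'))\to\infty$. For $n$ large, fix the canonical section $s_0$ of $\sO(nA')$ cutting out $nA'$ and pick any section $s$ not proportional to $s_0$; then $P:=s/s_0$ satisfies $\text{div}(P)+nA'\geq 0$, which forces $P$ to be regular on $\A^2_k$ (since $\supp A'\subset X_\infty$) and yields $v_i(P)=b_{E_i}^{-1}\ord_{E_i}(P)\geq 0$ for every $v_i\in S$ (since $A'$ has coefficient $0$ along each such $E_i$). Thus $P$ is a non-constant element of $R_S$, completing the proof.

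The main obstacle is establishing the strict inequality $A'\cdot K_X<0$: the hypothesis $\int A\,d\Delta\phi_S\leq 0$ alone would give only a weak inequality, but the correction term $-\sum_{v_i\in S}b_{E_i}^{-1}c_i$ arising from the $-1$ in the formula $\ord_E(dx\wedge dy)=b_EA(v_E)-1$ tips the balance. Once this strict negativity is in place, the remainder is a direct application of surface Riemann--Roch together with Serre duality.
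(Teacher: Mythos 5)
Your proposal is correct and follows essentially the same route as the paper: use $\chi(S)=0$ and the Main Theorem to dispose of $\delta(S)=2$, then build from $\phi_S$ a divisor $A'=\sum_E b_E\phi_S(v_E)E$ supported at infinity with $(A')^2=0$, compute $A'\cdot K_X$ via the thinness formula to get strict negativity, and invoke Riemann--Roch to produce a non-constant section giving a polynomial in $R_S$. The one place you genuinely diverge is the vanishing of $h^2$: you argue that $A'$ is nef (and you should note in passing that for a curve $C\not\subset X_\infty$ nefness follows from $A'$ being effective with support disjoint from $C$), and then derive a contradiction from $A'\cdot(K_X-nA')=A'\cdot K_X<0$. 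The paper instead notes simply that $D$ effective gives $h^0(K_X-D)\le h^0(K_X)=0$ because $X$ is rational, which is shorter and avoids needing nefness at all. Both arguments are sound; the paper's is lighter, while yours has the small advantage of making the nefness of $A'$ explicit, a fact that is morally the reason the construction works. You also make explicit the appeal to the Main Theorem to rule out $\delta(S)=2$, a step the paper leaves implicit.
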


\begin{proof}Write $S=\{v_1,\cdots,v_l\}$
and $v_i:=v_{E_i}$ for $E_i\in \mathcal{E}.$ Write $\phi_S=\sum_{i=1}^lr_ig_{v_i}$ where $r_i\in \mathbb{Q}^+.$
Let $X$ be a compactification of $\mathbb{A}^2_k$ such that $E_i$ can be realized as an irreducible component of $X\setminus \mathbb{A}^2_k$.
Let $E_X$ be the set of all irreducible components of $X\setminus \mathbb{A}^2_k$.
Set $\theta:=\sum_{E\in E_X}b_E\phi_S(v_E)E=\sum_{i}^lr_ib_{E_i}^{-1}\check{E}_i.$
Then we have
$$(\theta\cdot K_X)=\sum_{i=1}^lr_ib_{E_i}^{-1}(\check{E}_i\cdot K_X)=\sum_{i=1}^lr_ib_{E_i}^{-1}\ord_{E_i}K_X$$
$$=\sum_{i=1}^sr_ib_{E_i}^{-1}(-1+b_{E_i}A(v_i))=-\sum_{i=1}^sr_ib_{E_i}^{-1}+\int A\, \Delta \phi_S<0.$$
There exists $m\in \mathbb{Z}^+$ such that $D:=m\theta$ is a $\mathbb{Z}$ divisor supposed by $X_{\infty}.$
Then we have that $D$ is effective, $D^2=0$ and $(D\cdot K)\leq -1.$
Recall the Riemann-Roch theorem we have $$h^0_X(D)-h^1_X(D)+h^2_X(D)=\chi(O_X)-\left(D\cdot (D-K)\right)/2=\chi(O_X)-(D\cdot K)/2.$$
Since $X$ is rational, we have $\chi(O_X)=1$. Since $D$ is effective, we have $h^2_X(D)=h^0(K_X-D)\leq h^0(K_X)=0.$
It follows that $$h^0_X(D)\geq 1-(D\cdot K)/2>1.$$ Then there exists an element $P\in k(x,y)\setminus k$ such that $\text{div}(P)+D$ is effective.
 Since $D$ is supposed by $X\setminus \mathbb{A}^2_k$, we have $P\in k[x,y]\setminus k.$
 It follows that $$v_i(P)=(b_{E_i})^{-1}\ord_{E_i}(P)\geq -(b_{E_i})^{-1}\ord_{E_i}(D)=-m\phi_S(v_i)=0$$ for all $i=1,\cdots,l.$
\end{proof}
\begin{rem}The condition $\sum_{i=1}^l r_iA(v_i)\leq 0$ is not necessary.
Set $P:=y^2-x^3 \in \mathbb{C}[x,y]$. Consider the pencil $C_{\la}$ consisting
of the affine curves $C_{\la}:=\{P=\la\}\subseteq \mathbb{C}^2$
for $\la\in \mathbb{C}$. We see that $C_{\la}$ has one branch at infinity for every $\la\in \mathbb{C}$. Let $v_{|C|}$ be the normalized valuation defined by
$Q\mapsto 3^{-1}\ord_{\infty}(Q|_{C_{\la}})$ for $\la$ generic. We see that $\alpha(v_{|C|})=0$, $A(v_{|C|})=1/3>0$ and $P\in R_S.$
\end{rem}

\subsection{The structure of $R_S$ when $\delta(S) =1$}

\begin{pro}\label{protreonestruc}Suppose that $\delta(S)=1$.
Then there exists a polynomial $P\in k[x,y]\setminus k$ such that $R_{S}=k[P]$.
\end{pro}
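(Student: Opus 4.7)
The plan is to pick $P \in R_S \setminus k$ of minimal total degree and prove that $R_S = k[P]$. Such a $P$ exists because $\delta(S) = 1$ implies $R_S$ contains elements transcendental over $k$, and every such element of $R_S \subseteq k[x,y]$ is a non-constant polynomial. Since the inclusion $k[P] \subseteq R_S$ is immediate, the content lies in showing $R_S \subseteq k[P]$; I will do this in three steps that exploit the minimality of $\deg P$.

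First, I verify that the pencil $\{P = \lambda\}_{\lambda \in k}$ has irreducible generic member, equivalently that $k(P)$ is algebraically closed in $k(x,y)$. By Bertini's irreducibility theorem, failure of this would produce a decomposition $P = H(Q)$ with $H \in k[t]$ of degree at least $2$ and $Q \in k[x,y]$; but then for any $v \in S$, a negative value $v(Q) < 0$ would force $v(P) = (\deg H) \cdot v(Q) < 0$, contradicting $P \in R_S$. Hence $v(Q) \ge 0$ for every $v \in S$, so $Q \in R_S$ with $\deg Q = \deg P / \deg H < \deg P$, violating minimality. Second, given $Q \in R_S$, the fact that $\Frac(R_S)$ has transcendence degree $1$ over $k$ and contains both $P$ and $Q$ forces $Q$ to be algebraic over $k(P)$; combined with Step~1 this gives $Q \in k(P)$. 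Third, I upgrade $Q \in k(P)$ to $Q \in k[P]$: writing $Q = A(P)/B(P)$ with $A, B \in k[t]$ coprime and $B \ne 0$, the identity $Q \cdot B(P) = A(P)$ in the UFD $k[x,y]$ yields $B(P) \mid A(P)$. If $B$ had a root $\beta \in k$, then $(P - \beta) \mid A(P)$ in $k[x,y]$; evaluating at any $(x_0, y_0) \in \A^2$ with $P(x_0, y_0) = \beta$ (such a point exists since $P$ is non-constant and $k$ is algebraically closed) gives $A(\beta) = 0$, contradicting $\gcd(A, B) = 1$. Hence $B \in k^\times$ and $Q \in k[P]$.

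The main obstacle is Step~1, where I invoke Bertini's irreducibility theorem to pass from generic reducibility of the linear pencil $\{P = \lambda\}$ to an algebraic composition $P = H(Q)$. Once this geometric input is available, the elementary observation that such a composition would produce a smaller-degree element of $R_S$ is immediate, and the remaining two steps are essentially formal.
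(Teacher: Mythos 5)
Your proof is correct, and it diverges from the paper's argument in a way worth noting. Both proofs hinge on the same nontrivial input — the Bertini/Stein-type theorem (the paper cites Najib's Th\'eor\`eme fondamental) that a polynomial $P$ with infinitely many reducible fibres must decompose as $P = H(Q)$ with $\deg H \ge 2$ — but they deploy it differently. The paper first establishes, as a standalone lemma, that $\overline{\Frac(R_S)} \cap k[x,y] = R_S$ (using the characterisation of non-rich sets to force $v(a_i)=0$ for all $v\in S$), and then runs two separate descent arguments: one to show $P-r$ is irreducible for infinitely many $r$, and a second one that picks $R\in R_S\setminus k[P]$ of minimal degree and specialises to a fibre $\{P=r\}$ to extract a smaller element of $R_S\setminus k[P]$. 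You instead use the Bertini/Stein input to conclude directly that $k(P)$ is algebraically closed in $k(x,y)$, after which the inclusion $R_S\subseteq k[P]$ follows formally: $\delta(S)=1$ forces every $Q\in R_S$ to be algebraic over $k(P)$, algebraic closedness promotes this to $Q\in k(P)$, and the elementary fact $k(P)\cap k[x,y]=k[P]$ finishes. Your in-line valuation computation $v(H(Q)) = (\deg H)\,v(Q)$ when $v(Q)<0$ replaces the single application of the paper's Step~A lemma that your route needs, so you never need the general ``$\overline{\Frac(R_S)}\cap k[x,y]=R_S$'' statement nor the second descent. The net effect is a shorter and arguably cleaner argument; the one place where you are slightly informal is the appeal to ``Bertini's irreducibility theorem'' — what you actually need is the Stein/Najib statement that the intermediate curve in the Stein factorisation of $P:\A^2\to\A^1$ is again $\A^1$, yielding $P=H(Q)$ with $Q$ a \emph{polynomial}; this is exactly the result the paper cites, so no gap, just a point where the citation could be made more precise.
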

\proof[Proof of Proposition \ref{protreonestruc}]
Set $S=\{v_1,\cdots,v_l\}$ and suppose that $S=S^{\min}.$

If there exists $Q\in k[x,y]$ such that $Q\in \overline{\Frac(R_{S})}\setminus R_S,$ then we have $\sum_{i=1}^da_iQ^i=0$
where $d\geq 1$, $a_i\in R_S$ and $a_d\neq 0$. Since $S$ is not rich, we have $v(a_i)=0$ for all $v\in S$ and $i=1,\cdots,d.$
Since $Q\neq R_S$, there exists $v\in S$ satisfying $v(Q)<0.$ Then we have $v(a_iQ^i)=iv(Q)<0$ for $i=1,\cdots,d$.
It follows that $v(a_iQ^i)=iv(Q)>dv(Q)=v(a_dQ^d)$ for $i=1,\cdots,m-1$. Then we have $v(\sum_{i=0}^da_iQ^i)=dv(Q)<0$ which is a contradiction.
Then we have $$\overline{\Frac(R_{S})}\bigcap k[x,y]=R_{S}.$$

%Pick a polynomial $Q\in R_{S}\setminus k$.
%Consider the fibration $Q:\mathbb{A}^2_k\rightarrow \mathbb{A}^1_k.$
%There exists an admissible conpactification $X$ of $\mathbb{A}^2_k$ such that $Q$ extends to a morphism $Q_1:X\rightarrow \mathbb{P}^1$.
%We see that $\mathbb{A}^2_k\subseteq Q_1^{-1}(\mathbb{A}^1_k).$
%The morphism $Q_1|_{Q_1^{-1}(\mathbb{A}^1_k)}:Q_1^{-1}(\mathbb{A}^1_k)\rightarrow \mathbb{A}^1_k$
%is projective.
%By Stein factorization \cite[Corollary 11.5]{Hartshorne1977}, there exist a morphism $Q_2:Q_1^{-1}(\mathbb{A}^1_k)\rightarrow Y$
%where $Y$ is a smooth affine curve and a finite morphism $g:Y\rightarrow \mathbb{A}^1_k$ such that $Q_1|_{Q_1^{-1}(\mathbb{A}^1_k)}=g\circ Q_2$
%and $Q_{2*}\mathcal{O}_{Q_1^{-1}(\mathbb{A}^1_k)}=\mathcal{O}_{Y}$. By Generic Smooth \cite[Corollary 10.7]{Hartshorne1977},
%a general fiber of $Q_2$ is smooth and then irreducible. Set $P:=Q_2|_{\mathbb{A}^2_k}$ and let $L$ be a general line in $\mathbb{A}^2_k$.
%The morphism $P|_L:L\rightarrow Y$ is dominant. Since $L$ is rational, then $Y$ is rational. Since $Y$ is affine,
%there exists an embedding $i:Y\hookrightarrow \mathbb{A}^1_k.$ It follows that $i\circ P|_L:L\cong \mathbb{A}^1_k\rightarrow \mathbb{A}^1_k$
%is a nonconstant polynomial map. Since $k$ is algebraically closed, $i\circ P|_L$ is surjective. It follows that $i$ is an isomorphism.
%It follows that $P$ and $g$ are  polynomial maps. Since $Q=g\circ P$, we have $P\in \overline{k(Q)}$.
%It follows that $P\in \overline{\Frac(R_S)}\bigcap k[x,y]=R_S.$

Pick a polynomial $P\in R_S\setminus k$ with minimal degree. If there are infinitely many $r\in k$ such that $P-r$ is not irreducible,
then by \cite[Th\'eor\`eme fundamental]{Najib2005}, there exists a polynomial $Q\in k[x,y]$ and $R\in k[t]$ of degree at least two
satisfying $P=R\circ Q$. Then we have $Q\in \bar{\Frac(R_{S})}\cap k[x,y]=\Frac(R_{S})$ and $\deg(Q)<\deg(P)$
which contradicts the minimality of $\deg (P).$ It follows that there are infinitely many $r\in k$ such that $P-r$ is irreducible.

 If $R_S\neq k[P]$, there exists $R\in R_S\setminus k[P]$ with minimal degree.
Since $R\in \overline{\Frac(R_S)}=\overline{k(P)}$, we have $$\sum_{i=0}^ma_i(P)R^i=0$$ where $m\geq 1$, $a_i\in k[t]$ and $a_m\neq 0$ in $k[t].$
%Since a general fiber of $P:\mathbb{A}^2_k\rightarrow \mathbb{A}^1_k$ is irreducible,
There exists $r\in k$
such that the polynomial $P-r$ is irreducible and $a_m(r)\neq 0.$ We have $$0=(\sum_{i=0}^ma_i(P)R^i)|_{\{P-r=0\}}=\sum_{i=0}^ma_i(r)(R|_{\{P-r=0\}})^i.$$
It follows that $r_1:=R|_{\{P-r=0\}}$ is a constant in $k$. Since $P-r$ is irreducible, there exists $R_1\in k[x,y]$ such that $R-r_1=(P-r)R_1.$
It follows that $$R_1\in k(R,P)\bigcap k[x,y]\subseteq \Frac(R_{S})\bigcap k[x,y]=R_S$$ and $\deg R_1< \deg R$.
Since the degree of $R$ is minimal in $R_{S}\setminus k[P]$, we have $R_1\in k[P].$ Then we have $R=(P-r)R_1+r_1\in k[P]$ which contradicts to our hypotheses.
It follows that $R_S= k[P]$.
\endproof
\newpage

\section{An application to the algebraization problem of analytic curves}\label{sectionapptoalg}
The aim of this section is to prove Theorem \ref{thmanalytictoalgemanyrational}.
\subsection{$K$-rational points on plane curves}
Let $K$ be a number field, $\mathcal{M}_K^{\infty}$ the set of its archimedean places, $\mathcal{M}_K^0$ the set of its non-archimedean places, and $\mathcal{M}_K=\mathcal{M}^{\infty}_K\cup \mathcal{M}^0_K$. For any $v\in \mathcal{M}_K$, denote by $O_v:=\{x\in K|\,\, |x|_v\leq 1 \}$ the ring of $v$-integers and define $O_K:=\{x\in K|\,\, |x|_v\leq 1 \text{ for all }v\in \mathcal{M}_K^0\}$.

Let $S$ be a finite set of places of $K$ containing all archimedean places.
We define the ring of $S$-integers to be $$O_{K,S}=\{x\in K|\, |x|_v\leq 1 \text{ for all }v\in M_K\setminus S\}.$$

Let $X$ be a compactification of $\mathbb{A}^2_K$. We fix an embedding $\mathbb{A}^2_K\hookrightarrow X$. Fix a projective embedding $X\hookrightarrow \mathbb{P}^N$ defined over $K.$ For each place $v\in \mathcal{M}_K$, there exists a distance function $d_v$ on $X$, defined by
$$d_v([x_0:\cdots:x_N],[y_0:\cdots:y_N])=\frac{\max_{0\leq i,j\leq N}|x_iy_j-x_jy_i|_v}{\max_{0\leq i\leq N}|x_i|_v\max_{0\leq j\leq n}|y_j|_v}$$ for any two points $[x_0:\cdots:x_N],[y_0:\cdots:y_N]\in X(K)\subseteq\mathbb{P}^N(K).$
%
%
%Denote by $\{d_v\}_{v\in \mathcal{M}_K}$ the distance functions for all places induced by a fixed projective embedding $X\hookrightarrow \mathbb{P}^N$ defined over $K.$
Let $C$ be an irreducible curve in $X$ which is not contained in $X_{\infty}:=X\setminus \mathbb{A}^2_K$.
\begin{pro}\label{procurveopensetcoverkpoints}Pick any point $q\in C(K)\bigcap X_{\infty}$. For every place $v\in \mathcal{M}_K$, let $r_v$ be a positive real number and set $U_v:=\{p\in \mathbb{A}^2(K_v)|\,\,d_v(q,p)<r_v\}.$ Suppose more over that $r_v=1$ for all places $v$ outside a finite subset $S$ of $\mathcal{M}_K$.
Then the set $C(K)\setminus \cup_{v\in \mathcal{M}_K}U_v$ is finite.
\end{pro}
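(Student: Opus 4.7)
The plan is to argue by heights and Northcott's theorem.

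First I would reduce to a smooth projective model. Let $\nu\colon\tilde{C}\to C$ be the normalization, and let $K'/K$ be a finite extension over which each preimage $\tilde{q}_1,\ldots,\tilde{q}_m$ of $q$ becomes rational. Since $\nu$ is an isomorphism above the finitely many singular points of $C$ and $C(K)\cap X_{\infty}$ is finite, it suffices to prove an analogous finiteness for the image in $C(K)$ of $\tilde{C}(K')\setminus\bigcup_{v}\nu^{-1}(U_v)$. On $\tilde{C}$, the pullback divisor $D:=\nu^{*}q=\sum_{i=1}^{m}e_i\,\tilde{q}_i$, with $e_i\ge 1$ the branch multiplicities of $C$ at $q$, is effective of positive degree and hence ample on the smooth projective curve $\tilde{C}$.

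Next I would compare $-\log d_v(q,\cdot)$ with a Weil function $\lambda_{D,v}$ for $D$. Working in local analytic coordinates on $X(K_v)$ around $q$, the Pl\"ucker chordal distance $d_v(q,\cdot)$ is comparable, up to a bounded multiplicative factor, to the $v$-adic distance to $q$ in these coordinates. Pulling this back to $\tilde{C}$ and expanding in a uniformizer $t_i$ at $\tilde{q}_i$ gives $d_v(q,\nu(\tilde{p}))\asymp|t_i(\tilde{p})|_v^{e_i}$ as $\tilde{p}\to\tilde{q}_i$. By the standard uniqueness up to bounded functions of Weil functions, one obtains constants $c_v$, with $c_v=0$ for all but finitely many $v$, such that
$$\lambda_{D,v}(\tilde{p})\;\le\;-\log d_v(q,\nu(\tilde{p}))+c_v\quad\text{for every }\tilde{p}\in\tilde{C}(K_v)\setminus\nu^{-1}(q).$$

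For $p\in C(K)\setminus\bigcup_{v}U_v$ we have $d_v(q,p)\ge r_v$ at every $v\in\mathcal{M}_K$, so summing the above inequality with the usual local weights $[K_v:\mathbb{Q}_v]/[K':\mathbb{Q}]$ yields
$$h_D(\tilde{p})\;\le\;-\sum_{v\in\mathcal{M}_K}\frac{[K_v:\mathbb{Q}_v]}{[K':\mathbb{Q}]}\log r_v+\sum_{v\in\mathcal{M}_K}\frac{[K_v:\mathbb{Q}_v]}{[K':\mathbb{Q}]}c_v,$$
a finite constant since $r_v=1$ outside the finite set $S$ and $c_v=0$ outside another finite set. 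Ampleness of $D$ on $\tilde{C}$ combined with Northcott's theorem then implies that only finitely many $\tilde{p}\in\tilde{C}(K')$ have height bounded by this constant, and pushing down via $\nu$ finishes the proof.

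The main obstacle is verifying the Weil-function inequality with $c_v=0$ for almost every $v$. This requires fixing an integral model $\mathcal{X}\subset\mathbb{P}^{N}_{O_K}$ together with a flat projective model of $\tilde{C}$ over $O_{K,S'}$ for a suitably enlarged $S'\supset S$, and then a careful local computation using the explicit Pl\"ucker formula for $d_v$ recalled just before the proposition. Such calculations are standard in the theory of Weil heights but need to be spelled out with some care in our setting because $q$ lies at infinity in $X$ and because on a curve with several points at infinity the condition $d_v(q,p)\ge r_v$ bounds only the contribution of $q$ to the height, not the total size of $p$.
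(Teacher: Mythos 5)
Your proposal follows essentially the same route as the paper's proof: normalize $C$, transfer the non-proximity condition $d_v(q,p)\ge r_v$ into an upper bound on a local height contribution, and conclude by ampleness of a divisor supported over $q$ on $\tilde C$ together with Northcott. The paper packages this slightly more concretely. Instead of working with the full divisor $D=\sum e_i\tilde q_i$ and abstract Weil functions $\lambda_{D,v}$, it picks a single preimage $Q\in\nu^{-1}(q)$, embeds $\tilde C\hookrightarrow\mathbb P^M$ via $|lQ|$ so that the hyperplane at infinity meets $\tilde C$ only at $Q$, projects to get a finite morphism $g\colon\tilde C\to\mathbb P^1$ with $g^{-1}([1{:}0])=\{Q\}$, and then bounds the naive height of $g(\nu^{-1}(p))$. (Using $D=\sum e_i\tilde q_i$ as you do is equally valid; a single $Q$ already suffices because $g$ is finite.) The step you flag as the main obstacle --- the estimate $\lambda_{D,v}\le -\log d_v(q,\cdot)+c_v$ with $c_v=0$ for almost all $v$ --- is precisely what the paper carries out via the integral models $\tilde{\mathcal C}\subset\mathbb P^M_{O_K}$ and $\mathcal X\subset\mathbb P^N_{O_K}$: it defines a finite bad set $T$ of places where the extension $\iota\colon\tilde{\mathcal C}\dashrightarrow\mathcal X$ is not regular or the fibre of $H_\infty$ degenerates, and checks that for $v\notin S\cup T$ the residue disc $V_v$ around $Q$ maps into $U_v$, while for the finitely many $v\in S\cup T$ a radius shrinkage gives the bounded discrepancy $c_v$. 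So the gap you acknowledge is a genuine gap in your write-up (not in the approach), and your final paragraph already identifies the correct way to fill it; the paper does exactly that integral-model computation. Your closing caution about several points at infinity is harmless: $h_D$ is a height attached to an \emph{ample} divisor on the smooth projective curve $\tilde C$, so bounding it already forces finiteness by Northcott, even though $h_D$ only "sees" proximity to $q$ and not to the other points at infinity.
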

\proof
We shall prove that $C(K)\setminus \cup_{v\in \mathcal{M}_K}U_v$ is a set of points with bounded heights for a suitable height.

\smallskip

Let $i:\widetilde{C}\rightarrow C$ be the normalization of $C$ and pick a point $Q\in i^{-1}(q)$.

There exists a positive integer $l$ such that $lQ$ is a very ample divisor of $\widetilde{C}$. Choose an embedding $j: \widetilde{C}\hookrightarrow \mathbb{P}^M$ such that $$O=[1:0:\cdots :0]=H_{\infty}\bigcap \widetilde{C}$$  where $H_{\infty}=\{x_M=0\}$ is the hyperplane at infinity. Let $g:\widetilde{C}\rightarrow \mathbb{P}^1$ be the rational map sending $[x_0:\cdots :x_M]\in \widetilde{C}$ to $[x_0:x_M]\in \mathbb{P}^1$. It is a morphism since $\{x_0=0\}\bigcap H_{\infty}\bigcap \widetilde{C}=\emptyset$. It is also finite and satisfying $$g^{-1}([1:0])=H_{\infty}\bigcap\widetilde{C}=[1:0\cdots:0].$$
By base change, we may assume that $\widetilde{C},i,j, g$ are all defined over $K.$

Set $D=\Spec O_K.$
We consider the irreducible scheme $\widetilde{\mathcal{C}}\subseteq \mathbb{P}^M_{D}$ over $D$ whose generic fiber is $\widetilde{C}$ and the irreducible scheme $\mathcal{X}\subseteq \mathbb{P}^N_{D}$ over $D$ whose generic fiber is $X.$
 Then $i$ extends to a map $\iota:\widetilde{\mathcal{C}}\dashrightarrow \mathcal{X}$ over $D$ that is birational onto its image.

For any $v\in \mathcal{M}^0_K$, let $$\mathfrak{p}_v=\{x\in O_v|\,\,v(x)>0\}$$ be a prime ideal in $O_v.$ There is a finite set $T$ consisting of those places $v\in \mathcal{M}^0_K$ such that $\iota$ is not regular along the special fibre $\overline{C}_{O_v/\mathfrak{p}_v}$ at $\mathfrak{p}_v\in D$ or $\overline{C}_{O_v/\mathfrak{p}_k}\bigcap H_{\infty, O_v/\mathfrak{p}_v}\neq \{[1:0:\cdots:0]\}$.

Pick any place $v\in \mathcal{M}^0_K\setminus (S\bigcup T),$ and define
 \begin{align*}
V_v=&\{[1:x_1:\cdots:x_M]\in \widetilde{C}(K)|\,\, |x_i|_v<1 ,i=1,\cdots, M\}\nonumber\\
         =&\{[1:x_1:\cdots:x_M]\in \widetilde{C}(K)|\,\, |x_M|_v<1 \}.
\end{align*}

Since $r_v=1$, for such a place we set $\Omega_v:=\{[1:x]\in \mathbb{P}^1(K)|\,\,|x|_v<1\}$. We have $V_v=g^{-1}(\Omega_v)\bigcap \widetilde{C}(K),$
so that $i^{-1}(U_v\bigcap C(K))\supseteq V_v$ for all $v\in \mathcal{M}^0_K\setminus (S\bigcup T).$

Now choose a place $v\in S\bigcup T$. Since $g^{-1}([1:0])=Q$, we may supose that $r_v>0$ satisfying $i^{-1}(U_v\bigcap C(K))\supseteq g^{-1}(\{[1:x]\in \mathbb{P}^1(K)|\,\,|x|_v<r_v\})$.

\smallskip

By contradiction, we suppose that there exists a sequence $\{p_n=(x_n,y_n)\}_{n\geq 0}$ of distinct $K$-points in $C(K)\bigcap \mathbb{A}^2(K)$.
Since there are only finitely many singular points in $C$, we may suppose that for all $n\geq 0$, $C$ is regular at $p_n$. Set $q_n:=i^{-1}(p_n)$, and $y_n:=g(q_n).$ Since $g$ is finite, we may suppose that the $y_n$'s are distinct. Write $y_n:=[x_n:1]$ so that
$|x_n|_v<r_v$ for all $v\in \mathcal{M}_K$.

We now observe that
$$[K:\mathbb{Q}]h_{\mathbb{P}^1}(y_n)=\sum_{v\in\mathcal{M}_K}n_v\log(\max\{|x_n|_v,1\})$$
                                      $$\leq\sum_{v\in \mathcal{M}_K\setminus \{v\in \mathcal{M}_K\}}n_v\log(\max\{r_v,1\})
                                     $$$$ =\sum_{v\in S\bigcup T}n_v\log(\max\{r_v,1\})$$
where $h_{\mathbb{P}^1}$ denotes the naive height on $\mathbb{P}^1$.
We get a contradiction by Northcott property (see \cite{Silverman1986}).
\endproof

We also have a version of Proposition \ref{procurveopensetcoverkpoints} for $S$-integral points.

Given any finite set of place containing $\mathcal{M}^{\infty}_K$, we say that $(x,y)\in \mathbb{A}^2(K)\subseteq X$ is $S$-integral if $x,y\in O_{K,S}$.

\begin{pro}\label{prosintsqincurtoabranch}Let $\{p_n=(x_n,y_n)\}_{n\geq 0}$ be an infinite set of $S$-integral points lying in $C\bigcap \mathbb{A}^2$. Then for any point $q\in X_{\infty}\bigcap C(K)$, there exists a place $v\in \mathcal{M}_K$ such that there exists an infinite subsequence $\{p_{n_i}\}_{i\geq 1}$ satisfying $p_{n_i}\rightarrow q$ with respect to $d_v$ as $i\rightarrow \infty$.
\end{pro}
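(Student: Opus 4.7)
The plan is to argue by contradiction, combining Proposition \ref{procurveopensetcoverkpoints} with a good-reduction observation. Suppose that no infinite subsequence of $\{p_n\}$ converges to $q$ with respect to $d_v$ for any place $v\in\mathcal{M}_K$. Then $q$ is isolated from $\{p_n\}$ in each $d_v$-topology on $X(K_v)$, so for every $v$ one may choose $r_v\in(0,1]$ such that the set $\{n: d_v(q,p_n)<r_v\}$ is finite. The aim is to feed these $r_v$'s into Proposition \ref{procurveopensetcoverkpoints}; for that, I must show that one may in fact take $r_v=1$ at all but finitely many places.

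The key step is the following good-reduction lemma: \emph{there is a finite set $T\supset S$ of places such that for every $v\notin T$ and every $S$-integer $p=(x,y)\in\mathbb{A}^2(K)$, one has $d_v(q,p)=1$.} I would prove this by spreading the projective embedding $X\hookrightarrow\mathbb{P}^N$ and the section $\Spec O_K\to X$ corresponding to $q\in X_\infty(K)$ to an integral model $\mathcal{X}\subset\mathbb{P}^N_{O_{K,T}}$ with $T$ large enough that the section extends and lands in the closure of $X_\infty$. For $v\notin T$, the reduction of $q$ then lies in the special fibre of the divisor at infinity, whereas the $S$-integer $p$ extends to a section in the affine chart and reduces into $\mathbb{A}^2_{\kappa(v)}$. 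These two loci being disjoint in the special fibre, the $(N+1)$-tuples of integral projective coordinates of $q$ and $p$ are not congruent modulo $v$, so some minor $x_iy_j-x_jy_i$ is a unit at $v$; applying the definition of $d_v$ with integral representatives of unit maximal coordinate gives $d_v(q,p)=1$.

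With the lemma in hand, I would enlarge $T$ once more to absorb all finite places appearing in the previous choices, and set $r_v=1$ for all $v\notin T$ and keep the chosen $r_v\le 1$ for $v\in T$. Then the lemma gives $U_v\cap\{p_n\}=\emptyset$ whenever $v\notin T$, while for each of the finitely many $v\in T$ only finitely many of the $p_n$ lie in $U_v$; thus $\bigcup_{v\in\mathcal{M}_K} U_v$ contains only finitely many of the $p_n$. On the other hand Proposition \ref{procurveopensetcoverkpoints} asserts that $C(K)\setminus\bigcup_v U_v$ is finite. Combining these two finiteness statements would force $\{p_n\}$ itself to be finite, contradicting the hypothesis.

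The main obstacle will be executing the spreading-out step rigorously for the given compactification $X$, which is arbitrary: one must pick a finite $T$ so that the projective embedding has good reduction, the section defining $q$ extends to a section landing entirely in the (closure of the) divisor at infinity, and the affine plane and its complement extend to disjoint subschemes of $\mathcal{X}$ over $\Spec O_{K,T}$. Once this is in place, the identification of the non-archimedean distance formula with the reduction criterion (distinct residues force $d_v=1$) is standard, and the contradiction with Proposition \ref{procurveopensetcoverkpoints} is then purely combinatorial.
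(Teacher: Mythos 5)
Your proof is correct, and it follows a genuinely different organization from the paper's. The paper's proof of this proposition re-enters the machinery set up in the proof of Proposition~\ref{procurveopensetcoverkpoints}: it passes to the normalization $\widetilde{C}$, takes the finite map $g:\widetilde{C}\to\mathbb{P}^1$ collapsing the preimage $Q$ of $q$, and shows by a good-reduction argument that for $v\notin S\cup T$ the affine coordinate $x_n$ of $g(q_n)$ satisfies $|x_n|_v\le 1$; Northcott forces $h_{\mathbb{P}^1}(g(q_n))\to\infty$, so (pigeonhole over the finitely many places in $S\cup T$) some place $v$ and subsequence have $|x_{n_i}|_v\to\infty$, hence $p_{n_i}\to q$ in $d_v$. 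You instead argue by contradiction and treat Proposition~\ref{procurveopensetcoverkpoints} as a black box: your good-reduction lemma ($d_v(q,p)=1$ for $v$ outside a finite set $T\supset S$ and any $S$-integral $p$) lets you set $r_v=1$ away from $T$, and the non-accumulation hypothesis gives the $r_v$ at the finitely many places of $T$; then $\{p_n\}\cap\bigcup_v U_v$ is finite while Proposition~\ref{procurveopensetcoverkpoints} makes $C(K)\setminus\bigcup_v U_v$ finite, contradicting that $\{p_n\}$ is infinite. Both proofs rest on the same two ingredients (Northcott and good reduction of the compactification), but yours is more modular: it reuses the conclusion of Proposition~\ref{procurveopensetcoverkpoints} rather than its proof, at the cost of establishing the good-reduction lemma separately. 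Your spreading-out sketch for that lemma is sound — one should be explicit that the closed subscheme $\mathcal{X}\setminus\mathbb{A}^2_{O_{K,T}}$ (rather than the literal Zariski closure of $X_\infty$, since $\mathbb{A}^2$ is dense in $X$) is what receives the integral extension of $q$ and is disjoint from $\mathbb{A}^2_{\kappa(v)}$, and then different mod-$v$ reductions give a unit $2\times 2$ minor, i.e.\ $d_v(q,p)=1$. Also note there is no need to "enlarge $T$ once more": the same $T$ produced by the lemma already suffices, since away from $T$ one simply takes $r_v=1$.
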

\proof[Proof of Proposition \ref{prosintsqincurtoabranch}]
We define $\widetilde{C}$, $i$,$j$,$g$ and $T$ as in the proof of Proposition \ref{procurveopensetcoverkpoints}.

We may suppose that for all $n\geq 0$, $p_n$ is regular in $C$. The $K$-points $q_n:=i^{-1}(p_n)$ are distinct $K$-points in $\widetilde{C}$.

For any $v\in \mathcal{M}^0_K\setminus (S\bigcup T),$ Set
 \begin{align*}
V_v=&\{[1:x_1:\cdots:x_M]\in \widetilde{C}(K)|\,\, |x_i|_v<1 ,i=1,\cdots, M\}\nonumber\\
         =&\{[1:x_1:\cdots:x_M]\in \widetilde{C}(K)|\,\, |x_M|_v<1 \}.
\end{align*}

 We set $\Omega_v:=\{[1:x]\in \mathbb{P}^1(K)|\,\,|x|_v<1\},$ then $V_v=g^{-1}(\Omega_v)\bigcap \widetilde{C}(K).$ It follows that $q_n\not\in V_v$.
Set $[x_n:1]:=g(q_n).$  Then we have $|x_n|_v<1$ for all $v\in \mathcal{M}_K\setminus \{S\bigcup T\}$

Since $g$ is finite, we may suppose that $g(q_n)$'s are distinct.
By Northcott property, we have $h_{\mathbb{P}^1}(g(q_n))\rightarrow \infty$ as $n\rightarrow\infty$.
Observe that
$$[K:\mathbb{Q}]h_{\mathbb{P}^1}(g(q_n))=\sum_{v\in\mathcal{M}_K}n_v\log(\max\{|x_n|_v,0\})$$
                                      $$=\sum_{v\in \mathcal{M}_K\setminus \{S\bigcup T\}}n_v\log(\max\{|x_n|_v,0\})+\sum_{v\in S\bigcup  T}n_v\log(\max\{|x_n|_v,0\})
                                     $$$$ =\sum_{v\in S\bigcup T}n_v\log(\max\{|x_n|_v,0\})$$
Since $S\bigcup T$ is finite, there exists $v\in S\bigcup T$, such that there exists a subsequence $n_i$ such that $\log(\max\{|x_{n_i}|_v,0\})\rightarrow\infty$ as $i\rightarrow\infty.$ Then $g(q_{n_i})\rightarrow [1:0]$ with respect to $d_v$ as $i\rightarrow\infty.$
Since $g^{-1}([1:0])=\{Q\}$, we have $q_{n_i}\rightarrow Q$ and then $p_{n_i}=i(q_{n_i})\rightarrow q$ respect to $d_v$ as $i\rightarrow\infty.$
\endproof

%%%%%%%%%%%%%%%%%%%%%%%%%%%%%%%%%%%%%%%%%%%%%%%%%%%%%%%%%%%%%%%%%%%%%%%%%%%%%%
%\newpage
\subsection{The adelic analytic condition in Theorem \ref{thmanalytictoalgemanyrational}}
%We first fix some notations.
%\begin{defi}
%A formal curve $C$ at the original point $O\in \mathbb{A}^2_{\overline{\mathbb{Q}}}$ is said to be $adelic$-$analytic$ if
%\begin{points}
%\item there exists a number field $K$, a finite subset $S$ of $\mathcal{M}_K$ containing $\mathcal{M}^{\infty}_K$ and an affine coordinate $(x,y)$ of $\mathbb{A}^2_K$ such that $C$ is defined by a formal Puiseux series $y=\sum_{i=1}^{\infty}a_ix^{\frac{i}{m}}\in O_S[[x^{\frac{1}{m}}]]$ where $m$ is a positive integer;
%\item for every $v\in S$, the radius of convergence $r_{C,v}:=\liS^{\min}up_{i\rightarrow\infty}|a_i|_v^{-\frac{m}{i}}$ is positive.
%\end{points}
%\end{defi}
%Set $r_{C,v}:=1$ for $v\in \mathcal{M}_K\setminus S.$

Let $K$ be a number field.
Recall that $s$ is an adelic branch at infinity defined over $K$ if it is given by the following data.
%
% formal curves in $\mathbb{P}^2_{\overline{\mathbb{Q}}}$ satisfying the following conditions:
\begin{points}
\item
$s$ is a formal branch based at a point $q\in L_{\infty}(K)$  given in coordinates $x_q, y_q$ as in the introduction by a formal Puiseux series
$y_q=\sum_{j\ge 1}a_{j}x_q^{j/m}\in O_{K,S}[[x_q^{1/m}]]$  for some positive integer $m$ and be a finite set $S$ of places of $K$ containing all archimedean places.
\item
for each place $v\in S$, the radius of convergence of the Puiseux series determining $s$ is positive, i.e.
$\limsup_{j\rightarrow\infty}|a_j|_v^{-m/j}>0$.
\end{points}

Further, we say $s$ is a adelic branch at infinity if it is a adelic branch defined over some number field.
\begin{rem}The definition of adelic branch at infinity does not depend on the choice of affine coordinate in $\mathbb{A}^2_{\overline{\mathbb{Q}}}$.
\end{rem}

%\begin{rem}Let $\phi:X\rightarrow \mathbb{A}^2_K$ be a composition of finitely many point blowups. Let $C'$ be the strict transform of $C$ and $q'$ be the center of $C$. It is easy to check that $C'$ is adelic-analytic if and only if $C$ is adelic-analytic.
%\end{rem}

\begin{rem}If $C$ is a branch of an algebraic curve at infinity defined over $\overline{\mathbb{Q}}$, then $C$ is adelic.
\end{rem}

An adelic branch need not to be algebraic. Pick a formal Puiseux series $y_q=\sum_{i=1}^{\infty}a_ix_q^{\frac{i}{m}}\in K[[x_q^{\frac{1}{m}}]]$ which comes from a branch at $q\in L_{\infty}(K)$ of an algebraic curve such that all $a_i$'s are non zero. For example $y_q=\sum_{i=1}^{\infty}x_q^i=\frac{x_q}{1-x_q}.$ To each subset $T$ of $\mathbb{Z}^+$, we attach a formal Puiseux series $y_q=\sum_{i\in T}a_ix_q^{\frac{i}{m}}\in K[[x_q^{\frac{1}{m}}]]$ which defines a formal curve $C_T.$ It is easy to check that all $C_T$'s are adelic-analytic curves and $C_T\neq C_{T'}$ if $T\neq T'$.  So the cardinality of set $\{C_T\}_{T\subseteq \mathbb{Z}^+}$ is
$2^{\aleph_0}$. On the other hand, since $\overline{\mathbb{Q}}$ is countable, the set of all branches of algebraic curves at $O$ is countable. Then there exists an adelic-analytic curve $C_T$ for some $T\subseteq \mathbb{Z}^+$ which is not algebraic.

\bigskip

%For each $i=1,\cdots,l$, there exists an affine open set $U_i\simeq \mathbb{A}^2_{K}=\Spec K[x_i,y_i]$ whose original point is the center $q_i$ of $C_i$ such that $C_i$ are defined by $y_i=\sum_{j=1}a_{ij}x^{\frac{j}{m_i}}\in O_S[[x^{\frac{1}{m_i}}]]$ where $m_i$ is a positive integer;
%\item for every $v\in S$, for all $i=1,\cdots,l$.
%\end{points}
%Set $r_{C_i,v}=1$ for all $i=1,\cdots,l$ and $v\not\in S$.
%For every place $v\in M_K$, let $C^v_i$ be $v$-analytic curve defined by $\{(x_i,y_i)\in U_i(K_v)|\,\,y_i=\sum_{j=1}^{\infty}a_{ij}x^{\frac{j}{m_i}}, |x_i|_v< \min\{r_{C_i,v},1\}\}$.

\bigskip

\subsection{Proof of Theorem \ref{thmanalytictoalgemanyrational}}
Let $S$ be a finite set of places of $K$ containing all archimedean places. We may suppose that $s_1,\cdots,s_l$, $l\geq 1$ are adelic branches defined over $K$.  Denote by $q_i$ the center of $s_i$. Write $U_i$ for $U_{q_i}$, $x_i$ (resp. $y_i$) for $x_{q_i}$ (resp. $y_{q_i}$). By changing coordinates, we may suppose that $x_i = 1/x$, $y_i = y/x + c_i$ for some $c_i\in O_{K,S}$.
Suppose that $s_i$ is defined by $y_i=\sum_{j=1}a_{ij}x^{\frac{j}{m_i}}\in O_{K,S}[[x^{\frac{1}{m_i}}]]$ where $m_i$ is a positive integer.
Observe that $C^v(s_i)$ is contained in the ball $\{p\in \mathbb{P}^2(K_v)|\,\, d_v(p,q_i)<1\}$ for $v\in \mathcal{M}_K\setminus S$.
We may suppose that $B_v=1$ for $v\in M_K\setminus S$.

Since $\alpha(v_{s_i})=-\infty$, by Theorem \ref{thmgeqm} and Theorem \ref{thmsuvsectionprorefshr}, there exists a polynomial $P\in \overline{\mathbb{Q}}[x,y]$ such that $v_i(P)>0$ for all $i=1,\cdots,l.$ Replacing $K$ by a larger number field and $S$ by a larger set, we may suppose that $P\in O_{K,S}[x,y]$.

Observe that $P(x,y)=P(x_i^{-1},(y_i-c_i)x_i^{-1})$ in $U_i$, so that $$P|_{s_i}=P\left(x_i^{-1},(\sum_{j=1}a_{ij}x^{\frac{j}{m_i}}-c_i)x_i^{-1}\right)$$ is a formal Puiseux series. We may write it as  $\sum_{j}^{\infty}b_{i,j}x_i^{\frac{j}{m_i}}\in K((x_i^{\frac{1}{m_i}}))$. It is easy to see that $b_{i,j}\in O_{K,S}$.
Observe that $q_i$ is not a pole of $P|_{C_i}$. It follows that $b_{i,j}=0$ for $j\leq 0$ and then $P|_{C_i}\in K[[x^{\frac{1}{m_i}}]]$.
There exists a real number $M_v\geq 0$ satisfying $|P(p)|_v\leq M_v$ for all $p\in C^v(s_i)$, $i=1,\cdots, l$ and $v\in \mathcal{M}_K.$ Observe that we may chose $M_v=1$ for $v\in M_K\setminus S$.

There exists a number $R_v$ satisfying $|P(x,y)|_v\leq R_v$ for all $(x,y)\in K^2$ satisfying $|x|_v\leq B_v,|y|_v\leq B_v$. We may chose $R_v=1$ for all $v\in M_K\setminus S$. Set $A_v:=\max\{B_v,M_v\}$, we have $A_v= 1$ for $v\in \mathcal{M}_K\setminus S$.

The height of $P(p_n)$ is $$h(P(p_n))=\sum_{v\in \mathcal{M}_K}\log\{1,|P(p_n)|_v\}$$
$$\leq \sum_{v\in \mathcal{M}_K}\log\{1,A_v\}=\sum_{v\in S}\log\{1,A_v\}<\infty.$$
By Northcott property, the set $T:=\{P(p_n)|\,\,n\geq 0\}$ is finite. We denote by $D$ the curve defined by the equation $\prod_{t\in T}(P(x,y)-t)=0.$ Then $D$ contains the set $\{p_n\}_{n\geq 0}$. Let $C$ be the union of all irreducible components of $D$ which contains infinitely many $p_n$. Then for $n$ large enough, we have $p_n\in C.$

\smallskip

We only have to show that all branches of $C$ at infinity are contained in the set $\{s_1,\cdots,s_l\}$.
By contradiction, we suppose that there exists a branch $Z_1$ of $C$ at infinity which is not contained in $\{s_1,\cdots,s_l\}$. Let $Z$ be the irreducible component containing $Z_1.$ Set $R_Z:=\{p_n\}_{n\geq 0}\bigcap Z$. Then $R_Z$ is an infinite set.
Pick a compactification $X$ of $\mathbb{A}^2_K$ such that all centers $q_i'$ of the strict transforms of $s_i$'s are difference from the center $z$ of the strict transform of $Z_1$.  For every $v\in \mathcal{M}_K$ there exists $r_v>0$ such that the ball $D_v:=\{p\in \mathbb{P}^2(K_v)|\,\, d_v(p,z)<r_v\}$ does not intersect $C^v(s_i)$ for all $i=1,\cdots,l$ and does not in intersect the set $\{(x,y)\in \mathbb{A}^2(K_v)|\,\,\max\{|x|_v,|y|_v\}\leq B_v\}.$
 Moreover we may suppose that $r_v=1$ for all $v$ outside a finite set $F$ of $\mathcal{M}_K$. Let $U_v:= D_v\bigcap Z(K_v).$ By Proposition \ref{procurveopensetcoverkpoints}, we have the set $Z(K)\setminus (\cup_{v\in \mathcal{M}_K}U_v)$ is finite. Then there exists a point $p_n\in R_Z$ and a place $v\in \mathcal{M}_K$ such that $p_n=(x_n,y_n)\in U_v.$ Then we have $\max\{|x_n|_v,|y_n|_v\}>B_v$ and $p\not\in C^v(s_i)$ for all $i=1,\cdots,l$, which contradicts to our hypotheses.
\endproof

\begin{rem}\label{remstrongversion}In fact, we can prove a stronger version of Theorem \ref{thmanalytictoalgemanyrational}.
Our proof actually shows that it is only necessary to assume that $p_n$ is a sequence of $\bar{\mathbb{Q}}$ points having bounded degree over $\mathbb{Q}$ (instead of assuming it to belong to the same number field).
%Instead of assuming  $p_n$'s to be $K$-points for a fixed number field, we may just suppose that $p_n$'s to be $\overline{\mathbb{Q}}$-points with bounded degree over $\mathbb{Q}$ i.e. there exists $d\geq 0$ such that for all $n\geq 0$, there exists a number field $\mathbb{Q}(p_n)$ satisfying $p_n\in \mathbb{A}^2(\mathbb{Q}(p_n))$ and $[\mathbb{Q}(p_n):\mathbb{Q}]\leq d.$
%
%The proof of this strong version is the same as Theorem \ref{thmanalytictoalgemanyrational} but each time using the Northcott property for points of bounded degree over $\mathbb{Q}$ instead of the one for points in a fixed number field.
\end{rem}

%\begin{rem}The strong version in Remark \ref{remstrongversion} is optimal in some sense. Since for every algebraic curve $C$  in $\mathbb{P}^2_{\overline{\mathbb{Q}}}$ defined over a number field $K$. There exists a rational function $f:C\rightarrow \mathbb{P}^1_{\overline{\mathbb{Q}}}$. We may suppose that $f$ is defined over $K$ and set $d=\deg f.$ Since $\mathbb{P}^1_{\overline{\mathbb{Q}}}$ contains infinitely many $K$-points and for every  point $p\in f^{-1}(\mathbb{P}^1(K))$, we have $[\mathbb{Q}(p):\mathbb{Q}]\leq d.$ It follows that $C$ contains infinitely many algebraic points whose degrees over $\mathbb{Q}$ are bounded by $d.$
%\end{rem}

We also have an analogue of Theorem \ref{thmanalytictoalgemanyrational} for $S$-integer points.
\begin{thm}\label{thmanalytoalgsinteger}
Let $K$ be a number field and $S$ be a finite subset of places in $\mathcal{M}_K$ containing $\mathcal{M}^{\infty}_K$.

Let $s_1,\cdots,s_l$ where $l\geq 1$ be a finite set of  formal curves in $\mathbb{P}^2_{\overline{\mathbb{Q}}}$ define over $K$ whose centers $q_i$'s are $K$-points in the line $L_{\infty}$ at infinity.
Suppose that for all place $v\in S$, $s_i$ is convergence to a $v$-analytic curve $C^v(s_i)$ in a neighbourhood at $q_i$ w.r.t. $v$ for $i=1,\cdots,l$.

Finally let $p_n = (x^{(n)}, y^{(n)})$, $n\geq 0$ be an infinite collection of $S$-integer points in $\mathbb{A}^2(K)$ such that for each place $v \in M_K$ then either
$\max\{|x^{(n)}|_v,|y^{(n)}|_v\}\leq B_v$ or $p_n \in \cup_{i=1}^lC^v(s_i)$.

Then there exists  an algebraic curve $C$ in $\mathbb{A}^2_K$
such that  any branch of $C$ at infinity is contained in the set $\{s_1,\cdots, s_l\}$ and $p_n$ belongs to $C$ for all $n$ large enough.
\end{thm}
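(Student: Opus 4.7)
The plan is to adapt the proof of Theorem~\ref{thmanalytictoalgemanyrational} essentially verbatim, replacing the adelic integrality of the Puiseux coefficients (which in that proof was used to bound $|P(p_n)|_v$ at non-$S$ places) by the $S$-integrality of the $p_n$ themselves, and swapping Proposition~\ref{procurveopensetcoverkpoints} for Proposition~\ref{prosintsqincurtoabranch} in the final step. First, I would invoke Corollary~\ref{corgeqm} together with Theorem~\ref{thmsuvsectionprorefshr} to obtain a polynomial $P$ satisfying $v_{s_i}(P) > 0$ for all $i=1,\ldots,l$; this is legitimate because $\alpha(v_{s_i}) = -\infty$, so the set $\{v_{s_1},\ldots,v_{s_l}\}$ is rich. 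A Galois averaging argument, followed by enlarging $S$ to absorb the denominators of the coefficients, lets us arrange $P \in O_{K,S}[x,y]$.

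Next I would bound $|P(p_n)|_v$ at every place $v \in \mathcal{M}_K$. At a place $v \notin S$, the $S$-integrality of $p_n$ gives $|x^{(n)}|_v, |y^{(n)}|_v \le 1$, and since $P$ has $S$-integer coefficients this forces $|P(p_n)|_v \le 1$. At a place $v \in S$ satisfying $\max\{|x^{(n)}|_v,|y^{(n)}|_v\} \le B_v$, the bound is a straightforward consequence of the continuity of $P$ on the closed polydisk of radius $B_v$. Finally, when $p_n \in C^v(s_i)$, the condition $v_{s_i}(P) > 0$ forces the restriction $P|_{s_i}$, viewed as a Puiseux series in $x_i^{1/m_i}$, to vanish at $x_i = 0$; combined with the hypothesis that this Puiseux series converges on the $v$-analytic neighborhood defining $C^v(s_i)$, this yields a uniform bound on $|P|_v$ over $C^v(s_i)$. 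All three cases produce a uniform constant $A_v$ with $|P(p_n)|_v \le A_v$ and $A_v = 1$ for $v \notin S$, so $\{P(p_n)\}_{n \ge 0}$ has uniformly bounded height and is therefore a finite set $T \subset K$ by Northcott.

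Hence every $p_n$ lies on the algebraic curve $D = \{\prod_{t \in T}(P - t) = 0\}$, and after discarding finitely many indices we may assume all $p_n$ lie on a single irreducible component $C$. It remains to show that every branch of $C$ at infinity belongs to $\{s_1,\ldots,s_l\}$. Suppose for contradiction that $C$ carries a branch $Z$ at infinity not among the $s_i$. Choose a smooth projective compactification $X$ of $\mathbb{A}^2_K$ in which the strict transforms of $Z$ and each $s_i$ have pairwise distinct codimension-one centers, and pick a $K$-rational point $z \in X_\infty \cap C$ on the strict transform of $Z$ but on none of those of the $s_i$ (enlarge $K$ if needed). Apply Proposition~\ref{prosintsqincurtoabranch} to the infinite $S$-integer sequence $(p_n)$ on $C$: there exist a place $v$ and a subsequence $p_{n_j} \to z$ for $d_v$. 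If $v \notin S$, then $|x^{(n_j)}|_v, |y^{(n_j)}|_v \le 1$ precludes accumulation at $X_\infty$, a contradiction. If $v \in S$, then either the bound $\max\{|x^{(n_j)}|_v,|y^{(n_j)}|_v\} \le B_v$ keeps $p_{n_j}$ in a compact set away from $X_\infty$, a contradiction, or infinitely many $p_{n_j}$ lie in a single $C^v(s_i)$, which can accumulate only at the strict transform of $s_i$ in $X$, missing $z$ by construction — another contradiction.

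The main difficulty I anticipate lies in making precise the bound on $|P|_v$ along each $C^v(s_i)$ at places $v \in S$: one must confirm that the algebraic condition $v_{s_i}(P) > 0$ really translates into a positive order of vanishing of $P|_{s_i}$ as a Puiseux series at $q_i$, and combine this with the analytic convergence hypothesis on $s_i$ to extract a uniform pointwise bound on the open disk defining $C^v(s_i)$. Once this calibration is done, the rest of the argument follows the template of Theorem~\ref{thmanalytictoalgemanyrational} without modification.
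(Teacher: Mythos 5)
Your adaptation is correct and is precisely the argument the paper has in mind when it writes "the proof of Theorem~\ref{thmanalytoalgsinteger} is very similar to the proof of Theorem~\ref{thmanalytictoalgemanyrational}"; you correctly identify the two substitutions to make (use $S$-integrality of the $p_n$ rather than $S$-integrality of the Puiseux coefficients to control $|P(p_n)|_v$ at places $v\notin S$, and replace Proposition~\ref{procurveopensetcoverkpoints} by its $S$-integer analogue Proposition~\ref{prosintsqincurtoabranch} in the final contradiction). Two small points to watch in the write-up: when you extend $K$ to make the center $z$ of the offending branch $K$-rational, you must simultaneously replace $S$ by the set of places above it and verify the $p_n$ remain $S$-integral over the enlarged field; and the step bounding $|P|_v$ uniformly over $C^v(s_i)$ at a place $v\in S$ (using $v_{s_i}(P)>0$ to kill the pole at $x_i=0$ and then analytic continuity on the convergence disk) deserves one explicit sentence, exactly as in the proof of Theorem~\ref{thmanalytictoalgemanyrational} — but neither of these is a gap, just a matter of spelling things out.
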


The proof of Theorem \ref{thmanalytoalgsinteger} is very similar to the proof of Theorem \ref{thmanalytictoalgemanyrational}. We leave it to the reader.

\newpage

\newpage

\bibliography{dd}
\end{document}